\documentclass[aos]{imsart}

\RequirePackage{amsthm,amsmath,amsfonts,amssymb}
\RequirePackage[numbers]{natbib}
\RequirePackage[colorlinks,citecolor=blue,urlcolor=blue]{hyperref}
\RequirePackage{graphicx}

\usepackage{amsmath}
\usepackage{bbm}
\usepackage{amssymb}
\usepackage{amsthm}
\usepackage{mathrsfs}
\usepackage{xcolor}
\usepackage{braket}
\usepackage{tikz} 
\usetikzlibrary{cd}
\usepackage{enumerate}
\usepackage{comment}
\usepackage{physics}
\usepackage{amsmath}
\usepackage{tikz}
\usepackage{mathdots}
\usepackage{yhmath}
\usepackage{cancel}
\usepackage{color}
\usepackage{siunitx}
\usepackage{array}
\usepackage{multirow}
\usepackage{amssymb}
\usepackage{gensymb}
\usepackage{tabularx}
\usepackage{extarrows}
\usepackage{booktabs}
\usetikzlibrary{fadings}
\usetikzlibrary{patterns}
\usetikzlibrary{shadows.blur}
\usetikzlibrary{shapes}

\startlocaldefs
\theoremstyle{plain}
\newtheorem{lemma}{Lemma}
\newtheorem{corollary}{Corollary}
\newtheorem{proposition}{Proposition}
\newtheorem{theorem}{Theorem}
\theoremstyle{remark}
\newtheorem{definition}{Definition}

\def\id{\mathbf 1}

\def\RR{\mathbb{R}}
\def\cc{\mathbb{C}}
\def\nn{\mathbb{N}}
\def\tr{{\rm Tr}}
\def\TT{{\cal T}}

\def\EE{{\cal E}}

\newcommand{\manifirr}{\mathscr V^{\rm irr}}
\newcommand{\orbifirr}{\mathscr P^{\rm irr}}
\newcommand{\manifprim}{\mathscr V^{\rm prim}}
\newcommand{\tsnonid}{ T^{\rm nonid}}
\newcommand{\tsident}{ T^{\rm id}}

\endlocaldefs

\begin{document}

\begin{frontmatter}
\title{Asymptotic statistical theory of irreducible quantum Markov chains}

\begin{aug}
\author[A,B]{\fnms{Federico}~\snm{Girotti}\ead[label=e1]{federico.girotti@polimi.it}},
\author[C]{\fnms{Jukka}~\snm{Kiukas}}
\and
\author[A,D]{\fnms{M\u{a}d\u{a}lin}~\snm{Gu\c{t}\u{a}}}
\address[A]{School of Mathematical Sciences, University of Nottingham, Nottingham, NG7 2RD, United Kingdom}

\address[B]{Department of Mathematics, Polytechnic University of Milan, Milan, Piazza L. da Vinci 32, 20133, Italy\printead[presep={,\ }]{e1}}

\address[C]{Department of Mathematics, Aberystwyth University, Aberystwyth,
SY23 3BZ, United Kingdom}

\address[D]{Centre for the Mathematics and Theoretical Physics of Quantum Non-equilibrium Systems, University of Nottingham, Nottingham, NG7 2RD, United Kingdom}

\end{aug}

\begin{abstract}
In this paper we investigate the asymptotic statistical theory of irreducible quantum Markov chains, focusing on identifiability properties and asymptotic convergence of associated quantum statistical models.
We show that the space of identifiable parameters for the stationary output is a stratified space called an orbifold, which is obtained as the quotient of the manifold of irreducible dynamics by a compact group of state preserving symmetries. We analyse the orbifold's geometric properties, the connection between periodicity and strata, and provide orbifold charts as the starting point for the local asymptotic theory. The quantum Fisher information rate of the system and output state is expressed in terms of a canonical inner product on the identifiable tangent space. We then show that the joint system–output model satisfies quantum local asymptotic normality while the stationary output model converges to a product between a quantum Gaussian shift model and a mixture of quantum Gaussian shift models, reflecting the underlying periodicity. These  strong convergence results provide the basis for constructing asymptotically optimal estimators of dynamical parameters. We provide an in-depth analysis of the model with smallest dimensions, consisting of two-dimensional system and environment units. 
\end{abstract}

\begin{keyword}[class=MSC]
\kwd[Primary ]{62B15, 81P50}
\kwd[; Secondary ]{53B12, 57R18, 62M05, 62F12, 81S22}
\end{keyword}

\begin{keyword}
\kwd{Quantum Statistical Inference}
\kwd{Irreducible Quantum Markov Chains}
\kwd{Identifiability Theory}
\kwd{Local Asymptotic Normality}
\kwd{Stratified Spaces and Orbifolds}
\end{keyword}

\end{frontmatter}
\tableofcontents


\section{Introduction and summary of results}


In recent decades quantum statistics has evolved from a theoretical challenge into a foundational principle of quantum technology that  underpins advances in diverse areas such as state tomography \cite{Paris,Hayashi,Lvov09,GutaGill25,Huang20}, quantum enhanced sensing and metrology \cite{Fujiwara08,Escher11,Demko12,Degen,Zhou18,Pezze,Meyer25}, quantum imaging \cite{Tsang16,Def2024}, gravitational waves detection \cite{DK12,GW} and more. In general terms, the  estimation problem is to learn an unknown parameter  encoded in the state of a quantum system, by performing a measurement on the system and using the outcome to compute an estimator \cite{Ho82,He76}. As quantum measurements have probabilistic outcomes and typically disturb the system's state, one usually requires multiple measurements on independently prepared systems, a setup similar to  estimation with i.i.d. models in statistics. A key early result in quantum estimation is the quantum Cram\'{e}r-Rao bound (QCRB) for parametric models \cite{Ho73,Be76,He76,Ho82,QCR1}. Given a system prepared in a state $\rho_\theta$, the covariance of any unbiased estimator obtained by measuring the system is lower bounded by the inverse of the quantum Fisher information $F_\theta$. The latter is an intrinsic property of the quantum statistical model, which suggests that concepts and tools from ``classical statistics'' can be extended to the domain of quantum statistical inference. 

Since these early results, notable developments include among others the theory of quantum sufficiency \cite{Petz86,JencovaPetz06}, the full classification of quantum Fisher information metrics \cite{Petz96}, the asymptotic hypothesis testing theory including quantum Stein Lemma \cite{HP91,ON00,OH04} and its generalisations \cite{BP10,Lami25,HY25}, quantum Sanov theorem \cite{Bjela05,Not14} and quantum Chernoff bound \cite{Auden07,Auden08}, quantum decision theory \cite{Chefles04,Buscemi12,Jencova12}, quantum compressed sensing \cite{Gross10,Flammia12,Cramer10,Kueng17}, low rank tomography \cite{Kolt112,Xia16,Cai16,Alq13,BGK15,GKKT20,SJKG22,AG17}, quantum local asymptotic normality \cite{LAN1,GJ07,GJK08,GK09,GG13,YFG13,BGM18,FY20,FY23,LN24}, optimal state estimation \cite{LAN1,GG13,ODonW16,ODonW17,Haah17,YCH19,FY23,Yuen23} and quantum semiparametrics \cite{TAD20}.

\smallskip

Outside the i.i.d. setting, an important practical problem is the estimation of dynamical parameters of open quantum systems undergoing Markovian evolution. An experimental setup that is particularly prevalent in quantum optics is that of open 
systems subject to continuous monitoring through indirect probing of their environment \cite{GardinerZoller,WisemanMilburn}. In the continuous time setting we deal with the mathematical framework of quantum input-output dynamics driven by quantum Wiener processes, which has strong connections with classical filtering and control theory \cite{Belavkin94,GardinerZoller,WM93,GJ09,CKS17}. In this paper we investigate the discrete time version \cite{AttalPautrat,VerstraeteCirac,Gough04}, which is connected to several areas such as collision models theory \cite{Ciccarello22}, matrix product states \cite{PerezGarciaWolfCirac,SchonSolanoVerstraeteCiracWolf}, finitely correlated states \cite{FannesNachtergaeleWerner,FannesNachtergaeleWerner2}, and Haroche's one-atom maser photon-box \cite{Haroche2}. Throughout this work we use the terminology of \emph{quantum Markov chains} (QMCs) which is closer to the statistical spirit of this work.  

\smallskip

The question of how to estimate dynamical parameters of quantum Markov processes goes back to \cite{Mab96,GW01,Berry02,Wiseman04} with other approaches including Bayesian estimation \citep{GM13,Negretti13,KM16,Ralph17,Zhang19}, filtering methods \cite{Ralph11,CG09,Six15}, quantum smoothing \citep{Tsang09,T09,Tsang10,Guevara15}; more recently the focus has been in finding the ultimate precision limits in terms of the quantum Fisher information \cite{Guta2011,Molmer14,GutaCB15,Guta_2015,Guta_2017,Genoni17}, designing realistic optimal measurement strategies \cite{Godley2023,GGG25,DayouCounting}, and exploring the connection to enhanced metrology and dynamical phase transitions \cite{GutaGL16,ARPG17,Genoni18,Ilias22}. On a more fundamental statistical level, the identifiability theory, information geometry and quantum local asymptotic normality have been studied in \cite{GK17} for \emph{irreducible} continuous-time dynamics and in \cite{GK15} for \emph{primitive} discrete time QMCs. A first result of these studies is that the space of identifiable parameters of the stationary output process is a manifold obtained by quotienting the space of dynamics by a group of system transformations which do not change the output. This is reminiscent of Petrie's result \cite{Pe69} on identifiability of certain classes of (classical) hidden Markov chains. The second result is that the output state model can be approximated in the sense of quantum Le Cam distance by a quantum Gaussian shift model, locally on the tangent space to the manifold of identifiable parameters. This is a quantum version of local asymptotic normality results for irreducible Markov chains (\cite{Ho88,HRL90} and references therein) and hidden Markov chains \cite{BR96,BRR98} (with the stronger requirement of primitivity). 

\smallskip 
In this work we extend the existing theory from primitive QMCs to \emph{irreducible} ones, thus allowing for dynamics that exhibit  periodic behaviour \cite{Wo12}. We find that this seemingly minor change leads to significantly different identification and asymptotic statistical theory: the space of identifiable parameters is not a smooth manifold anymore but rather an \emph{orbifold} \cite{ALR07} and QLAN is replaced by convergence to a \emph{mixture} of quantum Gaussian shift models. While orbifolds (or stratified spaces) do appear in classical statistics, in particular in the context of mixture models \cite{HoNguyen19}, we are not aware of a similar analysis in the hidden Markov chains setting. 
With this in mind, for reader's convenience we have included a background section on quantum mechanics, estimation and QMCs and a short introduction to orbifold theory. Below we provide a high level summary of the concepts and results followed by an outline of the paper structure.

\smallskip 

\subsection{Summary of results}
We now review the results contained in this paper, and start by briefly depicting the physical setup investigated here. We consider a 
QMC consisting of a system with Hilbert space ${\cal H} = \cc^d$ and initial state $|\varphi\rangle \in {\cal H}$, interacting successively with independent and identically prepared ‘environment units’ with Hilbert space ${\cal K}= \cc^k$, each of them being prepared in a known pure state $\ket{\chi} \in {\cal K}$. The interaction is described by a
unitary operator 
$U$ on ${\cal H}\otimes{\cal K}$ which is applied sequentially to the system and each of the environment units. Since the state of the latter is fixed, the dynamics is completely determined by the isometry 
\begin{eqnarray*}
V : {\cal H}&\rightarrow& {\cal H}\otimes {\cal K}  \\
\ket{\varphi} &\mapsto& U(\ket{\varphi} \otimes \ket{\chi})
\end{eqnarray*} 
and we will refer to this as the ``dynamics''. Physically, this type of discrete-time process underlies landmark experimental setups such as Haroche's photon-box \cite{Haroche2} but also models general continuous-time quantum open system dynamics in the Markov approximation, including gravitational wave detectors \cite{GW3} and atomic clouds magnetometers \cite{Budker2007}. 
From a classical perspective, the dynamics bears similarities to that of a Markov chain driven by a Bernoulli sequence \cite{DiaconisFreedman99}, where the incoming environmental units play the role of ``quantum coins'', while the unitary corresponds to the deterministic dynamical law governing the system's evolution. 
An important difference to the classical case is that the state of the ``quantum coin'' changes due to the interaction with the system, and the system cannot be observed directly without perturbing its dynamics. 

After $n$ time steps, joint system and output state is given by 
$$
|\Psi^{\rm s+o}(n)\rangle = 
V(n)|\varphi\rangle = V^{(n)}\cdots V^{(1)}|\varphi\rangle 
$$
where $V^{(j)}$ is the isometric embedding of the system state into that of system and the $j$-th environment unit, arising from their interaction. The right side expression implies that the system-output state is a form of \emph{matrix product  state} \cite{PerezGarciaWolfCirac} where the system plays the role of memory building up correlations between the output units, which is broadly similar to properties of probability distributions of classical (hidden) Markov processes. 
By tracing out the environment, one finds that the reduced system state (represented as a density matrix) after $n$ steps is given by
 $$
 \rho^{\rm sys}(n) = {\rm Tr}_{\rm out} \left(| \Psi^{\rm s+o}(n)\rangle\langle \Psi^{\rm s+o}(n)| \right) = \mathcal{T}_*^n (\rho_{\rm in}), \qquad 
 \rho_{\rm in} := |\varphi\rangle\langle \varphi| 
 $$
 where $\mathcal{T}_*$ is the system's one-step quantum transition operator (channel) defined as $\mathcal{T}_*(\rho) = 
 {\rm Tr}_{\rm out}(V \rho V^*)$. 
On the other hand, by tracing out the system we obtain the output state
$$
\rho^{\rm out}(n, \rho_{\rm in}) = {\rm Tr}_{\rm sys}\left(| \Psi^{\rm s+o}(n)\rangle\langle \Psi^{\rm s+o}(n)| \right). 
$$

An important feature of the input-output setup of open quantum dynamics is that, instead of directly measuring the system, one learns about the dynamics by measuring the output units after the interaction with the system. In this sense quantum Markov chains bear similarities to classical \emph{hidden} Markov chains, but in order to avoid confusion we will stick with the standard terminology and not use the term ``hidden'' in the quantum setup. 

\smallskip

The key assumption made throughout the paper is that the QMC is \emph{irreducible}, which is equivalent to the existence of a unique strictly positive state 
$\rho^{\rm ss}$ such that $\mathcal{T}_*(\rho^{\rm ss}) = \rho^{\rm ss}$, called the \emph{stationary state}. This has  a two-fold motivation: on the one hand, irreducible QMCs can be considered as building blocks of more general dynamics and cover a large number of practical examples; on the other hand, QMCs with multiple stationary states exhibit radically different statistical properties such as quadratic (Heisenberg) scaling of the quantum Fisher information \cite{GutaGL16}, and therefore need to be treated separately.
We stress that irreducibility is a strictly weaker assumption than \emph{primitivity} as it allows the transition operator $\TT_*$ to have a non-trivial \emph{period}. As we will show, this gives rise to significantly different identifiability and asymptotic estimation theory compared to that of primitive QMCs \cite{GK15}. A second assumption is that the output is observed in the stationary regime, for which we provide three theoretical motivations in section \ref{sec:identifiability}. 


\smallskip 

The quantum statistical problem under investigation is the following: given a QMC with unknown unitary dynamics, we would like to estimate the dynamics (or a parameter thereof) by performing measurements on the (stationary) output units and using the outcomes to compute an estimator. 
More specifically, we would like to 
know which dynamical parameters are identifiable, and how well they can be estimated in the large time limit regime.

To answer the first question we define an equivalence relation between irreducible dynamics with identical stationary output states and characterise the corresponding equivalence classes. Let $\mathscr{V}^{\rm irr}$ denote the set of isometries of irreducible QMCs. Two isometries $V_1, V_2$ are called \emph{output equivalent} if they have the same stationary output states, i.e. $\rho^{\rm out}_{V_1}(n) =\rho^{\rm out}_{V_2}(n)$ for all $n\geq 1$. The equivalence relation is completely characterised in Theorem \ref{thm:identification}, which we reproduce here informally for convenience. 

\smallskip 

{\bf Result 1. (output equivalence)} 
\emph{Two isometries $V_1, V_2$ are output equivalent if and only if there exists a unitary $W$ on $\mathcal{H}$ and a phase $e^{i\phi}\in \mathbb{C}$ such that 
$V_2 = e^{i\phi} (W\otimes \mathbf{1}_{\mathcal{K}}) V_1 W^*$. }

\smallskip

The space of identifiable parameters $\mathscr{P}^{\rm irr}$ is therefore the set of equivalence classes of isometries $[V]$ given by the orbit of $V$ under the action of the group $\mathscr{G}= U(1)\times PU(d) $ acting on $\mathscr{V}^{\rm irr}$, cf. equation \eqref{eq:group.action}. Passing from $\mathscr{V}^{\rm irr}$ to $\orbifirr$ 
comes at the price of giving up the manifold structure of $\mathscr{V}^{\rm irr}$ and having to deal with a quotient space, which, a priori 
is defined in a rather implicit way. This issue is resolved in Theorem  \ref{th:orbifold.structure} which shows that $\mathscr{P}^{\rm irr}$ has the structure of an orbifold, which loosely speaking means a topological space that can be locally identified with the quotient of a subset of the Euclidean space under the action of a finite group of smooth transformations. The  key ingredient is Theorem \ref{almostfree} which identifies the the stabiliser group of 
$\mathscr{G}_V$ of a dynamics $V$ with the set of pairs 
$\{(\gamma_V^k, Z_v^k): k=0,\dots, p_V-1\}$ of peripheral eigenvalues and unitary eigenvectors of the transition operator $\mathcal{T}_V$.

\smallskip 

{\bf Result 2. (orbifold structure)} \emph{The space of identifiable parameters $\mathscr{P}^{\rm irr}$ has a natural orbifold structure inherited from the action of the compact Lie group $\mathscr{G}$ over the manifold of dynamical parameters $\mathscr{V}^{\rm irr}$.}


\smallskip 


The theory of orbifolds \cite{Ca22} allows us to derive several geometric properties of $\orbifirr$: compute its dimension, express it as a disjoint union of manifolds (called \textit{singular manifolds}), describe the connected components of such submanifolds and analyse how such submanifolds sit together inside the orbifold. Using these tools we construct an atlas of orbifold charts that turns out to be a valuable tool in the study of the asymptotic statistical theory of QMCs. In brief, the tangent space $T_V(\manifirr)$ to 
$\mathscr{V}^{\rm irr}$ at a point $V$ decomposes as the direct sum 
$$
T_V(\manifirr) = T^{\rm nonid}_V\oplus T^{\rm id}_V,
$$
where $T^{\rm nonid}_V$ is the space of non-identifiable directions  consisting of tangent vectors to the orbits of $\mathscr{G}$, and $T^{\rm id}_V$ is the space of  identifiable directions consisting of bounded linear operators 
$A:{\cal H} \rightarrow {\cal H}\otimes {\cal K}$ satisfying 
$VV^* A = 0$, cf Proposition \ref{horprop}. The space $T^{\rm id}_V$ is naturally  endowed with a complex structure and carries the inner product 
$$
(A,B)_V = {\rm Tr}(\rho^{\rm ss}_V A^*B),
$$ 
which plays an important role in defining the different limit statistical models. The space $T^{\rm id}_V$ also carries a unitary representation $\pi_V$ of the discrete 
stabiliser group $\mathscr{G}_V$ which is intimately related to the unitary eigenvectors of the transition operator 
$\mathcal{T}_V$ corresponding to peripheral eigenvalues. 
The quotient of $T^{\rm id}_V$ by this action is the tangent cone to the orbifold $\mathscr{P}^{\rm irr}$  and describes an infinitesimal neighbourhood of $[V]$. 

\smallskip 

Let us now consider the problem of estimating the dynamics $V$ from (system and) output measurements. According to the quantum Cram\'{e}r-Rao bound, the covariance of any (locally) unbiased estimator is larger than the inverse of the quantum Fisher information (QFI) of the model. For the system and output state 
$|\Psi^{\rm s+o}_V(n)\rangle$ we show that the QFI $F^{\rm s+o}_V(n)$ scales linearly with respect to time for directions in the identifiable space $T_V^{\rm id}$ and is bounded for the non-identifiable 
directions in $T_V^{\rm nonid}$. Proposition \ref{prop:QFI} shows that the scaling factor is given by the real part of the inner product on $T_V^{\rm id}$ defined earlier.

\smallskip

{\bf Result 3. (QFI rate for identifiable parameters)}
Let $A =A^{\rm id}+ A^{\rm nonid}, B =B^{\rm id}+ B^{\rm nonid}$ be the decompositions into identifiable and non-identifiable components of $A,B\in T_V$. Then the QFI rate at $V$ is given by
$$
\lim_{n\to\infty} \frac{1}{n} F^{\rm s+o}_{V,n}(A,B) = 4{\rm Re} \,(A^{\rm id}, B^{\rm id})_V.
$$



Going beyond QFI, we show that the sequence of stationary system and output quantum models $|\Psi^{\rm s+o}_V(n)\rangle$ converges in the Le Cam sense (see Definition \ref{def:LECAM}) to a quantum Gaussian shift model, i.e. it satisfies quantum local asymptotic normality (QLAN). To formulate this, let us consider a fixed parameter $V_0\in \mathscr{V}^{\rm irr}$ and the system and output state $|\Psi^{\rm s+o}_{V}(n)\rangle$ corresponding to a parameter $V= V_0 + n^{-1/2}A$ where $A\in T^{\rm id}_{V_0}$ is a ``local parameter'' whose magnitude is allowed to grow slowly as $n^{\delta}$ for $0<\delta<1/2$. Let us use the above geometric data to define the limit quantum model which  consists of Gaussian (coherent) states $ |{\rm Coh}(A)\rangle := W(A)|\Omega\rangle$ of a continuous variables system, where $W(A)$ are Weyl operators of the canonical commutations relations algebra $CCR(T^{\rm id}_{V_0}, (\cdot, \cdot)_{V_0})$ defined by 
$$
W(A)W(B) = e^{-i {\rm Im} (A,B)_{V_0}} W(A+B), \qquad A,B\in T^{\rm id}_{V_0}
$$ 
and $|\Omega\rangle$  is the ``vacuum'' state with characteristic function  
$$
\langle \Omega | W(A) |\Omega \rangle = e^{-\frac{(A,A)_{V_0}}{2}}.
$$ 
The following result shows the Le Cam convergence of the system and output models to the Gaussian shift model over growing local neighbourhoods (cf. Theorem \ref{thm:SOlan}).

\smallskip 

{\bf Result 4. (QLAN for system and output state)} For any given $V_0\in \mathscr{V}^{\rm irr}$, the system and output state satisfies QLAN. Concretely, for $\delta$ small enough, there exist quantum channels $\mathcal{F}^{\rm s+o}_n, \mathcal{B}^{\rm s+o}_n$ such that 
\begin{eqnarray*}
&&
\sup_{\|A\|\leq n^\delta} \| \mathcal{F}^{\rm s+o}_n (\rho^{\rm s+o}_{A,n}) - G(A)\|_1 = o(n^{-2\delta}),\\  
&&
\sup_{\|A\|\leq n^\delta} \| \rho^{\rm s+o}_{A,n} -\mathcal{B}^{\rm s+o}_n( G(A))\|_1 = o(n^{-2\delta}),
\end{eqnarray*}
where $\rho^{\rm s+o}_{A,n} = 
|\Psi^{\rm s+o}_{V_0+ n^{-1/2}A}(n)\rangle \langle \Psi^{\rm s+o}_{V_0 + n^{-1/2}A}(n)| $ and $G(A) = |{\rm Coh}(A)\rangle \langle {\rm Coh}(A)|$.

\smallskip 

Note that the QLAN result is operational in the sense that it prescribes physical transformations connecting the models, and strong in the sense that it provides convergence rates for the deficiency between the models, while the latter are allowed to grow locally. Note also that the same result holds for ``full'' neighbourgoods of $V_0$ including non-indentifiable directions, 
but the limit remains dependent only on the identifiable component.


\smallskip 

Let us consider now the stationary output state $\rho^{\rm out}_V(n)$. Result 2. showed that the state depends only on the equivalence class $[V]$ so the model should be seen as being parametrised by points in the space $\mathscr{P}^{\rm irr}$. In Theorem \ref{thm:nLAN} 
we show that the sequence of stationary output quantum models $\rho^{\rm out}_{V}(n)$ converges in the Le Cam sense to a limit quantum model, locally in the space of identifiable parameters. Remarkably, unlike the case of primitive (aperiodic) dynamics, the limit model is \emph{not} a quantum Gaussian shift model but rather a mixture of Gaussian models which are related to each other by the unitary action of a discrete group. In particular, quantum local asymptotic normality does not hold in the vecinity of periodic QMCs. To define the limit model consider the algebra $CCR(T^{\rm id}_{V_0}, (\cdot, \cdot)_{V_0})$ introduced earlier and define the mixture of coherent states
$$
\rho(A):=\frac{1}{p_{[V_0]}}\sum_{g\in \mathscr{G}_{V_0}}
|{\rm Coh}(U_{V_0}(g) A)\rangle \langle {\rm Coh}(U_{V_0}(g) A)|,
$$
where $U_{V_0}(g)$ is the unitary representation of $\mathscr{G}_{V_0}$ on $\tsident_{V_0}$. Physically, this amounts to averaging the coherent state $|{\rm Coh}(A)\rangle\langle {\rm Coh}(A)|$ by the action of the second quantisation of the unitary group $\{U_{V_0}(g)^k\}_{k=0}^{p_{[V_0]}}$ on $T^{\rm id}_{V_0}$. Note that this model is different from those obtained in quantum versions of local asymptotic mixed normality \cite{Benoist18} where one has access to the label of the individual components of the mixture. Theorem \ref{thm:nLAN} is informally stated as follows.

\smallskip 


{\bf Result 5. (Limit mixture of quantum Gaussian shift models)} Let $V_0\in \manifirr$ be fixed and consider the sequence of local output models 
$\rho^{\rm out}_{A,n}:= \rho^{\rm out}_{[V_0 + n^{-1/2} A]}(n)$ with local parameter $A\in T^{\rm id}_{V_0}$. 
Then, for $\delta$ small enough, there exist quantum channels $\mathcal{F}^{\rm out}_n, \mathcal{B}^{\rm out}_n$ such that 
\begin{eqnarray*}
&&
\sup_{\|A\|\leq n^\delta} \| \mathcal{F}^{\rm out}_n (\rho^{\rm out}_{A,n}) - \rho(A)\|_1 = o(n^{-2\delta}),\\  
&&
\sup_{\|A\|\leq n^\delta} \| \rho^{\rm out}_{A,n} -\mathcal{B}^{\rm out}_n( \rho(A))\|_1 = o(n^{-2\delta}).
\end{eqnarray*}

\smallskip 


Results 4 and 5 put the basis of an asymptotic estimation theory for QMCs. In classical asymptotics, it is well known that such strong convergence results can be used to derive asymptotic minimax rates of convergence \cite{LeCam}, and 
similar techniques have been demonstrated in the quantum setup in \cite{LAN1,BGM18,YCH19} for i.i.d. models. A detailed analysis of the estimation theory is left for a future work. 

\vspace{2mm}

{\bf Result 6. (Worked out example)}
In the final part of the paper we apply the general theory developed here to the class of QMCs with two dimensional system and environmental units, i.e. $\mathcal{H} = \mathcal{K} = \mathbb{C}^2$. The space of isometries $\mathscr{V}^{\rm irr}$ has dimension 12 and its quotient by the group $\mathscr{G}$ is an orbifold $\mathscr{P}^{\rm irr}$ made up of an 8 dimensional manifold $\mathscr{P}^{\rm prim}$ and a 4 dimensional submanifold $\mathscr{P}_{2,4}$ of dynamics with period $2$. For the purpose of illustration, a cartoon version of this geometry is represented in panels a) and b) of Figure \ref{fig:qubit}. In the neighbourhood of primitive dynamics the output model satisfies QLAN with a 4-modes limit quantum Gaussian shift model. In contrast, the asymptotic behaviour near periodic points has a richer, more interesting structure. The identifiable tangent space $T^{\rm id}_V$ at a periodic point $V$ decomposes into 2 subspaces 
$\mathcal{V}_0,\mathcal{V}_1$ of (complex) dimension 2 which are eigenspaces of the stabiliser with eigenvalues $1$ and respectively $-1$. This means that the limit model is a product between a 2-mode Gaussian shift model corresponding to movement within the periodic submanifold $\mathscr{P}_{2,4}$, and a 2-mode Gaussian mixture model for direction away from the singular manifold. We analyse 3 one-dimensional models passing through a periodic point, whose (identifiable) tangent vectors lie in $\mathcal{V}_1$, $\mathcal{V}_0$ or neither (labelled as (I), (II), (III) in Figure \ref{fig:qubit} b)). In the latter two cases we provide output measurement and estimation procedures which ``localise'' the parameter at standard rate; in the third case we show that the signal to noise ratio (SNR) of average statistics for simple repeated measurements vanishes near the periodic point, while the SNR for certain \emph{joint} measurements on pairs of environmental units remains non-zero in this limit. This analysis suggest that parameters can be estimated at standard rate in a preliminary localisation stage, and thus provides the basis for a future investigation into the asymptotic minimax theory of QMCs, combining preliminary estimation with the asymptotic approximation results derived here, in the spirit of the works \cite{YCH19,LAN6} in the i.i.d. setting. Figure \ref{fig:qubit} c) illustrates the three corresponding limit Gaussian and mixed Gaussian statistical experiments.

\begin{figure*}[!ht]
    \centering
   \includegraphics[width=\linewidth]{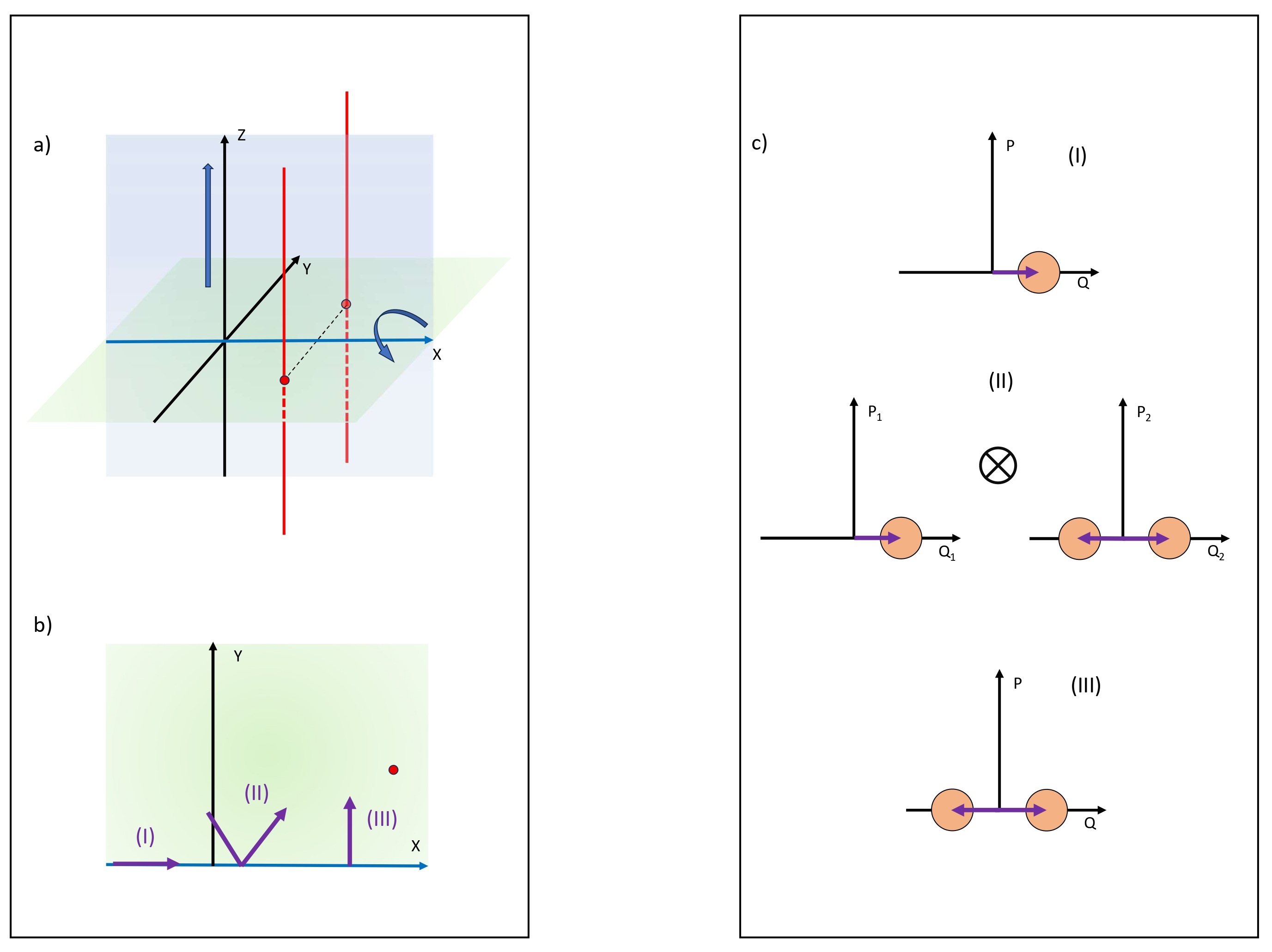}
    \caption{Conceptual representation of the local geometry of stationary QMCs with two dimensional system and ancillas, and limit quantum statistical models for 3 one-parameter models. 
    Panel a) depicts the (12 dimensional) manifold of isometries $\mathscr{V}^{\rm irr}$ and the action of the symmetry group 
    $\mathscr{G}$. Locally, the action consists of translations along the $Z$ axis together with the $\mathbb{Z}_2$ action of reflections around the $XZ$ plane that represents periodic QMCs. The blue arrows correspond to the transformations generating the group, while the two vertical red lines constitute a single arbitrarily chosen orbit. At every point, the non-identifiable tangent space 
    $\mathcal{T}^{\rm nonid}$  consists of vectors in the $Z$ direction while $\mathcal{T}^{\rm id}$ is parallel to the $XY$ plane. The latter carries the action of the stabiliser group 
    $\mathbb{Z}_2$ and decomposes into the direct sum of eigenspaces ${\cal V}_0$ and ${\cal V}_1$  consisting of vectors along the $X$ and respectively the $Y$ axis.  %
    Panel b) represents the orbifold obtained as the quotient of $\mathcal{V}^{\rm irr}$ under the action of $\mathscr{G}$, which can be identified with the $XY$ plane folded along the $X$ axis, represented as a half plane including the `edge' axis $X$;  the axis corresponds to the (4 dimensional) manifold $\mathscr{P}_{2,4}$ of equivalence classes of periodic isometries,  while the plane represents the (8 dimensional) manifold $\mathscr{P}^{\rm prim}$ of equivalence classes of primitive QMCs. 
    The red dot corresponds to the orbit represented by red lines in $a)$. We analyse the asymptotics of three \emph{one-dimensional models} with distinct geometries and limit models. In the parameter space these are the following lines in the $XY$ plane: a line (I) along the $X$ axis, a line (II) which is non-orthogonal to either $X$ or $Y$ axis, and a line (III) parallel to the $Y$ axis. For clarity, the lines are not represented in panel a) but their orbifold projections  are represented in panel b) by purple arrows: (I) lies inside the singular manifold, (II) is the `broken' line that crosses the singular manifold in one point, and (III) is a half line in the $Y$ direction, exhibiting non-identifiability.
    Panel $c)$ reports a representation in the phase space of the limit quantum statistical models corresponding to the three one-parameter models, locally around a point at the intersection with the $X$ axis (periodic QMC). Orange circles represent quantum coherent states with standard vacuum covariance.
    (I) is a pure quantum Gaussian shift model where the mean moves along the $Q$ axis. (II) is the tensor product of a pure Gaussian shift model as the one considered in (I) and a mixture of two coherent states with opposite means: the means move together when the local parameter changes. Finally, the third model is described by a mixture of coherent states lying on the $Q$ axis with opposite means. We remark that the local parameter is identifiable in (I) and (II), but not in (III).}
    \label{fig:qubit}
\end{figure*}

\subsection{Structure of the paper}

In section \ref{sec:back} we provide a brief  introduction to concepts and results used in this work, such as fundamental  notions of quantum mechanics, quantum estimation theory and quantum Markov chains. 
Section \ref{sec:identifiability} deals with the identifiability   problem for irreducible QMCs. Three statistical tasks are presented and the corresponding notions of identifiability are defined in Definitions \ref{def:equiv}, \ref{def:macro.equivalence} and \ref{def:waek.equivalence}; these are showed to be equivalent using Lemma \ref{lem:ltflu}, which is of independent interest. The equivalence class of QMCs with identical stationary outputs is completely characterized in Theorem \ref{thm:identification}. In Section \ref{sec:global} we further elaborate on the identifiability theory and show that the equivalence classes are orbits of
a group acting smoothly on the manifold of irreducible QMCs  (Theorem \ref{almostfree}); moreover, in the same Theorem we show that the action satisfies the requirements for the quotient space to be endowed with the structure of an orbifold. Section \ref{subsec:orbi} contains a reminder of all the notions and results about orbifolds used in the paper. 
In Section \ref{subsec:idatl} we construct an orbifold atlas for the parameter space which turns out to be a convenient parametrization for the results in Section \ref{sec:LAN}. Section \ref{subsec:orbi} treats the decomposition of $\orbifirr$ into submanifolds and discusses some of their properties (Proposition \ref{prop:sd}).

Theorems \ref{thm:SOlan} and \ref{thm:nLAN} in Section \ref{sec:limit_results} provide local approximations of the statistical models corresponding to the system and output state and the stationary output state, respectively. Corollary \ref{coro:plan} determines exactly those cases in which LAN holds. Section \ref{sec:example} applies the general theory to the class of QMCs with two-dimensional system and environment unit.

\section{Background and preliminary results} \label{sec:back}

Quantum theory employs the mathematical framework of linear operators on complex Hilbert spaces to describe physical phenomena at the microscopic scale of atoms and photons. One of the key features of the theory is that quantum measurements are \emph{intrinsically probabilistic}, so that the problem of learning the state of a system is fundamentally of a statistical nature. Such problems have become ubiquitous in Quantum Technology where experimenters use statistical inference to reconstruct parameters of quantum states and devices for validation purposes, but also to estimate unknown physical quantities with high precision. 
In this section we give a brief introduction to the basic concepts and techniques of the field, and highlight a new technical tool on convergence of quantum models which has a wider applicability.


\subsection{Quantum mechanics primer}\label{sec:QM}
In this section we give a condensed introduction to the basic notions of quantum mechanics leading to the theory of quantum channels and Gaussian states which play a key role later on.
\subsubsection{Postulates of quantum mechanics}
In quantum mechanics, each system (e.g. particle, electromagnetic field) is described mathematically in terms of a separable, complex \emph{Hilbert space}
$\mathcal{H}$, and certain linear operators acting on it. 
In this paper we adopt the physicists (Dirac) notation, whereby a vector 
$\psi$ in a Hilbert space $\mathcal{H}$ is denoted by using the ``ket'' symbol $|\psi\rangle$, while the ``bra'' vector $\langle \phi|$ denotes the adjoint $\phi^*$, such that the inner product between $|\phi\rangle$ and $|\psi\rangle$ is denoted $\langle \phi|\psi\rangle$.


\vspace{2mm}

We denote by $L^\infty(\mathcal{H},{\cal K})$ the space of bounded operators defined on $\mathcal{H}$ and taking values in ${\cal K}$; if ${\cal H}={\cal K}$, we will use the simpler notation $L^\infty({\cal H})$. We consider $L^\infty({\cal H})$ equipped with the operator norm $\|X\|_\infty = \sup_{\|\psi\|_{\cal H}=1}\|X\psi\|_{\cal K}$ and we let $L^1(\mathcal{H})$ be the space of trace-class operators, i.e. bounded operators $\tau$ satisfying $\|\tau\|_1 := \tr(|\tau|)<\infty$ where 
$|\tau| = \sqrt{\tau^*\tau}$ is the absolute value (operator)  and $\tau^*$ is the adjoint of $\tau$. Any bounded linear functional on $L^1(\mathcal{H})$ takes the form 
$$
\tau \in L^1(\mathcal{H})\mapsto
\tr(\rho X) \in \mathbb{C}
$$
for some bounded operator $X$, so that $L^{\infty}(\mathcal{H})$ is the dual of  $L^1(\mathcal{H})$. 
This fundamental duality is the quantum analogue of the duality encountered in probability theory between the space of absolutely integrable functions $L^1(\Omega, \Sigma, \mu) $ and that of bounded measurable functions $L^\infty(\Omega, \Sigma, \mu)$, where $(\Omega, \Sigma,\mu)$ is a probability space. 

\vspace{2mm}

\emph{Quantum states.} The \emph{state} of a quantum system with Hilbert space $\mathcal{H} $ incorporates information about its preparation and is represented mathematically by a \emph{density matrix}, i.e. an operator in $ L^1(\mathcal{H})$ which is   positive and has trace one. The space of states $S(\mathcal{H}) \subset L^1(\mathcal{H})$ is convex and its extremal elements are the \emph{pure states} represented by one-dimensional projections $P_\psi = |\psi\rangle\langle \psi|$ where $|\psi\rangle $ is a unit vector. When dealing with pure states we will often work with the vectors themselves rather than the projections and refer to $|\psi\rangle$ as being the ``state'', or  the ``wave function'' of the system. In general, a state $\rho\in S(\mathcal{H})$ which is not pure is called \emph{mixed} and can be written as a convex combinations of pure states. One such representation is given by the spectral decomposition
$$
\rho = \sum_{i} \lambda_i |e_i\rangle\langle e_i|
$$
where $\lambda_i$ are the eigenvalues of $\rho$ and $|e_i\rangle$ are the corresponding eigenvectors. The positivity and trace properties of $\rho$ imply that $\{\lambda_i\}_i$ is a discrete probability distribution. 

\vspace{2mm}

\noindent
\emph{Observables.} 
In quantum mechanics, the system's observables are represented by \emph{selfadjoint operators} acting on its Hilbert space $\mathcal{H}$. According to the Spectral Theorem, any observable has a spectral decomposition of the form
\begin{equation}
\label{eq:spectral.theorem}
A= \int_{\sigma(A)} \lambda P(d\lambda)
\end{equation}
where $\sigma(A)\subset \mathbb{R}$ is the spectrum of $A$ and $P(d\lambda)$ is the associated \emph{projection valued measure} 
whose elements are orthogonal projections. More precisely, 
$P:\Sigma \to L^\infty(\mathcal{H})$ is a map from the Borel $\sigma$-algebra of $\sigma(A)$ to bounded operators such that
\begin{enumerate}
\item 
$P(E) =\int_E P(d\lambda)$ is an orthogonal projection for each $E\in \Sigma$;
\item 
$P$ is $\sigma$-additive, i.e. 
$P\left(\bigcup_i E_i\right) =\sum_i P(E_i)$ for disjoint sets $\{E_i\}_i$ in $\Sigma$;
\item 
$P(\emptyset) =0$ and $P(\sigma(A))  =\mathbf{1}$.
\end{enumerate}
For finite dimensional spaces, observables have discrete spectrum given by the set of eigenvalues, and the spectral projections are the corresponding eigenprojectors.

\vspace{2mm}

The duality between $L^1(\mathcal{H})$ and $L^\infty(\mathcal{H})$ provides us with a quantum notion of expectation. 
For any state $\rho$ and any bounded observable $A$, we define the expected value of the latter with respect to the former by $\tr(\rho A)$. Its full probabilistic interpretation is described below in the context of measurements.


\vspace{2mm}

\noindent 
\emph{Measurements.}
Measurements provide the link between the quantum world of ``wave functions'' and the classical one of stochastic measurement outcomes. In its most general form, a measurement with outcomes in a measure space $(\Omega, \Sigma,\mu)$ is described by a bounded linear transformation 
$$
\mathcal{M}_*:L^1(\mathcal{H})
\to L^1(\Omega,\Sigma, \mu)
$$
which maps a state $\rho$ into the probability density $p^{\mathcal{M}}_\rho:=  \mathcal{M}_*(\rho)$ with respect to the given reference measure $\mu$. When the measurement is performed on a system in state 
$\rho$, one obtains a sample $X$ from the distribution $\mathbb{P}^{\mathcal{M}}_\rho$ with density 
$p^{\mathcal{M}}_\rho$. For each $E\in \Sigma$, the dual map 
$\mathcal{M}:L^\infty(\Omega,\Sigma, \mu)\to L^{\infty}(\mathcal{H})$ defines the positive operator $M(E) := \mathcal{M}(\chi_E)$ where $\chi_E$ is the characteristic function of $E$. The collection 
$\{M(E): E\in \Sigma\}$ is called a \emph{positive operator valued measure} (POVM) and determines the probability distribution of the measurement outcomes 
$$
\mathbb{P}^{\mathcal{M}}_\rho(E)  =\tr(\rho M(E)).
$$

Note that, as a map from states to probability densities, the measurement can be regarded as a quantum-to-classical randomisation, in analogy to  classical-to-classical randomisations 
$R:L^{1}(\Omega_1, \Sigma_1, \mu_1)\to L^{1}(\Omega_2, \Sigma_2, \mu_2)$ which map probability densities to probability densities.

The most common type of measurement is that associated to an observable. 
When measuring the observable $A$ with spectral decomposition \eqref{eq:spectral.theorem} on a system in state $\rho$, we obtain a random outcome $a\in \sigma(A)$ with probability distribution
$
\mathbb{P}_{\rho}( a \in E)  =
    \tr( \rho P(E)),
$ $E\in \Sigma$.
In this case the POVM is given by the projection valued measure associated to the spectral decomposition of $A$. In particular, the expected value of the outcome $X$ is 
$$
\mathbb{E}_\rho (X)= 
\int_{\sigma(A)} a \tr(\rho P(da))  =  \tr(\rho A).
$$




\vspace{2mm}

\noindent 
\emph{Unitary transformations.}
The time evolution of a closed quantum system $\mathcal{H}$ is determined by a unitary operator 
$U\in L^{\infty}(\mathcal{H})$, so that an initial vector state $|\psi\rangle$ is transformed into the final state $U|\psi\rangle$. On the level of density matrices, this action becomes that of unitary conjugation
\begin{eqnarray*}
\mathcal{U}_* : L^1(\mathcal{H}) &\to& 
L^1(\mathcal{H})\\
\rho &\mapsto& U\rho U^*.
\end{eqnarray*}
This description of the evolution is often called the \emph{Schr\"{o}dinger picture}, where states change and observables are fixed in time. 
An alternative view is the \emph{Heisenberg picture} where the system's state is always fixed but observables evolve according to the dual
map  
\begin{eqnarray*}
\mathcal{U}
:L^{\infty}(\mathcal{H})&\to& L^{\infty}(\mathcal{H})\\
 X &\mapsto& U^* X U
\end{eqnarray*}
The two pictures are equivalent in the sense that they both described the same expected values of observables and measurement probabilities
$$
\tr\left[ \mathcal{U}_*(\rho) A \right]  = 
\tr\left[\rho~ \mathcal{U}(A) \right]  , \qquad A\in L^{\infty}(\mathcal{H}), \rho \in L^1(\mathcal{H}).
$$
This is similar to the treatment of dynamics in stochastic processes where the evolution can be described in terms of time changing random variables or probability distributions. 

\vspace{2mm}

\noindent
\emph{Composite systems.}
The Hilbert space of a composite system  consisting of a finite number of sub-systems with Hilbert spaces $\mathcal{H}_1, \dots ,\mathcal{H}_k$, is given by the \emph{tensor product} $\mathcal{H} = \bigotimes_{i=1}^k\mathcal{H}_i$.
States of the form 
$\rho=\bigotimes_{i=1}^k\rho_i$ are called \emph{product states} and correspond to individual systems being prepared independently in states 
$\rho_1, \dots , \rho_k$. Convex combinations of such states are called \emph{separable} while all other states are called \emph{entangled}. 
In particular, pure (vector) states are either of the product form $\bigotimes_{i=1}^k |\psi_i\rangle $ or they are entangled.

States over multipartite systems are quantum analogues of joint distributions of several random variables. If $\rho_{12}$ is a density matrix on the bipartite space $\mathcal{H}_1 \otimes \mathcal{H}_2$ then its \emph{marginals} represent the states of the subsystems $\mathcal{H}_1$ and  $\mathcal{H}_2$ and are given by the partial traces $\rho_1:= \tr_2 (\rho_{12})$ and $\rho_2:=\tr_1 (\rho_{21})$. 
As in the classical case, the marginals determine the expectations of observables corresponding to individual sub-subsystems: indeed if $A\in L^\infty(\mathcal{H}_1)$ and $B\in L^\infty(\mathcal{H}_2)$ then their expectations can be computed as 
$$
\tr_1(\rho_1 A) = 
\tr_{12}(\rho_{12} (A\otimes \mathbf{1}_2)),\qquad
\tr_2(\rho_2 B) = 
\tr_{12}(\rho_{12} (\mathbf{1}_1\otimes B)).
$$
where $A\otimes \mathbf{1}_2$ denotes the ampliation of $A$ by the identity operator $\mathbf{1}_2$ on $\mathcal{H}_2$. 



\begin{center}
\begin{figure}
    \centering
    \includegraphics[width=0.7\linewidth]{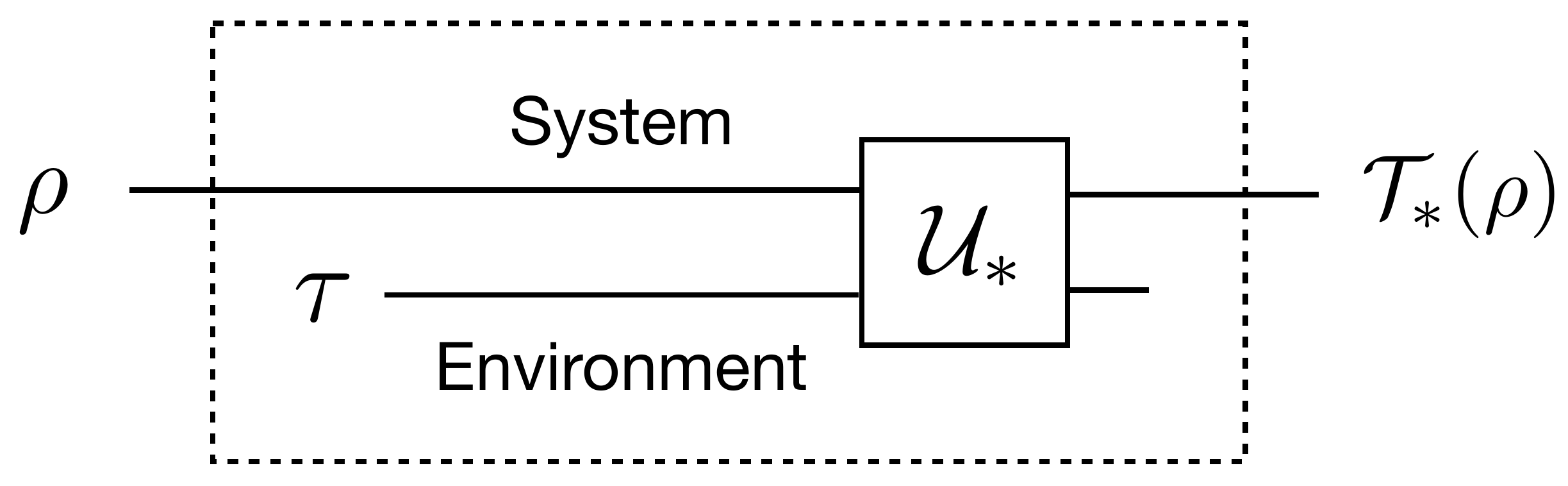}\hspace{3mm}
       \caption{Quantum channel as a black box transformation mapping the initial system state $\rho$ to final state $\mathcal{T}_*(\rho)$ through the unitary interaction with an environment system in a fixed initial state $\tau$. }
    \label{fig:channel}
\end{figure}   
\end{center}

\subsubsection{Quantum channels}
In practice, quantum systems are never perfectly isolated but interact with their environment, so even though the full system-environment evolution may be unitary, the \emph{marginal} state of the system does not transform unitarily. Instead, such transformations are described by a class of maps called \emph{quantum channels} on which we focus our attention at the end of this section. These can be seen as quantum-to-quantum randomisations but also as quantum analogue of transition matrices for Markov chains, as we will see later. 

\vspace{2mm}

We introduce this notion through the  
example of a system $\mathcal{H}$ interacting with the 
environment $\mathcal{K}$ via a unitary $U$ acting on $\mathcal{H}\otimes \mathcal{K}$, cf. Figure \ref{fig:channel}. If the two systems are independent and have initial states 
$|\varphi\rangle\in \mathcal{H}$ and respectively $|\chi\rangle\in \mathcal{K}$  
then the final joint state is
$
|\Psi\rangle:= U|\varphi \otimes \chi\rangle. 
$
By expanding with respect to 
an orthonormal basis $\{|1\rangle, \dots , |k\rangle\}$ in $\mathcal{K}$ we write 
$$ 
|\Psi\rangle
= \sum_{i=1}^k K_i|\varphi \rangle \otimes |i\rangle 
$$
where $K_i = \langle i|U|\chi\rangle \in L^{\infty}(\mathcal{H})$ are operators satisfying $\sum_{i=1}^k K_i^*K_i = \mathbf{1}_\mathcal{H}$, and the inner product is with respect to $\mathcal{K}$. 
The marginal system state (density matrix) after the evolution is 
$$
\rho^{{\rm sys}\prime}= \tr_{\mathcal{K}} (U (\rho^{\rm sys}\otimes\tau) U^*)=
\sum_{i=1}^k K_i |\varphi\rangle\langle\varphi| K_i^*
$$
where $\rho^{\rm sys}:= |\varphi\rangle \langle\varphi|$ and $\tau :=|\chi\rangle \langle \chi|$ are the initial states of system and environment. 
We now extend this argument to arbitrary system states and express the state transformation as the map which in the Schr\"{o}dinger picture is given by (cf. Figure \ref{fig:channel})
\begin{eqnarray*}
\mathcal{T}_*&:& L^1(\mathcal{H}) \to L^1(\mathcal{H}) \\
\mathcal{T}_* &:& \rho \mapsto 
\tr_\mathcal{K}(U(\rho\otimes \tau)U^*) = 
\sum_{i=1}^k K_i \rho K_i^*.
\end{eqnarray*}
Remarkably, it turns out that \emph{any} physical transformation is of the above form. Indeed, if we consider a system undergoing a generic black-box transformation 
$\mathcal{T}_*: L^1(\mathcal{H})\to L^1(\mathcal{H})$, then on physical grounds it is natural to require $\mathcal{T}_*$ to map states into states and to preserve convex combinations; this means that $\mathcal{T}_*$ should be linear, positive ($\mathcal{T}_*(A)\geq 0$ for $A\geq 0$) and trace preserving. Moreover, by applying a similar argument to the composite system $\mathbb{C}^n\otimes \mathcal{H}$ where the $n$-dimensional auxiliary system undergoes the identity transformation $\mathcal{I}_{n*}$, we conclude that 
$\mathcal{I}_{n*}\otimes \mathcal{T}_*$ must be positive for all $n \in \mathbb{N}$. A map $\mathcal{T}_*$ with this property is called \emph{completely positive} (CP). This requirement is a strictly stronger than positivity as it can be shown that certain maps (e.g. transposition) are positive but not completely positive. 

The following theorem \cite{Kraus71,Choi75} shows that all physical state transformations can be realised by coupling the system with an ``environment'' via a unitary transformation and tracing out the environment, as in our motivating example, cf. Figure \ref{fig:channel}.
\begin{theorem}[{\bf Kraus Theorem}]
\label{th:Kraus}
A linear map $\mathcal{T}_*: L^1(\mathcal{H})\to L^1(\mathcal{H})$ is completely positive and trace preserving if and only if it is of the form
$$
\mathcal{T}_*(\rho) = 
\sum_{i=1}^{\infty} K_i \rho K_i
$$
for a set of operators $ K_i \in L^\infty(\mathcal{H}) $ satifying $\sum_{i=1}^{\infty} K_i^* K_i =\mathbf{1}_{\cal H}$ (where the convergence holds in the strong operator topology). Such a map (and its dual $\mathcal{T}: L^\infty(\mathcal{H})\to L^\infty(\mathcal{H})$) is called a quantum channel.
\end{theorem}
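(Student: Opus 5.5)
The statement is the Kraus--Choi representation theorem for completely positive trace-preserving maps on $L^1(\mathcal{H})$. The displayed formula should read $K_i\rho K_i^*$; I prove it in that form. The plan has two directions. Sufficiency is routine; necessity is the substantive part, and I would prove it in the Heisenberg picture via Stinespring's dilation.

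\emph{Sufficiency.} Assume $\mathcal{T}_*(\rho)=\sum_i K_i\rho K_i^*$ with $\sum_i K_i^*K_i=\mathbf{1}$ strongly.
\begin{enumerate}
\item Define $V:\mathcal{H}\to\mathcal{H}\otimes\ell^2$ by $V\varphi=\sum_i K_i\varphi\otimes|i\rangle$. The normalisation condition makes $V$ an isometry.
\item Then $\mathcal{T}_*(\rho)={\rm Tr}_{\ell^2}(V\rho V^*)$. The partial trace converges in trace norm, since $V\rho V^*$ is trace class.
\item For any $n$, $(\mathcal{I}_{n*}\otimes\mathcal{T}_*)(X)={\rm Tr}_{\ell^2}\big((\mathbf{1}\otimes V)X(\mathbf{1}\otimes V)^*\big)$. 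Conjugation by $\mathbf{1}\otimes V$ preserves positivity, and so does the partial trace. Hence $\mathcal{T}_*$ is completely positive.
\item Trace preservation follows from ${\rm Tr}(V\rho V^*)={\rm Tr}(\rho V^*V)={\rm Tr}\,\rho$.
\end{enumerate}

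\emph{Necessity.}
\begin{enumerate}
\item Pass to the dual map $\mathcal{T}$ on $L^\infty(\mathcal{H})$. It is completely positive and unital, because $\mathcal{T}_*$ preserves the trace. It is normal, i.e.\ $\sigma$-weakly continuous, because it is the adjoint of a bounded map on the predual $L^1(\mathcal{H})$.
\item Apply the Stinespring construction. On the algebraic tensor product $L^\infty(\mathcal{H})\odot\mathcal{H}$ define the sesquilinear form
\[
\langle X\otimes\xi, Y\otimes\eta\rangle=\langle\xi,\mathcal{T}(X^*Y)\eta\rangle .
\]
Complete positivity is exactly what makes this form positive semidefinite. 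Quotient by the null vectors and complete to a Hilbert space $\mathcal{L}$.
\item Let $\pi(Z)[X\otimes\xi]=[ZX\otimes\xi]$ and $V\xi=[\mathbf{1}\otimes\xi]$. This gives a representation $\pi$ on $\mathcal{L}$ and an isometry $V$ with $\mathcal{T}(Z)=V^*\pi(Z)V$.
\item Normality of $\mathcal{T}$ makes $\pi$ a normal representation of $L^\infty(\mathcal{H})=B(\mathcal{H})$.
\item Use the structure of normal representations of $B(\mathcal{H})$: each is unitarily equivalent to an ampliation $Z\mapsto Z\otimes\mathbf{1}_{\mathcal{K}'}$ on $\mathcal{H}\otimes\mathcal{K}'$, with $\mathcal{K}'$ separable because $\mathcal{H}$ is. To prove this, take a unit vector $e\in\mathcal{H}$ and the rank-one projection $P=|e\rangle\langle e|$. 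Set $\mathcal{K}'=\pi(P)\mathcal{L}$. Then define the unitary by $\eta\otimes k\mapsto\pi(|\eta\rangle\langle e|)k$. Isometry follows from the matrix-unit relations. Surjectivity follows from normality, since the rank-one projections onto a basis sum strongly to $\mathbf{1}$.
\item Fix an orthonormal basis $\{|i\rangle\}$ of $\mathcal{K}'$ and set $K_i^*=V^*(\mathbf{1}\otimes|i\rangle)$. Then $\mathcal{T}(Z)=\sum_i K_i^*ZK_i$, with strong convergence.
\item Unitality $\mathcal{T}(\mathbf{1})=\mathbf{1}$ gives $\sum_iK_i^*K_i=\mathbf{1}$.
\item Dualising back yields $\mathcal{T}_*(\rho)=\sum_iK_i\rho K_i^*$, converging in trace norm.
\end{enumerate}

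\emph{Main obstacle.} The difficult step is the ampliation structure of normal representations of $B(\mathcal{H})$ in infinite dimensions. This is precisely where normality of $\mathcal{T}$ is needed: a non-normal representation can annihilate the compact operators, and then no Kraus form exists. In finite dimensions this step can be avoided entirely. One takes the Choi matrix $\sum_{jk}|j\rangle\langle k|\otimes\mathcal{T}_*(|j\rangle\langle k|)$, which is positive by complete positivity. Its spectral decomposition gives vectors $v_i$, and reshaping each $v_i$ into an operator $K_i$ yields the Kraus form directly. This is the only case the paper needs, since $\mathcal{H}=\mathbb{C}^d$.
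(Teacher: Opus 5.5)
Your proof is correct, and you are right that the displayed formula should read $K_i\rho K_i^*$. The paper gives no proof of its own and simply cites Kraus and Choi. Your two arguments are exactly the standard proofs from those references: Stinespring dilation followed by the ampliation structure of normal representations of $L^\infty(\mathcal{H})$ in general, and the Choi-matrix factorisation when $\mathcal{H}=\mathbb{C}^d$ (the only case the paper uses).

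Two points you leave tacit are routine. Boundedness of $\mathcal{T}_*$, needed for the dual to be normal, follows from positivity and trace preservation, which give $\|\mathcal{T}_*(\rho)\|_1\le 2\|\rho\|_1$. Separability of $\mathcal{K}'=\pi(P)\mathcal{L}$, needed for a countable Kraus family, holds because $\mathcal{K}'$ is the closed span of the vectors $[\,|e\rangle\langle\eta|\otimes\xi\,]$, and these depend continuously on $(\eta,\xi)\in\mathcal{H}\times\mathcal{H}$.
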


\subsubsection{Continuous variables and Gaussian states} \label{cvsystems}

The term continuous variables (CV) system refers to a quantum system whose fundamental observables are quantum mechanical versions of canonical coordinates and momenta associated to degrees of freedom of a  classical mechanical system. 
In a physical context, these observables are often position and momentum operators of free  particles, or electric and magnetic components of a number of monochromatic modes of the electromagnetic field. In a quantum statistical  context, such systems emerge from the asymptotic analysis of estimation problems where the limit model is described by a CV system whose state is a quantum analogue of a  Gaussian distribution. We refer to \cite{Ho82,Pa92} for the material presented below.

\vspace{2mm}

We start with an arbitrary complex Hilbert space $(X, \langle\cdot|\cdot\rangle)$ of finite dimension $k$, which we refer to as the one-particle space.
If we regard $X$ as a real linear space of dimension $2k$ (called the \emph{phase space}) we can define the real inner product $\beta$, and the bilinear \emph{symplectic form} $\sigma$ as
\begin{align*}
\beta(x,y) &= {\rm Re}\langle x|y\rangle, &\sigma(x,y) &= {\rm Im}\langle x| y\rangle.
\end{align*}
Now we define the \emph{symmetric Fock space}
$$
\mathfrak H(X)=
\mathbb{C}|\Omega\rangle \oplus\bigoplus_{n=1}^\infty  X^{\otimes_s n },
$$
where $X^{\otimes_s n}$ denotes the symmetric subspace of $X^{\otimes n}$, often called the $n$-particle space, and 
$\mathbb{C}|\Omega\rangle$ is a a one-dimensional space generated  by the vector $|\Omega\rangle$ called the  \emph{vacuum}. For each $|x\rangle \in X$ we define the \emph{exponential vectors}
$$
|e^{x}\rangle = \bigoplus_{n=0}^\infty \frac{1}{\sqrt{n!}} |x\rangle^{\otimes n}\in \mathfrak H(X),
$$
where we denote $|x\rangle^{\otimes 0}=|\Omega\rangle$ (so that $|e^{\bf 0}\rangle = |\Omega\rangle$). The exponential vectors span the Fock space $\mathfrak H(X)$, and satisfy
$\langle e^{x}|e^{y}\rangle=e^{\langle x|y\rangle}$ for all $x,y\in X$. We further define the unitary  \emph {Weyl operators} by their action on exponential vectors as
\begin{equation}
\label{eq:Weyl.operator.action}
W(x)| e^{y}\rangle = e^{-\frac 12\|x\|^2-\langle x|y\rangle} |e^{x+y}\rangle, \qquad x,y\in X.
\end{equation}
From this it follows that 
\begin{equation}\label{CCR2}
W(x)^*=W(-x), \quad W(x+y)=e^{i \sigma(x, y)}W(x)W(y), \qquad x,y\in X,
\end{equation}
which implies that $x\mapsto W(x)$ is a projective representation of the group of phase space translations on $\mathfrak H(X)$. 

\vspace{2mm}

While the Weyl operators implement phase space translations, we can also represent phase space rotations by means of \emph{second quantisation} operators. If $U$ is a unitary on $X$, its second quantisation
$\Gamma[U]$ is defined as a unitary operator on $\mathfrak{H}(X)$ with the following action on exponential vectors:
$$
\Gamma[U]| e^{x}\rangle = |e^{Ux}\rangle.
$$
In particular, the action of $\Gamma[U]$ restricted to the $n$-particle space is given by $U^{\otimes n}$.
This induces an automorphism $\alpha_U$ on $L^{\infty}(\mathfrak H(X))$ defined by
\begin{equation}
\label{eq:Gauge.automorphism}
A \in L^{\infty}(\mathfrak H(X))\mapsto \alpha_U(A) = \Gamma[U]A\Gamma[U]^*
\end{equation}
In particular,  
$\alpha_U(W(x)) = W(U x)$ for all $x\in X$, and  since $\Gamma[U]|\Omega\rangle=|\Omega\rangle$, we obtain
$$
\Gamma[U]W(x)|\Omega\rangle= W(Ux)|\Omega\rangle.
$$

\vspace{2mm}

For each $x\in X$, consider the 
unitary representation 
$\mathbb{R}\ni t\mapsto W(t x)$ of the abelian group $\mathbb{R}$; by Stone-von Neumann Theorem, the group has a selfadjoint  generator $Z(x)$, 
so that 
$W(t x) = \exp(-it Z(x))$. The operators $Z(x)$ are called \emph{quadratures}, and are the quantum analogues of the classical phase space position and momentum variables. However, the quadratures do not commute in general and instead satisfy the following \emph{canonical commutation relations} (CCR) which can be derived from \eqref{CCR2}
\begin{equation}
\label{eq:canonical.commutation.relations}
Z(x) Z(y) - Z(y)Z(x) = 2i \sigma(x,y)\mathbf 1.
\end{equation}

\vspace{2mm}

Let $X=X_1\oplus X_2$ be a decomposition of $X$ into two orthogonal closed subspaces and note that the map
\begin{eqnarray*}
|e^{x_1}\rangle \otimes |e^{x_2}\rangle &\mapsto&
   |e^{x_1\oplus x_2}\rangle  
\end{eqnarray*}
can be extended linearly to a unitary $V_{X_1,X_2}: \mathfrak H(X_1)\otimes \mathfrak H(X_2)\to\mathfrak H(X_1\oplus X_2)$. Bearing this in mind, it is convenient to make the identifications
\begin{align}
\label{eq:Fock.tensor.decomposition}
\mathfrak H(X_1\oplus X_2) &= \mathfrak H(X_1)\otimes \mathfrak H(X_2), & |e^{ x_1\oplus x_2}\rangle =|e^{x_1}\rangle \otimes |e^{ x_2}\rangle,
\end{align}
where for simplicity we omitted the unitary $V_{X_1,X_2}$. From this we derive the following relation for the Weyl operators 
$$
W(x_1\oplus x_2) = W(x_1)\otimes W(x_2), \quad x_1\in X_1,x_2\in X_2.
$$
In particular, if $\{e_j: j=1, \dots, k\}$ is an orthonormal basis in $X$ then the Fock space decomposes into a tensor product of ``one-mode" Fock spaces
$$
\mathfrak{H}(X) = \otimes_{j=1}^k
\mathfrak{H}(\mathbb{C}e_j)
$$
and for each $x = \sum_{j=1}^k x_k e_j$ we have  $W(x) = \otimes_{j=1}^kW (x_j e_j)$. The associated generators pairs $ Q_j:= Z(-ie_j),P_j:= Z(e_j)$ act on the $j$th terms of the tensor product and as identity on the others, and form a basis for quadratures, playing the role of ``position'' and ``momentum'' of the $k$ modes specified by the basis. In this representation the canonical variables satisfy the commutation relations $[Q_i,P_j] =2i\delta_{ij}\mathbf{1}$, as a consequence of \eqref{eq:canonical.commutation.relations}. The corresponding classical variables are then chosen to form the symplectic basis $\{e_{1}, ie_1, e_2,ie_2, \ldots, e_k, ie_k\}$ of the (real linear) phase space $X$, which is then identified with $\mathbb R^{2k}$ so that the natural basis $\{\mathbf e_1,\ldots, \mathbf e_{2k}\}$ corresponds to the above symplectic basis. Therefore, each $x=\sum_{i=1}^k x_k e_k\in X$ can be identified with the column vector ${\bf x}=\begin{pmatrix}{\rm Re}\, x_1,{\rm Im}\, x_1, \ldots, {\rm Re}\, x_k,{\rm Im}\, x_k\end{pmatrix}^\intercal$ and it follows that
\begin{align*}
\beta(x,y) &= {\bf x}^T{\bf y}, & \sigma(x,y) &= {\bf x}^T{\bf \Omega}{\bf y}, \quad{\rm where} \quad {\bf \Omega}=\bigoplus_{j=1}^k \left(\begin{smallmatrix}0&1\\-1& 0\end{smallmatrix}\right).
\end{align*}

\vspace{2mm}

We now proceed to introduce an important class of states of a CV system, called Gaussian states.
First, we define the \emph{Wigner function} of any CV state $\rho$ as the Fourier transform
\begin{equation}
\label{eq:wigner.fct}
w_\rho({\bf y}) =\frac{1}{(2\pi)^{2k}}\int_{\mathbb R^{2k}}
e^{-i{\bf y}^T{\bf \Omega x}} \,\tr[\rho W({\bf x})] \,d{\bf x}, \qquad {\bf y} \in X,
\end{equation}
where $d\bf x$ is the $2k$-dimensional Lebesgue measure. (Note that we have used the coordinates $x\mapsto {\bf x}$ given above). The Wigner function plays the role of a ``joint quasi-probability distribution'' of the canonical operators $Z(x)$. Indeed from 
\eqref{eq:wigner.fct} together with 
$W(x) = \exp(-i Z(x))$, one finds that for each $j=1,\ldots, k$, the one-dimensional marginal of $w_\rho(\cdot)$ along the direction $e_j={\bf e}_{2j-1}$ is equal to the probability density of $Q_j$, and the marginal along $ie_j={\bf e}_{2j}$ is $P_j$. However, the caveat is that even though $w_{\rho}(\cdot)$ always formally integrates to $1$, it may not be a positive function, and therefore does not generally provide a joint probability distribution for its marginals. This reflects the fact that non-commutative quantities typically do not have joint distributions. 


Now, a state with density matrix $\rho$ is called \emph{Gaussian state of mean ${\bf x}_0 
$ and 
covariance matrix ${\bf \Sigma}$} if its Wigner function $w_\rho ({\bf x})$ is the multivariate Gaussian probability density
\begin{equation}
\label{eq:Gaussian.state}
w_\rho ({\bf x}) =\frac{1}{\sqrt{(2\pi)^{2k} {\rm Det}({\bf \Sigma})}} 
    \exp\left(-\frac{1}{2}({\bf x}-{\bf x}_0)^T{\bf \Sigma}^{-1}({\bf x} - {\bf x}_0)\right)
\end{equation}
when written in the above coordinates. We denote such states $G({\bf x}_0, {\bf \Sigma})$. While the mean ${\bf x}_0$ can be arbitrary, the covariance matrix 
satisfies the constraint
${\rm Det}({\bf \Sigma})\geq 1$ for any state $\rho$, due to the CCR \eqref{eq:canonical.commutation.relations}. This is the well-known \emph{Heisenberg uncertainty principle} for the canonical coordinates; in this context it implies that there are classical Gaussian distributions which do not correspond to any quantum Gaussian state.

A special example of Gaussian states which  play an important role in our LAN result are the \emph{coherent states} 
$$
|{\rm Coh}(x_0)\rangle :=  W(x_0)|\Omega\rangle = e^{-\frac{1}{2}\| x_0\|^2} |e^{ x_0}\rangle.
$$
One can check that this state has Wigner function
$$
w_{\rho}({\bf y}) = \frac{1}{(2\pi)^{2k}}\int_{\mathbb R^{2k}} e^{-i{\bf y}^T{\bf \Omega x}-i{\bf x}^T{\bf \Omega}{\bf x}_0} e^{-\frac 12{\bf x}^T{\bf x}}d{\bf x} = \frac{1}{(2\pi)^k}e^{-\frac 12({\bf y}-{\bf x}_0)^T({\bf y}-{\bf x}_0)},
$$
and hence 
$|{\rm Coh}(x_0)\rangle\langle {\rm Coh}(x_0)| = G({\bf x}_0, I_{2k})$.

\subsection{Introduction to quantum estimation and local asymptotic normality} \label{sec:qe}

In this section we give a brief introduction to quantum statistical inference, focused on the estimation of finite dimensional parameters and the theory of quantum local asymptotic normality.

\subsubsection{Quantum Cram\'{e}r-Rao bound} A \emph{quantum statistical model} consists of a family of states $\rho_\theta$ on a Hilbert space $\mathcal{H}$, which is indexed by a parameter $\theta$ that belongs to a set $\Theta$. In this paper we focus on parameter estimation and $\Theta$ is taken to be an open set of $\mathbb{R}^p$ while $\rho_\theta$ is assumed to depend smoothly on $\theta$. The statistical task is to estimate the unknown parameter $\theta$ by performing a measurement on the system and using the measurement outcome to compute an estimator.


Let $\mathcal{M}_*:L^1(\mathcal{H})\to L^1(\Omega, \Sigma ,\mu)$ be a measurement; its outcome $X\in \Omega$ is a sample from the distribution $\mathbb{P}^{\mathcal{M}}_\theta$ whose density with respect to $\mu$ is given by 
$p^{\mathcal{M}}_\theta: = \mathcal{M}_*(\rho_\theta)$. Therefore, according to the ``classical'' Cram\'{e}r-Rao bound, the covariance of any \emph{unbiased} estimator 
$\hat{\theta}  = \hat{\theta} (X)$ satisfies
$$
{\rm Cov}_{{\theta}}(\hat{{\theta}}):=\mathbb{E}_{ \theta}[(\hat{{\theta}}-{\theta})(\hat{{\theta}}-{\theta})^T]\geq I_{\cal M}({\theta})^{-1}
$$
where $I_{\cal M}({\theta})$ is the Fisher information matrix of the classical statistical model $\{ \mathbb{P}^{\mathcal{M}}_\theta~:~ \theta\in \Theta\subseteq \mathbb{R}^p\}$. The \emph{quantum Cram\'{e}r-Rao bound} (QCRB) \cite{Ho73,Be76,He76, Ho82,QCR1} is a fundamental result in quantum estimation which gives a lower bound for the precision of \emph{any} such measurement. 
\begin{theorem}[{\bf Quantum Cram\'{e}r-Rao bound}]
Let $\{ \rho_\theta ~:~ \theta\in \Theta\subseteq \mathbb{R}^p\}$ be a quantum statistical model on $\mathcal{H}$. Let $F(\theta)$ be the $p\times p$ quantum Fisher information (QFI) matrix whose elements are given by
$$
F(\theta)_{ij} = 
\tr (\rho_{\theta} 
\mathcal{L}^i_{\theta} \circ \mathcal{L}^j_{\theta}), \qquad i,j=1,\dots ,p
$$ 
where $\mathcal{L}^j_{\theta}$ are operators called symmetric logarithmic derivatives (SLD) satisfying 
$$
\partial_i\rho_{\theta}= \mathcal{L}^j_{\theta}\circ \rho_{\theta}
$$ 
and $\circ$ denotes the symmetric product $A\circ B = (AB+BA)/2$. Then for any measurement $\mathcal{M}$ the matrix inequality holds
\begin{equation}
\label{eq:cF<qF}
I_{\mathcal{M}}(\theta) \leq F(\theta).
\end{equation}
In particular, for any unbiased estimator $\hat{\theta} = \hat{\theta}(X)$ of $\theta$ the quantum Cram\'{e}r-Rao bound holds:
\begin{equation}\label{eq:multidimQCRB}
{\rm Cov}_{{\theta}}(\hat{{\theta}})
\geq F(\theta)^{-1}.
\end{equation}

\end{theorem}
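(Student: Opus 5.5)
The plan is to reduce the quantum bound to a statement about classical Fisher informations and then apply the classical Cramér--Rao inequality. Fix a measurement $\mathcal{M}$ with POVM $M(\cdot)$ and outcome density $p_\theta = \mathcal{M}_*(\rho_\theta)$ with respect to $\mu$. Since the model is smooth, in the finite-dimensional or regular case one can differentiate under the integral and obtain
$$
\partial_i p_\theta = \mathcal{M}_*(\partial_i\rho_\theta) = \mathcal{M}_*(\mathcal{L}^i_\theta\circ\rho_\theta).
$$
It then suffices to prove $u^T I_{\mathcal{M}}(\theta) u \le u^T F(\theta) u$ for an arbitrary real vector $u$. Setting $L := \sum_i u_i \mathcal{L}^i_\theta$, which is selfadjoint, the claim reduces to the scalar inequality
$$
\int \frac{\big(\mathcal{M}_*(L\circ\rho_\theta)\big)^2}{p_\theta}\, d\mu \;\le\; \tr(\rho_\theta L^2).
$$

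The key step is a pointwise Cauchy--Schwarz estimate. First I would treat the case of a discrete POVM $\{M_x\}$, and then extend by approximating a general POVM with finite coarse-grainings; the classical Fisher information is monotone under coarse-graining and is recovered in the limit. For each outcome $x$ one has
$$
\mathcal{M}_*(L\circ\rho_\theta)(x) = {\rm Re}\,\tr(\rho_\theta L M_x) = {\rm Re}\,\tr\big((M_x^{1/2}\rho_\theta^{1/2})^*\, M_x^{1/2} L\rho_\theta^{1/2}\big).
$$
Applying Cauchy--Schwarz in the Hilbert--Schmidt inner product gives
$$
\big(\mathcal{M}_*(L\circ\rho_\theta)(x)\big)^2 \le \tr(\rho_\theta M_x)\,\tr(\rho_\theta^{1/2} L M_x L \rho_\theta^{1/2}).
$$
Dividing by $p_\theta(x) = \tr(\rho_\theta M_x)$ and summing over $x$, the normalisation $\sum_x M_x = \mathbf{1}$ yields $\tr(\rho_\theta L^2)$. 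This proves \eqref{eq:cF<qF}. Outcomes with $p_\theta(x) = 0$ contribute zero, because the same inequality forces the numerator to vanish there.

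The bound \eqref{eq:multidimQCRB} then follows by combining \eqref{eq:cF<qF} with the classical Cramér--Rao bound ${\rm Cov}_\theta(\hat\theta) \ge I_{\mathcal{M}}(\theta)^{-1}$ for the classical model $\{\mathbb{P}^{\mathcal{M}}_\theta\}$. Operator monotonicity of inversion on positive definite matrices gives $I_{\mathcal{M}}^{-1} \ge F^{-1}$. If $I_{\mathcal{M}}$ is singular, no unbiased estimator with finite covariance exists in the degenerate directions, and the bound holds trivially there.

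The main obstacle I expect is technical rather than conceptual. In infinite dimensions one must justify existence of the SLDs and interchange of differentiation with $\mathcal{M}_*$. One must also pass from discrete to general POVMs. For the latter I would use the Radon--Nikodym densities $M(dx) = m(x)\,\mu(dx)$ where available, or otherwise the finite-partition approximation combined with lower semicontinuity of the classical Fisher information.
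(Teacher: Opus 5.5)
The paper does not prove this theorem. It is quoted as background with references to the literature, and the authors restrict to bounded SLDs and defer the general case to Holevo. Your argument is the standard Braunstein--Caves proof and it is correct. The reduction to $u^T I_{\mathcal M}(\theta)u\le \tr(\rho_\theta L^2)$ with $L=\sum_i u_i\mathcal{L}^i_\theta$ checks out, as do the identity $\mathcal{M}_*(L\circ\rho_\theta)(x)={\rm Re}\,\tr(\rho_\theta L M_x)$, the Hilbert--Schmidt Cauchy--Schwarz step, the use of $\sum_x M_x=\mathbf 1$, and the treatment of null outcomes. You also rightly read the SLD equation as $\partial_i\rho_\theta=\mathcal{L}^i_\theta\circ\rho_\theta$, fixing the index typo in the statement.

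You can skip the discretisation step entirely, because the paper defines a measurement as a channel $\mathcal{M}_*:L^1(\mathcal H)\to L^1(\Omega,\Sigma,\mu)$ with dual $\mathcal M$. First use the variational formula $u^TI_{\mathcal M}u=\sup_f\bigl(\int f\,\partial_u p_\theta\,d\mu\bigr)^2/\int f^2p_\theta\,d\mu$ over bounded real $f$. By duality, $\int f\,\partial_u p_\theta\,d\mu={\rm Re}\,\tr(\rho_\theta L\,\mathcal M(f))$. Cauchy--Schwarz bounds this by $\tr(\rho_\theta L^2)^{1/2}\tr(\rho_\theta\mathcal M(f)^2)^{1/2}$. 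Kadison's inequality $\mathcal M(f)^2\le\mathcal M(f^2)$ then gives $\tr(\rho_\theta\mathcal M(f)^2)\le\int f^2p_\theta\,d\mu$. This handles arbitrary outcome spaces, and infinite dimensions with bounded $L$, in one step. Taking $f=\mathbf 1_E$ with $E\subseteq\{p_\theta=0\}$ also shows $\partial_u p_\theta=0$ there.

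If you keep your route, the property you need is not lower semicontinuity. It is that the score of a coarse-grained model is the conditional expectation of the score. Then $I_{\mathcal P_n}\uparrow I_{\mathcal M}$ by martingale convergence along partitions generating $\Sigma$.

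The singular case is also cleaner than you state. Differentiating unbiasedness gives $\mathbb E_\theta[\hat\theta\,s^T]=\mathbf 1$, where $s$ is the score. If $I_{\mathcal M}(\theta)u=0$ for some $u\neq0$, then $u^Ts=0$ almost surely, which contradicts this identity. So the existence of a regular unbiased estimator forces $I_{\mathcal M}(\theta)$, and hence $F(\theta)\ge I_{\mathcal M}(\theta)$, to be invertible. The bound then follows from operator monotonicity of inversion, with no "degenerate directions" left to handle.
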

For the sake of simplicity, we only presented the quantum Cram\'er-Rao bound in the case when the symmetric logarithmic derivatives are bounded operators. However, using the notion of square integrable operators with respect to a state and making the suitable assumptions, the QCRB theory carries to the unbounded case as well \cite{Ho82}.

Since measurements affect the state of the system, a single "quantum sample" can provide at most $F(\theta)$ amount of Fisher information. In practical situations this is often insufficient, and the experimenter needs to perform repeated measurements on an ensemble of identically prepared and independent systems in state $\rho_\theta$. The total QFI of the corresponding i.i.d. model $\{\rho_\theta^{\otimes n} ~:~ \theta\in \Theta\subset \mathbb{R}^p\}$ is $nF(\theta)$ and the QCRB \eqref{eq:multidimQCRB} shows the usual $n^{-1}$ rate for i.i.d. models. For one-dimensional parameters, the bound \eqref{eq:cF<qF} is formally saturated by measuring the SLD operator 
$\mathcal{L}_\theta$. In this case, the QCRB \eqref{eq:multidimQCRB} can be achieved \emph{asymptotically} with the  sample size $n$ by an adaptive, two stage measurement procedure \cite{GillMassar}. A preliminary non-optimal estimator 
$\widetilde{\theta}_n$ is computed from results of 
identical measurements on a small but growing subsample $\widetilde{n}\ll n$; on the remaining samples the SLD operator 
$\mathcal{L}_{\widetilde{\theta}_n}$ is measured and the final estimator $\hat{\theta}_n$ is computed e.g. by maximum likelihood. Under appropriate conditions on  
$\widetilde{\theta}_n$ one obtains that $\hat{\theta}_{n}$ is asymptotically normal and
$$
\lim_{n\to \infty } n \mathbb{E}(\theta -\hat{\theta}_n)^2 
 = F_\theta^{-1}.
$$ 

In contrast, for multidimensional parameter models, the QCRB is not achievable in general, even in the asymptotic limit; Intuitively, this has to do with the fact that the SLDs $\mathcal{L}^j_\theta$ may not commute with each other, so the different components of $\theta$ cannot be estimated optimally, simultaneously. A more rigorous analysis \cite{YCH19,RafalReview} based on quantum local asymptotic normality theory shows that the QCRB is achievable asymptotically if and only if the QFI matrix is real, i.e. 
$$
\tr(\rho_\theta [\mathcal{L}^i_\theta, \mathcal{L}^j_\theta]) =0, \quad i,j=1, \dots , p, \quad {\rm where}\quad [X,Y] = XY-YX. 
$$ 
When the QCRB is not achievable, one can nevertheless devise  optimal estimators which 
minimise the quadratic risk $R^G(\hat{\theta}_n):= \tr(G {\rm Cov}(\hat{\theta}_n))$ for a given real positive weight matrix $G$. In this case the Holevo bound \cite{Holevo2011} can be achieved by using the machinery of quantum local asymptotic normality, as we will show in section \ref{sec:QLAN} for the case of a simple two parameter example. The  general idea of this method is to map i.i.d. models into quantum Gaussian shifts models for which the optimal estimation problem is easier to solve.

\subsubsection{Quantum Gaussian shift models} \label{sec:QGSM}

In this section we introduce the notion of quantum Gaussian shift model and discuss the associated parameter estimation problem. In addition, we introduce a model consisting of mixtures of Gaussian shifts which will play an important role in the paper. For this we use the CV formalism introduced in section \ref{cvsystems}.  

\vspace{2mm}

Consider a CV system with Fock space $\mathfrak{H}(X)$, where $X$ is a finite dimensional Hilbert space of modes, as detailed in section \ref{cvsystems}. The CV system is characterised by canonical coordinates operators $Z( x)$ satisfying the commutation relations \eqref{eq:canonical.commutation.relations}, or equivalently in terms of the projective unitary representation of the translations group on $X$, given by the Weyl operators 
$W(x) = \exp(-iZ( x))$. 
In general, a \emph{quantum Gaussian shift model} consists of a family $\{ G(A\theta, V) ~:~ \theta \in \Theta \equiv \mathbb{R}^p\}$ where $G(A\theta, V)$ is a Gaussian state on $\mathfrak{H}(X)$ with covariance matrix $V$ and mean $A\theta$ with $A:\mathbb{R}^p\to X$ a real linear map and $V$ a fixed a quantum covariance matrix, cf equation \eqref{eq:Gaussian.state}.

For the purpose of this paper we restrict our attention to a specific Gaussian shift model
\begin{equation}
\label{eq:gaussian.shift.general}
{\bf G}=\{ |{\rm Coh}(x)\rangle :=W(x)|\Omega\rangle ~:~  x
\in X\}.
\end{equation}
where $|{\rm Coh}(x)\rangle$ is the coherent state whose unknown displacement parameter $x
\in X$ needs to be estimated. Since the  Wigner function of $|{\rm Coh}(x)\rangle$ is the probability density of the normal with mean 
$x$ and fixed covariance 
$I_{2k}$, the model is very similar to a classical Gaussian shift but cannot be reduced to the latter since the different components  are non-commuting selfadjoint operators and therefore cannot be measured simultaneously. 

\vspace{2mm}

To better understand the model consider an ONB $\{|e_{j}\rangle, j=1,\dots ,k\}$ in $X$ and write 
$|x \rangle = \sum_j x_j |e_j\rangle$ with Fourier coefficients $x_j\in \mathbb{C}$; for simplicity we identify $x$ with $(x_1,\dots , x_k)\in \mathbb{C}^k$. The associated basis of canonical variables
$ Q_j:= Z(-ie_j),P_j:= Z(e_j)$ have normal distributions with (real) means given by 
${\rm Re}(x_j)$ and respectively ${\rm Im}(x_j)$  and variance $1$, in the state $|{\rm Coh}(x)\rangle$. Each individual parameter can be estimated optimally by measuring the corresponding canonical variable. However, since these do not commute with each other, they cannot be measured simultaneously and one needs to make a trade-off between estimating means of incompatible observables. This is reflected in the fact that the QCRB \eqref{eq:multidimQCRB} for this model is not achievable 
(even in an asymptotic sense). The alternative is to focus on a \emph{specific} estimation problem and find measurements which minimise the risk for a given loss function, usually taken to be quadratic in the parameters. The caveat is that in this case the solution depends on the loss function, and no measurement is optimal for all such decision problems. One natural choice is the mean square error
$$
\mathbb{E}(\| \hat{x} - x \|^2) = 
\sum_{j=1}^k \mathbb{E}(| \hat{x}_j -x_j|^2),
$$
where $\hat{x}$ is an estimator constructed from the outcome of a specific measurement. Since all $Q_j$ commute with each other and similarly for all $P_j$, the simplest estimation strategy is 
to simultaneously measure these groups of observables separately. Since this requires two quantum systems,  we first map the coherent state into two identical copies with reduced amplitude through the isometry
$$
J: |{\rm Coh}(x)\rangle \mapsto 
\left|{\rm Coh}\left(\frac{x}{\sqrt{2}}\right)\right\rangle\otimes 
\left|{\rm Coh}\left(\frac{x}{\sqrt{2}}\right)\right\rangle
$$
and then simultaneously measure all coordinates $Q_1,\dots , Q_k$ one one of the copies, and all $P_1,\dots P_k$ on the other. The independent outcomes have distributions $N({\rm Re}( x)/\sqrt{2},I_{k})$ and $N({\rm Im}(x)/\sqrt{2},I_{k})$ and by rescaling we obtain an unbiased estimator 
$\hat{x}$ with distribution  
$N(x, 2 I_{2k})$ and whose risk is 
$\mathbb{E}(\|\hat{ x} - x\|^2) = 4k$. Note that this is twice the risk of the corresponding classical Gaussian shift model $N(x, I_{2k})$, a consequence of the intrinsic quantum nature of the multi-parameter coherent state model.  

\vspace{2mm}

For the general Gaussian shift model \eqref{eq:gaussian.shift.general}, a similar optimisation problem can be solved for arbitrary quadratic loss functions \cite{GG13}. The solution is to measure certain commuting canonical variables of a doubled up CV system consisting of the original one and an identical CV system prepared in a specific mean zero, pure Gaussian state.

\vspace{2mm}

We now introduce a second model which will feature in our main results later on. With the same notations as above, let $U$ be a unitary on $X$ such that $U^p=\id$ for some integer $p$ and $U^m \neq \id$ for every $m=1,\dots, p-1$, and consider the \emph{mixture model} 
\begin{equation}
\label{eq:mixture.model}
{\bf GM}=\left\{ \rho( x):= \frac{1}{p}\sum_{k=0}^{p-1} \alpha_{U*}^k(W( x)|\Omega\rangle\langle \Omega|W(x)^*)~:~ x\in X\right\},
\end{equation}
where $\alpha_{U*}(\cdot) = \Gamma(U^*)(\cdot)\Gamma(U) $ is the second quantised automorphism, cf. equation \eqref{eq:Gauge.automorphism}. 
 Note that each component of the mixture is a coherent state 
 $|{\rm Coh}(U^{-k}x)\rangle$:
 \begin{eqnarray*}
 \alpha_{U*}^k(W(x)|\Omega\rangle\langle \Omega|W(x)^*) &=&
 \Gamma(U^{-k}) W(x)|\Omega\rangle\langle \Omega|W(x)^* \Gamma(U^k) 
 = W(U^{-k}x)|\Omega\rangle\langle\Omega| W(U^{-k}x)^*
\\
&=&|{\rm Coh}(U^{-k}x)\rangle\langle {\rm Coh}(U^{-k}x)|\end{eqnarray*}
Therefore, the model \eqref{eq:mixture.model} can be seen as a mixture of Gaussian shift models which are related to each other by a group of unitary transformations $\{\Gamma(U^k)\}_{k=0,\dots , p-1}$.
Let
\begin{equation}
U=\sum_{m=0}^{p-1} \gamma^{m}R_m, \quad X=\bigoplus_{m=0}^{p-1} {\cal V}_m \text{ with } {\cal V}_m=R_mX,
\end{equation}
be the spectral decomposition of $U$, 
where $\gamma=e^{i2\pi/p}$. Then, using \eqref{eq:Fock.tensor.decomposition} the Fock space $\mathfrak{H}(X)$ can be decomposed as tensor product 
$\bigotimes_{m=0}^{p-1} \mathfrak{H}(\mathcal{V}_m)$ with 
$$
W(x) = \bigotimes_{m=0}^{p-1} W(R_m x), \qquad
\Gamma(U) = \bigotimes_{m=0}^{p-1} \Gamma (\gamma^{m} \mathbf{1}_m), 
\qquad 
|\Omega\rangle = \bigotimes_{m=0}^{p-1} |\Omega_m\rangle
$$ 
where $|\Omega_m\rangle$ is the vacuum state in 
$\mathfrak{H}(\mathcal{V}_m)$.
The mixture model can then be written as 
\begin{equation} \label{eq:2pexample}
\rho(x)=
\frac{1}{p}\sum_{k=0}^{p-1}\bigotimes_{m=0}^{p-1}W(\gamma^{mk}R_m  x)\ket{\Omega_m}\bra{\Omega_m}W(\gamma^{mk}R_m x)^*.
\end{equation}
The state \eqref{eq:mixture.model} is invariant under the action of the group $\{\Gamma(U^k)\}_{k=0,\dots p-1}$ and the parameter $x$ is not fully identifiable, i.e. some information is lost due to mixing. The identifiable parameters are characterised in the following lemma (for the proof, we refer to Appendix \ref{app:equivmixedGauss}).  
\begin{lemma} \label{lem:idgm}
The states $\rho( x)$ and $\rho(y)$ are identical if and only if there exists a $k=0,\dots, p-1$ such that $y=U^k  x$. Therefore, the space of identifiable parameters is the quotient of $X$ with respect to the action of the group $\{U^k\}_{k=0,\dots p-1}$.
\end{lemma}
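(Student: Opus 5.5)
The "if" direction is immediate. If $y=U^k x$, then the family $\{U^{-j}y\}_{j=0}^{p-1}=\{U^{k-j}x\}_{j}$ is a cyclic relabelling of $\{U^{-j}x\}_j$, because $U^p=\id$. The mixture $\rho(y)=\frac1p\sum_j |{\rm Coh}(U^{-j}y)\rangle\langle{\rm Coh}(U^{-j}y)|$ therefore coincides term by term with $\rho(x)$ after reindexing.

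For the "only if" direction I would compare characteristic functions. For a coherent state one has
$$\tr\big(|{\rm Coh}(z)\rangle\langle {\rm Coh}(z)|\,W(w)\big)=\langle\Omega|W(z)^*W(w)W(z)|\Omega\rangle = e^{-\frac12\|w\|^2}e^{-2i\sigma(z,w)}.$$
The phase comes from the commutation relations \eqref{CCR2}; the only thing that matters is that it is $e^{i\ell_z(w)}$ with $\ell_z(w)=c\,{\rm Im}\langle z|w\rangle$ for a fixed nonzero constant $c$. Hence
$$\tr(\rho(x)W(w)) = e^{-\frac12\|w\|^2}\,\frac1p\sum_{k=0}^{p-1}e^{i c\,{\rm Im}\langle U^{-k}x|w\rangle}.$$
Since the Weyl operators separate states (equivalently, the Wigner function \eqref{eq:wigner.fct} determines $\rho$), $\rho(x)=\rho(y)$ implies that the two trigonometric sums agree for all $w\in X$:
$$\sum_{k=0}^{p-1}e^{i c\,{\rm Im}\langle U^{-k}x|w\rangle}=\sum_{k=0}^{p-1}e^{i c\,{\rm Im}\langle U^{-k}y|w\rangle}.$$
Each side is the Fourier transform of the uniform probability measure $\mu_x=\frac1p\sum_k\delta_{U^{-k}x}$ on the real space $X\cong\RR^{2\dim X}$, because $w\mapsto {\rm Im}\langle z|w\rangle$ ranges over all real linear functionals as $z$ varies. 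By uniqueness of Fourier transforms of finite measures, $\mu_x=\mu_y$. In particular $y$ lies in the support of $\mu_x$, so $y=U^{-k}x=U^{p-k}x$ for some $k$.

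The final assertion follows directly: $x\mapsto\rho(x)$ is constant exactly on the orbits of the finite cyclic group $\{U^k\}$, so it factors through an injective map on $X/\{U^k\}$. The one step needing care is the linear independence or uniqueness argument. Using measure uniqueness sidesteps the issue of coinciding points $U^{-k}x$, which arises when $x$ has a nontrivial stabiliser, for example when $x\in\mathcal V_0$. If one prefers to avoid measure theory, an alternative is to note that distinct characters $w\mapsto e^{i\ell(w)}$ on $\RR^{2\dim X}$ are linearly independent and to match multiplicities; the measure formulation does the same thing more cleanly.
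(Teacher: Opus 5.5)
Your proof is correct, but it takes a different route from the paper's.

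\textbf{The paper's route.} It works inside the Fock space, using the block decomposition of Proposition \ref{prop.mixture.model},
\[
\rho(x)=|{\rm Coh}(x_0)\rangle\langle{\rm Coh}(x_0)|\otimes\sum_m|\zeta_m(x_0^\perp)\rangle\langle\zeta_m(x_0^\perp)|,
\]
where the $\zeta_m$ lie in the mutually orthogonal ranges of the $Q_m$. From this it deduces that $\rho(x)=\rho(y)$ forces $x_0=y_0$ and $\zeta_m(y_0^\perp)=e^{i\theta_m}\zeta_m(x_0^\perp)$. It then matches $n$-particle components. The vacuum term gives $\theta_0\equiv0$ and $\|y_0^\perp\|=\|x_0^\perp\|$. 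The one-particle terms give $R_my=e^{i\theta_m}R_mx$. The components $(R_1x)^{\otimes m}$ force $\theta_m\equiv m\theta_1$ and $p\theta_1\equiv 0$, so the phases form a character of $\mathbb Z_p$, i.e.\ $y=U^kx$.

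\textbf{Your route.} You pass to characteristic functions and reduce everything to the classical fact that a finite measure, here $\frac1p\sum_k\delta_{U^{-k}x}$, is determined by its Fourier transform.

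\textbf{What each buys.} Your argument is shorter and uses nothing about $U$ beyond $U^p=\id$; it works verbatim for any finite group of unitaries on $X$. It also handles degenerate $x$ automatically. By contrast, the paper's step $\theta_m\equiv m\theta_1$ tacitly assumes $R_1x\neq 0$. When $R_1x=0$ one must instead argue that the phases define a character on the subgroup of $\mathbb Z_p$ generated by $\{m: R_mx\neq 0\}$, and then extend it to all of $\mathbb Z_p$. The paper's route, in turn, stays with the orthogonal pure components $\zeta_m$ that are reused in the proof of Theorem \ref{thm:nLAN}. It also makes explicit that the lost information sits in the relative phases of these components.

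\textbf{Two cosmetic points.}
\begin{itemize}
\item With the paper's conventions the coherent-state characteristic function is $e^{-\|w\|^2/2}e^{2i\sigma(z,w)}$. You have the opposite sign, which, as you note, is immaterial.
\item The Fourier identification is cleanest via $\sigma(z,w)=\beta(z,-iw)$. Then your trigonometric sum equals $p\,\hat\mu_x(-ciw)$, and $w\mapsto -ciw$ is a bijection of $X$.
\end{itemize}
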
 
Another consequence of the invariance under the group action is that the spectral decomposition of $\rho(x)$ is block diagonal with respect to the eigenprojectors of $\Gamma(U)$. 
This structure is described in Proposition \ref{prop.mixture.model}  below whose proof can be found in Appendix \ref{app:equivmixedGauss}.
Before formulating the Proposition we introduce the following notation. Let $\mathcal{V}_0^{\perp} := \bigoplus_{m=1}^{p-1} \mathcal{V}_m$, such that $X={\cal V}_0 \oplus {\cal V}_0^\perp$ and the Fock space and the vacuum state factorise as
$$
{\frak H}(X)={\frak H}({\cal V}_0) \otimes {\frak H}({\cal V}_0^\perp), \qquad \ket{\Omega} =\ket{\Omega_0} \otimes \bigotimes_{m=1}^{p-1} |\Omega_m\rangle =\ket{\Omega_0} \otimes \ket{\Omega_0^\perp}.
$$


\begin{proposition}
\label{prop.mixture.model}
The operator $\Gamma(U)$ has spectral decomposition
$$
\Gamma(U)= \sum_{m=0}^{p-1} \gamma^m Q_m
$$
where $Q_m$ is the projection onto the space $\mathfrak{H}(\mathcal{V}_0)\otimes{\frak V}_m $, where  
$$
{\frak V}_m:= 
\bigoplus_{n\geq 0}\, {\rm Symm} \left[\bigoplus_{\substack{m_1,\dots, m_n =1,\dots, p-1 \\m_1 \oplus \cdots \oplus m_n=m}}{\cal V}_{m_1} \otimes \cdots \otimes {\cal V}_{m_n}\right] \subset \mathfrak{H}(\mathcal{V}_0^\perp),  \qquad m=0,\dots p-1,
$$
where ${\rm Symm}$ denotes the symmetric subspace, and $\oplus$ denotes addition modulo $p$.

\vspace{2mm}

The state $\rho(x)$ has the following decomposition
\begin{equation} \label{eq:alternative}
\rho( x)=
W(x_0)|\Omega_0\rangle\langle \Omega_0|W( x_0)^*\otimes \sum_{m=0}^{p-1}\ket{\zeta_m( x_0^\perp)}\bra{\zeta_m(x_0^\perp)}
\end{equation}
where $\ket{\zeta_m(x_0^\perp)}$ are orthogonal unnormalised vectors
\[
\begin{split}&\ket{\zeta_m(x_0^\perp)}=Q_m W( x_0^\perp)|\Omega_0^\perp
\rangle\\
    &=e^{-\|x_0^\perp\|^2/2} \left ( \delta_0(m) \ket{\Omega_0^\perp} + \sum_{l \geq 1}\frac{1}{\sqrt{l!}}\sum_{m_1\oplus\dots\oplus m_l=m} R_{m_1}|x_0^\perp \rangle \otimes \cdots \otimes R_{m_l}|x_0^\perp \rangle\right).\end{split}\]
\end{proposition}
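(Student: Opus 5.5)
The plan has two parts: first compute the spectral decomposition of $\Gamma(U)$ from its action on tensor powers, then exploit the fact that averaging over the cyclic group is the same as summing the isotypic projections.

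\textbf{Spectral decomposition of $\Gamma(U)$.} Since $U=\sum_m \gamma^m R_m$ and $X=\bigoplus_m \mathcal{V}_m$, the factorisation \eqref{eq:Fock.tensor.decomposition} gives $\Gamma(U)=\mathbf{1}_{\mathfrak{H}(\mathcal{V}_0)}\otimes\Gamma(U|_{\mathcal{V}_0^\perp})$, because $U$ acts trivially on $\mathcal{V}_0$. On the $n$-particle space of $\mathcal{V}_0^\perp$, $\Gamma(U)$ acts as $U^{\otimes n}$. The tensor space $(\mathcal{V}_0^\perp)^{\otimes n}$ decomposes as $\bigoplus_{m_1,\dots,m_n\ge1}\mathcal{V}_{m_1}\otimes\cdots\otimes\mathcal{V}_{m_n}$, and $U^{\otimes n}$ acts on each summand as the scalar $\gamma^{m_1+\cdots+m_n}$. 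Grouping the summands by $m_1\oplus\cdots\oplus m_n=m$ gives blocks that are permutation invariant, so symmetrisation commutes with this grouping. It follows that ${\rm Symm}$ of each block is exactly the $\gamma^m$-eigenspace inside the symmetric $n$-particle space. Summing over $n$ (the $n=0$ term, $\mathbb{C}|\Omega_0^\perp\rangle$, lies in $\mathfrak{V}_0$) shows that $\mathfrak{H}(\mathcal{V}_0^\perp)=\bigoplus_m\mathfrak{V}_m$, with $\Gamma(U)$ equal to $\gamma^m$ on $\mathfrak{V}_m$. This gives $Q_m$ as stated.

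\textbf{The state.} Write $x=x_0+x_0^\perp$ with $x_0=R_0x$. Then $W(x)|\Omega\rangle=W(x_0)|\Omega_0\rangle\otimes W(x_0^\perp)|\Omega_0^\perp\rangle$, and by the computation preceding the Proposition,
\[
\rho(x)=\frac1p\sum_{k=0}^{p-1}\Gamma(U^{-k})P\,\Gamma(U^{k}),\qquad P=|{\rm Coh}(x)\rangle\langle{\rm Coh}(x)| .
\]
Because $\Gamma(U^{-k})$ acts trivially on the $\mathcal{V}_0$ factor, $\rho(x)$ factorises as the coherent state on $\mathfrak{H}(\mathcal{V}_0)$ tensored with $\frac1p\sum_k\Gamma(U^{-k})|\psi\rangle\langle\psi|\Gamma(U^k)$, where $|\psi\rangle=W(x_0^\perp)|\Omega_0^\perp\rangle$. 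Set $|\psi\rangle=\sum_m Q_m|\psi\rangle=\sum_m|\zeta_m\rangle$. Then
\[
\frac1p\sum_k\Gamma(U^{-k})|\psi\rangle\langle\psi|\Gamma(U^k)=\sum_{m,m'}\Big(\frac1p\sum_k\gamma^{-k(m-m')}\Big)|\zeta_m\rangle\langle\zeta_{m'}| .
\]
The bracket equals $\delta_{mm'}$ by orthogonality of characters of $\mathbb{Z}_p$, which yields \eqref{eq:alternative}. The vectors $|\zeta_m\rangle$ are mutually orthogonal since the $Q_m$ are orthogonal projections.

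\textbf{Explicit formula for $\zeta_m$.} Use $W(x_0^\perp)|\Omega_0^\perp\rangle=e^{-\|x_0^\perp\|^2/2}|e^{x_0^\perp}\rangle$ and expand $|x_0^\perp\rangle^{\otimes l}=\big(\sum_{j\ge1}R_j|x_0^\perp\rangle\big)^{\otimes l}$ multilinearly into the sum over $(m_1,\dots,m_l)$. Then project with $Q_m$, which keeps exactly the terms with $m_1\oplus\cdots\oplus m_l=m$; for $l=0$ only $m=0$ survives. Each grouped sum is already symmetric, so no extra symmetrisation is needed.

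\textbf{Main obstacle.} The only delicate point is the bookkeeping in the first part: checking that the symmetrisation of the $m$-grouped block coincides with the $\gamma^m$-eigenspace of the symmetric power, and that these eigenspaces exhaust the symmetric power. Both follow because $U^{\otimes n}$ commutes with permutations and each grouped block is permutation invariant. Everything else is routine.
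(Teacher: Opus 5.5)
Your proposal is correct and follows essentially the same route as the paper: you identify the $\gamma^m$-eigenspaces of $\Gamma(U)$ by grouping the blocks $\mathcal{V}_{m_1}\otimes\cdots\otimes\mathcal{V}_{m_n}$ according to $m_1\oplus\cdots\oplus m_n$. You then kill the cross terms with the character orthogonality $\frac1p\sum_k\gamma^{k(m'-m)}=\delta_{mm'}$, and expand $W(x_0^\perp)|\Omega_0^\perp\rangle=e^{-\|x_0^\perp\|^2/2}|e^{x_0^\perp}\rangle$ to obtain the explicit form of $\zeta_m$. Your remark that the grouped blocks are permutation invariant, so that symmetrisation commutes with the spectral projections, makes explicit a point the paper leaves implicit.
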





The complete analysis of the estimation theory for the mixture model $\mathbf{GM} $ 
is non-trivial and goes beyond the scope of this work. However, we will briefly comment on the case of period $p=2$, which already points out some interesting features of such statistical models and their difference compared to Gaussian shift models. 
In this case $\rho( x)$ is the tensor product 
\begin{align}&
\label{eq:gaussian.mixed.p=2}
\rho( x) = 
W(R_0 x)|\Omega\rangle\langle \Omega|W(R_0 x)^*\otimes \frac{1}{2}\sum_{k=0}^{1}W((-1)^kR_1 x)|\Omega\rangle\langle \Omega|W((-1)^kR_1 x)^* 
\end{align}
and the identifiable parameters are given by the pair $(x_0,[x_1])$, where $x_0 \in {\cal V}_0$, ${\cal V}_1$ and $[x_1]=\{x_1,-x_1\}$. The left-hand side tensor in \eqref{eq:gaussian.mixed.p=2} describes a standard quantum Gaussian shift model with parameter $x_0$ which can be estimated by performing the measurement described at the beginning of this section. The right-hand side is a mixture of two coherent states with opposite amplitudes. Simplifying further, let us assume that $\mathcal{V}_1$ is one-dimensional, i.e. $x_1 = e^{i\phi} |x_1|\in \mathbb{C}$. Applying Proposition \ref{prop.mixture.model} we find that the state can be written as the mixture of two orthogonal pure states supported by the subspaces of the Fock space with even and respective odd number of excitations.
$$
\rho(x_1)= \frac{1}{2}\left[ W(x_1)|\Omega_0^\perp\rangle\langle \Omega_0^\perp|W(x_1)^* + W(-x_1)|\Omega_0^\perp\rangle\langle \Omega_0^\perp|W(-x_1)^* \right] =
|\zeta_{0}\rangle\langle \zeta_{0}| + |\zeta_{1}\rangle\langle \zeta_{1}| 
$$
where 
$$
|\zeta_{0}\rangle = e^{-|x_1|^2/2}
\sum_{k=0}^{\infty} \frac{x_1^{2k}}{\sqrt{(2k)!}} |2k\rangle,
\qquad
|\zeta_{1}\rangle = 
e^{-|x_1|^2/2}
\sum_{k=0}^{\infty} \frac{x_1^{2k+1}}{\sqrt{(2k+1)!}} |2k+1\rangle
$$
Since the even and odd subspaces do not depend on the parameter value, we can perform a projective measurement whose elements are the projections on these subspaces, and conditional on the measurement outcome, remain with a sample of the corresponding pure state, i.e. the normalised versions of the pure states $|\zeta_0\rangle, |\zeta_1\rangle$. This procedure is a "sufficient statistic" in the sense that by ignoring the measurement outcome we recover the original mixed state without any loss of statistical information. Therefore, one can construct "optimal" estimation procedures by first projecting on the odd/even subspaces and then applying the optimal measurement for the corresponding pure state, thus reducing the problem to one involving pure rather than mixed states. 

Solving the corresponding optimal estimation problem is left for a future investigation. However, we do want to make some qualitative remarks and point to an interesting connection with another quantum estimation problem appearing in quantum imaging \cite{Tsang16}. The simplest measurement strategy is to apply the joint measurement of $Q$ and $P$ described at the beginning of the section, which turned out to be optimal for the quantum Gaussian shift with coherent states. In this case we obtain an outcome in $\mathbb{R}^2$ whose distribution is the equal mixture of the normal distributions 
$N\left(\pm ({\rm Re} (x_1), {\rm Im} (x_1)), 2 I_2\right)$, i.e. a classical analogue of our mixture model. Estimating parameters of Gaussian mixtures is an important problem in (classical) statistics \cite{McLachlan} which can exhibit features such as non-identifiability and non-standard estimation rates. For further insight let us simplify the problem furher and assume that $x_1$ is real, so we deal with a one-dimensional estimation problem. Since the sign of $x_1$ is not identifiable, we can only estimate the absolute value $|x_1|$, but the classical Fisher information decreases to zero in the limit of small $|x_1|$. This singularity is reflected in non-standard estimation rate of $n^{-1/4}$ in the i.i.d. setting, cf. \cite{Chen95}. On the other hand, let us consider what happens if instead of a joint measurement of $Q$ and $P$ we measure the number operator $N$. The integer valued outcome has Poisson distribution with intensity $|x_1|^2$ and the classical Fisher information is constant and equal to the quantum Fisher information, so the QCRB is achieved for all $|x_1|\neq 0$. This surprising fact is at the basis of current investigations in quantum imaging \cite{Tsang19}, which aim at improving accuracy of optical imaging by choosing favorable measurement settings which avoid the limitations of "classical" imaging, and bypass the famous Rayleigh imaging limit. Extending this technique to the two-dimensional model we started with, and further to the general case of quantum Gaussian mixture models with arbitrary $p$, remains an interesting open question.

\subsubsection{Quantum decision problems and comparison of quantum  models}

So far we focused our attention on the specific  problem of parameter estimation. More generally, a (quantum-to-classical) decision problem for a model 
${\bf Q}:= \{\rho_\theta: \theta\in \Theta\}$ on a Hilbert space $\mathcal{H}$ is specified by a decision space $(\Omega, \Sigma)$ and a loss function $\ell:\Omega\times \Theta\to \mathbb{R}_+$, 
and the aim is to design a measurement 
$\mathcal{M}_*: L^1(\mathcal{H}) \to 
L^{1}(\Omega, \Sigma, \mu)$ such that the risk 
${R}(\mathcal{M}, \theta) = 
\mathbb{E}^{\mathcal{M}}_\theta (\ell(X ,\theta))$ is as small as possible. Here the expectation is taken with respect to the distribution of the outcome $X$ of $\mathcal{M}$, whose density is  
$p^{\mathcal{M}}_\theta := \mathcal{M}_*(\rho_\theta)$. In the frequentist approach one considers the \emph{maximum risk} $R_{\rm max} (\mathcal{M}):= \sup_{\theta} R(\mathcal{M}, \theta)$ and aims to find a measurement $\mathcal{M}$ which has the smallest maximum risk. This leads to the notion of  \emph{minimax risk} of the model ${\bf Q}$ defined as 
$$
R_{\rm minmax}({\bf Q}) = \inf_{\mathcal{M}} \sup_{\theta} R(\mathcal{M}, \theta).
$$ 

\begin{definition}[{\bf Equivalent models}]
Two quantum models 
${\bf Q}:= \{\rho_\theta\in S(\mathcal{H}): \theta\in \Theta\}$ and ${\bf R}:= \{\sigma_\theta\in S(\mathcal{K}): \theta\in \Theta\}$ are \emph{statistically equivalent} 
if there exist channels 
$\mathcal{T}_*: L^1(\mathcal{H}) \to L^1(\mathcal{K})$ and ${\cal S}_*: L^1(\mathcal{K}) \to L^1(\mathcal{H})$ such that
$$
\mathcal{T}_*(\rho_\theta) = \sigma_\theta,\qquad {\rm and }\qquad 
\mathcal{S}_*(\sigma_\theta)
 =\rho_\theta, \qquad
 {\rm for~all~}\theta\in \Theta.
$$
\end{definition}
The definition is justified by the fact that equivalent models share the same set of possible risk functions for any decision problem. Indeed, since equivalent models can be physically mapped into each other, this means that a measurement for one model can be pulled back into one for the other model 
: given any measurement $\mathcal{N}_*:L^1(\mathcal{K}) \to L^1(\Omega, \Sigma, \mu)$ for the model ${\bf R}$ we define the measurement $\mathcal{M}:= \mathcal{T}\circ \mathcal{N}$ for the model ${\bf Q}$ such that 
$p^{\mathcal{N}}_\theta = \mathcal{N}_*(\sigma_\theta) =  \mathcal{N}_*\circ \TT_*(\rho_\theta) = p^{\mathcal{M}}_\theta$ and the risks of these decisions coincide. In particular, the models have equal minmax risks.


\emph{Quantum sufficiency} theory \cite{Petz86,JencovaPetz06} 
provides a complete algebraic characterisation of statistical equivalence in terms of an isomorphism between the minimal sufficient algebras associated to the two models. However, the notion of equivalence is rather rigid as models may be ``close'' to each other but not statistically equivalent. While the theory of quantum sufficiency goes beyond the scope of this paper, the idea of ``closeness'' of statistical models plays a crucial part in our work. More specifically, we use the following definition, which is the natural quantum extension of the Le Cam distance between classical models \cite{LeCam}.
\begin{definition}[{\bf Statistical deficiency and Le Cam distance}]
\label{def:LECAM}
Let 
${\bf Q}:= \{\rho_\theta\in S(\mathcal{H}): \theta\in \Theta\}$ and 
${\bf R} := \{\sigma_\theta\in S(\mathcal{K}): \theta\in \Theta\}$ 
be two quantum statistical models. The \emph{statistical deficiency} of ${\bf Q}$ with respect to ${\bf R}$ is 
\begin{eqnarray*}
\delta({\bf Q}, {\bf R}) 
& = &
\inf_{\mathcal{T}_*} \sup_{\theta\in \Theta}
\| \mathcal{T}_*(\rho_\theta) - \sigma_\theta \|_1,
\end{eqnarray*}
where the infimum is taken over all channels $\mathcal{T}_*: L^1(\mathcal{H}) \to L^1(\mathcal{K})$; in particular if 
${\bf Q}$ is more informative than ${\bf R}$, then $\delta({\bf Q}, {\bf R})=0$. The \emph{Le Cam distance} between ${\bf Q}$ and ${\bf R}$ is 
$$\Delta({\bf Q},{\bf R}) = {\rm max}
(\delta({\bf Q}, {\bf R}),\delta({\bf R}, {\bf Q})).$$

\end{definition}
For any decision problem with decision space $(\Omega, \Sigma) $ and bounded loss function $\ell(\cdot, \cdot)$, the Le Cam distance can be used to upper bound the risks of one model in terms of the other. Indeed, let $\mathcal{N}$ and $\mathcal{M}$ be measurements defined as above. Then, since measurement maps are contractive with respect to the 1-norm, we have
\begin{equation}
\label{eq:contractivity.norm.1}
\|p^{\mathcal{M}}_\theta - p^{\mathcal{N}}_\theta\|_1 = \| \mathcal{N}_*\circ \mathcal{T}_*(\rho_\theta) - \mathcal{N}_* (\sigma_\theta)
\|_1\leq \|  \mathcal{T}_*(\rho_\theta) - \sigma_\theta\|_1
\end{equation}
which implies
$$
R_{\rm minmax}({\bf Q}) \leq 
R_{\rm minmax}({\bf R}) + 
\|\ell\|_\infty \cdot \delta({\bf Q}, {\bf R}).
$$
Using a similar argument in the opposite direction we conclude that models which are close in the Le Cam distance have similar statistical behaviour. This is particularly helpful in asymptotic theory where a sequence of models converges to a more tractable model in a certain ``large sample size'' limit, and therefore studying the limit model can provide an asymptotically optimal strategy for the original problem.  
An important instance of this is local asymptotic normality which we discuss below.


\subsubsection{Quantum local asymptotic normality}
\label{sec:QLAN}

In a nutshell, quantum local asymptotic normality (QLAN) means that certain sequences of quantum models can be approximated by a quantum Gaussian shift model, in a local neighbourhood of a parameter value, asymptotically with respect to the ``sample size''. This phenomenon has been demonstrated in several settings: i.i.d. models with finite dimensional systems and mixed states \cite{LAN1,GJ07,GJK08,GK09,GG13,YFG13,BGM18,FY20,FY23,LN24}, i.i.d. models with infinite dimensional pure states \cite{BGM18}, and  models consisting of correlated output states of quantum Markov processes \cite{Guta11,GutaCatana14,GK15,GK17}. In order to provide background intuition for our results on QLAN, we describe here the simplest instance of QLAN arising in the estimation of pure qubit states and we discuss how this can be used to devise asymptotically optimal state estimation procedures. 

\vspace{2mm}

Let $\{|0\rangle , |1\rangle\}$ be the standard orthonormal basis in $\mathbb{C}^2$, and let  
$$
\sigma_x = 
\left(
\begin{array}{cc}
  0   &  1\\
 1    & 0
\end{array}
\right), 
\quad
\sigma_y = 
\left(
\begin{array}{cc}
  0   &  -i\\
 i    & 0
\end{array}
\right)
\quad
\sigma_z = 
\left(
\begin{array}{cc}
  1   &  0\\
 0    & -1
\end{array}
\right)
$$
be the Pauli observables which together with the identity form a basis of qubit observables, but can also be seen as the generators of the two dimensional special unitary group ${\rm SU}(2)$. Geometrically, the qubit state space $S(\mathbb{C}^2)$ is isomorphic to the three-dimensional unit sphere $B_1(\mathbb{R}^3)$ through what is known as the \emph{Bloch representation} defined by the linear map
$$
S(\mathbb{C}^2) \ni \rho  
~
\longmapsto 
~
r^\rho  =(r^\rho_x, r^\rho_y, r^\rho_z)
\in B_1(\mathbb{R}^3)
$$
where $r^\rho$ is the \emph{Bloch vector} of $\rho$ and its components are given by the expectations of the Pauli observables $r^\rho_i = {\rm Tr}(\rho \sigma_i)$ for $i=x,y,z$. In this representation, the pure states correspond to vectors on the surface of the unit sphere, with the standard basis vectors at the poles, cf. Figure \ref{fig:Bloch.sphere} a). 
\begin{center}
\begin{figure}
    \centering
    \includegraphics[width=1\linewidth]
    {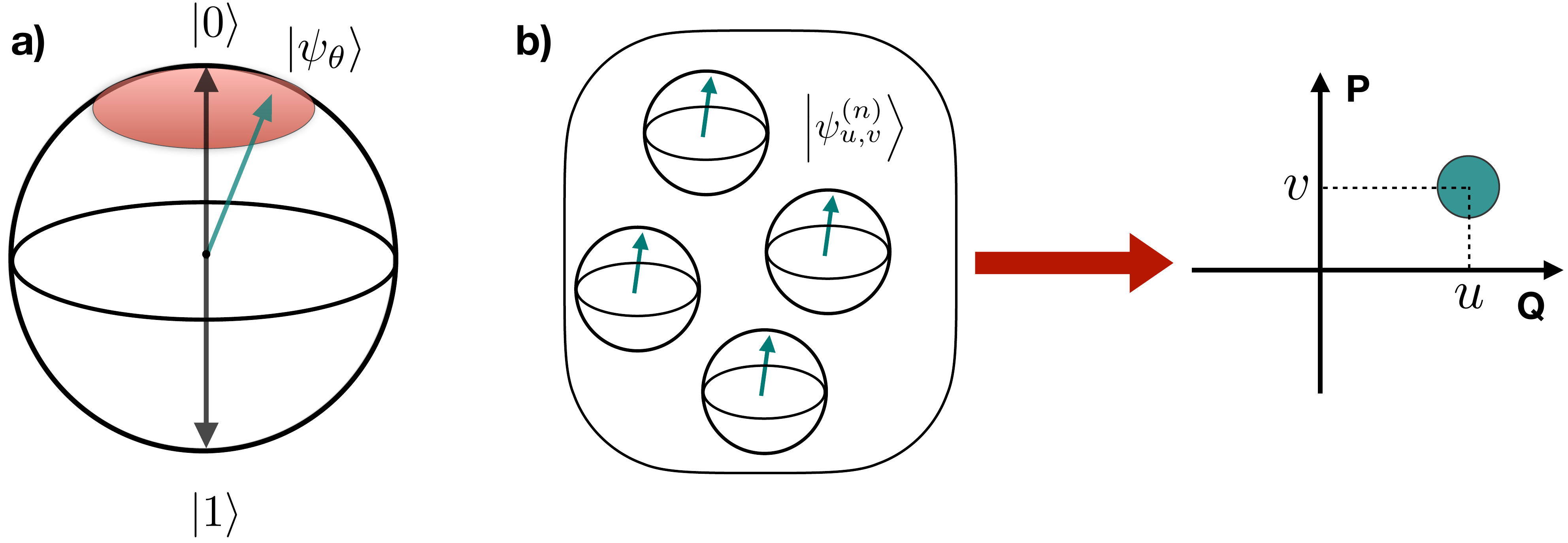}
    \caption{{\bf Panel a)} Bloch ball representation of qubit states. Pure states are represented as vectors on the unit sphere with basis vectors $0\rangle$ and $|1\rangle$ as north and south poles respectively. The pure state statistical model $|\psi_\theta\rangle $ covers a two dimensional neighbourhood of the north pole. Each state is a rotation of $|0\rangle$ with rotation parameter $\theta = (\theta_1, \theta_2) \in \Theta$. {\bf Panel b)} Illustration of quantum local asymptotic normality for pure qubit states. The i.i.d. model consisting of an ensemble of $n$ identically prepared qubits in state $|\psi^{(n)}_{u,v}\rangle:= |\psi_{u/\sqrt{n},v/\sqrt{n}}\rangle^{\otimes n}$ where 
    $u,v$ are local rotation parameters. For large $n$ the local i.i.d. model converges to quantum Gaussian model consisting of a single sample from the coherent state $|u,v\rangle$ whose mean is given by the local paramters. }
    \label{fig:Bloch.sphere}
\end{figure}
\end{center}
We consider the ``global'' qubit model consisting of pure states  
$\{ |\psi_{\theta}\rangle ~:~ \theta\in \Theta\subset \mathbb{R}^2\}$ with
$$
|\psi_{\theta}\rangle := 
\exp\left[-i (\theta_1\sigma_y - \theta_2\sigma_x) \right] |0\rangle,
$$
and assume that $\theta:=(\theta_1,\theta_2)$ belongs to a bounded, open and sufficiently small neighbourhood of the origin $\Theta\subset \mathbb{R}^2$  such that the parameter is identifiable, i.e. the map $\theta\to |\psi_\theta\rangle$ is one-to-one, cf. Figure \ref{fig:Bloch.sphere} a). 
More generally, one could drop the purity assumption and consider a three dimensional model consisting of mixed 
states, but this will not discussed here and we refer to \cite{LAN1} for further details.

As argued before, a single sample is insufficient for practical estimation, and instead one considers the corresponding i.i.d. estimation problem: given $n$ identically prepared qubits in state 
$
|\psi_\theta\rangle
$, 
estimate $\theta$ with the ``highest precision'' by measuring the qubit ensemble whose joint state is $|\psi_\theta\rangle^{\otimes n}$. To qualify the term ``highest precision'' we choose the loss function  
$\ell (\hat{\theta},\theta) =\|\hat{\theta}-\theta\|^2$ but note that more general locally quadratic functions can be considered. 
In the asymptotic scenario of large sample size $n$, this can be done by using a two-stage procedure \cite{GillMassar,GiGoGu}. In the first stage we use a small proportion $\widetilde{n} =n^{1-\epsilon}\ll n$ of the samples to compute a preliminary (non-optimal) estimator $\widetilde{\theta}_n$, where $\epsilon>0$ is a small constant. For simplicity, let us assume that $\widetilde{\theta}_n =(0,0)$, which can always be achieved by a suitable re-parametrisation using the rotation symmetry of the model. By applying standard concentration bounds, one can show that for reasonable preliminary estimators, 
$\theta$ belongs to a ball of size $Cn^{-1/2+\delta}$ centered at $\widetilde{\theta}_n$, for some fixed $\epsilon<\delta<1/2$ and $C>0$. 
We can therefore write 
$\theta= (\theta_1,\theta_2) = (u/\sqrt{n},v/\sqrt{n})$ with local parameters satisfying $\|(u,v)\|\leq n^\delta$. In the second stage we aim to measure the remaining $n-\tilde{n}$ samples in such a way as to obtain an optimal estimator $(\hat{u}_n, \hat{v}_n)$ of the local parameters $(u,v)$, and combine this with the preliminary estimator to obtain the final estimator 
$\hat{\theta}_n = \tilde{\theta}_n+ (\hat{u}_n, \hat{v}_n)/\sqrt{n}$. The performance of this final estimator depends crucially on that of the local estimator, which in turn relies on understanding the asymptotic behaviour of the local model. This insight is provided by the quantum local asymptotic normality theory on which we focus our attention now.

\vspace{2mm}

With the notation established above, we consider the sequence of local i.i.d. models 
$$
|\psi^{(n)}_{z}\rangle = 
|\psi_{u/\sqrt{n},v/\sqrt{n}}\rangle^{\otimes n}  :=
\left( 
\exp\left[-i (u\sigma_y - v\sigma_x)/\sqrt{n} \right] |0\rangle
\right)^{\otimes n},\quad  z:= u+iv.
$$
consisting of $n$ identically prepared qubits, each individual state being obtained by rotating $|0\rangle$ by a small amount which scales as $n^{-1/2}$.
For notational convenience we use the complex local parameter 
$z= u+iv$ in the following.
We present two formulations of QLAN showing that the i.i.d. sequence converges to the one-mode quantum Gaussian shift model 
$$
{\bf G}: =\{|{\rm Coh}(z)\rangle:=  W(z)|\Omega\rangle : z\in \mathbb{C}\},
$$ 
where $|{\rm Coh}(z)\rangle$ is a coherent state of a one-mode CV system whose canonical coordinates $Q,P$ have mean $(u,v) = ({\rm Re}(z), {\rm Im}(z))$, cf. section \ref{cvsystems}. The general idea of QLAN is illustrated in Figure \ref{fig:Bloch.sphere} b). The first formulation  takes advantage of the fact that all models consist of pure states. Therefore, their properties are encoded in the Hilbert space geometry which is  determined by the inner products of pairs of states with different parameters. 
\begin{theorem}[{\bf weak QLAN}]
\label{th:weak.QLAN}
For any pair of parameters 
$z_i =u_i+iv_i$ with $i=1,2$, the following limit holds 
\begin{equation}
\label{eq:weak.lan}
\lim_{n\to\infty}
\langle 
\psi^{(n)}_{z_1} | \psi^{(n)}_{z_2}\rangle =
\langle {\rm Coh}(z_1)| {\rm Coh}(z_2)\rangle
.
\end{equation}
\end{theorem}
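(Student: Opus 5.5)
Since the states are products, the proof reduces to a single-qubit computation followed by a classical limit. We have
$$\langle \psi^{(n)}_{z_1}|\psi^{(n)}_{z_2}\rangle = \langle \psi_{z_1/\sqrt n}|\psi_{z_2/\sqrt n}\rangle^n ,$$
where $\psi_{z/\sqrt n}$ denotes the one-qubit state with parameters $(u/\sqrt n, v/\sqrt n)$. The task is therefore to expand the single-qubit overlap to second order in $n^{-1/2}$, write it as $1 + w/n + O(n^{-3/2})$, and use $(1+w/n+o(1/n))^n \to e^{w}$. On the Gaussian side, the exponential vectors give
$$\langle {\rm Coh}(z_1)|{\rm Coh}(z_2)\rangle = e^{-|z_1|^2/2-|z_2|^2/2+\bar z_1 z_2}.$$
So it suffices to show that $w = -\tfrac12|z_1|^2-\tfrac12|z_2|^2+\bar z_1 z_2$.

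The single-qubit state can be written explicitly. With $H_z := u\sigma_y - v\sigma_x$ we have $H_z^2 = (u^2+v^2)\mathbf 1$, so
$$e^{-itH_z}|0\rangle = \cos(t|z|)|0\rangle - i\,\frac{\sin(t|z|)}{|z|}H_z|0\rangle .$$
Next, $H_z|0\rangle = u\sigma_y|0\rangle - v\sigma_x|0\rangle = iu|1\rangle - v|1\rangle = i(u+iv)|1\rangle = iz|1\rangle$. Hence
$$|\psi_{z/\sqrt n}\rangle = \cos(|z|/\sqrt n)|0\rangle + \frac{z}{|z|}\sin(|z|/\sqrt n)|1\rangle,$$
with the obvious interpretation when $z=0$. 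Expanding the trigonometric functions gives
$$|\psi_{z/\sqrt n}\rangle = \Bigl(1-\tfrac{|z|^2}{2n}\Bigr)|0\rangle + \tfrac{z}{\sqrt n}|1\rangle + O(n^{-3/2}),$$
where the remainder is uniform for $z$ in compact sets.

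Taking the inner product of two such vectors yields
$$\langle\psi_{z_1/\sqrt n}|\psi_{z_2/\sqrt n}\rangle = 1 - \frac{|z_1|^2+|z_2|^2}{2n} + \frac{\bar z_1 z_2}{n} + O(n^{-3/2}),$$
which is exactly the required $w$.

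For the limit, fix $z_1,z_2$. For large $n$ the overlap $a_n$ is close to $1$, so the principal logarithm is defined and $n\log a_n = w + O(n^{-1/2})$. Therefore $a_n^n \to e^{w}$, which is \eqref{eq:weak.lan}.

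There is no real obstacle in this argument. The only points requiring care are getting the phase convention right, so that the $|1\rangle$ coefficient is $z$ and not $\bar z$ or $-z$ (this determines $\bar z_1 z_2$ versus $z_1\bar z_2$), and checking that the $O(n^{-3/2})$ remainder is controlled so that the logarithm argument is valid.
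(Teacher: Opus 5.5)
Your proof is correct and follows the same route as the paper's sketch. Like the paper, you expand the single-qubit overlap to second order in $n^{-1/2}$, use $(1+w/n+o(1/n))^n\to e^{w}$, and compare with the explicit coherent-state overlap $e^{-|z_1|^2/2-|z_2|^2/2+\bar z_1 z_2}$. Your closed form for $|\psi_{z/\sqrt n}\rangle$ and the check that the cross term is $\bar z_1 z_2$, rather than $z_1\bar z_2$, supply exactly the details the paper leaves implicit.
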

Theorem \ref{th:weak.QLAN} can be proved by expanding the inner product on the left side in Taylor series in $n^{-1/2}$ and using the connvergence of $(1-a/n)^n$ to $\exp(-a)$ together with the explicit expression of the overlap on the right.
Although it has an appealing geometric interpretation, the weak convergence result \eqref{eq:weak.lan} is restricted to pure states and holds only point-wise with respect to local parameters. This makes it less amenable to statistical considerations which require uniform bounds over local parameters. These drawbacks are corrected by the second QLAN result which is formulated in terms of the Le Cam distance between suitable restrictions of the two models to local parameters in a growing ball of size $ C n^{\delta}$ where $0<\delta<1/2$ and C>$0$ are fixed constants. 
Let us define the sequence of local i.i.d. models 
$$
{\bf Q}(n) := 
\{|\psi^{(n)}_{z}\rangle ~:~ \|z\|\leq C n^{\delta} \}
$$
and the corresponding sequence of restricted quantum Gaussian shift models
$$
{\bf G}(n) = \{|{\rm Coh}(z) \rangle ~:~ 
\|z\|\leq C n^{\delta}\}
$$

\begin{theorem}[{\bf strong QLAN}]\label{th:strong.QLAN}
The sequences of local i.i.d. and restricted Gaussian models ${\bf Q}(n) $ and respectively 
${\bf G}(n)$ approach each other in the Le Cam sense
$$
\lim_{n\to\infty}\Delta ({\bf Q}(n), {\bf G}(n)) =0.
$$
In particular, there exist quantum channels $\mathcal{T}_{n*}$ and $\mathcal{S}_{n*}$ such that 
\begin{eqnarray*}
&&
\lim_{n\to\infty}
\sup_{\|z\|\leq Cn^{\delta}}
\| \mathcal{T}_{n*}(\rho^{(n)}_{z}) - G(z,I_2) \|_1 =0
\\
&& 
\lim_{n\to\infty}\sup_{\|z\|\leq Cn^{\delta}}
\| \mathcal{S}_{n*}(G(z,I_2)) - \rho^{(n)}_{z}\|_1 =0.
\end{eqnarray*}
where $\rho^{(n)}_z = |\psi^{(n)}_{z}\rangle\langle \psi^{(n)}_{z}|$ and 
$G(z,I_2):= |{\rm Coh}(z)\rangle\langle{\rm Coh}(z)|$.
\end{theorem}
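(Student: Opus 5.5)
The plan is the standard ``Holstein--Primakoff / symmetric subspace'' argument for pure qubit states. I would carry it out in four steps.

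\emph{Step 1: reduce to a fixed-dimensional subspace.} Since every $|\psi^{(n)}_z\rangle$ is a product state $\psi^{\otimes n}$, all the states lie in the symmetric subspace ${\rm Sym}^n(\mathbb{C}^2)\cong\mathbb{C}^{n+1}$. This space has the orthonormal Dicke basis $\{|n,k\rangle\}_{k=0}^n$, where $|n,k\rangle$ is the normalised symmetrisation of $|0\rangle^{\otimes(n-k)}\otimes|1\rangle^{\otimes k}$.

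\emph{Step 2: expand the product state in the Dicke basis.} Writing $\psi_{z/\sqrt n}=\cos(|z|/\sqrt n)|0\rangle + e^{i\arg z}\sin(|z|/\sqrt n)|1\rangle$ (up to a sign and phase convention fixed by the generator $u\sigma_y-v\sigma_x$), we get
$$
|\psi^{(n)}_z\rangle=\sum_{k=0}^n \sqrt{\tbinom nk}\,\cos^{n-k}(|z|/\sqrt n)\,\bigl(e^{i\arg z}\sin(|z|/\sqrt n)\bigr)^k|n,k\rangle .
$$

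\emph{Step 3: embed into Fock space and compare amplitudes.} Define the isometry $J_n:|n,k\rangle\mapsto|k\rangle$ into the one-mode Fock space, and a channel $\mathcal{S}_{n*}$ that compresses the Fock space onto ${\rm span}\{|k\rangle: k\le n\}$, pulls back by $J_n^*$, and places the leftover weight on some fixed state. Take $\mathcal{T}_{n*}(\rho)=J_n\rho J_n^*$. Since all states are pure, the trace distance is $2\sqrt{1-|\langle J_n\psi^{(n)}_z|{\rm Coh}(z)\rangle|^2}$, so it suffices to show $\sup_{|z|\le Cn^\delta}\|J_n\psi^{(n)}_z-{\rm Coh}(z)\|\to0$. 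The backward channel $\mathcal{S}_{n*}$ is then controlled by the same estimate, plus the tail mass $\sum_{k>n}e^{-|z|^2}|z|^{2k}/k!$, which is superexponentially small because $|z|^2\le C^2n^{2\delta}\ll n$.

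\emph{Step 4: the main obstacle, uniform amplitude comparison.} The coherent-state amplitudes are $e^{-|z|^2/2}z^k/\sqrt{k!}$, so the task is to compare, uniformly in $|z|\le Cn^\delta$, the binomial-type amplitudes $a_k$ from Step 2 with these Poisson amplitudes $b_k$. I would split at $k\le K_n:=n^{2\delta+\epsilon'}$ and $k>K_n$, with $\epsilon'$ chosen small enough that $K_n\ll n^{1/2}$; this is possible for $\delta<1/4$, and for larger $\delta$ one uses the finer estimate below.
\begin{itemize}
\item The tails are negligible on both sides. The Poisson tail follows from Chernoff bounds, and the binomial tail from the fact that the squared amplitudes $|a_k|^2$ form a Binomial$(n,\sin^2(|z|/\sqrt n))$ distribution with mean of order $|z|^2$.
\item In the bulk, take logarithms and use $\log\binom nk=k\log n-\log k!-k^2/(2n)+O(k^3/n^2)$, $\sin(|z|/\sqrt n)=|z|/\sqrt n\,(1+O(|z|^2/n))$, and $(n-k)\log\cos(|z|/\sqrt n)=-|z|^2/2+O(k|z|^2/n+|z|^4/n)$. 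This shows $a_k/b_k=1+O\bigl((k^2+|z|^4)/n\bigr)$.
\item For $\delta<1/4$ these errors vanish uniformly. For general $\delta<1/2$, I would bound the $\ell^2$ error $\sum_k |a_k-b_k|^2$ directly rather than the ratio pointwise: the deviations, weighted by $b_k^2$, have Poisson moments of order $|z|^4/n$.
\end{itemize}

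If this bound turns out to be too weak near $\delta\to1/2$, the fallback is to change the comparison target. Compare $J_n\psi^{(n)}_z$ with the Gaussian state whose variance is corrected to account for the binomial rather than Poisson fluctuations, and note that this state converges to the coherent state. Alternatively, invoke the general i.i.d. QLAN results \cite{LAN1,GJK08}, which cover the full range $\delta<1/2$.
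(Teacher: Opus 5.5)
Your Steps 1--3 are correct and follow the route the paper indicates. The paper gives no details beyond the isometric embedding of the symmetric subspace into the one-mode Fock space. Your amplitudes are right, since $-i(\theta_1\sigma_y-\theta_2\sigma_x)|0\rangle=(\theta_1+i\theta_2)|1\rangle$. The backward channel is in fact controlled by contractivity alone, because $\mathcal{S}_{n*}(J_n\rho J_n^*)=\rho$.

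The gap is in Step 4, where you claim the estimate extends to all $\delta<1/2$; it does not. Since $a_k$ and $b_k$ carry the same phase $e^{ik\arg z}$, the overlap $\langle J_n\psi^{(n)}_z|{\rm Coh}(z)\rangle$ is exactly the Bhattacharyya coefficient of $\mathrm{Bin}(n,\sin^2(|z|/\sqrt n))$ and $\mathrm{Poisson}(|z|^2)$. The binomial mean is $n\sin^2(|z|/\sqrt n)=|z|^2-|z|^4/(3n)+\dots$, so the two distributions are shifted by about $|z|^3/(3n)$ standard deviations. Equivalently, redo your bulk expansion keeping the cancellation at $k=|z|^2$. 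For $k=|z|^2+t|z|$ one finds
\[
\log\frac{a_k}{b_k}=-\frac{k^2}{4n}+\frac{k|z|^2}{3n}-\frac{|z|^4}{12n}+\dots=-\frac{t|z|^3}{6n}-\frac{t^2|z|^2}{4n}+\dots,
\]
so $\sum_k|a_k-b_k|^2\approx|z|^6/(36n^2)$. Your claimed Poisson moments of order $|z|^4/n$ are therefore wrong, and even an error of that order would only give $\delta<1/4$. Consequently $\|J_n\rho^{(n)}_zJ_n^*-G(z,I_2)\|_1$ is of order $|z|^3/n$ when this is small. At $|z|=Cn^\delta$ it stays bounded away from zero for $\delta=1/3$ and tends to $2$ for $\delta>1/3$; the same holds for your $\mathcal{S}_{n*}$. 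So your construction proves the statement for $\delta<1/3$ (your pointwise ratio bound only reaches $\delta<1/4$), but provably not for $1/3\le\delta<1/2$.

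Neither fallback repairs this. The channel $J_n\cdot J_n^*$ is fixed and its distance to ${\rm Coh}(z)$ has just been computed exactly. So inserting a variance-corrected intermediate Gaussian cannot help: by the triangle inequality it cannot converge uniformly to ${\rm Coh}(z)$ either. Moreover, the dominant defect is not a variance mismatch but a mean shift, i.e. the radial reparametrisation $|z|\mapsto\sqrt n\sin(|z|/\sqrt n)$. The i.i.d. QLAN results you cite are formulated for local balls growing like $n^{\eta}$ with $\eta$ well below $1/2$, so they do not cover this range either. To reach $\delta\in[1/3,1/2)$ you would need a genuinely different channel that compensates this nonlinear reparametrisation. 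It is not affine, so composing $J_n$ with a Gaussian unitary will not do. As it stands, your argument establishes the theorem only for $\delta<1/3$.
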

Theorem \ref{th:strong.QLAN} can be proved by explicitly constructing the quantum channels $\mathcal{T}_{n*}$ and 
$\mathcal{S}_{n*}$ based on an isometric embedding of the symmetric subspace of 
$\left(\mathbb{C}^2\right)^{\otimes n}$ in the one-mode Fock space $\mathfrak{H}(\mathbb{C})$ of the limit 
Gaussian model. This provides an operational procedure to physically map the i.i.d. model into the Guassian one (up to small errors) and vice-versa, in full analogy to the classical LAN theory developed by Le Cam \cite{LeCam}. In particular, strong QLAN allows us to translate the original i.i.d. estimation problem into a simpler one concerning the estimation of a Gaussian shift model, which was discussed in section \ref{sec:QGSM}.

\vspace{2mm}
To conclude, we return to the original i.i.d. model and the qubit estimation problem and sketch the second stage of the procedure to which we alluded earlier in this section. After the first stage, the ``global'' parameter $\theta$ has been ``localised'' in a  neighbourhood of 
size $n^{-1/2+\delta}$ around the preliminary estimator 
$\tilde{\theta}_n$ which after re-parametrisation and unitary rotation can be assumed to be $\tilde{\theta}_n=(0,0)$; the state of the remaining qubits has local parameters satisfying $\| (u,v)\|\leq n^{\delta}$ with $\epsilon<\delta<1/2$. By applying the channel $\mathcal{T}_{n*}$ in Theorem \ref{th:strong.QLAN}, the i.i.d. state is mapped into a one-mode CV state $\mathcal{T}_{n*}(\rho^{n}_{z})$ which is close to the pure Gaussian state $G(z,I_2)$ with means $(u,v)= ({\rm Re}(z),{\rm Im}(z))$. To estimate the mean, we apply the measurement described in section \ref{sec:QGSM}, which is optimal for the quadratic loss function 
considered here. 
With the outcomes $(\hat{u}_n, \hat{v}_n)$ of the measurement we can compute the final estimator 
$\hat{\theta}_n = \tilde{\theta}_n + (\hat{u}_n, \hat{v}_n)/\sqrt{n}$. This procedure can be shown to be asymptotically optimal in the sense that $\hat{\theta}_n$ is a asymptotic minimax estimator, but also in the sense that it achieves the asymptotic Holevo bound \cite{Holevo2011}
$$
\lim_{n\to\infty} n \mathbb{E} \left[ \|\hat{\theta}_n - \theta\|^2\right] =4.
$$
The key ingredients are the concentration bound used to localise the parameter in the first stage, the strong QLAN convergence used in the states in the second stage, the optimality of the CV measurement for Gaussian shift models and the of norm-one contractivity property 
\eqref{eq:contractivity.norm.1} which allows to approximate risks in terms of the Le Cam distance, cf. \cite{YCH19} for details.

\subsubsection{Useful tools in proving convergence of statistical models}

Despite being the relevant notion in statistical applications, convergence results for the Le Cam distance (strong convergence) are often hard to prove. On the other hand, the notion of weak convergence introduced earlier is easier to verify, but has the drawback that it only makes sense for pure states models, and in itself, is not sufficiently strong to establish 
optimal estimation results. In this section we present new results that allow to upgrade weak convergence of pure statistical models to strong convergence and to prove strong convergence for certain types of mixed statistical models. The general line follows ideas form section 3 in \cite{GK15}, but the new results allow for growing local parameter sets and provide explicit error bounds.



\begin{lemma}[{\bf Weak to strong convergence for pure state models}]\label{lem:wts2} Let $\{r_n\}_{n \in \mathbb{N}}$ be a sequence of positive numbers such that $r_n \uparrow +\infty$, $C$ a positive constant, $X$ be a normed space of finite dimension $d$, and $A$ a finite set. Moreover, let us consider the sequence of pure statistical models
\begin{align*}
    &\mathbf{Q}(n):=\{\rho_n(x):=\ket{v_n(x,a)}\bra{v_n(x,a)}\}_{\{(x,a) \in {\cal U}(n)\times A\}}, \\
    &\mathbf{Q}^{\infty}:=\{\rho(x):=\ket{v(x,a)}\bra{v(x,a)}\}_{\{(x,a) \in X\times A\}},
    \end{align*}
where ${\cal U}(n):=B_{C r_n}(0)\subseteq X$ is the ball centered in $0$ with radius $C r_n$. We will use $\mathbf{Q}^{\infty}(n)$ to denote the model $\mathbf{Q}^{\infty}$ restricted to ${\cal U}(n)\times A$.

Let us assume that the following hypotheses hold true:
\begin{enumerate}
\item \label{it:Lp2} \textbf{(H\"older parametrization)} there exist two positive constants $C,\alpha>0$ such that for every $x,y \in X$ and $a \in A$
$$1-|\langle v(x,a),v(y,a)\rangle|^2 \leq C \|x-y\|^\alpha;$$
\item  \label{it:fc2} \textbf{(Fast enough uniform weak convergence)} let
$$f(n):=\sup_{(x,a),(y,b) \in {\cal U}(n)\times A}|\langle v_n(x,a),v_n(y,b) \rangle -\langle v(x,a),v(y,b) \rangle|;$$
assume that
$$\lim_{n \rightarrow +\infty} r_n^df(n)=0.$$
Note that, in this case, we can find a sequence of positive numbers $\{\delta_n\}_{n \in \mathbb{N}}$ such that $\delta_n \downarrow 0$ and and $r_n^df(n)=o(\delta^d_n).$
Then
$$\Delta(\mathbf{Q}(n),\mathbf{Q}^\infty(n))=O\left (\delta_n^{\alpha}+f(n)^{\frac{1}{2}}+\left(\frac{r_n}{\delta_n}\right )^{\frac{d}{4}}f(n)^{\frac{1}{4}}\right )$$
and, therefore,
$$\lim_{n \rightarrow +\infty}\Delta(\mathbf{Q}(n),\mathbf{Q}^\infty(n))=0.$$
\end{enumerate}

\end{lemma}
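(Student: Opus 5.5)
The plan is a discretisation argument combined with a Gram-matrix comparison for pure states. I will show that on a finite net the two families of vectors have nearly equal Gram matrices, hence are related by an almost isometric map, and then use the Hölder hypothesis to pass from the net to the whole ball.

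First, I cover ${\cal U}(n)=B_{Cr_n}(0)$ by a $\delta_n$-net $N_n=\{x_1,\dots,x_{M_n}\}$. Since $X$ is $d$-dimensional, this can be done with $M_n=O((r_n/\delta_n)^d)$ points. For each $x\in{\cal U}(n)$ let $\pi(x)$ be a nearest net point.

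Second, I compare the two finite families $\{v_n(x_i,a)\}$ and $\{v(x_i,a)\}$, indexed by $(i,a)\in N_n\times A$, with Gram matrices $G_n$ and $G$. By hypothesis \ref{it:fc2} every entry of $G_n-G$ is at most $f(n)$. Hence $\|G_n-G\|_1\le (M_n|A|)\,\|G_n-G\|_{2}\le (M_n|A|)^2 f(n)$; the Hilbert--Schmidt estimate can be used to sharpen this. The standard construction gives a partial isometry $T$ from ${\rm span}\{v_n(x_i,a)\}$ to ${\rm span}\{v(x_i,a)\}$, built from the polar decompositions $G_n^{1/2}$ and $G^{1/2}$ via Powers--Størmer $\|G_n^{1/2}-G^{1/2}\|_2^2\le\|G_n-G\|_1$. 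It satisfies
\[
\sum_{i,a}\|Tv_n(x_i,a)-v(x_i,a)\|^2\le \|G_n^{1/2}-G^{1/2}\|_2^2\le \|G_n-G\|_1 .
\]
I would rather obtain a uniform per-point bound. I think it is cleanest to bound $\|Tv_n(x_i,a)-v(x_i,a)\|^2$ by $\|G_n^{1/2}-G^{1/2}\|_2^2$ and estimate the latter by $\|G_n-G\|_2\sqrt{M_n|A|}$, using $\|\sqrt P-\sqrt Q\|_2^2\le\sqrt{\dim}\,\|P-Q\|_2$. With $\|G_n-G\|_2\le M_n|A|f(n)$ this gives a per-point error of order $(M_n)^{3/4}f(n)^{1/2}$. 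Some care is needed here, since the stated rate $(r_n/\delta_n)^{d/4}f^{1/4}$ corresponds to $\sqrt{M_n}\,f^{1/2}$ inside the squared norm. I would aim to organise the estimates to match exactly that exponent, and this bookkeeping is the main obstacle. I then extend $T$ to a channel $\mathcal{T}_{n*}(\rho)=T\rho T^*+\tr((\mathbf 1-T^*T)\rho)\,\sigma_0$ for a fixed state $\sigma_0$. The reverse channel is built symmetrically from $T^*$.

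Third, I extend the bound off the net. For an arbitrary $x$,
\[
\|\mathcal T_{n*}(\rho_n(x,a))-\rho(x,a)\|_1\le \|\rho_n(x,a)-\rho_n(\pi(x),a)\|_1+\|\mathcal T_{n*}\rho_n(\pi(x),a)-\rho(\pi(x),a)\|_1+\|\rho(\pi(x),a)-\rho(x,a)\|_1 .
\]
For pure states $\|\,|u\rangle\langle u|-|w\rangle\langle w|\,\|_1=2\sqrt{1-|\langle u,w\rangle|^2}$. The third term is therefore $O(\delta_n^{\alpha/2})$ by hypothesis \ref{it:Lp2}. After adjusting exponent conventions, or redefining the net radius as $\delta_n^2$, this becomes the claimed $\delta_n^\alpha$. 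For the first term I write $|\langle v_n(x),v_n(\pi x)\rangle|\ge|\langle v(x),v(\pi x)\rangle|-f(n)$, so it is bounded by the Hölder term plus $O(f(n)^{1/2})$. The middle term is the net estimate from the second step. Taking suprema gives the stated bound. The limit statement then follows from $r_n^df(n)=o(\delta_n^d)$, which makes $(r_n/\delta_n)^{d/4}f^{1/4}\to0$, while $\delta_n\to0$ and $f\to0$.
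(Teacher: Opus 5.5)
Your architecture is the same as the paper's, which proves this via Lemma \ref{lem:wts1}. It consists of a $\delta_n$-net with $N\asymp |A|(r_n/\delta_n)^d$ points, Gram matrices $G_n,G$ on the net, and a contraction $T$ between the two spans built from $\sqrt{G_n}$ and $\sqrt{G}$. It ends with completion to a channel and a three-term triangle inequality that uses the H\"older hypothesis off the net. The gap is the quantitative step you flag as ``the main obstacle'', and it is not a matter of bookkeeping. You pass through Hilbert--Schmidt and trace norms (Powers--St{\o}rmer together with $\|\cdot\|_1\le\sqrt N\|\cdot\|_2$). That only gives $\|Tv_n(x_i,a)-v(x_i,a)\|^2\lesssim N^{3/2}f(n)$, which vanishes only if $(r_n/\delta_n)^{3d/2}f(n)\to0$. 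This does not follow from $r_n^df(n)\to0$. If, say, $r_n^df(n)=1/\log r_n$, then $(r_n/\delta_n)^{3d/2}f(n)\ge r_n^{d/2}/\log r_n\to\infty$ for every choice of $\delta_n\le 1$. So your argument does not give even the limit statement, let alone the stated rate. The loss is intrinsic to controlling the error in a norm that sums over all $N$ net points and then extracting a single point from that sum.

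What is missing is a dimension-free operator-norm estimate. Every entry of $G_n-G$ is at most $f(n)$, so $\|G_n-G\|_\infty\le Nf(n)$. The square root also satisfies $\|\sqrt{G_n}-\sqrt{G}\|_\infty\le\|G_n-G\|_\infty^{1/2}$ with no dependence on the matrix size (Lemma \ref{lem:sqrt}). Hence, for each net point, $\|Tv_n(x_i,a)-v(x_i,a)\|\le\|(\sqrt{G_n}-\sqrt{G})e_{i,a}\|\le(Nf(n))^{1/2}$, and the resulting trace-norm error at net points is $O((Nf(n))^{1/2})$. The paper instead inserts $\|u\|^2\|w\|^2-|\langle u,w\rangle|^2\le2\|\sqrt{G_n}-\sqrt{G}\|_\infty$ into $\||u\rangle\langle u|-|w\rangle\langle w|\|_1=\sqrt{(\|u\|^2-\|w\|^2)^2+4(\|u\|^2\|w\|^2-|\langle u,w\rangle|^2)}$. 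That yields the stated $(r_n/\delta_n)^{d/4}f(n)^{1/4}$. Either bound vanishes because $Nf(n)\lesssim(r_n/\delta_n)^df(n)\to0$. On $\delta_n^{\alpha}$ versus $\delta_n^{\alpha/2}$: the paper's own proof also produces only $\delta_n^{\alpha/2}$ from the H\"older step, which is harmless for the limit. However, your suggested remedy of shrinking the net radius to $\delta_n^2$ does not work. It enlarges the net to $(r_n/\delta_n^2)^d$ points and thereby changes the last term.
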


The proof of Lemma \ref{lem:wts2} can be found in Appendix \ref{app:lemmas.convergence.models} as a corollary of the more general Lemma \ref{lem:wts1} which is also used in proving the next result concerning mixed statistical models.
This shows that if one is able to express the states as a sum of rank one operators, where the sum index does not depend on the unknown parameter, nor on $n$, then the strong convergence of the statistical models boils down to showing the strong convergence of the collection of rank one operators appearing in the decomposition and one can apply Lemma \ref{lem:wts1}.

\begin{lemma}[{\bf Strong convergence of mixed statistical models}] \label{lemm:mixed}
Let $\{r_n\}_{n \in \mathbb{N}}$ be a sequence of positive numbers such that $r_n \uparrow +\infty$, $C$ a positive constant, $X$ be a normed space of finite dimension $d$. Moreover, let us consider a sequence of statistical models of the form
\begin{align*}
    &\mathbf{Q}(n):=\{\rho_n(x):\, x \in {\cal U}(n)\}, \\
    &\mathbf{Q}^{\infty}:=\{\rho(x):\,x \in X\},
    \end{align*}
where ${\cal U}(n):=B_{C r_n}(0)\subseteq H$ is the ball centered in $0$ with radius $C r_n$. We will use $\mathbf{Q}^{\infty}(n)$ to denote the model $\mathbf{Q}^{\infty}$ restricted to ${\cal U}(n)$. Assume that the following statements hold true:
\begin{enumerate}
\item there exists a finite set $A$ such than one can write $\rho_n(x)$ and $\rho(x)$ as a sum of rank one operators in the following way:
$$\rho_n(x)=\sum_{a \in A}\ket{v_n(x,a)}\bra{v_n(x,a)}, \quad \rho(x)=\sum_{a \in A}\ket{v(x,a)}\bra{v(x,a)};$$
\item there exists a sequence of positive numbers $(a_n)_{n \geq 0}$ such that
$$\lim_{n \rightarrow +\infty}a_n=+\infty$$
and
$$\lim_{n \rightarrow +\infty}\Delta(\mathbf{P}(n),\mathbf{P}^{\infty}(n))a_n=0,$$
where
\begin{align*}
    &\mathbf{P}(n):=\{\ket{v_n(x,a)}\bra{v_n(x,a)}:(x,a) \in {\cal U}(n)\times A\}, \\
    &\mathbf{P}^{\infty}(n):=\{\ket{v(x,a)}\bra{v(x,a)}:(x,a) \in {\cal U}(n)\times A\}.
    \end{align*}
\end{enumerate}
In this case, one has
$$\lim_{n \rightarrow +\infty}\Delta(\mathbf{Q}(n),\mathbf{Q}^{\infty}(n))a_n=0$$
\end{lemma}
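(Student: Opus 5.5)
The plan is to build the channels for the mixed models from the channels that witness the Le Cam closeness of the rank-one models $\mathbf{P}(n)$ and $\mathbf{P}^\infty(n)$, using linearity of channels. Fix $n$. By hypothesis (2) there are channels $\mathcal{T}_{n*}$ and $\mathcal{S}_{n*}$ with
\[
\sup_{(x,a)\in{\cal U}(n)\times A}\big\|\mathcal{T}_{n*}(|v_n(x,a)\rangle\langle v_n(x,a)|)-|v(x,a)\rangle\langle v(x,a)|\big\|_1\le \Delta(\mathbf{P}(n),\mathbf{P}^\infty(n))+\tfrac{1}{n a_n},
\]
and similarly for $\mathcal{S}_{n*}$ in the reverse direction. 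The slack $1/(na_n)$ is there because the infimum defining the deficiency need not be attained. We then apply the same channel to the mixed state. Linearity gives $\mathcal{T}_{n*}(\rho_n(x))=\sum_{a\in A}\mathcal{T}_{n*}(|v_n(x,a)\rangle\langle v_n(x,a)|)$. The triangle inequality then yields
\[
\|\mathcal{T}_{n*}(\rho_n(x))-\rho(x)\|_1\le |A|\big(\Delta(\mathbf{P}(n),\mathbf{P}^\infty(n))+\tfrac{1}{na_n}\big)
\]
uniformly in $x\in{\cal U}(n)$. The reverse direction is handled the same way with $\mathcal{S}_{n*}$. Multiplying by $a_n$ and using that $|A|$ is finite and independent of $n$ and $x$ gives $\Delta(\mathbf{Q}(n),\mathbf{Q}^\infty(n))a_n\to 0$.

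The main subtlety is normalisation. The vectors $|v_n(x,a)\rangle$ are generally unnormalised, since their squared norms are the weights of the decomposition. So $\mathbf{P}(n)$ is a family of positive trace-class operators rather than states, and the deficiency in hypothesis (2) has to be read with channels acting on positive operators. This is harmless: channels are linear, positive and trace preserving, so they extend to all of $L^1$, and the definition of $\delta$ makes sense verbatim for such families. I would state this explicitly.

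A second point concerns the order of quantifiers. Hypothesis (2) gives a single channel for the whole family $\mathbf{P}(n)$, indexed by both $x$ and $a$, not one channel per $a$. This is exactly what allows us to sum over $a$ inside one channel application, so the argument relies on it.

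If one prefers to rely only on earlier tools, one could instead obtain hypothesis (2) from Lemma \ref{lem:wts1}, which applies to unnormalised rank-one families. The proof itself, however, only needs the linearity and triangle-inequality step above.
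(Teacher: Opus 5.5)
Your proposal is correct and follows the paper's proof: apply one channel to the whole rank-one family, then use linearity and the triangle inequality to get $\Delta(\mathbf{Q}(n),\mathbf{Q}^{\infty}(n))\le |A|\,\Delta(\mathbf{P}(n),\mathbf{P}^{\infty}(n))$ (up to your slack term). The only difference is that you take near-optimal channels with slack $1/(na_n)$, whereas the paper cites Lemma 3.12 of \cite{GJ07} to obtain channels that attain the infimum; your version is slightly more self-contained.
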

The proof of lemma \ref{lemm:mixed} can be found in Appendix \ref{app:lemmas.convergence.models}. 
\subsection{Irreducible quantum Markov chains} \label{sec:irrQMC}

In this section we review the basic elements of the ergodic theory of quantum channels that are needed in the paper. There is a remarkable similarity to the classical theory of transition matrices and at the end of the section we show the latter can be seen as special case of the quantum theory.

\subsubsection{Irreducible quantum channels}
\label{sec:irreducible.channels}
Consider a system with finite dimensional space $\mathcal H = \mathbb C^d$ and let $\mathcal{T}_*: L^1(\mathcal H)\to L^1(\mathcal H)$ be 
a quantum channel (in the Schr\"{o}dinger picture) cf. Theorem \ref{th:Kraus}. In this section, the channel is seen as a quantum analogue of a Markov transition operator, and the goal is to review fundamental ergodic properties which will play an inportant role in the statistical analysis of quantum Markov chains. For the notions and results presented in this section, we refer to \cite{EH78,FP09,Wo12}.

\vspace{2mm}

We fix a unitary dilation specified by an auxiliary Hilbert space $\mathcal K=\mathbb C^k$, a unit vector $\ket{\chi} \in {\cal K}$ and a unitary $U:\mathcal H\otimes {\cal K} \to \mathcal H\otimes \mathcal K$ such for any state $\rho\in S(\mathcal{H})$ 
$$
\mathcal{T}_* (\rho) = \tr_\mathcal{K}(U(\rho\otimes |\chi\rangle\langle \chi|)U^*).
$$ 
Let $V:{\cal H} \rightarrow {\cal H} \otimes {\cal K}$ be the isometry
\begin{eqnarray*}
V: |\psi\rangle  &\mapsto& U(|\psi\rangle \otimes |\chi\rangle) = 
\sum_{i=1}^k 
K_i|\psi\rangle \otimes |i\rangle
\label{eq:dilation.V}
\end{eqnarray*}
where $\{ |1\rangle , \dots , |k\rangle\}$ is an ONB in 
$\mathcal{K}$ and $K_i:= \langle i| U|\chi \rangle\in L^\infty(\mathcal{H})$ are Kraus operators satisfying $\sum_{i=1}^k K_i^*K_i = \mathbf{1}$. Then $\mathcal{T}_*$ depends on $U$ through the isometry $V$ and can be written as
$$
\mathcal{T}_*(\rho) = 
\tr_{\mathcal{H}}(V\rho V^*)= \sum_{i=1}^k K_i \rho K_i^*.
$$
The dual map $\mathcal{T}: L^\infty(\mathcal H)\to  L^\infty(\mathcal H)$ is then given by
$$
\mathcal{T}(X)=\bra{\chi} U^* (X \otimes \id_{\cal K}) U \ket{\chi}
=
V^*(X\otimes \id_{\cal K})V 
= \sum_{i=1}^k K_i^* X K_i .
$$
%

By the quantum Perron-Frobenius Theorem \cite{Wo12},  
for any channel $\mathcal{T}_*$, the set of  eigenvalues is contained in the unit complex disk, and includes the eigenvalue $\lambda =1$. Moreover the  channel has a positive eigenvector with eigenvalue $1$, which means that there exists a state $\rho^{\rm ss}$ which is invariant (stationary) for the channel i.e.   
$\mathcal{T}_*(\rho^{\rm ss}) = \rho^{\rm ss}$. In this paper  will be particularly interested the class of irreducible channels.

\begin{definition}[{\bf irreducible channel}]
    The channel $\mathcal{T}_*$ is called \emph{irreducible} if it has a unique faithful invariant state, i.e. there exists a unique state $\rho^{\rm ss}>0$ such that $\mathcal{T}_*(\rho^{\rm ss}) = \rho^{\rm ss}$.
\end{definition}
For irreducible channels, time averages converge to the stationary mean 
\begin{equation}
\label{average.Schrodinger}
\lim_{n\rightarrow \infty} 
\frac1n \sum_{k=1}^n 
\mathcal{T}_*^k(\rho) = 
\rho^{\rm ss}, 
\quad \text{ for all }\rho\in L^1(\mathcal{H}),
\end{equation}
In the Heisenberg picture, this is equivalently expressed as 
\begin{equation}\label{average.Heisenberg}
\lim_{n\rightarrow \infty} \frac1n \sum_{k=1}^n \mathcal{T}^k(X) = {\rm tr}[\rho^{\rm ss} X]\id_{\cal H}, \quad \text{ for all } X\in L^\infty ({\cal H}).
\end{equation}
In general however, the sequence $\mathcal{T}_*^k(\rho)$ may not converge. 
This depends on whether 
$\mathcal{T}_*$ exhibits periodic behaviour, which is determined by the nature of its \emph{peripheral spectrum}, the set of eigenvalues with absolute value equal to one.

\begin{theorem}[{\bf Periodic decomposition for irreducible channels}]
\label{th:periodic.structure}
Let $\mathcal{T}_*$ be an irreducible channel. The following statements hold.

\begin{enumerate}
\item[i)]
There exists an integer $p$, called the period,  such that the peripheral spectrum of $\mathcal{T}_*$  is 
$\{\gamma^{i}\}_{i=0}^{p-1}$ where 
$\gamma:=e^{2\pi i/p}$, and every eigenvalue is algebraically simple. 

\item[ii)] 
There exists a unique decomposition  
\begin{equation} \label{eq:perdec}
\mathcal{H}= \bigoplus_{a=0}^{p-1} \mathcal{H}_a
\end{equation}
such that the eigenvectors of 
$\mathcal{T}$ and $\mathcal{T_*}$  corresponding to $\gamma^i$ are given respectively by
\begin{equation} \label{eq:pereigen}
Z^i=\sum_{a=0}^{p-1} \gamma^{ai} P_a, \qquad J_i= \sum_{a=0}^{p-1} \overline{\gamma}^{ai} \rho_a^{\rm ss}, \qquad \text{for all } i=0,\dots p-1,
\end{equation}
where $P_a$ denotes the projection onto the subspace $\mathcal{H}_a$, and $\rho^{\rm ss }_a=P_a \rho^{\rm ss} P_a$.
\item[iii)] 
The action of $\mathcal{T}$ is such that 
$\mathcal{T}(P_{a\oplus 1}) = P_a$ for $a=0,\dots,p-1$. 

\end{enumerate}
\end{theorem}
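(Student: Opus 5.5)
The proof follows the classical Perron–Frobenius route, adapted to unital completely positive maps. We work mainly in the Heisenberg picture with the dual $\mathcal{T}$, which is unital and completely positive. Irreducibility gives that the fixed points of $\mathcal{T}$ are $\mathbb{C}\id$: by \eqref{average.Heisenberg}, a fixed point $X$ satisfies $X=\tr[\rho^{\rm ss}X]\id$. We also use the Kadison–Schwarz inequality $\mathcal{T}(X^*X)\geq \mathcal{T}(X)^*\mathcal{T}(X)$ together with faithfulness of $\rho^{\rm ss}$.

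\textbf{Step 1: peripheral eigenvectors are multiples of unitaries.} Let $\mathcal{T}(Z)=\lambda Z$ with $|\lambda|=1$. Then
$$\tr[\rho^{\rm ss}(\mathcal{T}(Z^*Z)-Z^*Z)]=0,\qquad \mathcal{T}(Z^*Z)-\mathcal{T}(Z)^*\mathcal{T}(Z)\geq 0,\qquad \mathcal{T}(Z)^*\mathcal{T}(Z)=Z^*Z.$$
Since the operator in the middle is positive with zero trace against the faithful $\rho^{\rm ss}$, it vanishes, so $\mathcal{T}(Z^*Z)=Z^*Z$. Hence $Z^*Z\in\mathbb{C}\id$, and after normalisation $Z$ is unitary. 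Equality in Kadison–Schwarz places $Z$ in the multiplicative domain of $\mathcal{T}$. For Kraus operators this gives $ZK_i=\lambda K_iZ$; one derives it by expanding $\sum_i (ZK_i-\lambda K_iZ)^*(ZK_i-\lambda K_iZ)=0$.

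\textbf{Step 2: group structure and simplicity.} By the multiplicative domain property, products of peripheral eigenvectors are again eigenvectors, with eigenvalues multiplying. Adjoints are eigenvectors with the conjugate eigenvalue. Each eigenspace is one-dimensional: if $Z_1,Z_2$ share eigenvalue $\lambda$, then $Z_1^*Z_2$ is a fixed point, hence a scalar. So the peripheral eigenvalues form a finite subgroup of the circle, of order at most $d^2$, i.e.\ $\{\gamma^i\}_{i=0}^{p-1}$ with $\gamma=e^{2\pi i/p}$. Choose $Z$ for $\gamma$. Then $Z^p$ is fixed, so it is a scalar, and we rescale to get $Z^p=\id$. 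The spectral decomposition $Z=\sum_a\gamma^aP_a$ defines the $\mathcal{H}_a$ and gives $Z^i=\sum_a\gamma^{ai}P_a$. Every projection $P_a$ is nonzero: otherwise a polynomial in $Z$ of degree below $p$ would vanish, contradicting linear independence of the eigenvectors $Z^0,\dots,Z^{p-1}$ for distinct eigenvalues.

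\textbf{Step 3: cyclic action and the dual eigenvectors.} The relation $ZK_i=\gamma K_iZ$ means $K_i$ maps $\mathcal{H}_{a+1}$ into $\mathcal{H}_a$, indices modulo $p$; this is the only ordering consistent with iii). Hence
$$\mathcal{T}(P_{a\oplus1})=\sum_i K_i^*P_{a\oplus1}K_i=P_a,$$
using $\sum_iK_i^*K_i=\id$ and the block structure. The same block structure gives $\mathcal{T}_*(\rho^{\rm ss}_a)=\rho^{\rm ss}_{a\ominus1}$, up to orientation. Projecting $\mathcal{T}_*(\rho^{\rm ss})=\rho^{\rm ss}$ onto the diagonal blocks then shows that $\mathcal{T}_*$ permutes the $\rho^{\rm ss}_a$ cyclically. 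A direct computation verifies that $J_i=\sum_a\bar\gamma^{ai}\rho^{\rm ss}_a$ is an eigenvector of $\mathcal{T}_*$ for the eigenvalue $\gamma^i$.

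\textbf{Step 4: algebraic simplicity and uniqueness.} This is the main obstacle. For algebraic simplicity, suppose there were a Jordan block at a peripheral eigenvalue. Then $\|\mathcal{T}^n\|$ would grow linearly, which contradicts the fact that a unital completely positive map is a contraction in operator norm. Uniqueness of the decomposition follows because the eigenvector $Z$ for $\gamma$ is unique up to a scalar, and the normalisation $Z^p=\id$ fixes that scalar to a $p$-th root of unity. Multiplying $Z$ by such a root only relabels the blocks cyclically, so the decomposition is unique up to the choice of which block is labelled $\mathcal{H}_0$.

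The delicate points are these: the equality case in Kadison–Schwarz, which needs the faithfulness of $\rho^{\rm ss}$; fixing the orientation conventions so that iii) reads $\mathcal{T}(P_{a\oplus1})=P_a$; and stating clearly that uniqueness holds only up to this cyclic relabelling.
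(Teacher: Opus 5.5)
Your argument is correct in substance and is the standard one. The paper gives no proof of its own and defers to \cite{Wo12}. Your route is essentially that argument: the Schwarz inequality plus the faithful stationary state make peripheral eigenvectors unitary, the Kraus relation $ZK_i=\lambda K_iZ$ follows, the peripheral spectrum is a cyclic group, the $\mathcal{H}_a$ come from the spectral projections of the unitary for $\gamma$, and power-boundedness gives semisimplicity.

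There is, however, one concrete error in Step 3. The orientation is not a convention you get to fix; it is forced by the relation you derived. For $\psi\in\mathcal{H}_a$ you have $ZK_i\psi=\gamma K_iZ\psi=\gamma^{a+1}K_i\psi$. So $K_i\mathcal{H}_a\subseteq\mathcal{H}_{a\oplus1}$, i.e.\ $P_{a\oplus1}K_i=K_iP_a$, which is the opposite of what you wrote. With your stated direction, the displayed computation would give $\sum_iK_i^*P_{a\oplus1}K_i=P_{a\oplus2}$ rather than $P_a$. Likewise $\mathcal{T}_*(\rho^{\rm ss}_a)=\rho^{\rm ss}_{a\ominus1}$ would make $J_i$ an eigenvector for $\bar\gamma^i$ rather than $\gamma^i$. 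With the correct direction everything closes up:
\begin{itemize}
\item $\sum_iK_i^*P_{a\oplus1}K_i=\sum_i(K_iP_a)^*(K_iP_a)=P_a$, which is iii);
\item $\mathcal{T}_*(\rho^{\rm ss}_a)=P_{a\oplus1}\mathcal{T}_*(\rho^{\rm ss})P_{a\oplus1}=\rho^{\rm ss}_{a\oplus1}$ directly, with no need for the projection-onto-blocks argument;
\item hence $\mathcal{T}_*(J_i)=\gamma^iJ_i$.
\end{itemize}

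Two small points should be added. First, $J_i\neq0$ because $\rho^{\rm ss}$ is faithful and every $P_a\neq0$. Second, $\mathcal{T}_*$ is the transpose of $\mathcal{T}$ with respect to the bilinear pairing $(\rho,X)\mapsto\tr(\rho X)$, so it has the same eigenvalues with the same Jordan structure. This transfers i) from $\mathcal{T}$ to $\mathcal{T}_*$ and shows that $J_i$ spans the whole $\gamma^i$-eigenspace. Your observation that the decomposition in ii) is unique only up to cyclic relabelling is correct, and it is the right reading of ``unique'' in the statement.
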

We refer to \cite{Wo12} for  details about the proof of Theorem \ref{th:periodic.structure}, but note that the Theorem implies that  $\mathcal{T}_*$ jumps between the subspaces $\mathcal{H}_a$
$$\TT_*(P_a \rho P_a) = P_{a\oplus1} \TT_*(\rho) P_{a\oplus1} , \qquad a=0,\dots,p-1.
$$

Indeed, from $\TT(P_{a\oplus1}) = P_a$ one gets $(\id_{\cal H} - P_{a\oplus1} ) K_i P_a = 0 $ for each $i$, which means that each $K_i$ maps $\mathcal{H}_a$ into $\mathcal{H}_{a\oplus1}$. Another consequence is that the stationary state commutes with all $P_a$, so 
\begin{equation}
\label{eq:spectral.decomp.rho.ss}
\rho^{\rm ss} =\sum_{a=0}^{p-1} \rho^{\rm ss}_a, \qquad \rho^{\rm ss }_a=  \sum_{i\in I_a} \pi^{a}_i |\phi^a_i\rangle \langle \phi^a_i |,
\end{equation}
where $\pi_i^a>0$ are the eigenvalues of $\rho^{\rm ss}$ (possibly repeated) and $\ket{\phi_i^a} \in {\cal H}_a$ the corresponding eigenvector. Indeed, 
$$
 \TT_*\left ( \sum_{a=0}^{p-1} P_a\rho^{\rm ss}P_a \right) =  \sum_{a=0}^{p-1} P_{a\oplus1} \TT_*(\rho^{\rm ss}) P_{a\oplus1} =  \sum_{a=0}^{p-1} P_{a\oplus1} \rho^{\rm ss} P_{a\oplus1}
$$
so $\sum_{a=0}^{p-1}\rho^{\rm ss}_a$ is equal to the unique stationary state $\rho^{\rm ss}$. Note that $\TT_*(\rho_a^{\rm ss})=\rho_{a\oplus1}^{\rm ss}$ and $\tr(\rho_a^{\rm ss})=1/p$.


The commutative *-algebra generated by $Z$ (or equivalently by the $P_a$'s) is called the \emph{decoherence-free algebra}  of 
$\TT$, and is also the algebra of the fixed points of the quantum channel $\TT^p$. While $\mathcal{T}_*^n$ may not converge, the following two limit statement holds true for any initial state $\rho$
    
\begin{equation}
\lim_{n \rightarrow \infty}~ 
    \left\|\TT^n_*(\rho)-\sum_{a=0}^{p-1} \tr(\rho P_{a \ominus n}) \rho^{\rm ss}_a\right\|_1=0,
   \label{eq:dfs}
   \end{equation}
and in particular for $n=pl$ we get 
    \begin{equation}
    \lim_{l \rightarrow \infty} \TT_*^{pl}(\rho)=\mathcal{E}_*(\rho):=  p\sum_{a=0}^{p-1} \tr(\rho P_a)\rho_a^{\rm ss}. 
    \label{eq:erg}
    \end{equation}
Note that equation \eqref{eq:dfs} implies that irrespective its initial state after a sufficiently long time, the system will be approximately in a convex combination of the $\rho_a^{\rm ss}$s. If $p=1$, $\TT$ is called \emph{aperiodic or primitive} and one has that for every initial state $\rho$
$$\lim_{n \rightarrow +\infty} \TT^n_*(\rho)=\rho^{\rm ss}.$$

The reader will recognise that these properties generalise well known facts about transition matrices of irreducible classical Markov chains. In fact one can consider the classical case as a special case of the quantum setting described above. Indeed, given a transition matrix 
$T= [T_{ij}] \in M_d$ we can define the quantum channel 
$\mathcal{T}:L^{\infty}(\mathcal{H})\to L^\infty(\mathcal{H})$ with Kraus operators 
$K_{ij} = \sqrt{T_{ij}} |j\rangle \langle i|$ for $i,j=1,\dots d$ which leaves the commutative diagonal algebra 
$\mathcal{D}:= \{ D= \sum_{i} d_i |i\rangle \langle i| ~:~ d_i\in \mathbb{C}\} \subset L^\infty(\mathcal{H})$ invariant, and on which it has the same action as $T$ i.e. 
$\left[\mathcal{T}(D)\right]_{ii} = \sum_j T_{ij} d_j$. It can be shown that if $T$ is irreducible then $\mathcal{T}$ is as well, and both have the same period. In addition the 
stationary components $\rho^{\rm ss}_a$ of $\mathcal{T}$ are diagonal matrices corresponding to the stationary components of the stationary state of $T$.


\subsubsection{Quantum Markov chains}
\label{sec:QMC}
Using the same setup as in section \ref{sec:irreducible.channels} we introduce the notion of a quantum Markov chain (QMC) generated as a result of repeated interactions between the system and input ``environment units'', as illustrated in Figure \ref{Fig:i-o}. 

Consider the system $\mathcal{H}=\mathbb{C}^d$ interacting successively with environment units $\mathcal{K}=\mathbb{C}^k$ prepared independently in the same state $|\chi\rangle$. We assume that each interaction is described by the same unitary $U$ on $\mathcal{H}\otimes \mathcal{K}$. 
%
\begin{figure}[h]
\begin{center}
\includegraphics[width=10cm]{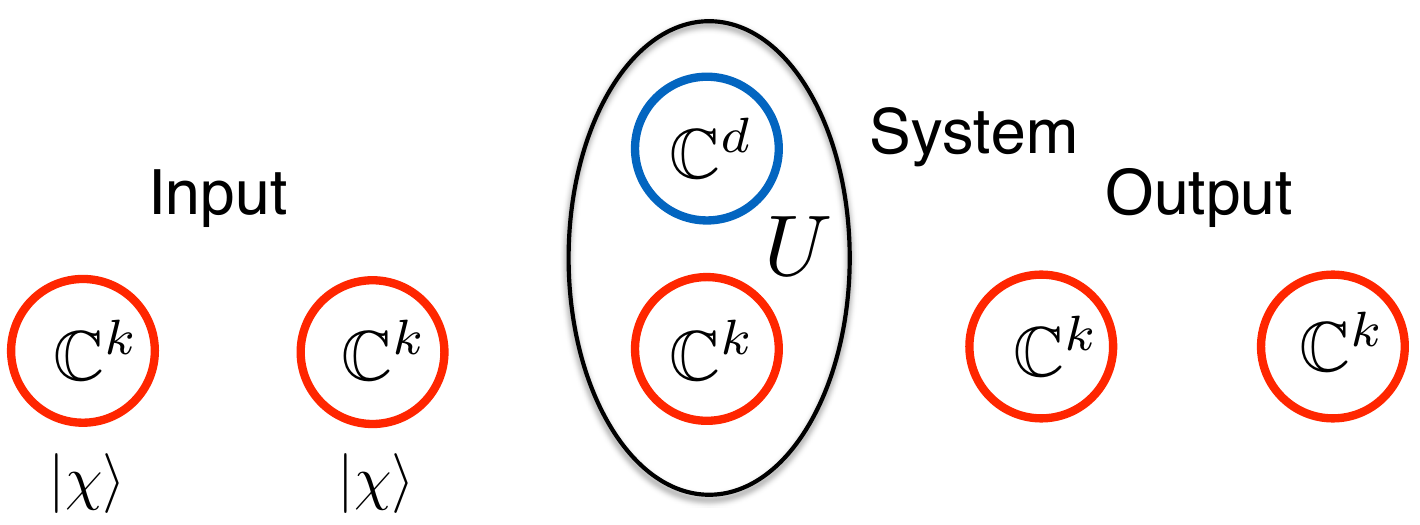}
\end{center}
\caption{A discrete-time quantum Markov chain. A sequence of identically prepared input units interact successively with a system via the unitary $U$. After the interaction, the output units are correlated and carry information about the unitary $U$.
}\label{Fig:i-o}
\end{figure}
If the initial system state is $|\varphi\rangle\in \mathcal{H} $ then after one time step the joint system-output state transforms as
$$
U: |\chi\otimes \varphi\rangle \mapsto 
U|\chi\otimes \varphi\rangle = V|\varphi\rangle =
\sum_{i=1}^k K_i |\varphi\rangle \otimes |i\rangle, 
$$
where $\{|1\rangle, \dots ,  |k\rangle\}$ is a fixed orthonormal basis in $\mathcal{K}$ and $K_i = \langle i|U|\chi\rangle $ are the Kraus operators of the channel $\TT$ introduced in section \ref{sec:irreducible.channels}.


After $n$ times steps the system and output state is
\begin{eqnarray} \label{eq:sostate}
|\Psi(n)\rangle &=& U(n) |\varphi\otimes \chi^{\otimes n}\rangle = 
U^{(n)}\cdot\dots \cdot U^{(2)}\cdot U^{(1)}|\varphi\otimes \chi^{\otimes n}\rangle\nonumber \\
&=&\sum_{i_1,\dots , i_n =1}^k 
K_{i_n}\dots K_{i_1} |\varphi\rangle \otimes |i_{1}\rangle\otimes \dots\otimes |i_{n}\rangle\nonumber\\
&=& \sum_{{\bf i}\in I_n} K^{(n)}_{\bf i} |\varphi\rangle \otimes |{\bf i}\rangle =:
 V(n)|\varphi\rangle\in \mathcal{H}\otimes \mathcal{K}^{\otimes n}\label{eq:joint.state}
\end{eqnarray}
where $U^{(i)}$ is the unitary acting on the system and the $i$-th input unit, we used the compact notation ${\bf i} := (i_1,\dots ,i_n)\in I_n:= \{1,\dots ,k\}^n$ and $ K^{(n)}_{\bf i} := K_{i_n}\dots K_{i_1}$, and the last equality defines the iterated version of $V(n)$ of the isometry $V$. By sequentially tracing out the environment units in equation \eqref{eq:joint.state} we find that the reduced state of the system at time $n$ is  
$$
\rho^{\rm sys}(n) := 
\tr_{\mathcal{K}^{\otimes n }}
(|\Psi(n)\rangle \langle \Psi(n)|) = 
\TT^n_*(\rho_{\rm in}), \qquad \rho_{\rm in} = |\varphi\rangle\langle \varphi|,
$$
where the partial trace is taken over the output units. Note that this is similar to classical Markov chains where the $n$-steps evolution is described by the $n$-th power of the transition operator. On the other hand, by tracing out the system in equation \eqref{eq:joint.state} we obtain the \emph{output} state, i.e. state of the 
$n$ ``environmental units'', after the interaction with the system
\begin{eqnarray}
    \label{eq:output.state}
\rho^{\rm out} (n) 
&= &
\tr_{\mathcal{H}}\left[U(n) (\rho_{\rm in} \otimes \tau^{\otimes n}) U(n)^*\right]= \tr_{\mathcal{H}}\left[V(n) \rho_{\rm in} V(n)^*\right]\\
&=&
    \sum_{{\bf i}, {\bf j}} {\rm Tr}(K_{\bf i} \rho_{\rm in} K^*_{\bf j})
|{\bf i}\rangle\langle {\bf j}|.\end{eqnarray}
In practical settings, the system is often not directly accessible so the QMC should be seen as a black-box input-output system where the black-box dynamics is probed by performing measurements on the output state. For this reason, our investigation will focus on understanding the structure of the output state from an asymptotic statistical viewpoint. This structure depends strongly on the ergodic properties of the dynamics, and throughout the paper we assume that the Markov operator $\TT$ is irreducible, which is satisfied in many physical settings. 

\section{Identifiability of stationary irreducible QMCs}
\label{sec:identifiability}

In this section we develop the identifiability theory for \emph{irreducible} QMCs as a preliminary step in the estimation problem. Along the way, we compare our results with those for \emph{primitive} QMCs \cite{GK15}. The main results are the equivalence between the different notions of equivalent parameters (Proposition \ref{prop:equivalence}) and Theorem \ref{thm:identification}, which gives a complete characterisation of QMCs with the same stationary output. We conclude the section recalling that a partial counterpart of Theorem \ref{thm:identification} was proved for the classical counterpart of the models studied in this work, i.e. hidden Markov chains.

\vspace{2mm}

Consider an irreducible QMC specified by the isometry $V$ as outlined in sections \ref{sec:irreducible.channels} 
and \ref{sec:QMC}, and let $\rho_V^{\rm ss}$ be its unique stationary state and denote by 
$\rho^{\rm out}_V(n)$ the output state \eqref{eq:output.state} when the initial system state is $\rho_{\rm in} = \rho_V^{\rm ss}$. In this case the output state is stationary in time. To motivate this choice of initial state for the system, note that if the QMC is primitive, any system initial state converges to $\rho_V^{\rm ss}$ exponentially fast, which means that the stationary output is the relevant model in an asymptotic analysis. We will therefore start by introducing an equivalence relation between QMCs based on the distinguishability of  their stationary output states (Definition \ref{def:equiv}). However, this notion of identifiability requires further justification when one considers periodic QMCs, for which convergence to stationarity does not generally hold. We show that the same equivalence notion emerges from two other statistical settings: the estimation of dynamical parameters without any information about the initial state of the system (Definition \ref{def:waek.equivalence}), or having access only to macroscopic fluctuations of the output (Definition \ref{def:macro.equivalence}).
\begin{definition}[\textbf{Stationary output equivalence}] 
\label{def:equiv}
Consider two irreducible quantum Markov chains with isometries $V_1:\mathcal{H}_1\to \mathcal{H}_1\otimes \mathcal{K}$ and 
$V_2:\mathcal{H}_2\to \mathcal{H}_2\otimes \mathcal{K}$ having the same output space $\mathcal{K}$ but possibly different system spaces $\mathcal{H}_1$ and $\mathcal{H}_2$. We say that the chains/isometries are \emph{output equivalent} if their corresponding stationary output states are identical
$$
\rho^{\rm out}_{V_1}(n) = \rho^{\rm out}_{V_2}(n), {\rm ~for~all~} n\in \mathbb{N}.
$$

\end{definition}

As pointed out earlier, the stationary output is an appropriate asymptotic model in the case of primitive QMC, but needs some justification in the case of irreducible QMCs which exhibit periodicity. In this case the output retains a dependence on the initial system state, even after long times. However, if the  initial system state is unknown (as it is often the case), multiple dynamics with different initial states may produce the same output state. This is the basis of our second equivalence definition.

\begin{definition}[\textbf{Weak output equivalence}]
\label{def:waek.equivalence}Consider two irreducible quantum Markov chains with isometries $V_1:\mathcal{H}_1\to \mathcal{H}_1\otimes \mathcal{K}$ and 
$V_2:\mathcal{H}_2\to \mathcal{H}_1\otimes \mathcal{K}$ having the same output space $\mathcal{K}$ but possibly different system spaces $\mathcal{H}_1$ and $\mathcal{H}_2$.  We say that the chains/isometries are \emph{weakly output equivalent} if there exists a pair of system states $(\rho_1,\rho_2)$ such that the corresponding outputs are identical
\[
\tr_{{\cal H}_1}(V_1(n)\rho_1 V_1(n)^*)=\tr_{{\cal H}_2}(V_2(n)\rho_2 V_2(n)^*), {\rm ~for~all~} n\in \mathbb{N}.
\]
\end{definition}
From Definitions \ref{def:equiv} and \ref{def:waek.equivalence} it follows that if two chains are stationary output equivalent, they are also weakly output equivalent, by choosing their stationary states as initial states. As we will show below, the converse is also true, so the two notions are equivalent.  

\vspace{2mm}

In order to formulate a third equivalent notion of indistinguishability, we need to introduce a family of physically meaningful observables which provide a rich class of statistics, namely time averages of local output observables. 

Let $Q\in L^\infty({\cal K}^{\otimes k})$ be a selfadjoint operator interpreted as an observable of a chain of $k$ subsequent output units, for some fixed $k\in \mathbb{N}$. Let $n$ be the output size such that $n\gg k$ and consider the observable $Q^{(i)}$ which acts as  
$Q$ on the output units 
$\{i,i+1,\dots i+k-1\}$ and identity on the rest of the units. We denote the stationary mean value of $Q$ and, respectively, the time correlations by  
\begin{align*}
m_V(Q) &= {\rm Tr}(\rho^{\rm out}_V (n)Q^{(i)}),\\
c_{a,V} (Q) &= {\rm Tr}\left[\rho^{\rm out}_V (n)(Q^{(i)}-m_V(Q)\mathbf{1})\circ ( Q^{(i+a)} -m_V(Q)\mathbf{1})\right],
\end{align*}
where $A\circ B$ denotes the symmetric product,  
$1\leq i,i+a\leq  n $. Note that both are independent of $n$ and $i$ (as far as $n \gg i$) due to stationarity. We define the \emph{time average} and, respectively, the \emph{fluctuation operator} associated to $Q$  by
$$
\overline{Q}_{n}:=\frac{1}{n-k+1}\sum_{l=1}^{n-k+1} Q^{(l)}, \qquad F_n(Q):=
\sqrt{n-k+1} (\overline{Q}_{n}- m_V (Q)).
$$ The following Lemma shows that these observables satisfy a Law of Large Numbers and a Central Limit Theorem, respectively.

\begin{lemma} \label{lem:ltflu}
Consider an irreducible quantum Markov chain with isometry $V:\mathcal{H}\to \mathcal{H}\otimes \mathcal{K}$. Let $\rho$ be a system state and let $\rho^{\rm out}_V(n,\rho)$ be the output state for the initial state $\rho$. Let 
$\mathbb{P}(F_n(Q), \rho^{\rm out}_V(n,\rho))$ and 
$\mathbb{P}(\overline{Q}_n, \rho^{\rm out}_V(n,\rho))$ denote the probability distributions of $F_n(Q)$ and respectively  $\overline{Q}_n$ with respect to $\rho^{\rm out}_V(n,\rho)$.

Then the following limits hold in the sense of convergence in distribution:
\begin{equation}
\lim_{n \rightarrow +\infty}
\mathbb{P}(F_n(Q), \rho^{\rm out}_V(n,\rho))
= {\cal N}(0,\sigma^2_{V}(Q)), \quad \lim_{n \rightarrow +\infty}
\mathbb{P}(\overline{Q}_n, \rho^{\rm out}_V(n,\rho))
    = \delta_{m_V(Q)},
\end{equation}
where the asymptotic variance of the Gaussian random variable is given by the following expression:
\begin{equation} \label{eq:asvar}
\sigma^2_{V}(Q):= c_{0,V}(Q)+2\sum_{l=1}^{+\infty}
c_{l,V}(Q).
\end{equation}

\end{lemma}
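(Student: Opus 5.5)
The plan is to compute the moment generating (characteristic) function of $F_n(Q)$ through a deformed transfer operator and then perturb its peripheral spectrum. Since the $Q^{(l)}$ do not commute, it is cleanest to first reduce to a commutative situation. Write $Q=\sum_x x\,\Pi_x$ in spectral form on $\mathcal{K}^{\otimes k}$. Group the output into consecutive blocks of length $k$ (after passing to the blocked chain with isometry $V^{(k)}:=V(k)$, which is again a finite-dimensional channel with peripheral spectrum contained in that of $\TT^k$). Then $\overline{Q}_n$ splits into $k$ sums according to the offset $l \bmod k$. Within one offset class the terms act on disjoint blocks and therefore commute.

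For the law of large numbers I do not need commutativity. Indeed,
\[
{\rm Tr}(\rho^{\rm out}_V(n,\rho)\overline{Q}_n)=\frac{1}{n-k+1}\sum_l {\rm Tr}\big(\TT_*^{l-1}(\rho)\,\TT_Q(\id)\big),
\]
where $\TT_Q(X):=V(k)^*(X\otimes Q)V(k)$. By the ergodic average \eqref{average.Schrodinger} this tends to ${\rm Tr}(\rho^{\rm ss}\TT_Q(\id))=m_V(Q)$. Next I compute the second moment ${\rm Tr}(\rho^{\rm out}\,\overline{Q}_n^2)$. It is a double sum over $l,l'$ of terms ${\rm Tr}(\TT_*^{l-1}(\rho)\,\cdots)$. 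Split $\TT=\TT_{\rm per}+\TT_{\rm rest}$, where $\TT_{\rm per}=\sum_i\gamma^{i}\,|Z^i\rangle\langle J_i|$ (Theorem \ref{th:periodic.structure}) and $\TT_{\rm rest}$ has spectral radius $<1$. This shows that the variance of $\overline{Q}_n$ is $O(1/n)$, which gives Chebyshev-type convergence to $\delta_{m_V(Q)}$.

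For the CLT I use the characteristic function $\phi_n(t)={\rm Tr}(\rho^{\rm out}e^{itF_n(Q)})$. Since the $Q^{(l)}$ overlap and do not commute, $e^{itF_n}$ does not factorize. I handle this by the standard quantum-CLT device: approximate $e^{itF_n}$ by a time-ordered product of local exponentials, using a Trotter/BCH estimate. The commutators $[Q^{(l)},Q^{(l')}]$ vanish for $|l-l'|\ge k$ and are $O(1/n)$ after the $1/\sqrt n$ scaling, so the error in the exponent is $O(1/\sqrt n)$ and vanishes. Alternatively, and more simply, I can verify convergence of all moments ${\rm Tr}(\rho^{\rm out}F_n^m)$ to Gaussian moments with variance \eqref{eq:asvar}. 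This is a cumulant computation in which $\TT_{\rm rest}$ gives exponentially decaying connected correlations. Moment convergence then implies convergence in distribution because the Gaussian is determined by its moments.

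The key step and main obstacle is the periodic part: the peripheral eigenvectors $Z^i$ with $i\neq 0$ do not contribute a decaying factor. The plan for dealing with it is as follows.
\begin{itemize}
\item Observe that under any state the output correlations $c_{a}$ computed from $\TT_*^{l}(\rho)$ oscillate with period $p$.
\item Note that the quantities entering the mean and the cumulants are Cesàro sums over $l$, so the factors $\gamma^{il}$ with $i\ne0$ average out at rate $O(1/n)$, again by \eqref{average.Schrodinger}. Thus the cumulants of order $m$ of $F_n$ are $n^{1-m/2}$ times averages that converge to their stationary values, independently of $\rho$.
\item Check carefully that the stationary second cumulant equals $c_{0,V}+2\sum_{l\ge1}c_{l,V}$. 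The sum converges because $c_{l,V}$ involves $\TT^{l-k}$ applied to a quantity orthogonal to $\id$ in the $\rho^{\rm ss}$-pairing. Its peripheral components vanish after stationary averaging over phases $a$ of the periodic decomposition, leaving the geometrically decaying $\TT_{\rm rest}$.
\end{itemize}
Higher cumulants then vanish in the limit, yielding $\mathcal{N}(0,\sigma_V^2(Q))$.
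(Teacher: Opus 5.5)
Your treatment of the peripheral spectrum is where the argument breaks, and that is exactly the step you identify as the crux. The claim that the peripheral components of $c_{l,V}(Q)$ vanish after stationary averaging is false. Pairing with $\rho^{\rm ss}$ only removes the component of $\TT_Q(\id)-m_V(Q)\id$ along $Z^0=\id$. The components along $Z^i$, $i\neq 0$, survive $\TT^{l-k}$ as $\gamma^{i(l-k)}Z^i$. The left factor does not kill them either: $(Z^i\otimes\id)V(k)=\gamma^{ik}V(k)Z^i$ gives ${\rm Tr}(\rho^{\rm ss}\TT_Q(Z^i))=\gamma^{ik}{\rm Tr}(Z^i\rho^{\rm ss}\,\TT_Q(\id))$, which is generally nonzero.

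Concretely, take $\mathcal{H}=\mathcal{K}=\mathbb{C}^2$, $V|0\rangle=|1\rangle\otimes|0\rangle$, $V|1\rangle=|0\rangle\otimes|1\rangle$. This is irreducible with period $2$ and $\rho^{\rm ss}=\id/2$. Let $Q=|0\rangle\langle 0|$ act on one site. The stationary output is $\frac12\big(|0101\cdots\rangle\langle 0101\cdots|+|1010\cdots\rangle\langle 1010\cdots|\big)$, so $c_{l,V}(Q)=(-1)^l/4$ for every $l\geq 0$. The correlations do not decay, and the partial sums of $c_{0,V}+2\sum_l c_{l,V}$ oscillate between $\pm 1/4$, yet $|F_n(Q)|\leq 1/(2\sqrt n)$.

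So the stationary output is not clustering, and in general neither is the output of an initial state not supported in a single $\mathcal{H}_a$. Your cumulant argument needs exactly this: that $\TT_{\rm rest}$ gives exponentially decaying connected correlations, hence $m$-point connected sums of order $n$. That hypothesis is unavailable here. Cesàro averaging of the phases $\gamma^{il}$ does not remove these terms: in the second cumulant they contribute at order one, and in the example they cancel $c_{0,V}$ exactly. For $p>1$ the series in \eqref{eq:asvar} must therefore be read in the Cesàro sense, $\sigma^2_V(Q)=\lim_N\big[c_{0,V}(Q)+2\sum_{l=1}^{N}(1-l/N)\,c_{l,V}(Q)\big]$.

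Your Trotter/BCH shortcut also fails as stated. The term $\frac{t^2}{2n}\sum_{l<l'}[Q^{(l)},Q^{(l')}]$ has of order $nk$ nonvanishing summands, so it is $O(1)$, not $O(n^{-1/2})$. It concentrates at a purely imaginary scalar that has to be retained. Blocking by $k$ does not help either: different offset classes still fail to commute, and periodicity survives, since $\TT^k$ has peripheral eigenvalues other than $1$ unless $p$ divides $k$.

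The missing idea is a mechanism that restores clustering, and this is what the paper's reductions supply.
\begin{enumerate}
\item Since $K_{\mathbf i}P_a$ maps into $\mathcal{H}_{a\oplus n}$, cross terms vanish and $\rho^{\rm out}_V(n,\rho)=\sum_a{\rm Tr}(\rho P_a)\,\rho^{\rm out}_V(n,\rho_a)$ with $\rho_a$ supported in $\mathcal{H}_a$. One can therefore treat a single phase, provided the limit turns out independent of $a$.
\item The output is regrouped into blocks of length $M$, a multiple of $p$ with $M\geq k$; this changes $F_n(Q)$ by $O(n^{-1/2})$ in norm. Then $\TT^M$ restricted to $\mathcal{H}_a$ is primitive. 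The block observable $\widetilde Q$ is a sum over full periods, so it is centred under each $p\rho^{\rm ss}_a$, and this is what eliminates the oscillating terms above.
\item Discarding finitely many initial blocks and using $\TT_*^{Mj}(\rho_a)\to p\rho^{\rm ss}_a$ reduces everything to a translation-invariant finitely correlated state with primitive transfer operator. Matsui's CLT \cite{Ma03} then handles the noncommutativity and all higher cumulants at once.
\item Finally one checks that the resulting variance is the same for every $a$.
\end{enumerate}
Your cumulant computation could replace the appeal to Matsui inside each phase, but only after this phase decomposition and full-period blocking. Your law-of-large-numbers argument is fine as it stands.
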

The proof of Lemma \ref{lem:ltflu} can be found in Appendix \ref{ap:fo}. This result extend similar ones for primitive dynamics \cite{Ma03} by allowing for periodic dynamics and arbitrary initial states.
Lemma \ref{lem:ltflu} shows that the asymptotic mean of 
$\overline{Q}_n$ forgets the initial state $\rho$ and is uniquely determined by the isometry $V$ through the output stationary state $\rho_V^{\rm out}$. In turn, the set of means $m_V(Q)$ completely characterises the stationary output, since the latter is determined by expectations of local observables. Therefore, a consequence of Lemma \ref{lem:ltflu} is that weak output equivalence (cf. Definition \ref{def:waek.equivalence}) implies implies strong output equivalence (cf. Definition \ref{def:equiv}) and the two definitions are equivalent. Another consequence is that the following definition is also equivalent to the previous ones.


\begin{definition}[{\bf Macroscopic output equivalence}] 
\label{def:macro.equivalence}
Consider two irreducible quantum Markov chains with isometries $V_1:\mathcal{H}_1\to \mathcal{H}_1\otimes \mathcal{K}$ and 
$V_2:\mathcal{H}_2\to \mathcal{H}_1\otimes \mathcal{K}$ having the same output space $\mathcal{K}$, but possibly different system spaces $\mathcal{H}_1$ and $\mathcal{H}_2$. We say that the chains/isometries are \textit{macroscopic output equivalent} if for every local observable $Q$ and pair of initial states $(\rho_1,\rho_2)$ the limit laws of $F_n(Q)$ and $\overline{Q}_n$ coincide, i.e. 
\begin{equation} \label{eq:asympeq}
m_{V_1}(Q)=m_{V_2}(Q), \quad \sigma_{Q,V_1}^2=\sigma_{Q,V_2}^2.\end{equation}
\end{definition}

Note that the condition stated in Eq. \eqref{eq:asympeq} is redundant, since one can easily see from equation \eqref{eq:asvar} that the equality of all asymptotic means implies that of asymptotic variances as well. Let us summarise in the following Proposition what we just discussed.
\begin{proposition} \label{prop:equivalence}
  Let $V_1:\mathcal{H}_1\to \mathcal{H}_1\otimes \mathcal{K}$ and 
$V_2:\mathcal{H}_1\to \mathcal{H}_1\otimes \mathcal{K}$ be two  isometries corresponding to irreducible QMCs. The following are equivalent:
\begin{enumerate}
\item[(i)]
$V_1$ and $V_2$ are stationary output equivalent;
\item[(ii)] $V_1$ and $V_2$ are weak output equivalent;
\item[(iii)] $V_1$ and $V_2$ are macroscopic output equivalent.
\end{enumerate}
If any of the previous equivalent conditions hold, we will simply say that the two isometries $V_1$ and $V_2$ are 
\emph{output equivalent} or that they are \emph{indistinguishable parameters}.
\end{proposition}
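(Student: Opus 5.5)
The plan is to prove the cycle (i)$\Rightarrow$(ii)$\Rightarrow$(i) and (i)$\Leftrightarrow$(iii), using Lemma \ref{lem:ltflu} as the main tool.

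\emph{(i)$\Rightarrow$(ii).} This is immediate: choose $\rho_1=\rho^{\rm ss}_{V_1}$ and $\rho_2=\rho^{\rm ss}_{V_2}$ in Definition \ref{def:waek.equivalence}.

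\emph{(ii)$\Rightarrow$(i).} Let $(\rho_1,\rho_2)$ be initial states producing identical outputs for every $n$. Fix a local observable $Q\in L^\infty(\mathcal{K}^{\otimes k})$. Since the outputs coincide, the distributions $\mathbb{P}(\overline{Q}_n,\rho^{\rm out}_{V_1}(n,\rho_1))$ and $\mathbb{P}(\overline{Q}_n,\rho^{\rm out}_{V_2}(n,\rho_2))$ are equal for every $n$. By Lemma \ref{lem:ltflu} they converge to $\delta_{m_{V_1}(Q)}$ and $\delta_{m_{V_2}(Q)}$ respectively, and limits in distribution are unique, so $m_{V_1}(Q)=m_{V_2}(Q)$.

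Now take $Q$ to range over all of $L^\infty(\mathcal{K}^{\otimes k})$ for every $k$; linearity extends the identity from selfadjoint $Q$ to all operators. Then $\mathrm{Tr}(\rho^{\rm out}_{V_1}(k)Q)=m_{V_1}(Q)=m_{V_2}(Q)=\mathrm{Tr}(\rho^{\rm out}_{V_2}(k)Q)$. This uses stationarity: the marginal of the stationary output on any block of $k$ consecutive units equals $\rho^{\rm out}_V(k)$, which holds because $\mathcal{T}_*(\rho^{\rm ss})=\rho^{\rm ss}$ and tracing out later units leaves earlier marginals unchanged. Since $L^\infty$ is the dual of $L^1$, this gives $\rho^{\rm out}_{V_1}(k)=\rho^{\rm out}_{V_2}(k)$ for all $k$.

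\emph{(i)$\Rightarrow$(iii).} If the stationary outputs agree, then $m_{V_1}(Q)=m_{V_2}(Q)$ for every $Q$. Each correlation $c_{a,V}(Q)$ is the expectation of a local observable on $k+a$ consecutive units in the stationary output, so $c_{a,V_1}(Q)=c_{a,V_2}(Q)$ for all $a$. Formula \eqref{eq:asvar} then gives $\sigma^2_{V_1}(Q)=\sigma^2_{V_2}(Q)$. Lemma \ref{lem:ltflu} shows these limit laws do not depend on the initial states, so they coincide for every pair $(\rho_1,\rho_2)$.

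\emph{(iii)$\Rightarrow$(i).} Equality of $m_{V_1}(Q)$ and $m_{V_2}(Q)$ for all local $Q$ gives equal stationary outputs by the same duality argument used in (ii)$\Rightarrow$(i).

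The only delicate point is justifying that $m_V(Q)$ and $c_{a,V}(Q)$ are stationary-output expectations independent of position and $n$. This follows from the invariance $\mathcal{T}_*(\rho^{\rm ss})=\rho^{\rm ss}$ together with $\mathcal{T}(\mathbf{1})=\mathbf{1}$. All the genuinely analytic content, namely the LLN and CLT under periodic dynamics with arbitrary initial state, is supplied by Lemma \ref{lem:ltflu}, which is assumed.
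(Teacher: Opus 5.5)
Your proposal is correct and follows essentially the same route as the paper, which derives the proposition in the discussion just before it. There, (i)$\Rightarrow$(ii) holds by choosing the stationary states; (ii)$\Rightarrow$(i) follows from the law of large numbers in Lemma \ref{lem:ltflu}, since the limit $\delta_{m_V(Q)}$ forgets the initial state and the means $m_V(Q)$ determine the stationary output; and (i)$\Leftrightarrow$(iii) holds because equal stationary means force equal correlations $c_{a,V}(Q)$ and hence equal variances in \eqref{eq:asvar}. Your write-up only makes explicit details the paper leaves implicit, namely the extension to non-selfadjoint $Q$ by linearity and the consistency and shift-invariance of the stationary marginals.
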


After motivating the notion of equivalent dynamics, we now proceed to characterise the corresponding equivalence classes. The following Theorem provides an explicit description of QMCs which give rise to the same stationary output. It generalizes the corresponding result in \cite{GK15} (Theorem 2), showing that the assumption of primitivity can be relaxed to irreducibility; this has also been observed in \cite{BCJPta}. The proof can be found in Appendix \ref{ap:oe}.
\begin{theorem}[{\bf Equivalent irreducible QMCs}]\label{thm:identification}
Let $V_1:\mathcal{H}_1\to \mathcal{H}_1\otimes \mathcal{K}$ and 
$V_2:\mathcal{H}_1\to \mathcal{H}_1\otimes \mathcal{K}$ be two  isometries corresponding to irreducible QMCs with stationarys states 
$\rho^{\rm ss}_1$ and respectively $\rho^{\rm ss}_2$. The following are equivalent:

\begin{itemize}
\item[(i)]
$V_1$ and $V_2$ are output equivalent;
\item[(ii)] There exists a unitary
$W: \mathcal{H}_2\to \mathcal{H}_1$ and a complex phase $e^{i\omega}$ such that 
$$
K_{2,i} = e^{i\omega} W^* K_{1,i} W \quad  \text{and} 
\quad 
\rho^{\rm ss}_2=W^*\rho^{\rm ss}_1W
$$ 

\item[(iii)]
There exists a unitary
$W: \mathcal{H}_2\to \mathcal{H}_1$ and a complex phase $e^{i\omega}$ such that 
$$
(W\otimes \mathbf{1}_{\cal K}) V_2 = e^{i\omega}V_1 W.
$$
\end{itemize}
\end{theorem}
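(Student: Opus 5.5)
The implications (iii)$\Rightarrow$(ii)$\Rightarrow$(i) are routine, and the substance lies in (i)$\Rightarrow$(ii). For (iii)$\Rightarrow$(ii) I will expand both sides of $(W\otimes \mathbf{1}_{\cal K})V_2=e^{i\omega}V_1W$ in the basis $\{|i\rangle\}$ of ${\cal K}$. This gives $WK_{2,i}=e^{i\omega}K_{1,i}W$, i.e. $K_{2,i}=e^{i\omega}W^*K_{1,i}W$. Consequently $\TT_{2*}(\rho)=W^*\TT_{1*}(W\rho W^*)W$, so $W^*\rho^{\rm ss}_1W$ is a faithful invariant state of $\TT_{2*}$, and uniqueness (irreducibility) gives $\rho^{\rm ss}_2=W^*\rho^{\rm ss}_1W$. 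The converse (ii)$\Rightarrow$(iii) is the same computation read backwards. For (ii)$\Rightarrow$(i) I will insert the relation into the matrix elements $\tr(K_{2,\mathbf i}\rho^{\rm ss}_2K_{2,\mathbf j}^*)$ of \eqref{eq:output.state}. The phases $e^{in\omega}$ and $e^{-in\omega}$ cancel and the $W$'s telescope, so the stationary outputs coincide for every $n$.

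For (i)$\Rightarrow$(iii) I will use a purification/Stinespring-uniqueness argument on the stationary output, adapting the proof of Theorem 2 in \cite{GK15}. First, I purify the stationary state as $|\Phi_j\rangle=\sum_i\sqrt{\pi_i}\,|\phi_i\rangle\otimes|\phi_i\rangle\in\mathcal{H}_j\otimes\mathcal{H}_j$ with a reference copy. The vectors $(V_j(n)\otimes\mathbf 1)|\Phi_j\rangle$ then purify $\rho^{\rm out}_{V_j}(n)$ with purifying system $\mathcal{H}_j\otimes\mathcal{H}_j$. Since the outputs are equal, uniqueness of purifications gives a partial isometry $W_n$ on the purifying spaces with $(\mathbf 1\otimes W_n)$ mapping one purification onto the other. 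The aim is to show that $W_n$ can be chosen to factor as $W\otimes\overline W$, with $W$ independent of $n$. Second, I will use the matrix product structure. The map $X\mapsto \tr_{\rm out}[(Q\otimes\mathbf 1)\,\cdot\,]$ shows that the ``memory'' of the output is generated by the operators $K_{j,\mathbf i}\sqrt{\rho^{\rm ss}_j}$. Equality of all output correlations, $\tr(K_{1,\mathbf i}\rho_1K_{1,\mathbf j}^*)=\tr(K_{2,\mathbf i}\rho_2K_{2,\mathbf j}^*)$ for all words, is a Gram-matrix equality between the families $\{K_{1,\mathbf i}\sqrt{\rho_1}\}$ and $\{K_{2,\mathbf i}\sqrt{\rho_2}\}$ in Hilbert–Schmidt space. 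Hence there is a unitary $\mathcal{U}$ between their spans with $\mathcal{U}(K_{2,\mathbf i}\sqrt{\rho_2})=K_{1,\mathbf i}\sqrt{\rho_1}$. Irreducibility enters here: the words $K_{\mathbf i}$ of all lengths span the whole algebra $L^\infty(\mathcal H)$ (the unique faithful invariant state forces the generated algebra to be irreducible), so the spans are the full spaces $L^2(\mathcal H_j)$. Third, I will show that $\mathcal U$ intertwines left multiplication by $K_{2,i}$ with left multiplication by $K_{1,i}$, by extending a word by one letter. Consequently $\mathcal U$ intertwines the left regular representations of the two (full matrix) algebras. Such an intertwiner has the form $\mathcal U(X)=\Theta(X)$ with $K_{1,i}=e^{i\omega}WK_{2,i}W^*$ up to a phase, and this yields (iii).

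The main obstacle is the periodic case. For $p>1$ the word algebra generated by words of a fixed length is not full, being block structured by the decomposition \eqref{eq:perdec}, and only words of all lengths generate $L^\infty(\mathcal H)$. Stationarity is also only over the average $\rho^{\rm ss}=\sum_a\rho^{\rm ss}_a$. I will handle this by using words of all lengths simultaneously, which the stationary output supplies since it is defined for every $n$. I will also check that the phase ambiguity produced by $\mathcal U$ (an automorphism of $M_d$ being inner up to a scalar) is a single global phase $e^{i\omega}$ and not a word-length-dependent one. This check uses that the intertwining holds letter by letter, so a length-dependent phase $\lambda^{|\mathbf i|}$ must come from one scalar $\lambda$ with $|\lambda|=1$.
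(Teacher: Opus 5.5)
Your arguments for (ii)$\Leftrightarrow$(iii) and (ii)$\Rightarrow$(i) are fine, but (i)$\Rightarrow$(iii) has a genuine gap at its central step. The stationary output at time $n$ determines only the inner products $\tr(K_{\mathbf i}\rho^{\rm ss}K_{\mathbf j}^*)$ for words of the \emph{same} length $|\mathbf i|=|\mathbf j|=n$. Cross-length inner products $\tr(\rho^{\rm ss}K_{\mathbf j}^*K_{\mathbf i})$ with $|\mathbf i|\neq|\mathbf j|$ are not output quantities. Moreover, for equivalent chains they are in general \emph{not} equal. Already for $V_2=e^{i\omega}V_1$ one has $\tr(\rho^{\rm ss}_2K_{2,\mathbf j}^*K_{2,\mathbf i})=e^{i\omega(|\mathbf i|-|\mathbf j|)}\tr(\rho^{\rm ss}_1K_{1,\mathbf j}^*K_{1,\mathbf i})$.

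So a single unitary $\mathcal U$ with $\mathcal U(K_{2,\mathbf i}\sqrt{\rho^{\rm ss}_2})=K_{1,\mathbf i}\sqrt{\rho^{\rm ss}_1}$ for words of all lengths does not exist in general. Your fix for the periodic case (``use words of all lengths simultaneously, which the stationary output supplies'') relies on exactly the information the output does not contain. What the data legitimately give is a sequence of length-dependent partial isometries $\mathcal U_n$, linked by $\mathcal U_{n+1}(K_{2,k}X)=K_{1,k}\,\mathcal U_n(X)$. Each $\mathcal U_n$ is defined only on the span of length-$n$ words, which for $p>1$ lies in the proper block subspace $\bigoplus_a L(\mathcal H_a,\mathcal H_{a\oplus n})$. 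Nothing in your proposal forces $\mathcal U_n$ to be $\lambda^{-n}$ times a fixed map. Your final ``phase check'' presupposes the single $\mathcal U$, and a single $\mathcal U$ intertwining $K_{2,k}$ with $K_{1,k}$ leaves no room for a phase at all, so the check is circular. Likewise, the purification step (choosing $W_n=W\otimes\overline W$ independent of $n$) merely restates the claim. The phase $e^{i\omega}$ is precisely the unknown that must be extracted from same-length data.

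The paper does this with the map $\TT_{12}(X)=V_1^*(X\otimes\mathbf 1_{\cal K})V_2=\sum_iK_{1,i}^*XK_{2,i}$, which uses only same-length data.
\begin{itemize}
\item The overlap $\tr(\rho^{\rm out}_{V_1}(n)\rho^{\rm out}_{V_2}(n))$ is a positive combination of the terms $|\langle\phi_{1,p}|\TT_{12}^n(|\phi_{1,m}\rangle\langle\phi_{2,m'}|)|\phi_{2,p'}\rangle|^2$.
\item If every eigenvalue of the contraction $\TT_{12}$ lay strictly inside the unit disk, this overlap would vanish as $n\to\infty$. That contradicts $\tr(\rho^{\rm out}(n)^2)\ge d^{-2}$ when the outputs are equal.
\item Hence $\TT_{12}(F)=cF$ for some $F$ with $|c|=1$, and $c$ is the phase.
\item Next, $\TT_2(F^*F)\ge \TT_{12}(F)^*\TT_{12}(F)=F^*F$. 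Combined with ergodic averaging and faithfulness of $\rho^{\rm ss}_2$, this gives $F^*F=\mathbf 1$.
\item Saturation of Cauchy--Schwarz then gives $FK_{2,i}=cK_{1,i}F$. Uniqueness of the stationary state gives $F\rho^{\rm ss}_2F^*=\rho^{\rm ss}_1$, so $F$ is unitary.
\end{itemize}
This route needs no spanning property of words and handles the periodic case with no extra work. To keep your Hilbert--Schmidt picture, you would first need an analogous device producing $\lambda$ with $|\lambda|=1$ such that the twisted cross-length Gram entries match, and only then define $\mathcal U$.
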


We note in particular that equivalent irreducible QMCs are necessarily of the same system dimension. We point out that the set of stationary output states of irreducible QMCs studied here coincides with that of ergodic purely generated ${\rm C}^*$-finitely correlated states, cf. \cite{Ac81,AF83} and \cite{FNW92}, and therefore, all our results hold for this class of states. The latter are closely connected to ground states of local  Hamiltonians, which exhibit translational symmetry breaking when the corresponding QMC is periodic (see for instance Examples 4 and 6 in \cite{FNW92}).

\subsection{Identifiability for certain classical hidden Markov chains} 

Before ending this section, we recall that a result in the spirit of Theorem \ref{thm:identification} has also been obtained for the classical counterpart of the models studied in this work, i.e. hidden Markov chains.

Indeed, let us consider a Markov chain on a finite state space $E$ with transition matrix $P$ and suppose that one can only observe a function $f:E \rightarrow A$ of the state of the stochastic system, where $A$ is a finite set of labels. For every label $a \in A$, let us define the matrix $P^{(a)}$ whose entries are given by
$$P^{(a)}(x,y)=P(x,y) \delta_{f(x),a}.$$
Given a probability density $\nu$ on $E$, we choose to use the notation in which its evolution via the transition matrix is given by $P\nu$, i.e. $P(x,y)$ represents the probability that the process passes from state $y$ to state $x$. Therefore $P^{(a)}$ is the matrix containing probabilities of all possible transitions to a state which is labelled with $a$ by $f$.

Note that $P^{(a)}$ is substochastic and that $P=\sum_{a \in A}P^{(a)}$, which is the equivalent for transition matrices of the Kraus representation of a quantum channel; such a way of writing $P$ contains the key for finding the ``correct'' translation to the classical setting of irreducibility of the quantum channel: it is well known that irreducibility of a quantum channel $T$ with Kraus operators $\{K_i\}_{i \in I}$ is equivalent to the fact that for every non-zero vector $v \in {\cal H}$
\begin{equation} \label{eq:access}
{\rm span}\{K_{i_n} \cdots K_{i_1}v: \, n \in \mathbb{N}, \, i_1,\ \dots, i_n \in I\}={\cal H}.
\end{equation}

Therefore, we consider hidden Markov processes that satisfy the following condition: for every vector $v\in \mathbb{R}^{|E|}$
\begin{equation} \label{eq:irrHM}
{\rm span}\{P^{(a_n)} \cdots P^{(a_1)}v: \, n \in \mathbb{N}, \, a_1,\ \dots, a_n \in A\}=\RR^{|E|}.
\end{equation}
If $v$ is a probability density, in control theory literature the left hand side of previous equation is sometimes called  the reachable space starting from $v$. One can easily see that condition in Eq. \eqref{eq:irrHM} in general is strictly stronger than irreducibility of the underlying Markov process, therefore any transition matrix $P$ satisfying such condition has a unique invariant measure $\nu^{\rm ss}_P$ with full support. The marginal law of the first $n$ observed labels at stationarity is given by
$$\nu^{\rm out}_{P,f}(a_1,\dots, a_n)=\mathbf{1}^T P^{(a_n)} \cdots P^{(a_1)}\nu^{\rm ss}, \quad a_1,\dots,a_n \in A.$$
The following Theorem from \cite{IAK92} is a partial counterpart of our Theorem \ref{thm:identification}.

\begin{theorem} \label{thm:clid}
    Let $(E_i,P_i,f_i)$ for $i=1,2$ be two triples of state spaces, transition matrices and labelling functions (with the same alphabet set $A$), which satisfy the condition \eqref{eq:irrHM}. Then the two hidden Markov chains have the same stationary output states, i.e.
    $$\nu^{\rm out}_{P_1,f_1}(n)=\nu^{\rm out}_{P_2,f_2}(n) \quad \text{ for every } n \geq 1$$
    if and only if there exists a linear bijection $\Pi:\mathbb{R}^{|E_2|} \rightarrow \mathbb{R}^{|E_1|}$ such that
    \begin{itemize}
    \item $P^{(a)}_{2}=\Pi^{-1}P^{(a)}_1\Pi$ for every $a \in A$ and
    \item $\nu_2^{\rm ss}=\nu_1^{\rm ss}\Pi$.
    \end{itemize} 
    Such $\Pi$ is unique.
\end{theorem}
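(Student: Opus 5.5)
The sufficiency direction is immediate. If $\Pi$ is a bijection with $P_2^{(a)}=\Pi^{-1}P_1^{(a)}\Pi$ and $\Pi\nu_2^{\rm ss}=\nu_1^{\rm ss}$, then $\mathbf{1}^T\Pi=\mathbf{1}^T$ also holds. This follows by applying $\mathbf{1}^T P_1=\mathbf{1}^T$ and the uniqueness argument below, or by checking that $\mathbf{1}^T\Pi$ is a left fixed vector of $P_2$ and comparing normalisations. Then
$$\mathbf{1}^T P_2^{(a_n)}\cdots P_2^{(a_1)}\nu_2^{\rm ss}=\mathbf{1}^T\Pi^{-1} P_1^{(a_n)}\cdots P_1^{(a_1)}\Pi\nu_2^{\rm ss},$$
which equals the corresponding expression for chain 1. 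So the substance is the converse together with uniqueness. I plan to follow the classical realization-theory (Ho–Kalman / Hankel matrix) argument for the minimal linear representation of a rational series, using condition \eqref{eq:irrHM} to supply both reachability and observability.

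For the converse, I will consider, for each chain, the word-indexed vectors $x_i(w):=P_i^{(w)}\nu_i^{\rm ss}$ for words $w=a_1\cdots a_n$ (including the empty word), where $P^{(w)}:=P^{(a_n)}\cdots P^{(a_1)}$, and the row vectors $y_i(u):=\mathbf{1}^TP_i^{(u)}$. The hypothesis gives the Hankel identity $y_1(u)x_1(w)=y_2(u)x_2(w)$ for all words $u,w$, since both sides equal $\nu^{\rm out}(wu)$.

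\textbf{Reachability.} Condition \eqref{eq:irrHM} applied to $v=\nu_i^{\rm ss}$ says the $x_i(w)$ span $\mathbb{R}^{|E_i|}$.

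\textbf{Observability.} The $y_i(u)$ span the dual space. This is the step I expect to be the main obstacle, because \eqref{eq:irrHM} is stated for forward orbits, not for the transposed ones. My plan is to show that the annihilator $N=\{v: y_i(u)v=0\ \forall u\}$ is invariant under every $P_i^{(a)}$. If $N\ne0$, pick $0\ne v\in N$; by \eqref{eq:irrHM} its orbit under the $P^{(a)}$ spans everything, and by invariance that orbit lies in $N$, so $N$ is the whole space. That contradicts $\mathbf{1}^T\nu_i^{\rm ss}=1$. Hence $N=0$, which gives observability.

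With reachability and observability established, I will define $\Pi$ on spanning vectors by $\Pi x_2(w):=x_1(w)$. Well-definedness and injectivity both follow from the Hankel identity and observability: if $\sum c_w x_2(w)=0$, then applying every $y_2(u)$ and using the identity kills every $y_1(u)\sum c_w x_1(w)$, so $\sum c_w x_1(w)=0$ by observability of chain 1, and symmetrically in the other direction. Surjectivity comes from reachability of chain 1. Hence $\Pi$ is a linear bijection. In particular this forces $|E_1|=|E_2|$; this requires no separate argument, since it follows from the bijection once it is constructed.

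The intertwining relation follows by evaluating on spanning vectors: $\Pi P_2^{(a)}x_2(w)=\Pi x_2(wa)=x_1(wa)=P_1^{(a)}\Pi x_2(w)$. The empty word gives $\Pi\nu_2^{\rm ss}=\nu_1^{\rm ss}$, which is the statement's condition on stationary states. (The statement writes this as $\nu_2^{\rm ss}=\nu_1^{\rm ss}\Pi$; that uses a different convention, so I will reconcile the orientation as $\Pi^{-1}\nu_1^{\rm ss}=\nu_2^{\rm ss}$.)

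Uniqueness will follow because any $\Pi'$ with the two properties must satisfy $\Pi' x_2(w)=x_1(w)$ for every word $w$, by induction on the length of $w$. Since the $x_2(w)$ span $\mathbb{R}^{|E_2|}$, this determines $\Pi'$, so $\Pi'=\Pi$.
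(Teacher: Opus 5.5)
Your proof is correct. The paper itself gives no proof of this statement: it is quoted from \cite{IAK92}, so there is no argument in the text to compare against step by step.

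Your realization-theoretic (Hankel) route holds up at every step:
\begin{itemize}
\item Reachability is exactly \eqref{eq:irrHM} applied to $\nu_i^{\rm ss}$.
\item The invariant-annihilator trick correctly extracts observability from the same hypothesis, since $\mathbf{1}^TP^{(u)}P^{(a)}v=\mathbf{1}^TP^{(au)}v$.
\item The kernel comparison then makes $x_2(w)\mapsto x_1(w)$ a well-defined bijection.
\item Intertwining, $\Pi\nu_2^{\rm ss}=\nu_1^{\rm ss}$ and uniqueness all follow on spanning sets.
\end{itemize}
Your reading of $\nu_2^{\rm ss}=\nu_1^{\rm ss}\Pi$ as $\Pi\nu_2^{\rm ss}=\nu_1^{\rm ss}$ is the only one compatible with the column convention $P\nu$ used to define $\nu^{\rm out}$.

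It is instructive to contrast this with how the paper proves the quantum counterpart, Theorem \ref{thm:identification}, in Appendix \ref{ap:oe}. There the overlap ${\rm tr}(\rho^{\rm out}_{V_1}(n)\rho^{\rm out}_{V_2}(n))$ is written through powers of the mixed transfer map $\TT_{12}$. The purity bound of Lemma \ref{outputstructure} forces a peripheral eigenvalue, and Cauchy--Schwarz saturation plus irreducibility upgrades the eigenvector to a \emph{unitary} intertwiner. That route relies on positivity and the Hilbert space structure, and it produces the rigid, compact gauge group on which the orbifold analysis rests. Your purely linear-algebraic route needs none of this. Correspondingly, it only yields a general invertible $\Pi$, which is exactly what the classical statement asserts.

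One small repair is needed in the sufficiency direction. You give two justifications of $\mathbf{1}^T\Pi=\mathbf{1}^T$.
\begin{itemize}
\item The first does not work as stated. Knowing $\Pi x_2(w)=x_1(w)$ only gives $\mathbf{1}^T\Pi x_2(w)=\mathbf{1}^Tx_1(w)$. Identifying this with $\mathbf{1}^Tx_2(w)$ is precisely the equality of outputs you are trying to prove.
\item The second is the right one, but make the missing step explicit. Condition \eqref{eq:irrHM} implies that $P_2$ is irreducible: a proper closed set of states $S$ would confine the orbit of $\delta_y$, $y\in S$, to $\mathbb{R}^S$. Hence $1$ is a simple eigenvalue of $P_2$, so $\mathbf{1}^T\Pi=c\,\mathbf{1}^T$, and $c=\mathbf{1}^T\Pi\nu_2^{\rm ss}=\mathbf{1}^T\nu_1^{\rm ss}=1$.
\end{itemize}
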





\section{Global geometric structure of identifiable stationary irreducible QMCs} \label{sec:global}

The upshot of the previous section is that the space of output-identifiable parameters 
of an unknown QMC is the quotient of the set $\manifirr$ of isometries $V:{\cal H} \rightarrow {\cal H}\otimes {\cal K}$ with respect to the equivalence relation introduced in Definition \ref{def:equiv}. We denote this quotient by $\orbifirr$. The equivalence was characterised in Theorem \ref{thm:identification} in terms of unitary conjugation and phase multiplication.


\vspace{2mm}

The goal of this section is to describe the global geometric structure of $\orbifirr$. Firstly, we show that $\manifirr$ can be endowed with a manifold structure; we then prove that there exists a group $\mathscr{G}$ of smooth transformations of such manifold and that the orbit space is homeomorphic to $\orbifirr$. The advantage of seeing $\orbifirr$ as an orbit space is that it carries naturally a structure of \emph{orbifold} (this is true under some assumptions on the group action, which we prove to hold). After a brief intermezzo in which we quickly recall the main definitions and results concerning orbifolds, the rest of the section is devoted to constructing a convenient set of charts and studying some geometric properties of $\orbifirr$. 

\subsection{Manifold of irreducible isometries and gauge group}
\label{sec:isometries.manifold}
Let us consider the set of isometries 
\begin{equation}
\label{eq:tangent.space.isometries}
\mathscr{V}=\{V\in L^\infty(\mathcal H,\mathcal H\otimes\mathcal K)~:~ V^*V=\id\}.
\end{equation}
The constant rank theorem (Corollary 5.9, page 80 in \cite{BO86}) shows that $\mathscr{V}$ is a submanifold of $L^\infty(\mathcal H,\mathcal H\otimes\mathcal K)$ of dimension $(2k-1)d^2$ and that the tangent space at a point $V \in \mathscr{V}$ can be identified with
\begin{equation} \label{eq:tangentmanirr} T_V(\mathscr{V})=\{L\in L^\infty(\mathcal H,\mathcal H\otimes K)~:~ L^*V-V^*L=0\}.
\end{equation}
so that, infinitesimally, an isometry in the neighbourhood of $V$ can be represented as $V+i\delta L$ for small real $\delta$.
We will introduce a block decomposition of $ L^\infty({\cal H}, {\cal H}\otimes {\cal K})$ that will allow us to formulate a more convenient description of $T_V(\mathscr{V})$: let us consider the range of $V$, which we denote by $R(V)$ and the block decomposition of $L^\infty(\mathcal H,\mathcal H\otimes {\cal K})$ induced by the orthogonal decomposition ${\cal H} \otimes {\cal K}=R(V) \oplus R(V)^\perp$:
$$L^\infty(\mathcal H,\mathcal H\otimes {\cal K}) \ni L \Leftrightarrow \begin{pmatrix} L_1:=V V^* L \\ L_2:=(\mathbf{1}_{{\cal H} \otimes {\cal K}}-VV^*)L\end{pmatrix},$$
where $V^*V$ is the projection onto $R(V)$, and with an abuse of notation we identified an operator in $L^\infty(\mathcal H,\mathcal H\otimes {\cal K})$ with range included in $R(V)$ with an operator in $L^\infty(\mathcal H,R(V))$ and similarly for $R(V)^\perp$. The condition \eqref{eq:tangent.space.isometries} on tangent vectors does not impose any restrictions on $L_2$, while it translates in terms of $L_1$ into $V^*L_1$ (which is an operator acting on ${\cal H}$) to be Hermitian. From this point of view, it is also clear how to compute the dimension of the manifold and that the dimension of $T_V(\mathscr{V})$ does not change with $V$.

We then consider the subset corresponding to isometries inducing irreducible quantum channels:
$$
\manifirr = \{V\in \mathscr{V} ~:~ \TT_V \text{ irreducible }\}.
$$
Since the set of irreducible channels is open, and the mapping $V\mapsto \TT_V$ is continuous, it follows that $\manifirr$ is an open submanifold of the manifold of $\mathscr{V}$. This manifold is the starting point of the geometric structure we will define in this section. Theorem \ref{thm:identification} shows that the same output state can correspond to different isometries, hence we define the quotient topological space
$$\orbifirr:=\{[V] ~:~ V \in \manifirr\},
$$



\bigskip We  now introduce a group of transformations of $\manifirr$ such that the orbit space is homeomorphic to $\orbifirr$. Let us consider the compact Lie group given by
\begin{equation}\label{symmetries}
\mathscr{G}=U(1)\times PU(d)
\end{equation}
and its action on the parameter manifold defined as follows:
\begin{align}
& \mu: \mathscr{G}\times \manifirr \to \manifirr, & (g,V)\mapsto \mu(g,V)=g \cdot V &:= \overline{c} (W\otimes \id)VW^*
\label{eq:group.action}\\
& & g&=(c,W)\in \mathscr{G}, \quad V\in \manifirr
\nonumber
\end{align}
where $W$ is understood as the equivalence class of unitaries related by a complex phase.

Clearly, the map $g\mapsto g \cdot V$ satisfies the required composition rule with respect to the group multiplication. Given an element $V \in \manifirr$, its \emph{stabiliser}
is defined as the following Lie subgroup
\begin{equation}
\label{eq:stabiliser_group}
\mathscr{G}_V:=\{g \in \mathscr{G}: \, g \cdot V=V\}.
\end{equation}

The action $\mu$ is said to be \emph{almost free} if $\mathscr{G}_V$ is finite at any $V$ and \emph{effective} if $\bigcap_{V \in \manifirr}\mathscr{G}_V=\{e\}$, where $e=(1,\mathbf{1}_{\cal H})$ is the unit of $\mathscr{G}$.

By $\manifirr/\mathscr{G}$ we denote the topological quotient space of $\manifirr$ induced by the equivalence relation that identifies two isometries $V_1$ and $V_2$ if there exists $g \in \mathscr{G}$ such that $g \cdot V_1=V_2$.
The following Theorem, whose proof can be found in Appendix \ref{sec:proofsgeom}, says that $\mathscr{G}$ is exactly the gauge group of our identification problem and characterises the stabiliser at each point.
\begin{theorem}\label{almostfree} The following statements hold true:
\begin{enumerate}
    \item The action of $\mathscr{G}$ on $\manifirr$ is effective and almost free;
    \item if $p_V$ is the period of $\TT_V$, $\gamma_V:=e^{i2\pi/p_V}$, and $Z_V$ is the eigenvector of $\TT_V$ corresponding to $\gamma_V$ defined in Eq. \eqref{eq:pereigen}, then one has
    $$\mathscr{G}_V=\{(\gamma_V^k,Z_V^k): \, k=0,\dots, p_V-1\}\simeq \mathbb{Z}_{p_V};$$
    \item $\mathcal P^{\rm irr}=\manifirr / \mathscr{G}$ as topological spaces.
\end{enumerate}
\end{theorem}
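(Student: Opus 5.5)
The natural order is to prove item 2 first, since items 1 and 3 follow from it almost directly. Fix $V\in\manifirr$ with Kraus operators $K_i$, and let $g=(c,W)\in\mathscr{G}_V$. Then $\overline{c}(W\otimes\id)VW^*=V$, which in Kraus form reads
\[
W K_i W^* = c K_i \quad\text{for all } i,
\]
or equivalently $K_i W = \overline{c}\, W K_i$ after rearranging (the exact placement of conjugates will need care). Apply the dual channel:
\[
\TT_V(W)=\sum_i K_i^* W K_i .
\]
From the relation we get $WK_i = cK_iW$, hence
\[
\TT_V(W)=c\sum_i K_i^*K_iW = cW .
\]
So $W$ is an eigenvector of $\TT_V$ with eigenvalue $c$, and $|c|=1$ places it in the peripheral spectrum.

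Theorem~\ref{th:periodic.structure} now pins everything down. The eigenvalue must be $c=\gamma_V^k$ for some $k$, and since every peripheral eigenvalue is simple, $W$ is a scalar multiple of $Z_V^k$. In $PU(d)$ this scalar is irrelevant, so $g=(\gamma_V^k,Z_V^k)$.

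For the converse inclusion I need to check that each $(\gamma_V^k,Z_V^k)$ actually stabilises $V$. By Theorem~\ref{th:periodic.structure}(iii) each $K_i$ maps $\mathcal H_a$ into $\mathcal H_{a\oplus1}$. Therefore $Z_V K_i P_a = \gamma_V^{a+1} K_i P_a$, while $K_i Z_V P_a = \gamma_V^{a} K_i P_a$, which gives
\[
Z_VK_iZ_V^* = \gamma_V K_i .
\]
I must match this against the convention in \eqref{eq:group.action}, where $\overline{c}$ appears; if the sign is off, I replace $k$ by $-k$, which still runs over the same set.

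It remains to see that the set has exactly $p_V$ elements and forms a copy of $\mathbb{Z}_{p_V}$. The elements are distinct because the $\gamma_V^k$ are distinct, and $k\mapsto(\gamma_V^k,Z_V^k)$ is a homomorphism since $Z_V^p=\id$. An alternative route to the first inclusion avoids the channel eigen-equation. The relation $WK_i=cK_iW$ says that conjugation by $W$ fixes the Kraus family up to a phase, so $W$ lies in the peripheral eigenspace. Alternatively one could use the irreducibility criterion \eqref{eq:access}, but the eigenvector argument above is cleaner.

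Item 1 is then immediate. Each stabiliser is finite, so the action is almost free. For effectiveness, observe that any primitive $V$ in $\manifirr$ (a nonempty set, by openness and genericity) has trivial stabiliser. Hence
\[
\bigcap_{V\in\manifirr}\mathscr{G}_V=\{e\}.
\]

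For item 3, Theorem~\ref{thm:identification}(iii) with $\mathcal H_1=\mathcal H_2$ says that output equivalence coincides exactly with membership in the same $\mathscr{G}$-orbit. So the two quotient sets agree, and both carry the quotient topology from $\manifirr$ by the same relation; hence they are equal as topological spaces. I expect the only delicate point to be bookkeeping of phase and adjoint conventions in the stabiliser computation.
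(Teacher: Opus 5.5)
Your proof is correct and follows essentially the same route as the paper. The paper shows that $g\cdot V=g'\cdot V$ forces $W'^*W$ to be a peripheral eigenvector of $\TT_V$ with eigenvalue $c\overline{c}'$ (your argument is the case $g'=e$), uses simplicity of the peripheral eigenvalues to conclude $(c,W)=(\gamma_V^k,Z_V^k)$, and reads off item 3 directly from Theorem~\ref{thm:identification}. Your check that $Z_VK_iZ_V^*=\gamma_VK_i$ (so no sign flip is needed), and your effectiveness argument via primitive points, supply steps the paper leaves implicit.
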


An important consequence of Theorem \ref{almostfree} is the following. Given $V \in \manifirr$, let us denote by $(d_0(V), \dots, d_{p_V-1}(V))$ the dimensions of the subspaces appearing in the decomposition of the Hilbert space related to the period (Eq. \eqref{eq:perdec}); using that $[V]$ is the orbit of $V$ under the action of $\mathscr{G}$, it is immediate to see that both $p_V$ and $(d_0(V), \dots, d_{p-1}(V))$ do not depend on the particular representative of $[V]$, hence we can safely use the notation $p_{[V]}$ and $(d_0([V]), \dots, d_{p_{[V]}-1}([V]))$.

\subsection{Intermezzo on orbifolds} \label{subsec:orbi}

If a compact Lie group $\mathscr{G}$ acts smoothly, effectively, and freely on a manifold $\mathscr{M}$, then there is a natural manifold structure on $\mathscr{M}/\mathscr{G}$ such that $\mathscr{M}$ is a principal $\mathscr{G}$-bundle over $\mathscr{M}/\mathscr{G}$ \cite{BO86}; This result was used in \cite{GK17} to develop the information geometry theory for continuous time quantum Markov dynamics. However, Theorem \ref{almostfree} shows that in the model we study here, the action of $\mathscr{G}$ is only \emph{almost free}; this implies that $\mathscr{M}/\mathscr{G}$ has a more intricate geometric structure, which can be understood using the concept of \emph{orbifolds}. Loosely speaking, an orbifold is a topological space which is locally described by the quotient of an euclidean space under the smooth action of a finite group; hence, it is a very natural notion in dynamical systems and information geometry, when one considers the space of configurations/parameters and wants to reduce the degrees of freedom using the symmetries of the dynamics/states.

The goal of this brief intermezzo is to introduce the notion of orbifolds and to present the main feature and results we will need in the rest of this section. For the benefit of readers not familiar with these concepts, we also include a simple example. 


There are several approaches to orbifold theory: we decided to follow the one in \cite{Ca22,ALR07,Dr11} and references therein, which is closer to the usual way smooth manifolds are defined. Let $X$ be a topological space; an \emph{orbifold chart} of dimension $n$ for an open connected set $U \subseteq X$ is a triple $(\widetilde{U},H, \phi)$ where
\begin{itemize}
\item $\widetilde{U}$ is an open connected subset of $\mathbb{R}^n$;
\item $H$ is a finite group acting smoothly and effectively on $\widetilde{U}$;
\item $\phi:\widetilde{U} \rightarrow U $ is a continuous $H$-invariant surjection;
\item $\phi$ induces an homeomorphism between $\widetilde{U}/H$ and $U$.
\end{itemize}
We recall that the action of $H$ is said to be effective if there is no $h \in H$ other than the unit element such that $h \cdot x =x $ for every $x \in \widetilde{U}$ (by $h \cdot x$ we denote the action of $h$ on $x$).

\vspace{2mm}

An \emph{embedding between orbifold charts} $(\widetilde{U}_i,H_i, \phi_i)$ for $i=1,2$ is given by a smooth embedding $\lambda :\widetilde{U}_1 \rightarrow \widetilde{U}_2$ such that $\phi_1=\phi_2 \circ \lambda$; a nontrivial consequence of this definition is that every embedding between charts induces an injective group homomorphism between $H_1$ and $H_2$ or, in other words, there exists a subgroup of $H_2$ which is isomorphic to $H_1$. Two charts $(\widetilde{U}_i,H_i, \phi_i)$ for $i=1,2$ are said to be \emph{compatible} if for any $ x\in U_1 \cap U_2$, there exists a neighborhood $x \in U_3\subseteq U_1 \cap U_2$ and an orbifold chart $(\widetilde{U}_3,H_3, \phi_3)$ for $U_3$ which admits embeddings in $(\widetilde{U}_i,H_i, \phi_i)$ for $i=1,2$.

\vspace{2mm}

A $n$-dimensional \emph{orbifold atlas} for $X$ is a collection ${\cal A} = \{(\widetilde{U}_i,H_i, \phi_i)\}$ of compatible orbifold charts of dimension $n$ that covers $X$; we
say that the atlas ${\cal A}$ \emph{refines} an atlas ${\cal B}$ when every chart in ${\cal A}$ admits an embedding in
some chart in ${\cal B}$. Two atlases are \emph{equivalent} if they have a common refinement.
\begin{definition}[{\bf orbifold}]
    An \emph{n-dimensional orbifold}  consists of a Hausdorff paracompact topological space $X$ together with an equivalence class $[{\cal A}]$ of $n$-dimensional orbifold atlases for $X$.
\end{definition}
We remark that a smooth manifold is an orbifold in which, for every chart, the group $H$ is trivial. The most prominent example of orbifold is the case of quotients of manifolds with respect to the action of a Lie group.

\begin{theorem}[Proposition 1.5.1 in \cite{Ca22}, Definition 1.7 in \cite{ALR07}] \label{thm:orbi}
Let $\mathscr{G}$ be a compact Lie group  which acts smoothly, effectively and almost
freely on a smooth manifold $\mathscr{M}$. Then the quotient space $\mathscr{M}/\mathscr{G}$ has a natural orbifold structure.
\end{theorem}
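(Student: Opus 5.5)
The plan is the standard slice construction, which builds orbifold charts from the local structure of the action near each orbit. Fix $x\in\mathscr M$ and write $\mathscr G_x$ for its stabiliser. Since the action is almost free, $\mathscr G_x$ is a compact Lie group of dimension zero, hence finite. Because $\mathscr G$ is compact, we may average any Riemannian metric on $\mathscr M$ over the normalised Haar measure to obtain a $\mathscr G$-invariant metric $g$. The orbit $\mathscr G\cdot x$ is then an embedded compact submanifold, diffeomorphic to $\mathscr G/\mathscr G_x$. At $x$ we split
$$T_x\mathscr M = T_x(\mathscr G\cdot x)\oplus N_x,$$
where $N_x$ is the $g$-orthogonal complement of the tangent space to the orbit. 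The finite group $\mathscr G_x$ acts linearly and isometrically on $N_x$ through the differential of the action.

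\textbf{Step 1: the slice theorem.} Let $\widetilde U=\exp_x(B_\epsilon(0)\cap N_x)$ for small $\epsilon$. I would show that for $\epsilon$ small enough:
\begin{itemize}
\item the map $\mathscr G\times_{\mathscr G_x}\widetilde U\to\mathscr M$, $[h,s]\mapsto h\cdot s$, is a diffeomorphism onto an open $\mathscr G$-invariant tube around the orbit;
\item for $s,s'\in\widetilde U$, the relation $h\cdot s=s'$ with $h\in\mathscr G$ forces $h\in\mathscr G_x$.
\end{itemize}
Invariance of the metric gives equivariance, $h\exp_x(v)=\exp_{hx}(dh\,v)$. Local injectivity follows from the inverse function theorem at $[e,x]$, since the differential there is $T_x(\mathscr G\cdot x)\oplus N_x$. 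Global injectivity is the main obstacle. I would argue by contradiction: take sequences $s_n,s'_n\to x$ and $h_n\notin\mathscr G_x$ with $h_n s_n=s'_n$. By compactness, pass to a subsequence $h_n\to h\in\mathscr G_x$. Then $h^{-1}h_n\to e$ contradicts the local injectivity near the identity coset.

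\textbf{Step 2: the chart.} Identify $\widetilde U$ with a $\mathscr G_x$-invariant ball in $N_x\cong\mathbb R^n$, where $n=\dim\mathscr M-\dim\mathscr G$; this dimension is constant because all stabilisers are finite. Let $\phi:\widetilde U\to U:=\pi(\mathscr G\cdot\widetilde U)$ be the restriction of the quotient map $\pi$. By Step 1, $\phi$ is a continuous $\mathscr G_x$-invariant surjection that induces a bijection $\widetilde U/\mathscr G_x\to U$. This bijection is a homeomorphism because $\pi$ is open for group actions and the tube is open.

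The action of $\mathscr G_x$ on $\widetilde U$ need not be effective. Its kernel $K$ is the set of elements fixing the whole slice, and hence fixing an open set of $\mathscr M$. To handle this, replace $\mathscr G_x$ by $\mathscr G_x/K$, which does not change the quotient. In the intended application, Theorem \ref{almostfree} shows that the stabilisers act faithfully on the identifiable directions, so $K$ is trivial there.

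\textbf{Step 3: compatibility and topology.} Take $y=\pi(s)\in U_1\cap U_2$ with $s\in\widetilde U_1$. Then $\mathscr G_s\subseteq\mathscr G_x$. A small slice at $s$ gives a third chart, and it embeds into the first chart via the inclusion; this again uses equivariance and uniqueness of slices. It embeds into the second chart via translation by some $h\in\mathscr G$ followed by the exponential reparametrisation.

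Finally, $\mathscr M/\mathscr G$ is Hausdorff because $\mathscr G$ is compact: the graph of the orbit relation is closed, and $\pi$ is open. Paracompactness follows because the quotient is second countable and locally compact. Together these give the natural orbifold structure.
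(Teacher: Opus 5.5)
Your proposal is correct and follows essentially the construction the paper sketches after the statement: a Haar-averaged invariant metric, the exponential image of a small ball in the normal space to the orbit, the finite stabiliser acting linearly through the differential, $\phi=\pi\circ\exp_x$, and compatibility via the slice theorem, with more of the details filled in than the paper gives. One correction that does not affect the argument: the slice has codimension $\dim\mathscr G$, so an element fixing it pointwise need not fix an open subset of $\mathscr M$ (for $SO(3)$ acting by left multiplication on $SO(3)/\{e,R\}$, with $R$ a rotation by $\pi$, the action is effective and almost free, yet the stabiliser acts trivially on the one-point slice), so your passage to $\mathscr G_x/K$ is genuinely needed in general, whereas in the paper's setting $K$ is trivial because $g\mapsto d_V\mu^g$ is injective on $T^{\rm id}_V$, which the paper asserts in the atlas construction rather than in Theorem~\ref{almostfree}.
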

The compactness assumption on $\mathscr{G}$ in the previous theorem can be weakened, however this result suffices and is more convenient for our purposes. Let us denote the action of $\mathscr{G}$ on $\mathscr{M}$ with $\mu:\mathscr{G}\times \mathscr{M} \rightarrow \mathscr{M}$. We recall that the action of $\mathscr{G}$ is almost free if the \emph{stabiliser} subgroup $\mathscr{G}_x$ at any point $x \in \mathscr{M}$ 
is finite, where 
$$
\mathscr{G}_x:=\{g \in \mathscr{G}: \mu(g,x)=x\}.
$$

The proof of Theorem \ref{thm:orbi} provides the explicit construction of an atlas with a rich structure. Since such a construction will play a crucial role in this paper, we briefly sketch it here. For every point $y \in \mathscr{M}$, one can consider the orbit of $y$, i.e. $$\mathscr{G}(y):=\{z \in {\cal M}: \, \exists g \in \mathscr{G}: \mu(g , y)=z\}.$$ 
Note that $\mathscr{G}(y)$ is an equivalence class, hence $z\in \mathscr{G}(y)$ if and only if $\mathscr{G}(z)=\mathscr{G}(y)$. Moreover, $\mathscr{G}(y)$ is a submanifold of ${\cal M}$ (Proposition 3.28 in \cite{AB10}) and for every $z \in \mathscr{G}(y)$, the tangent space of $\mathscr{G}(y)$ at $z$, which we denote by $T_z(\mathscr{G}(y))$, is given by the image of the Lie algebra ${\frak g}$ of the group via the differential of the action, that is
$$T_z(\mathscr{G}(y)):=\{d_e\mu(\cdot,z)A~:~
A \in {\frak g}\}.
$$
We use the notation $e$ for the identity of $G$ and $d_e $ to denote the differential at $e$, so that the explicit action of $d_e\mu(\cdot,z)A$ on a smooth function $f$ is
$$
\left(d_e\mu(\cdot,z)A \right)[f]= 
\left.\frac{d}{dt} \right|_{t=0} f(\mu(\exp(tA),z)).
$$

Note that every point in $\mathscr{G}(y)$ will be identified in $\mathscr{M}/\mathscr{G}$, therefore, in order to construct a set of orbifold charts for $\mathscr{M}/\mathscr{G}$, one needs to restrict to subsets of $\mathscr{M}$ that contain only finitely many points that belong to the same orbit, which is what we do in the following. Since $\mathscr{G}$ is compact, it is possible to endow $\mathscr{M}$ with a Riemannian metric such that $\mathscr{G}$ acts on $\mathscr{M}$ as a group of isometries (the choice of the metric is not unique), and the associated geodesics allow one to locally define the exponential map from a neighborhood of the origin of the tangent space to $\mathscr M$. Now, at any point $y \in \mathscr{M}$, the tangent space can be written as the orthogonal sum of $T_y(\mathscr{G}(y))$ and its orthogonal complement $T_y(\mathscr{G}(y))^\perp$. Furthermore, by means of the exponential map we can parametrise a small neighborhood of $y$ using a small centered open ball of radius $\epsilon$ $B_\epsilon(y) \subset T_y(\mathscr{M})$. Let us consider an open ball $B^\perp_\epsilon(y)=B_\epsilon(y) \cap T_y(\mathscr{G}(y))^\perp$ and define the following elements:
\begin{itemize}
\item $\widetilde{U}:=B^\perp_\epsilon(y)$;
\item $H=\{d_y\mu(g,\cdot )  : \, g \in \mathscr{G}_y\}$ is a finite group acting on $\widetilde{U}$;
\item $\phi:=\pi \circ \exp_y:B^\perp_\epsilon(y) \rightarrow \mathscr{M}/\mathscr{G}$, where $\pi:\mathscr{M}\rightarrow \mathscr{M}/\mathscr{G}$ is the quotient map;
\item $U:=\pi \circ \exp_y(B^\perp_\epsilon(y)).$
\end{itemize}
One can show that if $\epsilon$ is small enough, $(\widetilde{U}, H, \phi)$ is an orbifold chart for $U$ and that all such charts are compatible (this relies on the Slice and Tubular Neighborhood Theorems, see Section 3.2 in  \cite{AB10}); we remark that a fundamental role is played by the property
$$\mu(g,\exp_y(\cdot))=\exp_y \circ d_y\mu(g,\cdot),$$
which holds thank to the fact that $\mathscr{G}$ acts as a group of isometries (which preserve geodesics) and that $t \mapsto \exp_y(tA)$ is the unique geodesic with tangent vector $A$ in $y$. Such charts have the following nice properties: they are said to be 
\begin{itemize}
    \item \emph{linear}, since $H$ acts on $\widetilde{U}$ as a group of orthogonal linear transformations;
    \item \emph{fundamental} for $\phi(y)$, since $y$ is a fixed point under the action of $H$ or, equivalently, $H=H_y$, where $H_y$ denotes the stabiliser group of $y$.
\end{itemize}

Returning to the case of a general orbifold $X$, let us further comment on the notion of fundamental chart. Let $x \in X$ be fixed, and consider a chart $(\widetilde{U}, H, \phi)$ and a point $\widetilde{x}\in \widetilde{U}$ such that $\phi(\widetilde{x})=x$, and let $H_{\widetilde{x}}$ be the stabiliser of $\widetilde{x}$. 
It can be shown that the isomorphism class of the stabiliser $H_{\widetilde{x}}$ is independent of the chart or point $\widetilde{x}$ and we call this the \emph{local group} $\Gamma_x$.
This implies that for every chart $(\widetilde{U}, H, \phi)$ that parametrizes a neighborhood of $x$, $H$ must contain a subgroup in $\Gamma_x$; therefore, a fundamental chart for $x$ is a chart with the ``minimal'' group of transformations.

\vspace{2mm}

There are different ways of defining the equivalent of a tangent bundle for an orbifold, which is known as tangent orbibundle. Since in this work we will not need the global construction, we will just define the generalization of the tangent space at a point. Let us consider a point $x$ in the orbifold and a chart $(\widetilde{U}, H, \phi)$ such that $x =\phi(\widetilde{x})$; the action of the stabilizer $H_{\widetilde{x}}$ of $\widetilde{x}$, induces the following action on the tangent space of $\widetilde{U}$ in $\widetilde{x}$, which we will denote by $\TT_{\widetilde{x}}(\widetilde{U})$:
$$H_{\widetilde{x}} \times T_{\widetilde{x}}(\widetilde{U}) \ni (h,v) \mapsto d_{\widetilde{x}} \mu(h,\cdot) v.$$
The \emph{tangent cone} at $x$ is given by $T_{x}(X):=T_{\widetilde{x}}(\widetilde{U}) / H_{\widetilde{x}}$; the tangent cone is independent of the choice of point in the fiber of $x$ in the following sense: if one picks a different point $\hat{x}$ such that $x=\phi(\hat{x})$, then there exists a group isomorphism $\alpha:H_{\widetilde{x}} \rightarrow H_{\hat{x}}$ and a linear isomorphism $f:T_{\widetilde{x}}(\widetilde{U}) \rightarrow T_{\hat{x}}(\widetilde{U})$ such that
$$fd_{\widetilde{x}}\mu(h,\cdot) v=d_{\hat{x}}\mu(\alpha(h),\cdot) fv, \quad \forall h \in H_{\widetilde{x}}, \, v \in T_{\widetilde{x}}(\widetilde{U}).$$
Analogously one can check that the tangent cone does not depend on the choice of the chart.

In general, $T_{x}(X)$ is not a vector space; however there is an important linear subspace of $T_{\widetilde{x}}(\widetilde{U})$ which is given by those vectors which are fixed by $d_{\widetilde{x}}\mu(h,\cdot)$ for every $h \in H_{\widetilde{x}}$: due to what we observed above, up to linear isomorphisms, such a subspace is independent of the choice of the point in the fiber of $x$ and the chart and we denote its isomorphism class of linear subspaces with $T_x(X)^{\Gamma_x}$ and we call it the \emph{tangent space at $x$}. The \emph{singular dimension} of $x$ is the dimension of $T_x(X)^{\Gamma_x}$; we denote by $\mathscr{S}_k$ the set of points with singular dimension $k$ for $k=0, \dots, n$, where $n$ is the dimension of the orbifold. Therefore, $X$ is the disjoint union of the $\mathscr{S}_k$'s, which is called the \emph{canonical stratification}. The set $\mathscr{S}_n$ is known as the set of \emph{regular points} and can be shown to be a dense open set in $X$ (see Proposition 2.8 \cite{Dr11}), while its complementary is called set of \emph{singular points}. In addition, the $\mathscr{S}_k$'s have the following further structure.
\begin{proposition}[Proposition 3.4 \cite{Dr11}, Section 4.5 \cite{CR01}, \cite{Ka78}]
For $k=0,\dots, n$, $\mathscr{S}_k$ has naturally the structure of a $k$-dimensional smooth manifold. The tangent space $T_x(\mathscr{S}_k)$ at a point
$x \in \mathscr{S}_k$ is canonically identified with $T_x(X)^{\Gamma_x}$, the space of tangent vectors at $x$.
\end{proposition}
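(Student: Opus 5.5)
The plan is to work entirely in linear orbifold charts and reduce everything to the linear algebra of a finite group acting orthogonally on a vector space. Fix $x\in\mathscr{S}_k$ and a fundamental linear chart $(\widetilde U,H,\phi)$ with $\widetilde U$ a small ball around $0=\widetilde x$ in $\mathbb{R}^n$, $\phi(0)=x$, and $H=H_0\cong\Gamma_x$ acting by orthogonal linear maps. Such charts exist in general by the standard linearisation argument: average a Riemannian metric over $H$ and use the exponential map at a fixed point. For $\mathscr{M}/\mathscr{G}$ they are exactly the slice charts described above. Let $F:=\widetilde U^{H}$ be the fixed subspace of $H$, intersected with $\widetilde U$. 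Its dimension is $\dim T_x(X)^{\Gamma_x}=k$.

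\textbf{Key step.} Show that $\phi(F)$ is an open neighbourhood of $x$ in $\mathscr{S}_k$ and that $\phi|_F$ is a homeomorphism onto it. For $y\in\widetilde U$, linearity of the action gives that the stabiliser of $y$ is $H_y=\{h:hy=y\}$. Moreover, for $y$ in a small ball the chart restricted to a small $H_y$-invariant ball around $y$ is a chart at $\phi(y)$ with group $H_y$; as this group fixes $y$, the chart is fundamental there. Hence $\Gamma_{\phi(y)}\cong H_y$ and $T_{\phi(y)}(X)^{\Gamma_{\phi(y)}}\cong\mathbb{R}^{n,H_y}$.

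Now $H_y\subseteq H$, so $\mathbb{R}^{n,H_y}\supseteq\mathbb{R}^{n,H}=F$. Equality of dimensions $k$ forces $\mathbb{R}^{n,H_y}=F$. Since $y\in\mathbb{R}^{n,H_y}$, we get $y\in F$ and hence $H_y=H$. Conversely, points of $F$ have stabiliser $H$ and therefore singular dimension $k$. Thus $\phi^{-1}(\mathscr{S}_k\cap U)=F$.

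Injectivity of $\phi|_F$ follows because distinct points of $F$ are fixed by $H$, so they lie in distinct $H$-orbits. Continuity and openness follow from $\phi$ inducing a homeomorphism $\widetilde U/H\to U$, together with $F$ being a union of orbits (each a single point) and closed in $\widetilde U$.

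\textbf{Charts and transitions.} The maps $(\phi|_F)^{-1}$ form an atlas of $\mathscr{S}_k$ with values in open subsets of $\mathbb{R}^k$. For smooth compatibility, take two such charts at overlapping points and use an orbifold-chart embedding $\lambda:\widetilde U_3\to\widetilde U_i$. This is a smooth embedding satisfying $\phi_3=\phi_i\circ\lambda$, and it is equivariant with respect to the induced injective homomorphism $H_3\to H_i$. Work around a point $z$ of $\mathscr{S}_k$, using fundamental charts at $z$ so that the groups match; then $\lambda$ maps the fixed set to the fixed set. The transition map on $\mathscr{S}_k$ is therefore a restriction of a smooth diffeomorphism between fixed-point submanifolds, which is smooth. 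Hausdorffness and second countability are inherited from $X$.

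\textbf{Tangent space.} $T_x\mathscr{S}_k$ is computed in the chart as $T_0F=\mathbb{R}^{n,H}$. This equals $T_0(\widetilde U)^{H_0}$, i.e. $T_x(X)^{\Gamma_x}$. The identification is canonical: the linear isomorphisms relating different choices of lift and chart intertwine the group actions and so preserve the fixed subspaces.

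\textbf{Main obstacle.} The delicate point is justifying that, near $0$, the chart restricted around $y$ is fundamental for $\phi(y)$ with group $H_y$, so that the local group and the fixed subspace can be read off linearly. I would prove it directly: choose $\epsilon$ smaller than the distance from $y$ to every subspace $\{hy\}$ with $hy\neq y$, and check the chart axioms on the $\epsilon$-ball around $y$ with group $H_y$. I would also make sure that the equivariance of embeddings between fundamental charts is used correctly, invoking the result recalled above that embeddings induce injective group homomorphisms.
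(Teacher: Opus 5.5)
Your proof is correct, and it is the standard linear-chart argument of the references the paper cites \cite{Ka78,CR01,Dr11}. The paper gives no proof of its own; it only imports the result, so there is no separate route to compare.

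\textbf{Transition maps.} This is the one step worth tightening. You cannot choose the compatibility chart $(\widetilde U_3,H_3,\phi_3)$ to be fundamental, but it is forced to be. Suppose $\phi_3(y_3)=z=\phi_i(f_i)$ with $f_i\in F_i$. Then $\lambda_i(y_3)=f_i$, and by equivariance the injective homomorphism $\bar\lambda_i:H_3\to H_i$ sends $(H_3)_{y_3}$ into $(H_i)_{f_i}=H_i$. The local group has a well-defined order, so $|H_3|\le|H_i|=|(H_3)_{y_3}|$. Hence $(H_3)_{y_3}=H_3$ and $\bar\lambda_i$ is an isomorphism. Equivariance and injectivity of $\lambda_i$ then give $\lambda_i^{-1}(F_i)=\widetilde U_3^{H_3}$. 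So the transition map is the restriction of the smooth map $\lambda_2\circ\lambda_1^{-1}$ to $F_1\cap\lambda_1(\widetilde U_3)$. Its differential is exactly the isomorphism used to identify the fixed tangent subspaces, which is what makes the tangent-space identification canonical.

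\textbf{Smaller points.}
\begin{itemize}
\item Your ``main obstacle'' is already settled by facts the paper recalls: both $H_{\widetilde x}$ and $T_{\widetilde x}(\widetilde U)^{H_{\widetilde x}}$ are independent of the chart and of the lift. In a linear chart the singular dimension of $\phi(y)$ is therefore $\dim(\mathbb{R}^n)^{H_y}$, with no need to restrict. If you restrict anyway, take $\epsilon<\tfrac12\min\{\|hy-y\|: hy\neq y\}$; these are distances to orbit points, not to subspaces.
\item The paper's orbifolds are only assumed Hausdorff and paracompact, not second countable. Obtain paracompactness of $\mathscr{S}_k$ from metrizability of $X$ (paracompact, Hausdorff and locally metrizable) rather than from inherited second countability.
\end{itemize}
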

We remark that $\Gamma_x$ is constant along connected components of $\mathscr{S}_k$'s; however, there can be two points with same local group, but sitting on two different submanifolds.

\subsubsection*{A simple orbifold example} 
\begin{figure*}[!ht]
    \centering
   \includegraphics[width=\linewidth]{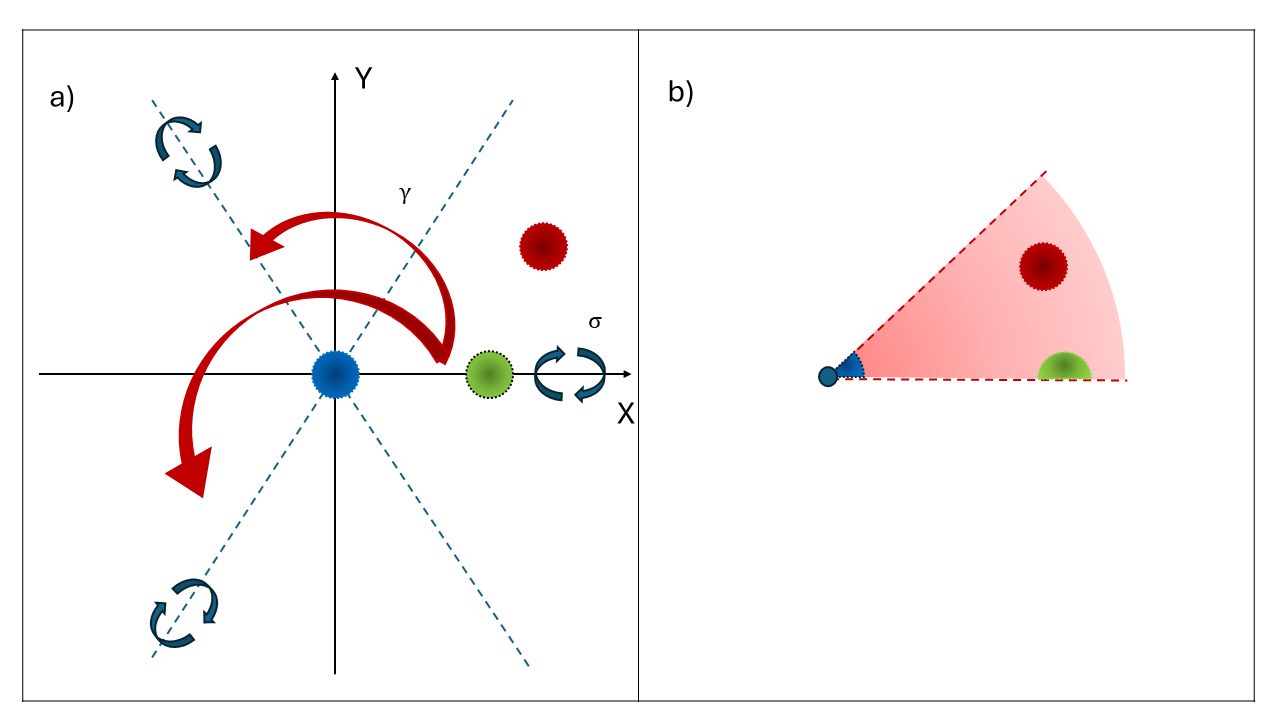}
    \caption{In Figure a) the axis represents a choice of coordinates for $\mathbb{R}^2$, while the red  sector in Figure b) is a graphic representation of $\mathbb{R}^2/\langle \gamma,\sigma \rangle$. Arrows in figure a) represent the five transformations in the dihedral group minus the identity: red arrows correspond to rotations and blue arrows to symmetries. Disks in Figure a) and b) represent charts and their image: the blue is a chart for $\pi((0,0))$, the green for $\pi(x)$ for some point $x$ laying on the $X$ axis and the red one for a point in $A_2$. The blue dot in Figure b) represents $\Sigma_0$, the red dotted half-lines (without the origin) represent $\Sigma_1$ and all the other points are regular points.}
    \label{fig:orbi}
\end{figure*}

To make all previously introduced orbifold notions  easier to digest, we provide a simple example. Let us consider the orbifold originating from the quotient of $\mathbb{R}^2$ under the action of the group of transformations $\mathscr{G}$ generated by $\gamma$, the rotation around the origin of an angle of $2\pi/3$, and by $\sigma$ the symmetry around the $X$ axis; we will use the notation $\mathscr{G}=\langle \gamma, \sigma \rangle$ and $\pi:\RR^2 \mapsto \RR^2/\mathscr{G}$ for the quotient map. Notice that $G$ is the dihedral group of order $6$, that is the group of symmetries of an equilateral triangle. Since $\mathscr{G}$ is a finite group that acts smoothly and effectively on $\RR^2$, we can apply Theorem \ref{thm:orbi}.

In order to understand the orbifold structure of $\RR^2/\mathscr{G}$, it is convenient to consider the following partition of $\RR^2$:
\begin{itemize}
    \item $A_0=\{(0,0)\}$, \item $A_1=\bigcup_{k=0}^{2} \gamma^k(\{x_1 \neq 0, x_2=0\})$, i.e. the $X$ axis without the origin and its rotations of angles which are multiples of $2\pi/3$, and
    \item $A_2=\RR^2\setminus (A_0\cup A_1)$.
\end{itemize}
The first difference between $A_0,A_1, A_2$ is that they contain points with different stabilizers: indeed, $\mathscr{G}_{(0,0)}=\mathscr{G}$, for every $x \in A_1$, $\mathscr{G}_x\simeq \mathbb{Z}_2$ and for every $x \in A_2$, $\mathscr{G}_x=\{{\rm Id}\}$.

\vspace{2mm}

We will now proceed to construct an atlas of fundamental and linear charts for $\RR^2/\mathscr{G}$, as we outlined above; in the case of this simple example we have several simplifications happening: since $\mathscr{G}$ is finite, the tangent space of every orbit $\mathscr{G}(x)$ is the zero vector; we can identify $\RR^2$ with the tangent space at any point; $\mathscr{G}$ is already a group of linear isometries with respect to the natural metric on $\RR^2$.

\vspace{2mm}

Let us first consider the origin; for every open ball $B_\epsilon((0,0))$ centered at the origin, $(B_\epsilon((0,0)) , \mathscr{G}, \pi)$ is a linear chart for $\pi(B_\epsilon((0,0)))$ and it is fundamental for $\pi((0,0))$. The tangent cone $T_{\pi((0,0))}(\RR^2/\mathscr{G})$ is given by $\RR^2/\mathscr{G}$ again and the tangent space is given by the zero vector alone. $\mathscr{S}_0=\{\pi((0,0))\}$ is the $0$-dimensional manifold appearing in the canonical stratification.

\vspace{2mm}

Given any $x \in \{x_1 \neq 0, x_2=0\} \subset A_1$ (the same conclusions hold for the other points in $A_1$ as well) and choosing the radius $\epsilon$ small enough, one can find an open ball $B_\epsilon(x)$ such that, defining
$$\mu (\mathscr{G},  B_\epsilon(x)):=\{\mu(g,x): \, g \in \mathscr{G}, \, x \in B_\epsilon(x)\},$$
one has $\mu (\mathscr{G},  B_\epsilon(x))\cap B_\epsilon(x)=B_\epsilon(x)$ and 
$$y \in B_\epsilon(x), \, g \in \mathscr{G}, \, \mu(g,y)  \in B_\epsilon(x) \text{ implies } g \in \langle \sigma \rangle .$$
Therefore $(B_\epsilon(x), \langle \sigma \rangle, \pi)$ is a linear orbifold chart for $\pi(B_\epsilon(x))$ and is fundamental for $\pi(x)$. The tangent cone $T_{\pi(x)}(\RR^2/\mathscr{G})$ is given by $\RR^2/\langle \sigma \rangle$ and the tangent space at $\pi(x)$ is one dimensional and is given by ${\rm span}\{(1,0)\}$. $\mathscr{S}_1=\pi(A_1)$ is the $1$-dimensional manifold in the canonical stratification and it is made of two connected components.

\vspace{2mm}

Finally, for every $x \in A_2$, one can choose a radius $\epsilon$ small enough such that $B_\epsilon(x)$ is such that $\mu (g,  B_\epsilon(x)) \cap B_\epsilon(x)=\emptyset$ unless $g={\rm Id}$. Therefore, $(B_\epsilon(x), \{{\rm Id}\}, \pi)$ is a linear orbifold chart for $\pi(B_\epsilon(x))$ and is fundamental for $\pi(x)$. The tangent cone and the tangent space in this case coincide and are given by the whole $\RR^2$. $\mathscr{S}_2=\pi(A_2)$ is the $2$-dimensional manifold of regular points.

\subsection{Identifiable directions and atlas of fundamental charts} \label{subsec:idatl}


We now return to the geometric analysis of the QMC identification problem from section \ref{sec:isometries.manifold}. Based on the orbifold theory detailed earlier we find that Theorems \ref{almostfree} and \ref{thm:orbi} imply the following.
\begin{theorem}[{\bf Orbifold structure of identifiable parameters}] The space of identifiable parameters 
$\orbifirr\simeq \manifirr/\mathscr{G}$ can be endowed with the structure of an orbifold.
\label{th:orbifold.structure}
\end{theorem}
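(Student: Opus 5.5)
The plan is to reduce the statement to Theorem \ref{thm:orbi} applied with $\mathscr{M}=\manifirr$ and the compact Lie group $\mathscr{G}=U(1)\times PU(d)$ acting via \eqref{eq:group.action}. Together with the identification $\orbifirr=\manifirr/\mathscr{G}$ of Theorem \ref{almostfree}(3), this gives the orbifold structure. The hypotheses to verify are, in order:
\begin{enumerate}
\item $\manifirr$ is a smooth manifold;
\item $\mathscr{G}$ is a compact Lie group;
\item the map $\mu$ is well defined, smooth, and maps $\manifirr$ into itself;
\item the action is effective and almost free;
\item the quotient is Hausdorff and paracompact, as required by the definition of orbifold.
\end{enumerate}

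The first two points are immediate. $\manifirr$ is an open submanifold of the isometry manifold $\mathscr{V}$, as noted in Section \ref{sec:isometries.manifold}. $U(1)\times PU(d)$ is a product of compact Lie groups, since $PU(d)=U(d)/U(1)$ is a quotient by a closed central subgroup.

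For the third point, I first check that $\overline{c}(W\otimes\id)VW^*$ does not depend on the phase representative of $W$: the phase enters through $W$ and cancels against $W^*$. Smoothness then follows because the formula is polynomial in the entries of $(c,W,V)$ and descends through the smooth quotient $U(d)\to PU(d)$. To see that the action preserves $\manifirr$, note that the Kraus operators transform as $K_i\mapsto \overline{c}\,WK_iW^*$. Hence $\TT_{g\cdot V}(X)=W\TT_V(W^*XW)W^*$ is unitarily conjugate to $\TT_V$, so $\rho^{\rm ss}_{g\cdot V}=W\rho^{\rm ss}_VW^*$ is again the unique faithful stationary state. The group law $g_1\cdot(g_2\cdot V)=(g_1g_2)\cdot V$ is a direct computation.

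The fourth point is exactly Theorem \ref{almostfree}(1). That result identifies the stabilisers as finite cyclic groups $\mathbb{Z}_{p_V}$, and effectiveness holds because the primitive isometries have trivial stabiliser.

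For the fifth point, Hausdorffness and paracompactness of $\manifirr/\mathscr{G}$ follow from standard facts about proper actions. Any action of a compact group is proper, so the orbit space of a (second countable, metrisable) manifold is Hausdorff, second countable and locally compact, hence paracompact.

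Theorem \ref{thm:orbi} then yields the orbifold atlas, built from slices $B^\perp_\epsilon(V)\subset T_V(\mathscr{G}(V))^\perp$ with local groups $\mathscr{G}_V\simeq\mathbb{Z}_{p_V}$. The orbifold dimension is $\dim\manifirr-\dim\mathscr{G}=(2k-1)d^2-d^2=(2k-2)d^2$. This uses that almost freeness makes the orbits have dimension $\dim\mathscr{G}$.

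I expect essentially no obstacle, since the heavy lifting (the stabiliser computation) was done in Theorem \ref{almostfree}. The only point needing care is that the action is well defined on $PU(d)$ and that the quotient topology agrees with the one on $\orbifirr$, which is Theorem \ref{almostfree}(3).
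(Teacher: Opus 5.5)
Your proposal is correct and follows the paper's route. The paper likewise obtains the theorem by combining Theorem~\ref{almostfree} (the action is effective and almost free, and $\orbifirr=\manifirr/\mathscr{G}$) with Theorem~\ref{thm:orbi}, checking smoothness and properness in the appendix proof of Theorem~\ref{almostfree}. Your additional checks (well-definedness on $PU(d)$, invariance of $\manifirr$, Hausdorffness and paracompactness, and the dimension $2d^2(k-1)$) spell out details the paper leaves implicit.
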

%
Let us make a brief remark on the comparison between QMCs and hidden Markov chains (HMCs).
In general $\orbifirr$ fails to have a manifold structure due to the fact that the stabiliser $\mathscr{G}_V$ is not trivial for isometries $V$ whose quantum channel has non-trivial period. In the case of HMCs, it is not clear whether transition matrices producing the same output distribution at stationarity can be obtained one from another via the action of a certain transformations group; a natural candidate would be the group of permutations acting by relabeling the states of the hidden Markov chain and this has been proved to be the case for a restricted class of HMCs in \cite{Pe69} ). However, even if this was the case, Theorem \ref{thm:clid} shows that in the case of HMCs with deterministic outputs the stabiliser at any point is always trivial.

\vspace{2mm}

In this section we  construct an atlas of fundamental linear charts which will allow us to investigate in detail the geometric structure of $\orbifirr$ and will turn out to be a fundamental tool in section \ref{sec:LAN}. The construction follows the general procedure presented in section \ref{subsec:orbi}, which in our case gains the following statistical meaning: the Riemannian metric used to construct the charts is related to the quantum Fisher information of the statistical model corresponding to the system and output state (Proposition \ref{prop:QFI}); moreover, the subspace of the tangent space at every point $V$ in $\manifirr$ used as coordinates for a neighborhood of $[V]$ in $\orbifirr$, corresponds to infinitesimal changes in the unknown parameter that can be identified from the stationary output state.

\subsubsection*{Identifiable directions} 

Given an element $V \in \manifirr$, its orbit 
$[V]$ under the action of $\mathscr{G}$ is a submanifold of $\manifirr$. We denote by $T^{\rm nonid}_V$ the tangent space of $[V]$ at $V$ (as an element of the submanifold): it represents the infinitesimal changes in $V$ which are not detectable from the output state. Let us consider the restriction of the action
\begin{equation} \label{eq:restract}\mu_V:=\mu(\cdot,V): \mathscr{G} \rightarrow \manifirr, \quad  g \mapsto \mu(g,V),
\end{equation}
then $\tsnonid_V$ is given by the image of $d\mu_V$ at $e:=(1,\mathbf{1}_{\cal H})\in \mathscr{G}$. Note that the tangent space of $\mathscr{G}$ at $e$ can be identified with
\begin{equation} \label{eq:LieAlgebra}
\mathfrak g_V = \{(\theta, K) ~:~ \theta\in \mathbb R, K\in \mathfrak{u}(d), \, {\rm tr}[\rho^{ss}_V K]=0\},
\end{equation}
where we have chosen the natural constraint ${\rm tr}[\rho^{\rm ss}_V K]=0$ to eliminate the $\mathcal U(1)$-direction from the Lie algebra $\mathfrak u(d)$ of $\mathcal U(d)$. Now $d_e\mu_V: \mathfrak g_V \to \TT^{\rm nonid}_V$ is an isomorphism 
given by
\begin{eqnarray}\label{push}
d_e\mu_V:Y =(\theta, K) &\mapsto& 
\left. i\frac{d}{dt} (\exp(tY)V)\right|_{t=0}\nonumber\\
&&=
\left.i\frac{d}{dt}\left(e^{-it\theta}(e^{itK}\otimes \id_{\cal K})Ve^{-itK}\right)\right|_{t=0}\nonumber \\
&&= \theta V-(K\otimes \id_{\cal K})V+VK.
\end{eqnarray}
where $Y\in \mathfrak g_V$ is identified with $(\theta, K)$. The bijectivity follows from the irreducibility of $\TT_V$, but we can also compute the inverse explicitly. In fact, multiplying by $V^*$ from the left we get the relation
\begin{equation}\label{remarkable}
V^*d_e\mu_V(Y) = \theta\mathbf{1}_{\cal H}+ ({\rm Id}-\TT_V)(K)
\end{equation}
which connects the transition operator with the geometry. This is similar to its continuous-time counterpart (eq. (26) in \cite{GK17}), where the role of ${\rm Id}-\TT_V$ is played by the Lindblad generator.

Relation \eqref{remarkable} suggests that we can recover $K$ by inverting ${\rm Id}-\TT_V$. This map is of course not invertible on the full space; however, it is easy to see that \emph{in the irreducible case}, its restriction onto the invariant subspace
$$\mathcal S_V=\{ X \in L^\infty({\cal H}) ~:~ \mathrm{tr}[\rho^{ss}_V X]=0\}$$
has an inverse $({\rm Id}-\TT_V)^{-1}:\mathcal S_V\to\mathcal S_V$. We therefore proceed by computing the stationary mean from a tangent vector, subtracting it and then applying $({\rm Id}-\TT_V)^{-1}$:
\begin{align*}
&\mathsf M_V: \tsnonid_V \to \mathbb C, & \mathsf M_V(A)&={\rm tr}[\rho^{\rm ss}_V V^*A]\\
&\mathsf C_V:\tsnonid_V \to \mathcal S_V & \mathsf C_V(A) &=({\rm Id}-\TT_V)^{-1}(V^*A-\mathsf M_V(A)\id_{\cal H} ).
\end{align*}
This allows us to recover the gauge transformation generators from \eqref{push} via
\begin{align*}
\theta &= M_V\circ d_e\mu_V(Y), & K & = \mathsf C_V\circ d_e\mu_V(Y).
\end{align*}
Moreover, the two mappings can be applied to \emph{any} $A \in T_V(\manifirr)$, thereby setting up a specific way of extracting the ``part'' of $A$ that translates the parameter along an orbit of the gauge group:
\begin{align}\label{connection}
\omega_V: T_V(\manifirr)&\to \mathfrak g_V, & \omega_V(A) &= \left(\mathsf M_V(A), \mathsf C_V(A)\right).
\end{align}
If $A$ lies in the unidentifiable subspace $\tsnonid_V$, we recover $A$ exactly:
$$
A = d_e\mu_V(\omega_V(A)), \quad \text{for all } A\in \tsnonid_V={\rm ran}\, d_e\mu_V.
$$
For a general $A \in T_V(\manifirr)$ we can then extract the part that contributes to an \emph{identifiable} parameter change, by defining a projection (an idempotent linear map) $\mathsf P_V:T_V(\manifirr) \to  T_V(\manifirr)$ and the corresponding range subspace
\begin{equation}\label{projection}
\mathsf P_V := {\rm Id}-d_e\mu_V\circ \omega_V, \quad \tsident_V = {\rm ran}\, \mathsf P_V.
\end{equation}
This provides a splitting of the tangent bundle of $\manifirr$ as a direct sum of vector bundles:
$$
T(\manifirr) = \tsident \oplus \tsnonid.
$$
Notice that from the identification of $\tsnonid_V$ with ${\frak g}_V$ one can easily compute
$$
d_{\rm nonid}:=\dim \tsnonid_V=1 + (d^2-1) = d^2.
$$
Consequently, the dimension of $\tsident_V$ is given by
$$
d_{\rm id}=(2k-1)d^2-d_{\rm nonid}=2d^2(k-1)=2d d_0,$$
where $d_0=d(k-1)$ is the codimension of the range of $V$ inside ${\cal H}\otimes {\cal K}.$

The splitting above is compatible with the action of $\mathscr{G}$ in the following sense. Let us consider the restriction of the action
\[ \mu^g:=\mu(g,\cdot): \manifirr \rightarrow \manifirr, \quad V \mapsto \mu(g,V),
\]
and let $d_V\mu^g: T_V(\manifirr)\to T_{g \cdot V}(\manifirr)$ be the induced tangent map.
\begin{lemma} \label{lem:compa}
    For every $g \in \mathscr{G}$ and every $V \in \manifirr$, the following holds true:
    \begin{equation} \label{eq:groupcomp}
    P_{g \cdot V}d_V\mu^g=d_V\mu^g P_V.
    \end{equation}
Therefore,
$$d_V \mu^g(\tsident_V)=\tsident_{g \cdot V} \qquad 
\text{ and } 
\qquad 
d_V \mu^g(\tsnonid_V)=\tsnonid_{g \cdot V}.$$
\end{lemma}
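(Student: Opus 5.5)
The plan is to compute everything explicitly. Since $\mu^g$ is the restriction of a real-linear map on $L^\infty(\mathcal H,\mathcal H\otimes\mathcal K)$, its differential is easy to write down. For $g=(c,W)$ we have
$$
d_V\mu^g(A)=\overline c\,(W\otimes\id)AW^* .
$$
We then check separately that $d_V\mu^g$ intertwines the two pieces $d_e\mu_V$ and $\omega_V$ making up $\mathsf P_V={\rm Id}-d_e\mu_V\circ\omega_V$. Set $V':=g\cdot V$. The ingredients are how the stationary state, the transition operator and the Lie algebra $\mathfrak g_V$ transform.

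\textbf{Step 1: covariance of the data.} From $V'^*(X\otimes\id)V'=W V^*(W^*XW\otimes\id)VW^*$ we get
$$
\TT_{V'}(X)=W\,\TT_V(W^*XW)\,W^* .
$$
Hence $\rho^{\rm ss}_{V'}=W\rho^{\rm ss}_VW^*$, by uniqueness of the stationary state. Consequently $\mathcal S_{V'}=W\mathcal S_VW^*$, and $({\rm Id}-\TT_{V'})^{-1}=\mathrm{Ad}_W\circ({\rm Id}-\TT_V)^{-1}\circ\mathrm{Ad}_{W^*}$ on $\mathcal S_{V'}$. The map $\mathrm{Ad}_W:(\theta,K)\mapsto(\theta,WKW^*)$ then sends $\mathfrak g_V$ to $\mathfrak g_{V'}$, since the trace constraint is preserved.

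\textbf{Step 2: intertwining of $d_e\mu$.} Using \eqref{push}, we compute
$$
\overline c(W\otimes\id)\bigl[\theta V-(K\otimes\id)V+VK\bigr]W^*=\theta V'-(WKW^*\otimes\id)V'+V'WKW^* .
$$
This gives $d_V\mu^g\circ d_e\mu_V=d_e\mu_{V'}\circ\mathrm{Ad}_W$. Conceptually this is $\mu(g\exp(tY),V)=\mu(\exp(t\,\mathrm{Ad}_gY)g,V)$, with the $U(1)$ factor commuting.

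\textbf{Step 3: intertwining of $\omega$.} The key identity is
$$
V'^*\,d_V\mu^g(A)=|c|^2\,W V^*A W^* = W V^*AW^* .
$$
It follows that $\mathsf M_{V'}(d_V\mu^gA)=\tr(W\rho^{\rm ss}_VW^*\,WV^*AW^*)=\mathsf M_V(A)$. By Step 1 we also get $\mathsf C_{V'}(d_V\mu^gA)=W\,\mathsf C_V(A)\,W^*$. Therefore $\omega_{V'}\circ d_V\mu^g=\mathrm{Ad}_W\circ\omega_V$.

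\textbf{Conclusion.} Combining Steps 2 and 3 gives
$$
d_e\mu_{V'}\,\omega_{V'}\,d_V\mu^g=d_e\mu_{V'}\mathrm{Ad}_W\,\omega_V=d_V\mu^g\,d_e\mu_V\,\omega_V ,
$$
which is \eqref{eq:groupcomp}. Since $d_V\mu^g$ is a linear isomorphism (its inverse is $d_{V'}\mu^{g^{-1}}$), the equality of ranges follows: applying it to ${\rm ran}\,\mathsf P_V$ and to ${\rm ran}({\rm Id}-\mathsf P_V)$ yields $T^{\rm id}_{V'}$ and $T^{\rm nonid}_{V'}$ respectively.

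The main obstacle is bookkeeping rather than substance. First, the phase ambiguity of $W\in PU(d)$ has to be handled; it cancels in every conjugation. Second, the inverse $({\rm Id}-\TT_{V'})^{-1}$ must be applied to an argument lying in $\mathcal S_{V'}$. This holds because $V'^*d\mu^g(A)-\mathsf M\,\id=W(V^*A-\mathsf M_V(A)\id)W^*$.
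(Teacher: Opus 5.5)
Your proposal is correct and is essentially the ``simple check'' the paper alludes to without writing out. It establishes the covariances $\rho^{\rm ss}_{g\cdot V}=W\rho^{\rm ss}_VW^*$ and $\TT_{g\cdot V}=W\,\TT_V(W^*\cdot\, W)\,W^*$, then the intertwinings $d_V\mu^g\circ d_e\mu_V=d_e\mu_{g\cdot V}\circ\mathrm{Ad}_W$ and $\omega_{g\cdot V}\circ d_V\mu^g=\mathrm{Ad}_W\circ\omega_V$, which together give \eqref{eq:groupcomp}, and the equality of ranges follows from the invertibility of $d_V\mu^g$.
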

The proof is just a simple check. The following result collects some equivalent characterisations of identifiable directions, which follow from what we observed so far.

\begin{proposition} \label{horprop} The following are equivalent statements for every $A \in T_V(\manifirr)$:
\begin{itemize}
\item[(i)] $A\in \tsident_V$;
\item[(ii)] $\mathsf{P}_V(A)=A$;
\item[(iii)] $\omega_V(A)=0$;
\item[(iv)] $VV^*A=0$.
\end{itemize}
In particular, we can identify bijectively
\begin{align*}
&L^\infty({\cal H}, R(V)^\perp) \simeq \tsident_V, &A_0\mapsto \begin{pmatrix}0 \\ A_0\end{pmatrix}
\end{align*}
where $L^\infty({\cal H}, R(V)^\perp)$ is considered as a real linear space and the block representation is again induced by the splitting ${\cal H}\otimes {\cal K}=R(V)\oplus R(V)^\perp$.
\end{proposition}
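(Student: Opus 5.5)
The plan is to prove the chain (i)$\Leftrightarrow$(ii)$\Rightarrow$(iii)$\Rightarrow$(iv)$\Rightarrow$(ii), and then read off the block description from (iv) and the characterisation \eqref{eq:tangentmanirr} of $T_V(\manifirr)$. Each step uses only the definitions \eqref{connection} and \eqref{projection} together with the irreducibility of $\TT_V$.

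\textbf{(i)$\Leftrightarrow$(ii).} First I will check that $\mathsf P_V$ is idempotent. This follows from $\omega_V\circ d_e\mu_V={\rm Id}_{\mathfrak g_V}$, which is the identity $A=d_e\mu_V(\omega_V(A))$ on $\tsnonid_V$ already established above. Indeed, $(d_e\mu_V\omega_V)^2=d_e\mu_V\omega_V$, so $\mathsf P_V^2=\mathsf P_V$. For an idempotent, a vector lies in its range if and only if it is fixed by it, which gives (i)$\Leftrightarrow$(ii).

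\textbf{(ii)$\Rightarrow$(iii).} Condition (ii) says $d_e\mu_V(\omega_V(A))=0$. Since $d_e\mu_V:\mathfrak g_V\to\tsnonid_V$ is injective (bijectivity from irreducibility), this gives $\omega_V(A)=0$. Conversely, (iii) trivially implies (ii).

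\textbf{Well-definedness of $\omega_V$.} This is the step I expect to require the most care, because of the sign and factor-of-$i$ conventions in the identification of $\mathfrak g_V$ and of tangent vectors. I need $\omega_V(A)$ to genuinely lie in $\mathfrak g_V$ for every $A\in T_V(\manifirr)$, not merely for $A\in\tsnonid_V$.
\begin{itemize}
\item By \eqref{eq:tangentmanirr}, $V^*A$ is Hermitian. Hence $\mathsf M_V(A)={\rm tr}[\rho^{\rm ss}_V V^*A]$ is real.
\item $V^*A-\mathsf M_V(A)\id$ is Hermitian and lies in $\mathcal S_V$.
\item $\TT_V$ preserves Hermiticity and $\mathcal S_V$, so $({\rm Id}-\TT_V)^{-1}$ does too. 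Thus $\mathsf C_V(A)$ is a Hermitian element of $\mathcal S_V$, i.e.\ it has the form of the $K$ in \eqref{push}, using the same convention as there, with the generator written as $e^{itK}$.
\end{itemize}

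\textbf{(iii)$\Rightarrow$(iv).} Suppose $\omega_V(A)=0$. Then ${\rm tr}[\rho^{\rm ss}_V V^*A]=0$, and $({\rm Id}-\TT_V)^{-1}(V^*A)=0$. Since $({\rm Id}-\TT_V)^{-1}$ is injective on $\mathcal S_V$, this gives $V^*A=0$, hence $VV^*A=0$.

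\textbf{(iv)$\Rightarrow$(ii).} Suppose $VV^*A=0$. Since $V^*V=\id$, we get $V^*A=V^*VV^*A=0$. Therefore $\mathsf M_V(A)=0$ and $\mathsf C_V(A)=({\rm Id}-\TT_V)^{-1}(0)=0$. So $\omega_V(A)=0$ and $\mathsf P_V A=A$.

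\textbf{Block description.} In the decomposition $L=(L_1,L_2)$, condition (iv) says exactly $L_1=0$. The tangency constraint in \eqref{eq:tangentmanirr} only requires $V^*L_1$ to be Hermitian, which is automatic when $L_1=0$, and it imposes nothing on $L_2$. Hence every $A_0\in L^\infty({\cal H},R(V)^\perp)$ yields a tangent vector $\binom{0}{A_0}\in\tsident_V$, and conversely every element of $\tsident_V$ is of this form. The map $A_0\mapsto\binom{0}{A_0}$ is clearly real-linear and injective, so it is the claimed bijection.

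As a consistency check, the real dimension of $L^\infty({\cal H},R(V)^\perp)$ is $2d\cdot d(k-1)=2dd_0=d_{\rm id}$, matching the dimension count given earlier.
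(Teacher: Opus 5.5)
Your proof is correct and is essentially the paper's route. The paper gives no separate proof, stating only that the equivalences follow from the preceding definitions, and your chain of implications carries out exactly that: idempotence of $\mathsf P_V$ via $\omega_V\circ d_e\mu_V=\mathrm{Id}$, injectivity of $d_e\mu_V$, and invertibility of $\mathrm{Id}-\mathcal T_V$ on $\mathcal S_V$. Your check that $\omega_V$ sends all of $T_V(\manifirr)$ into $\mathfrak g_V$ fills in a detail the paper leaves implicit.
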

The formulation (iv) is quite transparent in that the number $d_{\rm id}= 2dd_0=2d^2(k-1)$ of identifiable parameters is readily visible. Notice that the choice of the splitting of the tangent bundle is by no means unique. However, this particular choice will turn out to be particularly convenient for computing the singular dimension of a point in $\orbifirr$ and in relation to the \emph{quantum Fisher information} of the system and output state (Proposition \ref{prop:QFI}).


\subsubsection*{Atlas of fundamental charts} 

We now build an atlas for $\orbifirr$ following the path outlined in section \ref{subsec:orbi}, which uses a subset of 
$\tsident_V$ as coordinates around $[V]$; the first step is to find a $\mathscr{G}$-invariant Riemannian metric on $\manifirr$ such that $\tsident_V$ is the orthogonal complement of $\tsnonid_V$ for every $V \in \manifirr$. The proof of next Lemma can be found in appendix \ref{sec:proofsgeom}.

\begin{lemma} \label{lem:rm}
There exists a $\mathscr{G}$-invariant Riemannian metric $\nu$ on $\manifirr$ such that
\begin{enumerate}
    \item for every $V \in \manifirr$, the subspaces $\tsident_V$ and $\tsnonid_V$ of the tangent space are orthogonal.
    
 \item for every $V \in \manifirr$ and $A \in \tsident_V$
 \begin{equation}
 \label{eq:inner.prod.tangent.id}
 \nu_V(A,A)=\tr(\rho_V^{\rm ss}A^*A).
 \end{equation}
\end{enumerate}

\end{lemma}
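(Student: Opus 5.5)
The plan is to build the metric explicitly from the splitting $T_V(\manifirr)=\tsident_V\oplus\tsnonid_V$ and the identification $d_e\mu_V:\mathfrak g_V\to\tsnonid_V$. I define $\nu_V$ as the orthogonal sum of two inner products, one on each summand:
\[
\nu_V(A,B) := {\rm Re}\,\tr\big(\rho^{\rm ss}_V (\mathsf P_V A)^*(\mathsf P_V B)\big) + \kappa_V\big(\omega_V(A),\omega_V(B)\big),
\]
where $\kappa_V$ is an inner product on $\mathfrak g_V$, for instance
\[
\kappa_V((\theta,K),(\theta',K'))=\theta\theta'+{\rm Re}\,\tr(\rho^{\rm ss}_V K^*K').
\]
Properties 1 and 2 then hold by construction, because $\omega_V$ vanishes on $\tsident_V$ and $\mathsf P_V$ vanishes on $\tsnonid_V$ (Proposition \ref{horprop}). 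Positive definiteness on $\tsident_V$ follows since $\rho^{\rm ss}_V>0$ by irreducibility. Positive definiteness on $\tsnonid_V$ follows since $\omega_V$ restricted to $\tsnonid_V$ is the inverse of the isomorphism $d_e\mu_V$. Smoothness in $V$ holds because $\rho^{\rm ss}_V$ depends smoothly on $V$ (it is a simple eigenvector of $\TT_{V*}$), $({\rm Id}-\TT_V)^{-1}$ on $\mathcal S_V$ is smooth, and hence $\mathsf M_V$, $\mathsf C_V$ and $\mathsf P_V$ are smooth.

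The main work is $\mathscr G$-invariance, i.e.\ $\nu_{g\cdot V}(d_V\mu^g A,d_V\mu^g B)=\nu_V(A,B)$.

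For $g=(c,W)$ the action is linear in $V$, so $d_V\mu^g A=\bar c(W\otimes\id)AW^*$. One checks that $\rho^{\rm ss}_{g\cdot V}=W\rho^{\rm ss}_VW^*$ and $\TT_{g\cdot V}(X)=W\TT_V(W^*XW)W^*$. Using Lemma \ref{lem:compa}, the identifiable term transforms as
\[
\tr\big(W\rho^{\rm ss}_VW^* \, W(\mathsf P_VA)^*(W^*\otimes\id)(W\otimes\id)(\mathsf P_VB)W^*\big),
\]
which is invariant by cyclicity, and $|c|=1$ removes the phase.

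For the non-identifiable term, I verify the equivariance
\[
\omega_{g\cdot V}(d_V\mu^gA)={\rm Ad}_g\,\omega_V(A)=(\mathsf M_V(A),\,W\mathsf C_V(A)W^*).
\]
This follows by substituting the transformed $V^*A$, $\rho^{\rm ss}$ and $\TT$ into the definitions of $\mathsf M$ and $\mathsf C$; note that $\mathcal S_{g\cdot V}=W\mathcal S_VW^*$. The chosen $\kappa_V$ is invariant under $(\theta,K)\mapsto(\theta,WKW^*)$ together with $\rho^{\rm ss}\mapsto W\rho^{\rm ss}W^*$.

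I expect the one delicate point to be this equivariance of $\omega$, in particular checking that the phase $\bar c$ cancels in $V^*A$: since both factors $V$ and $A$ carry $\bar c$, one gets $\bar c\,c=1$. If that verification were troublesome, the fallback is to average any metric with properties 1 and 2 over the compact group $\mathscr G$ with Haar measure. Averaging preserves property 2 and the orthogonality, since both subspaces and the formula in \eqref{eq:inner.prod.tangent.id} are themselves equivariant by Lemma \ref{lem:compa}.
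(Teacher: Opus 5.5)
Your proposal is correct, and it reaches invariance by a different route from the paper.

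\textbf{The paper's route.} It starts from the non-invariant metric
\[
\langle\omega_V(X),\omega_V(Y)\rangle+\mathrm{Re}\,\tr\bigl(\rho^{\rm ss}_V\mathsf P_V(X)^*\mathsf P_V(Y)\bigr),
\]
with an arbitrary fixed inner product on the pairs $(\theta,K)$, and averages it over $\mathscr G$ with Haar measure. Lemma \ref{lem:compa} and $\rho^{\rm ss}_{g\cdot V}=W\rho^{\rm ss}_VW^*$ are used only to check that averaging preserves the orthogonality of $\tsident_V$ and $\tsnonid_V$ and leaves $\nu_V(A,A)$ unchanged for $A\in\tsident_V$. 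Your ``fallback'' is therefore exactly the paper's proof.

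\textbf{Your route.} You choose an $\mathrm{Ad}$-invariant inner product $\kappa_V$ on $\mathfrak g_V$ from the start and prove the equivariance $\omega_{g\cdot V}\circ d_V\mu^g=\mathrm{Ad}_g\circ\omega_V$ directly. Your computations check out: $(g\cdot V)^*d_V\mu^gA=WV^*AW^*$ (the phases cancel), $\mathsf M$ is invariant, $\mathcal S_{g\cdot V}=W\mathcal S_VW^*$, and $({\rm Id}-\TT_{g\cdot V})^{-1}=\mathrm{Ad}_W\,({\rm Id}-\TT_V)^{-1}\mathrm{Ad}_{W^*}$ on that subspace. This equivariance is essentially what underlies the ``simple check'' of Lemma \ref{lem:compa}, so it costs you little extra. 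Your smoothness and positive-definiteness remarks, via algebraic simplicity of the eigenvalue $1$ and $\rho^{\rm ss}_V>0$, are also fine.

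\textbf{What each buys.} Your construction gives an explicit closed-form invariant metric, including its non-identifiable block. That could be convenient when working with the exponential maps defining the fundamental charts. The averaging argument is softer: it needs only compactness of $\mathscr G$ and Lemma \ref{lem:compa}, and works for any starting choice. The price is that the metric on $\tsnonid_V$ stays implicit.

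Your version also delivers property 2 with exactly the stated normalisation. The paper's displayed starting metric carries a stray factor $2$ in front of the identifiable term, which has to be dropped to match \eqref{eq:inner.prod.tangent.id}.
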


Note that while the Riemannian metric in Lemma \ref{lem:compa} is not unique, its restriction to vectors in $\tsident_V$ for every $V \in \manifirr$ is completely determined by \eqref{eq:inner.prod.tangent.id}. Now that we have the suitable Riemannian structure, we can construct an atlas of fundamental linear charts: for every $[V] \in \orbifirr$, we pick a representative $V \in [V]$ and we consider the chart given by $(B^{\rm id}_\epsilon(V),\mathscr{G}_V,\pi \circ \exp_{V})$, where
\begin{itemize}
\item $B^{\rm id}_\epsilon(V)$ is the open ball of radius $\epsilon$ centered in $0$ in $\tsident_V$;
\item 
$\mathscr{G}_V$ is the (discrete) 
stabiliser group characterised in Theorem \ref{almostfree}; the differential of its action on $\manifirr$ induces a linear unitary representation of $\mathscr{G}_V$ on $\tsident_V$ given by $g\mapsto  d_V \mu^g$. Since this representation is injective, we use the same notation $\mathscr{G}_V$ for simplicity.

\item $\exp_{V}$ is the exponential map of the Riemannian metric $\nu$ at $V$ and $\pi: \manifirr \rightarrow \orbifirr$ is the projection induced by the equivalence relation. Here $\epsilon$ needs to be small enough in order for the exponential map to be a homeomorphism.
\end{itemize}
We conclude that $\{(B^{\rm id}_\epsilon(V),\mathscr{G}_V,\pi \circ \exp_{V}):[V] \in \orbifirr\}$ is a $2dd_0$-dimensional orbifold atlas, hence $\orbifirr$ is a $2dd_0$-dimensional orbifold.
\subsection{Canonical stratification}
In this section we introduce and study the relevant splitting of $\orbifirr$ into disjoint submanifolds given by the canonical stratification. Regular points correspond to the submanifold of isometries corresponding to primitive quantum channels; the results of this section and section \ref{sec:LAN} show that, if one restricts to such a manifold, all the results in \cite{GK17} can be recovered, in the discrete time setting. However, we will also study the set of singular points and determine the dimensions of the submanifolds contained in it and what features are shared by points on the same singular submanifold.

\vspace{2mm}

The proof of Theorem \ref{almostfree} characterizes the stabilizer at each point. As an abstract group, $\mathscr{G}_V$ is isomorphic to $\mathbb{Z}_{p_V}$; we recall (cf. section \ref{subsec:orbi}) that the class of isomorphism of $\mathscr{G}_V$ as an abstract group does not depend on the choice of a representative for $[V] \in \orbifirr$ 
and we denote it by $\mathscr{G}_{[V]}$ (with an abuse of notation we will use $\mathscr{G}_{[V]}$ to denote also its representative $\mathbb{Z}_{p_{[V]}}$). As we already mentioned, \textit{regular points} are those with a trivial stabilizer, hence the ones that correspond to primitive channels:
$$\manifprim:=\{V \in \mathscr{V}: \, \TT_V \text{ primitive}\}
$$
The set $\manifprim$ is a submanifold of $\manifirr$, which is invariant under the action of $\mathscr{G}$; moreover the proof of Theorem \ref{almostfree} shows that the action of $\mathscr{G}$ on $\manifprim$ is effective and free, hence one obtains the following corollary regarding the geometry of the corresponding quotient space:
$$
\mathscr{P}^{\rm prim}:=\{[V]:V \in \manifprim\}.
$$

\begin{corollary} \label{coro:pb}
We have ${\mathcal P}^{\rm prim}=\manifprim / \mathscr{G}$. This set admits a unique smooth structure such that $\manifprim$ is a principal $\mathscr{G}$-bundle over ${\mathcal P}^{\rm prim}$ and the latter is a submanifold of $\orbifirr$ with the same dimension.
\end{corollary}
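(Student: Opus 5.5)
Three things need checking for Corollary \ref{coro:pb}: that $\manifprim$ is an open, $\mathscr{G}$-invariant submanifold of $\manifirr$; that $\mathscr{G}$ acts on it freely; and that the resulting manifold quotient sits inside $\orbifirr$ as the open stratum of regular points.

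\emph{Openness and invariance.} A channel is primitive exactly when its peripheral spectrum is $\{1\}$ with $1$ simple. By Theorem \ref{th:periodic.structure}, an irreducible channel of period $p$ has peripheral spectrum $\{\gamma^i\}_{i=0}^{p-1}$, all eigenvalues algebraically simple. Hence within $\manifirr$, primitivity means: every eigenvalue of $\TT_V$ other than the simple eigenvalue $1$ has modulus $<1$. This condition is open by continuity of $V\mapsto\TT_V$ and upper semicontinuity of the spectrum. So $\manifprim\cap\manifirr$ is an open submanifold of full dimension $(2k-1)d^2$. (Primitive channels are irreducible, so $\manifprim\subset\manifirr$.) For invariance, note that $g\cdot V$ induces the channel $X\mapsto W\TT_V(W^*XW)W^*$, since the phase $\overline c$ cancels. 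This channel is unitarily conjugate to $\TT_V$, so it has the same spectrum and the same period.

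\emph{Freeness.} By Theorem \ref{almostfree}(2), $\mathscr{G}_V\simeq\mathbb{Z}_{p_V}$, which is trivial when $p_V=1$. So the restricted action is free. It is also smooth and proper, since $\mathscr{G}$ is compact, and effective. The standard quotient manifold theorem for a free, proper, smooth action of a compact Lie group (as cited from \cite{BO86} in section \ref{subsec:orbi}) then gives a unique smooth structure on $\manifprim/\mathscr{G}$ making the quotient map a principal $\mathscr{G}$-bundle. Its dimension is $(2k-1)d^2-\dim\mathscr{G}=(2k-1)d^2-d^2=2dd_0$, matching the dimension of $\orbifirr$. The identification $\mathscr{P}^{\rm prim}=\manifprim/\mathscr{G}$ as topological spaces is the restriction of Theorem \ref{almostfree}(3) to a saturated open set. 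Here $\manifprim$ is saturated because it is $\mathscr{G}$-invariant, so $\pi|_{\manifprim}$ is an open quotient map onto $\pi(\manifprim)$.

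\emph{Embedding in $\orbifirr$.} The remaining point, and the only mildly delicate one, is that this smooth structure agrees with the orbifold structure restricted to $\mathscr{P}^{\rm prim}$. I would use the fundamental charts $(B^{\rm id}_\epsilon(V),\mathscr{G}_V,\pi\circ\exp_V)$ built in section \ref{subsec:idatl}. For primitive $V$, $\mathscr{G}_V$ is trivial, and for small $\epsilon$ the chart image lies in the open set $\pi(\manifprim)$. The map $\pi\circ\exp_V$ restricted to $B^{\rm id}_\epsilon(V)$ is then a homeomorphism onto an open set. It is a smooth local section transversal to the orbits, because $\tsident_V\oplus\tsnonid_V=T_V$ and $\tsnonid_V$ is the tangent space to the orbit. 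Local sections of this kind are precisely the charts of the principal bundle structure, so the two atlases are compatible.

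Finally, since $\mathscr{P}^{\rm prim}$ consists of points with trivial local group, it coincides with the regular stratum $\mathscr{S}_{2dd_0}$. It is therefore an open submanifold of $\orbifirr$ of full dimension.
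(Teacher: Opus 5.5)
Your proposal is correct and follows essentially the paper's route: the stabiliser $\mathscr{G}_V$ is trivial for $p_V=1$ by Theorem \ref{almostfree}, so $\mathscr{G}$ acts freely on the invariant open set $\manifprim$, and the quotient theorem for free actions of compact Lie groups \cite{BO86} gives the principal bundle structure. Your checks of openness, invariance, the dimension count and compatibility with the trivial-group fundamental charts fill in details that the paper only asserts. The one imprecision is your opening claim that a channel is primitive exactly when its peripheral spectrum is $\{1\}$ with $1$ simple: this fails for general channels, because the stationary state need not be faithful. You only use the claim inside $\manifirr$, however, where it is correct.
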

Let us now move our attention to the set of singular points, especially regarding its canonical stratification; this set can be further decomposed into a disjoint union of submanifolds according to the \emph{singular dimension} of points. We recall (cf. section \ref{subsec:orbi}) that the singular dimension of a point is the dimension of the tangent space at that point. 
Let us consider a point $[V] \in \orbifirr$ and the fundamental chart $(B^{\rm id}_\epsilon(V), \mathscr{G}_V, \pi \circ \exp_V)$ defined above; the tangent space at $[V]$ is defined as the isomorphism class of the subspace of $\tsident_V$ consisting of vectors that are invariant under the group $\{d_V\mu^g\}_{g \in \mathscr{G}_V}$ and we denote it by $T_{[V]}(\orbifirr)^{\mathscr{G}_{[V]}}$.

The following decomposition into disjoint subsets holds true:
\begin{equation} \label{eq:strata}
\orbifirr =\bigsqcup_{\substack{p=2,\dots,d\\l \in I_p}}\mathscr{P}_{p,l} \sqcup \mathscr{P}^{\rm prim}
\end{equation}
where
\begin{itemize}
\item $\mathscr{P}_{p,l}:=\{[V] \in \orbifirr\setminus \mathscr{P}^{\rm prim}: \,  \mathscr{G}_{[V]}=\mathbb{Z}_p \text{ and $[V]$ has singular dimension $l$}\}$;
\item $\mathscr{P}_{p,l}$ is a $l$-dimensional manifold whose tangent space at a point is canonically identified with $T_{[V]}(\orbifirr)^{\mathscr{G}_{[V]}}$;
\item $I_p$ is the set of $l \in \{0,\dots, d_{\rm id}-1\}$ such that $\mathscr{P}_{p,l} \neq \emptyset$. 
Note that there may exist pairs $(p,l)$ for which $\mathscr{P}_{p,l} = \emptyset$ and it remains an interesting open question to characterize pairs for which $\mathscr{P}{p,l} \neq \emptyset$, given fixed dimensions $d$ and $k$.
\end{itemize}

We remark that the canonical stratification is given by
$$\orbifirr =\bigsqcup_{l=0,\dots, d_{\rm id}-1}\mathscr{P}_{l} \sqcup \mathscr{P}^{\rm prim}, \text{ where } \mathscr{P}_l=\bigsqcup_{\substack{p=0,\dots, d : \\l \in I_p}} \mathscr{P}_{p,l}.$$
The further splitting of $\mathscr{P}_l$ into $\mathscr{P}_{p,l}$ is possible due to the fact that points in the same connected component of $\mathscr{P}_l$ have the same local group and, hence, the same period. Thanks to the atlas of fundamental charts constructed in the previous subsection, we can explicitly compute the singular dimension of every point $[V] \in \orbifirr$. 



\begin{proposition} \label{prop:sd}
    Let $[V] \in \orbifirr$, then $[V] \in \mathscr{P}_{p_{[V]},l_{[V]}}$ where
    $$l_{[V]}=2\left ( 
    \sum_{a=0}^{p_{[V]}-1} 
    \left[d_{a\oplus 1}([V])k-d_a([V])\right]
    d_a([V])
    \right ) \leq 2 d^2(k-1)=d_{\rm id}.$$
    Moreover, for every representative $V \in [V]$ one has
    \begin{equation} \label{eq:singulartangent}
    T_{[V]}(\mathscr{P}_{p_{[V]},l_{[V]}})\simeq \left \{A \in \tsident_V : A P_{a}(V)=(P_{a\oplus 1}(V) \otimes \mathbf{1}_{{\cal K}}) A, \, a=0, \dots, p_{[V]}-1 \right\},\end{equation}
    where $P_a(V)$ are the orthogonal projections onto the subspaces appearing in the periodic decomposition of ${\cal H}$, cf. Theorem \ref{th:periodic.structure}.

    \noindent
    Finally, if two points $[V]$ and $[W]$ belong to the same connected component of any singular manifold $\mathscr{P}_l$ for some $l$, then $p_{[V]}=p_{[W]}$ and $d_a([V])=d_a([W])$ for every $a=0, \dots, p_{[V]}$.
\end{proposition}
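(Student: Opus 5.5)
Using the fundamental chart $(B^{\rm id}_\epsilon(V),\mathscr{G}_V,\pi\circ\exp_V)$, the tangent space at $[V]$ is the fixed-point subspace of $\tsident_V$ under the linear action $\{d_V\mu^g\}_{g\in\mathscr{G}_V}$. By Theorem \ref{almostfree}, $\mathscr{G}_V$ is cyclic, generated by $g_1=(\gamma_V,Z_V)$ with $Z_V=\sum_a\gamma_V^aP_a$. It therefore suffices to compute the action of $g_1$ and find its invariant vectors. Since $\mu^g$ is the restriction of a real-linear map on $L^\infty({\cal H},{\cal H}\otimes{\cal K})$, its differential is $d_V\mu^{g_1}(A)=\overline{\gamma_V}(Z_V\otimes\id)AZ_V^*$. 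By Lemma \ref{lem:compa}, this map preserves $\tsident_V$, since $g_1\cdot V=V$.

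Write $A=\sum_{a,b}(P_b\otimes\id)AP_a$. Then $d_V\mu^{g_1}(A)=\sum_{a,b}\gamma_V^{\,b-a-1}(P_b\otimes\id)AP_a$. Invariance forces every block with $b\neq a\oplus1$ to vanish, which is exactly the condition $AP_a=(P_{a\oplus1}\otimes\id)A$ in \eqref{eq:singulartangent}. This identifies $T_{[V]}(\orbifirr)^{\mathscr{G}_{[V]}}$, and the canonical identification of the tangent space of the stratum then gives \eqref{eq:singulartangent}.

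For the dimension, I use Proposition \ref{horprop}: $\tsident_V\simeq L^\infty({\cal H},R(V)^\perp)$ as a real space, i.e. operators $A$ with $V^*A=0$. Recall that $V$ maps ${\cal H}_a$ into ${\cal H}_{a\oplus1}\otimes{\cal K}$, because each $K_i$ maps ${\cal H}_a$ into ${\cal H}_{a\oplus1}$ (Theorem \ref{th:periodic.structure}). Hence the invariant $A$ decompose as a direct sum over $a$ of maps $A_a:{\cal H}_a\to{\cal H}_{a\oplus1}\otimes{\cal K}$. For each $a$, the condition $V^*A=0$ means $A_a$ takes values in the orthogonal complement of $V{\cal H}_a$ inside ${\cal H}_{a\oplus1}\otimes{\cal K}$. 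This works because $V^*$ maps $({\cal H}_{a\oplus1}\otimes{\cal K})$ into ${\cal H}_a$, so the constraints decouple across blocks. Since $V$ is an isometry, $V{\cal H}_a$ has dimension $d_a$, so that complement has complex dimension $kd_{a\oplus1}-d_a$. Summing the real dimensions gives $l_{[V]}=2\sum_a(kd_{a\oplus1}-d_a)d_a$. The inequality $l\le 2d^2(k-1)$ holds because the invariant subspace sits inside $\tsident_V$. One can also check it directly: the terms $d_ad_{a\oplus1}$ are bounded via $\sum_a d_ad_{a\oplus1}\le\sum_a d_a^2$ (Cauchy–Schwarz after shifting), and $\sum_a d_a^2\le d^2$.

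For the last claim, connected components of $\mathscr{P}_l$ have constant local group, so $p$ is constant along each component. The tuple $(d_a)$ is a discrete invariant of $[V]$, and I must show it is locally constant on the stratum. This is the main obstacle. My approach is to move within the stratum through the fundamental chart: points near $[V]$ in $\mathscr{P}_{p,l}$ are of the form $\exp_V(A)$ with $A$ invariant. The key step is to show that $\exp_V(A)$ is still fixed by $g_1$, using the $\mathscr{G}$-equivariance $\mu(g,\exp_V(\cdot))=\exp_V\circ d_V\mu^g$. Then $Z_V$ intertwines the dynamics of $\exp_V(A)$. By Theorem \ref{almostfree}, the stabilizer of $\exp_V(A)$ contains $(\gamma_V,Z_V)$ and has period $p$, so $Z_V$ coincides with its own $Z$, up to relabeling $a$ by a cyclic shift. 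Its spectral projections are therefore still the $P_a$, which keeps the $d_a$ locally constant. A connectedness argument then extends this to the whole component. I would double-check that the cyclic relabeling ambiguity is fixed consistently, for example by matching the $\gamma$-eigenvalue of $Z$.
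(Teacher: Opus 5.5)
Your proof is correct. For the first two statements it is the paper's argument: you compute the fixed space of the generator of $\mathscr{G}_V$ from the spectral resolution of $Z_V$. The paper uses the inverse generator $(\overline{\gamma}_V,Z_V^*)$, which changes nothing. Your blockwise count of maps from ${\cal H}_a$ into the orthogonal complement of $V{\cal H}_a$ in ${\cal H}_{a\oplus1}\otimes{\cal K}$ is the same computation the paper phrases as ``$(P_{a\oplus1}(V)\otimes\id)$ commutes with $VV^*$, so $(P_{a\oplus1}(V)\otimes\id)(\id-VV^*)$ has rank $kd_{a\oplus1}-d_a$''.

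For the last statement you take a different route. The paper joins $[V]$ and $[W]$ by a smooth path in the stratum and lifts it to $\manifirr$. It then uses perturbation theory for the algebraically simple peripheral eigenvalue to get a smooth family $Z_{\widetilde{\alpha}(t)}$, hence continuous spectral projections $P_a(\widetilde{\alpha}(t))$ of constant rank. You instead prove local constancy inside a single slice, in two steps:
\begin{enumerate}
\item Stratum points near $[V]$ are $\pi(\exp_V(A))$ with $A$ fixed by $\mathscr{G}_V$. You assert this; to justify it, note that the local group at $\pi(\exp_V(A))$ is the stabiliser of $A$ in $\mathscr{G}_V$. If that stabiliser were a proper subgroup, its fixed space would strictly contain that of $\mathscr{G}_V$, giving a larger singular dimension.
\item Equivariance of $\exp_V$ for the $\mathscr{G}$-invariant metric of Lemma \ref{lem:rm} gives $g_1\cdot\exp_V(A)=\exp_V(A)$, i.e.\ $\TT_{\exp_V(A)}(Z_V)=\gamma_V Z_V$.
\end{enumerate}
This avoids both perturbation theory and the path-lifting step, which the paper only sketches. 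It also gives a stronger conclusion: the periodic decomposition itself is constant across the slice. In particular, the relabelling issue you flag disappears. $Z_V$ is already a unitary eigenvector of the required form $\sum_a\gamma_V^aP_a$, so you may take $Z_{\exp_V(A)}:=Z_V$ and get $P_a(\exp_V(A))=P_a(V)$ exactly. The remaining cyclic ambiguity is built into the definition of the $d_a$, since $Z$ is fixed only up to a $p$-th root of unity; the paper's version resolves it by continuity. In turn, the paper's route does not need the identification of stratum points with $\mathscr{G}_V$-invariant slice vectors.
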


The proof of Proposition \ref{prop:sd} can be found in appendix \ref{sec:proofsgeom}. Note that by the definition of periodic projections one has that for every $a=0,\dots, p_V-1$
\begin{equation} \label{eq:periodicprop}
V P_{a}(V)=(P_{a\oplus 1}(V) \otimes \mathbf{1}_{{\cal K}}) V .
\end{equation}
The tangent space at $[V]$ of the submanifold in the canonical stratification can be identified with those vectors in $\tsident_{V}$ that preserve property in Eq. \eqref{eq:periodicprop}.


From the fundamental charts we built, one can see that for every integer $q$ that divides $p_{[V]}$, for every neighborhood of $[V]$, one can find a point $[V^\prime]$ that belongs to such neighborhood and with period $q$. They correspond to those directions in the tangent space 
$\tsident_V$ which are fixed points of the subgroup of $\mathscr{G}_V$ corresponding to the $q$-th roots of $1$, i.e.
$$\left\{(\gamma_{[V]}^{l\cdot p_{[V]}/q},Z_{[V]}^{l \cdot p_{[V]}/q}\right\}, \quad l=0, \dots, q-1.$$
These vectors correspond to those $A:{\cal H} \rightarrow R(V)^\perp$ that put in communication some of the periodic projections:
$$A P_{q,a}(V)=(P_{q,a\oplus 1}(V) \otimes \mathbf{1}_{{\cal K}}) A, \quad a=0,\dots, q-1$$
with
$$P_{q,a}(V)=\sum_{l=0}^{p_{[V]}/q-1}P_{a \oplus lq}(V).$$
Note that
$$d_a([V^\prime])=\sum_{l=0}^{p_{[V]}/q-1}d_{a\oplus lq}([V]).$$
Such a tight mathematical structure reflects the rigidity of the peripheral spectrum of irreducible channels: it can only be the group of the roots of $1$; therefore, any smooth change of the dynamics can only perturb the peripheral eigenvalues bringing some of them inside the unit circle in a way that leaves on the unit circle a subgroup of the original group.

\section{Local approximation of stationary irreducible quantum Markov chains} \label{sec:LAN}

While in the previous section we analysed the \emph{geometric structure} of the  orbifold parameter space, the focus of this section is on the \emph{local properties} of the quantum statistical models corresponding to the system and output state, and the stationary output state.

Let us first introduce the local statistical models that we will analyse in the rest of the section. Consider a Riemannian metric $\nu$ for $\manifirr$ as in Lemma \ref{lem:rm} and a fixed but arbitrary $V_0 \in \manifirr$; there exists a small centered ball of radius $\epsilon>0$,  $B_\epsilon(V_0) \subset T_{V_0}(\manifirr)$, such that $\exp_{V_0}$ is a diffeomorphism between $B_\epsilon(V_0)$ and its image. This provides a convenient local parametrisation of the quantum statistical model of system and output states:
\begin{equation} \label{eq:sosm}
\widehat{\mathbf{Q}}_{V_0}^{\rm s+o}(n)= 
\left \{\left|\Psi_{V(n^{-1/2} A)}(n)\right\rangle ~:~ A \in B_{r_n}(V_0)\right\}, 
\end{equation}
where $V:=\exp_{V_0}$ and $r_n=Cn^{\delta}$ for some $\delta \in (0,1/2)$. Note that for $n$ large enough, one has $n^{-1/2}r_n<\epsilon$ and it makes sense to evaluate $V$ at $n^{-1/2}A$ for $A \in B_{r_n}(V_0)$.

Let us now introduce the sequence of local statistical models corresponding to the stationary output state; in order to work with an identifiable statistical model, we need to consider parameters in $\mathscr{P}^{\rm irr}$, and the orbifold charts built in section \ref{subsec:idatl} provides convenient parametrisations:
\begin{equation} \label{eq:outputsm}
\mathbf{Q}_{V_0}^{\rm out}(n):= \left\{\rho^{\rm out}_{[V(n^{-1/2}A)]}(n) ~:~
 A \in B^{\rm id}_{r_n}(V_0)\right\},
\end{equation}
where $[V(\cdot)]:=\pi \circ V(\cdot)$ and we only consider tangent vectors in $B_{r_n}^{\rm id}(V_0):=B_{r_n}(V_0) \cap \tsident_{V_0}$. A remarkable feature of these family of charts is that they factorise through 
$\manifirr$ by construction; this is fundamental because it provides us with a convenient and smooth choice of representatives for the equivalence classes in 
$[V(B^{\rm id}_{\epsilon}(V_0))] \subseteq \orbifirr$ and this will be essential in order to prove the main results of this section.

\vspace{2mm}

The goal is to show that each sequence of local statistical models introduced above approaches a sequence of ``limit models'' in the sense of Le Cam distance. Concretely, we show that the sequence \eqref{eq:sosm} satisfies quantum local asymptotic normality (Proposition \ref{prop:extsolan}), while in the case of \eqref{eq:outputsm} the limit is a mixture of Gaussian shift models for which we give a complete characterisation (Theorem \ref{thm:nLAN}). We also prove that the limit states corresponding to each $\ket{\Psi_{V(n^{-1/2}A)}(n)}$ only depend on the identifiable component $A^{\rm id}:=P_{V_0}(A)$ (see eq. \eqref{projection} for the definition of $P_{V_0}$) and that the quantum Fisher information of the statistical model of the system and output state at time $n$ evaluated at vectors in $\tsnonid_{V_0}$ grows at most sub-linearly with $n$ (Proposition \ref{prop:QFI}); therefore, if one is interested in asymptotically optimal estimation strategies, it makes sense to consider the following restriction of the model in eq. \eqref{eq:sosm}:
\begin{equation} \label{eq:somid}
\mathbf{Q}_{V_0}^{\rm s+o}(n)= 
\left \{\left|\Psi_{V(n^{-1/2} A)}(n)\right\rangle ~:~ A \in B^{\rm id}_{r_n}(V_0)\right\}.
\end{equation}
Moreover, the limit Gaussian shift model for \eqref{eq:somid} is independent of the initial system state, making it possible to  extend the result in Proposition \ref{prop:extsolan} to initial mixed states as well. Assuming that the system is initially in the unique stationary state, one can check that the model in eq. \eqref{eq:outputsm} can be obtained from the one in eq. \eqref{eq:somid} applying a quantum channel which consists of tracing out the system; we will explicitly construct a quantum channel that does the same for the limit models. Figure \ref{fig:diagram} contains a diagram illustrating this statement.


%
\tikzset{every picture/.style={line width=0.75pt}} 
\begin{figure}
\begin{tikzpicture}[x=0.75pt,y=0.75pt,yscale=-1,xscale=1]
%
\draw    (183,51) -- (385.2,51.4) ;
\draw [shift={(387.2,51.4)}, rotate = 180.11] [color={rgb, 255:red, 0; green, 0; blue, 0 }  ][line width=0.75]    (10.93,-3.29) .. controls (6.95,-1.4) and (3.31,-0.3) .. (0,0) .. controls (3.31,0.3) and (6.95,1.4) .. (10.93,3.29)   ;
\draw    (131.2,71.4) -- (131.2,156.4) ;
\draw [shift={(131.2,158.4)}, rotate = 270] [color={rgb, 255:red, 0; green, 0; blue, 0 }  ][line width=0.75]    (10.93,-3.29) .. controls (6.95,-1.4) and (3.31,-0.3) .. (0,0) .. controls (3.31,0.3) and (6.95,1.4) .. (10.93,3.29)   ;
\draw    (440.2,70.4) -- (440.2,155.4) ;
\draw [shift={(440.2,157.4)}, rotate = 270] [color={rgb, 255:red, 0; green, 0; blue, 0 }  ][line width=0.75]    (10.93,-3.29) .. controls (6.95,-1.4) and (3.31,-0.3) .. (0,0) .. controls (3.31,0.3) and (6.95,1.4) .. (10.93,3.29)   ;
\draw    (183,199) -- (385.2,199.4) ;
\draw [shift={(387.2,199.4)}, rotate = 180.11] [color={rgb, 255:red, 0; green, 0; blue, 0 }  ][line width=0.75]    (10.93,-3.29) .. controls (6.95,-1.4) and (3.31,-0.3) .. (0,0) .. controls (3.31,0.3) and (6.95,1.4) .. (10.93,3.29)   ;
\draw    (178,332) -- (380.2,332.4) ;
\draw [shift={(382.2,332.4)}, rotate = 180.11] [color={rgb, 255:red, 0; green, 0; blue, 0 }  ][line width=0.75]    (10.93,-3.29) .. controls (6.95,-1.4) and (3.31,-0.3) .. (0,0) .. controls (3.31,0.3) and (6.95,1.4) .. (10.93,3.29)   ;
\draw    (131,229) .. controls (156.94,267.61) and (106.23,266.04) .. (133.35,318.4) ;
\draw [shift={(134.2,320)}, rotate = 241.76] [color={rgb, 255:red, 0; green, 0; blue, 0 }  ][line width=0.75]    (10.93,-3.29) .. controls (6.95,-1.4) and (3.31,-0.3) .. (0,0) .. controls (3.31,0.3) and (6.95,1.4) .. (10.93,3.29)   ;
\draw    (434,232) .. controls (398.74,265.49) and (461.27,292.19) .. (438.31,321.65) ;
\draw [shift={(437.2,323)}, rotate = 310.91] [color={rgb, 255:red, 0; green, 0; blue, 0 }  ][line width=0.75]    (10.93,-3.29) .. controls (6.95,-1.4) and (3.31,-0.3) .. (0,0) .. controls (3.31,0.3) and (6.95,1.4) .. (10.93,3.29)   ;

\draw (61,30.4) node [anchor=north west][inner sep=0.75pt]  [font=\LARGE]  {${\cal U}_n(V_0) \subseteq \manifirr$};
\draw (406,30.4) node [anchor=north west][inner sep=0.75pt]  [font=\LARGE]  {$\pi ({\cal U}_n) \subseteq\orbifirr$};
\draw (75,172.4) node [anchor=north west][inner sep=0.75pt]  [font=\LARGE]  {$\left\{\rho ( n)_{V}^{\rm s+o}\right\}$};
\draw (395,172.4) node [anchor=north west][inner sep=0.75pt]  [font=\LARGE]  {$\left\{\rho ( n)_{[V]}^{\rm out}\right\}$};
\draw (120,325) node [anchor=north west][inner sep=0.75pt]  [font=\LARGE]  {$\mathbf{G}_{V_0}$};
\draw (399,325) node [anchor=north west][inner sep=0.75pt]  [font=\LARGE]  {$\mathbf{GM}_{V_0}$};
\draw (268,18.4) node [anchor=north west][inner sep=0.75pt]  [font=\Large]  {$\pi $};
\draw (263,155.4) node [anchor=north west][inner sep=0.75pt]  [font=\Large]  {$\text{tr}_{\mathcal{H}}$};
\draw (272,297.4) node [anchor=north west][inner sep=0.75pt]  [font=\Large]  {$\mathcal{S}_{*}$};
\draw (233,246.4) node [anchor=north west][inner sep=0.75pt]  [font=\Large]  {$n\rightarrow +\infty $};
\end{tikzpicture}
\caption{In the top line of the picture there are the sets of local parameters $\exp_{V_0}(B_{Cn^{\delta-1/2}}(V_0))=: {\cal U}_n(V_0) \subseteq \mathscr{V}^{\rm irr}$ and $\pi \circ \exp_{V_0}(B_{Cn^{\delta-1/2}}(V_0))=:\pi({\cal U}_{n}(V_0))\subseteq \mathscr{P}^{\rm irr}$ and $\pi$ represents the quotient map induced by the equivalence relation introduced in Section \ref{sec:identifiability}. Each parameter space is connected to the statistical model that it parametrizes, i.e. $\mathscr{V}^{\rm irr}$ to $\left\{\rho ( n)_{V}^{\rm s+o}\right\}$ (the state of system and output at time $n$ when the system starts in the stationary state) and $\mathscr{P}^{\rm irr}$ to $\left\{\rho ( n)_{[V]}^{\rm out}\right\}$; one model can be obtained from the other tracing away the system. Finally, $\mathbf{G}_{V_0}$ and $\mathbf{GM}_{V_0}$ are the local limit models, related by the quantum channel $\mathcal{S}_{*}$.}
\label{fig:diagram}
\end{figure}

\subsection{Limit statistical models} \label{sub:limitmodels}

In this section we construct two ``limit'' statistical models which describe the local asymptotic statistics around a fixed given point $V_0\in \mathscr{V}^{\rm irr}$ as shown in our main results, Theorem \ref{thm:SOlan} and Theorem \ref{thm:nLAN}. These are Gaussian shift and mixtures of Gaussian shift models of the type introduced in section \ref{sec:QGSM}, and the construction relies on the general method for building a continuous variables (CV) system outlined in section \ref{cvsystems}.

\vspace{2mm}

Let $V_0\in \mathscr{V}^{\rm irr}$ be a fixed parameter and consider the tangent space of identifiable directions $\tsident_{V_0}$; recall that according to Proposition \ref{horprop}, its vectors can be identified with operators in $L^\infty({\cal H}, R(V_0)^\perp) $ via the map
\begin{align*}
 L^\infty({\cal H}, R(V_0)^\perp) \ni A\mapsto \begin{pmatrix} 0 \\ A\end{pmatrix} \in \tsident_{V_0}.
\end{align*}
The space $\tsident_{V_0}$ has a natural \emph{complex} linear structure and we define a complex inner product  
\begin{equation}
\label{eq:inner.product.id.space}
(A,B)_{V_0} = {\rm tr}\left (\rho^{ss}_{V_0}A^*B\right ), \quad A,B\in L^\infty({\cal H}, R(V_0)^\perp).
\end{equation}
The real and imaginary parts are the following real bilinear forms
\begin{align}\label{complexid}
\beta_{V_0}(A,B)&:={\rm Re}(A,B)_{V_0}, & \sigma_{V_0}(A,B)&:={\rm Im}(A,B)_{V_0}.
\end{align}
such that $\beta_{V_0}$ is strictly positive and $\sigma_{V_0}$ is a nondegenerate symplectic form. Note that $\beta_{V_0}$ coincides with the restriction of any Riemannian metric as in Lemma \ref{lem:rm} to $\tsident_{V_0}$. 

We think of $(\tsident_{V_0}, (\cdot, \cdot)_{V_0}) $ as a space of modes and construct the associated continuous variable system consisting of the Fock space $\mathfrak{H}(\tsident_{V_0})$ and the canonical variables $Z(A)$ acting on this space and satisfying the commutation relations $[Z(A), Z(B)] =2i\mathbf{1} \sigma_{V_0}( A,B)$. By applying the Weyl operators $ W(A) = \exp(-iZ(A))$ to the vacuum we obtain (Gaussian) coherent states $|{\rm Coh}(A)\rangle := W(A) |\Omega\rangle$ with Wigner function  
$$
w_A(B) = \frac{1}{(2\pi)^{d_{\rm id}} } \exp\left(-\frac{1}{2} \|B-A\|_{V_0}^2 \right)
$$
For simplicity, we refer to this construction as the CV system associated to $V_0$ and denote it with the symbol $CV(V_0)$.

\begin{definition}[{\bf Limit gaussian shift model}]
Given parameter $V_0\in \manifirr$, we define the quantum Gaussian shift model consisting of pure coherent states of the CV system $CV(V_0)$ defined above.
\begin{equation} \label{eq:gsm}
\mathbf{G}_{V_0}:=\{ |{\rm Coh}(A)\rangle:= W(A)|\Omega\rangle ~:~ A \in  \tsident_{V_0}\}.
\end{equation}
We further define the sequence of restricted quantum Gaussian shift models
\begin{equation} \label{eq:gsm.restricted}
\mathbf{G}_{V_0}(n):=\{ |{\rm Coh}(A)\rangle:=W(A)|\Omega\rangle ~:~  A \in  B^{\rm id}_{r_n}(V_0)\}.
\end{equation}
where $r_n = C n^{\delta}$, with $0<\delta<1/2$ and $C>0$ fixed constants.
\end{definition}

\vspace{2mm}

We now proceed to construct a second statistical model from the pure Gaussian shift model \eqref{eq:gsm} following the method outlined in section \ref{sec:QGSM}. For this we consider the action of the stabiliser group $\mathscr{G}_{V_0}$ on the tangent space of identifiable parameters 
$\tsident_{V_0}$.
We recall that the stabiliser group is the cyclic group of order $p_{[V_0]}$ whose elements are   
$\{g_k = g_{V_0}^k\}_{k=0}^{p-1}$ with $g_{V_0}=(\overline{\gamma}_{V_0},Z^*_{V_0})$, and whose spectrum consists of  
$p_{[V_0]}$-th roots of the identity $\{\gamma_{V_0}^k\}_{k=0,\dots,p-1}$, as described in Theorem \ref{th:periodic.structure}. Its action on the tangent space is given by the linear map
\begin{equation}
\label{eq:action.stab.group}
U_{V_0}(g)^k : A \mapsto d\mu^{g_k}_{V_0}A =\gamma_{V_0}^k (Z_{V_0}^{*k} \otimes \mathbf{1}_{{\cal K}}) A Z_{V_0}^k, 
\end{equation}
which is a \emph{unitary transformation} with respect to $(\cdot, \cdot)_{V_0}$. Indeed
$$
(U_{V_0}(g) A,U_{V_0}(g) \cdot B)_{V_0}=\tr(\rho^{\rm ss}_{V_0}Z_{V_0}^*A^*(Z_{V_0} \otimes \mathbf{1}_{{\cal K}})(Z_{V_0}^* \otimes \mathbf{1}_{{\cal K}})B Z_{V_0})=\tr(\rho^{\rm ss}_{V_0}A^*B )=(A,B)_{V_0},$$
where we used the fact that $[\rho^{\rm ss}_{V_0},Z_{V_0}]=0$. 
The orthogonal eigenspaces $\mathcal{V}_k$ of the unitary operator $U_{V_0}(g) $ are:
\begin{equation}
\label{eq:def.v_k}
\mathcal{V}_k=\{A \in L^\infty({\cal H}, R(V_0)^\perp): \, AP_a(V_0)= (P_{a \oplus 1-k}(V_0) \otimes \mathbf{1}_{\cal K}) A, \,a=0, \dots, p_{[V_0]}-1\}.
\end{equation}
Note that $\mathcal{V}_0$ is the tangent space at $[V_0]$ of the submanifold 
$\mathscr{S}_{[V_0]}:=\mathscr{S}_{p_{[V_0]},k([V_0])}$ appearing in the canonical stratification (equation \eqref{eq:strata}); the above explicit description of $\mathcal{V}_k$ can be obtained using the same strategy employed for $\mathcal{V}_0$ in the proof of Proposition \ref{prop:sd}. 

\vspace{2mm}

At the level of the CV system $CV(V_0)$, the stabiliser induces the group $G_{V_0}:=\{\alpha_{g_1}^k \, k=0,\dots, p_{[V_0]}-1\}$ of vacuum preserving inner $^*$-automorphisms $\alpha_{g_1}(\cdot)=\Gamma(U_{V_0}(g)) \cdot \Gamma(U_{V_0}(g))^*$ of the algebra of Weyl operators, where $\Gamma(U_{V_0}(g))$ is the second quantization of $U_{V_0}(g)$.

\vspace{2mm}

We can now introduce the second statistical model that will be needed in the following. This will be an instance of the Gaussian mixture models studied in section \ref{sec:QGSM} (see Eq. \eqref{eq:mixture.model}), constructed on the identifiable tangent space.
\begin{definition}[{\bf mixture of Gaussian shifts}]
\label{def:mixture.gaussian}
Given parameter $V_0\in \manifirr$ and $A \in \tsident_{V_0}$ we define the mixed state 
\begin{eqnarray}
\rho(A)&:=&\frac{1}{p_{[V_0]}}\sum_{m=0}^{p_{[V_0]}-1} \alpha_{g_1*}^m(\ket{{\rm Coh}(A)}\bra{{\rm Coh}(A)})\\
\nonumber
&=&\frac{1}{p_{[V_0]}}\sum_{m=0}^{p_{[V_0]}-1} 
\ket{{\rm Coh}(U_{V_0}(g)A)}\bra{{\rm Coh}(U_{V_0}(g)A)}
\end{eqnarray}
and the Gaussian mixture model 
\begin{equation}
\mathbf{GM}_{V_0}:=\left \{ \rho(A) ~:~  A \in \tsident_{V_0}\right \}.
\end{equation}
We further define the sequence of restricted Gaussian mixture model
\begin{equation}
\label{eq:restricted.mixed.Gaussian}
\mathbf{GM}_{V_0}(n):=\left \{ \rho(A) ~:~  A \in B_{r_n}^{\rm id}(V_0)\right \}.
\end{equation}
where $r_n = C n^{\delta}$, with $0<\delta<1/2$ and $C>0$ fixed constants.
\end{definition}
Note that states in ${\bf GM}_{V_0}$ can be obtained from the ones in ${\bf G}_{V_0}$ via the application of the following quantum channel:
\begin{eqnarray*}
\mathcal{S}_*:
L^1({\cal F}(\tsident_{V_0})) 
&\to& 
L^1({\cal F}(\tsident_{V_0}))\\ 
\rho
&\mapsto &
\frac{1}{p_{[V_0]}}\sum_{m=0}^{p_{[V_0]}-1} \alpha_{g_1*}^m(\rho).
\end{eqnarray*}
Let us consider the following orthogonal splitting (with respect to $(\cdot, \cdot)_{V_0}$) \begin{equation}
\tsident_{V_0}=\mathcal{V}_0 \oplus \mathcal{V}_0^\perp
\end{equation}
and let us denote by $A=A_0+A_0^\perp$ the unique splitting of an element $A \in \tsident_{V_0}$ into its components in the two orthogonal subspaces. 
This splitting induces a factorization of the CCR representation into the product of the two CCR representations corresponding to $\mathcal{V}_0$ and $ \mathcal{V}_0^\perp$:
$$W(A)=W(A_0) \otimes W(A_0^\perp), \quad \ket{\Omega}=\ket{\Omega_0}\otimes \ket{\Omega_0^\perp}.$$
By 
Proposition \ref{prop.mixture.model} we can express the gaussian mixture as 
\begin{equation}
\label{eq:decomp.gaussian.mixture}
\rho(A) =
\ket{{\rm Coh}(A_0)}\bra{{\rm Coh}(A_0)}
\otimes \sum_{m=0}^{p-1} \ket{\zeta_m(A)}\bra{\zeta_m{A}},
\end{equation}
 where 
 $\ket{\zeta_m(A)}$ can be expressed as 
 \begin{equation} \label{eq:zed}
 e^{-\beta_{V_0}(A_0^\perp,A_0^\perp)/2} \left ( \delta_0(k) \ket{\Omega_0^\perp} + \sum_{l \geq 1}\frac{1}{\sqrt{l!}}\sum_{m_1\oplus\dots\oplus m_l=m} A_{-m_1} \otimes \cdots \otimes A_{-m_l} \right)   
\end{equation}
where $A_0^\perp=A_1+\dots + A_{p_{[V_0]}-1}$ is the decomposition of $A_0^\perp$ induced by the further splitting of $\mathcal{V}_0^\perp$ into
$\bigoplus_{i=1}^{p_{[V_0]}-1}\mathcal{V}_i $.
While in general the above expression looks rather involved, in the case $p_{[V_0]}=2$ the model is more tractable and we refer to the discussion following Proposition \ref{prop.mixture.model} for more details.


\subsection{Limit results}
\label{sec:limit_results}

In this section we derive asymptotic results concerning the quantum Fisher information (QFI) of the system and output state, and establish the convergence to limit models for system and output and stationary output models. 

\vspace{2mm}

The first result shows that the QFI of system and output state scales linearly with time for the identifiably directions $T^{\rm id}_V$ and sub-linearly for the the non-identifiable directions $T^{\rm nonid}_V$. 
This means that the local asymptotic behavior is determined by the former and the latter can be dropped. 
Recall that any $A \in T_{V_0}$ can be uniquely  decomposed as $A=A^{\rm nonid}+A^{\rm id}$ where 
$A^{\rm nonid}\in T^{\rm nonid}_V$ and 
$A^{\rm id}\in T^{\rm id}_V$ (see equation \eqref{projection}).


\begin{proposition}[{\bf QFI convergence}] \label{prop:QFI}
Let us consider $A,B \in B_\epsilon(V_0)\subset T_{V_0}(\manifirr)$ and let $F_{V_0,n}(A,B)$ be the quantum Fisher information of $|\Psi_{V}(n)\rangle$ at $V_0$ in directions $A$, $B$. Then, one has
$$\lim_{n \rightarrow +\infty} \frac{F_{V_0,n}(A,B)}{n}=F_{V_0}(A^{\rm id},B^{\rm id}),$$
where 
$F_{V_0}(X,Y):=4 \beta_{V_0}(X,Y)=
4{\rm Re }\tr(\rho^{\rm ss}_{V_0} X^*  Y)$ for $X,Y\in T^{\rm id}_{V_0}$.
\end{proposition}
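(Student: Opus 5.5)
Since $|\Psi_V(n)\rangle=V(n)|\varphi\rangle$ is a pure state, I will use the pure-state formula for the QFI. Write $\partial_A$ for the derivative along $t\mapsto V(tA):=\exp_{V_0}(tA)$ at $t=0$. Then
$$F_{V_0,n}(A,B)=4{\rm Re}\left[\langle\partial_A\Psi|\partial_B\Psi\rangle-\langle\partial_A\Psi|\Psi\rangle\langle\Psi|\partial_B\Psi\rangle\right].$$
The exponential map has differential the identity at $0$, so $\partial_A V=A$ (up to the factor $i$ in the convention $V+i\delta L$, which cancels in the Fisher information). By the Leibniz rule, $\partial_A V(n)=\sum_{j=1}^n V^{(n)}\cdots A^{(j)}\cdots V^{(1)}$. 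Inner products of such terms reduce, via $V^*V=\mathbf 1$ and the definition $\mathcal T(X)=V^*(X\otimes\mathbf 1)V$, to expressions in the transition operator. For $i<j$, write $\rho_i:=\mathcal T_*^{i-1}(\rho_{\rm in})$ and introduce the twisted maps $\mathcal T_{A}(X):=A^*(X\otimes \mathbf 1)V$ and $\mathcal T_{B}^\dagger(X):=V^*(X\otimes\mathbf 1)B$. Then
$$\langle \partial^{(i)}_A\Psi|\partial^{(j)}_B\Psi\rangle={\rm tr}\big(\rho_i\,\mathcal T_A\big(\mathcal T^{\,j-i-1}(V^*B)\big)\big),$$
the diagonal terms $i=j$ give ${\rm tr}(\rho_i A^*B)$, and $\langle\Psi|\partial_B\Psi\rangle=\sum_j{\rm tr}(\rho_j V^*B)$.

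Next I reduce to identifiable directions. By \eqref{projection}, $A=A^{\rm id}+d_e\mu_{V_0}(Y)$ with $Y=(\theta,K)$. A gauge direction changes $V(n)$ only by boundary terms: differentiating $e^{-it\theta n}(e^{itK}\otimes\mathbf 1)V(n)e^{-itK}$ gives $\partial_{A^{\rm nonid}}\Psi=(n\theta+K_{\rm out}-\ldots)\Psi$, namely a global phase $n\theta$ plus operators acting only at the two ends, with norm $O(1)$. The phase cancels in the variance form of the QFI. Hence $F_{V_0,n}(A^{\rm nonid},\cdot)$ is $O(\sqrt n)$ by Cauchy–Schwarz with the $O(n)$ identifiable part, so it vanishes after dividing by $n$ and is the bounded/sub-linear contribution mentioned before the proposition. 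This leaves the case $A,B\in\tsident_{V_0}$.

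For $A,B\in\tsident_{V_0}$, Proposition~\ref{horprop} gives $V_0^*A=V_0^*B=0$. Consequently $\langle\Psi|\partial_B\Psi\rangle=0$, and the cross terms with $i<j$ contain the factor $\mathcal T^{\,j-i-1}(V^*B)$ (or its adjoint analogue), which vanishes when $j>i+1$. For $j=i+1$ the term is $\mathcal T_A(V^*B)=0$ as well. Only the diagonal remains:
$$F_{V_0,n}(A,B)=4{\rm Re}\sum_{i=1}^n{\rm tr}\big(\mathcal T_*^{i-1}(\rho_{\rm in})A^*B\big).$$
The ergodic average \eqref{average.Schrodinger} then yields $\frac1n F\to4{\rm Re}\,{\rm tr}(\rho^{\rm ss}A^*B)$, and this holds for any initial state, including periodic cases, since only Ces\`aro means are needed.

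The main obstacle is the mixed terms between $A^{\rm id}$ and $A^{\rm nonid}$, since the two pieces are not independent. I expect to handle them by the same boundary-term computation: $\langle \partial_{A^{\rm nonid}}\Psi|\partial_{B^{\rm id}}\Psi\rangle$, once the phase is removed, involves $V^*B^{\rm id}=0$ except at $O(1)$ boundary sites, so it is $O(1)$. If the boundary bookkeeping proves delicate, the fallback is to invoke $\mathscr G$-invariance directly: $\Psi_{g\cdot V}(n)$ equals $\Psi_V(n)$ up to phase and unitaries on the system at the two ends, and both affect the QFI only by $O(1)$.
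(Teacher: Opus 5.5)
Your proof is correct. Its outer structure is the paper's: positivity of the QFI form (Cauchy--Schwarz) reduces everything to showing $F_{V_0,n}(A^{\rm nonid},A^{\rm nonid})=o(n)$. For $A,B\in\tsident_{V_0}$, the relation $V_0^*A=V_0^*B=0$ kills the overlap term and all off-diagonal terms, leaving a Ces\`aro average of ${\rm tr}(\TT_*^{i-1}(\rho_{\rm in})A^*B)$.

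The genuine difference is the non-identifiable step. The paper treats $A=\theta V_0$ and $A=-(K\otimes\mathbf 1)V_0+V_0K$ separately. For the latter it expands $\|\partial_A\Psi\|^2$ into a diagonal sum and two double sums, inserts the spectral decomposition of $\TT_{V_0}$ (peripheral eigenvectors $Z^a$, $J_a$ and an exponentially decaying remainder), and checks that the coefficients of $n$ cancel.

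You instead use the covariance $(g\cdot V_0)(n)=\overline{c}^{\,n}(W\otimes\mathbf 1)V_0(n)W^*$. The derivative along a gauge direction is then $-in\theta\Psi+i(K\otimes\mathbf 1)\Psi-iV_0(n)K\varphi$. Since $F=4\|(\mathbf 1-|\Psi\rangle\langle\Psi|)\partial\Psi\|^2$, the multiple of $\Psi$ drops out and $F_{V_0,n}(A^{\rm nonid},A^{\rm nonid})\le 16\|K\|^2$ for every $n$. This is shorter, gives an explicit $n$-independent bound, and needs no spectral information about $\TT_{V_0}$; irreducibility is used only for the Ces\`aro limit in the identifiable part. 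The paper's computation gives nothing more for this statement.

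Your final paragraph is unnecessary, since Cauchy--Schwarz already gives $F_{V_0,n}(A^{\rm nonid},B^{\rm id})=O(\sqrt n)$. Its ``boundary sites'' heuristic is also not literally right for periodic chains. The term $\langle (K\otimes\mathbf 1)\Psi|\partial^{(j)}_B\Psi\rangle={\rm tr}\bigl(\rho_jV_0^*(\TT_{V_0}^{n-j}(K)\otimes\mathbf 1)B\bigr)$ involves every site $j$. The non-decaying peripheral part of $\TT_{V_0}^{n-j}(K)$ drops out there only because $V_0^*(Z^b\otimes\mathbf 1)B=\gamma^bZ^bV_0^*B=0$.
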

In particular, if either $A$ or $B$ are in $\tsnonid_{V_0}$, then
$$\lim_{n \rightarrow +\infty}\frac{F_{V_0,n}(A,B)}{n}=0.$$
The proof of Proposition \ref{prop:QFI} can be found in Appendix \ref{app:local}. 

\vspace{2mm}

We now consider the local asymptotic theory of the system-output state for states in the neighbourhood of $V_0$. The next result shows that the non-identifiable component of the tangent vector disappears in the limit, in agreement with the fact that its QFI rate per time unit is zero.
Consider the extension of Gaussian shift model $\mathbf{G}_{V_0}(n)$ to all directions in the tangent space:
\begin{equation}\label{eq:redextmodel}\widehat{\mathbf{G}}_{V_0}(n):=
\left\{\ket{{\rm Coh}(A^{\rm id})}: \, A \in T_{V_0}, \, A \in B_{r_n}(V_0)\right\}, \quad 
\end{equation}
We recall that $\nu$ is an arbitrary Riemannian metric on $\manifirr$ as in Lemma \ref{lem:rm}.

\begin{proposition} \label{prop:extsolan}
Given parameter $V_0\in \manifirr$, let us consider the sequence of extended system and output models $\widehat{\mathbf{Q}}^{\rm s+o}_{V_0}(n)$ 
and the extended quantum Gaussian shift sequence $\widehat{\mathbf{G}}_{V_0, }(n)$ defined in \eqref{eq:redextmodel}, with constants $C>0$ and $\delta <(2(d_{\rm id}+3))^{-1}$, where $d_{\rm id}=2d^2(k-1)$.
Then the following limit holds true:
\[
\lim_{n \rightarrow \infty} \Delta(\widehat{\mathbf{Q}}_{V_0}^{\rm s+o}(n),\widehat{\mathbf{G}}_{V_0}(n))=0.
\]

\end{proposition}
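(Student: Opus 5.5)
The plan is to reduce the statement to Lemma \ref{lem:wts2} (weak to strong convergence for pure state models), applied with the parameter space $X=T_{V_0}(\manifirr)$. This space has real dimension $(2k-1)d^2$. However, the limit states depend only on $A^{\rm id}$, so the effective dimension entering the error bound is $d_{\rm id}$; the covering/packing argument behind the lemma only needs a net of the image in $\tsident_{V_0}$. Since both models are pure, the main task is to establish \emph{fast uniform weak convergence}:
$$
f(n):=\sup_{A,B\in B_{r_n}(V_0)}\left|\langle\Psi_{V(n^{-1/2}A)}(n)|\Psi_{V(n^{-1/2}B)}(n)\rangle-\langle{\rm Coh}(A^{\rm id})|{\rm Coh}(B^{\rm id})\rangle\right|,
$$
with $r_n^{d_{\rm id}}f(n)\to0$. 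The Hölder hypothesis for the limit model is immediate from $|\langle {\rm Coh}(x)|{\rm Coh}(y)\rangle|^2=e^{-\|x-y\|^2_{V_0}}$, which gives $\alpha=2$.

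The inner product is computed through the transfer operator. Writing $V_A:=V(n^{-1/2}A)$, we have
$$
\langle\Psi_{V_A}(n)|\Psi_{V_B}(n)\rangle={\rm tr}\big(\rho_{\rm in}\,\mathcal{E}_{A,B}^{n}(\mathbf 1)\big),\qquad \mathcal{E}_{A,B}(X)=V_A^*(X\otimes\mathbf 1)V_B .
$$
Expanding $V_A=V_0+n^{-1/2}A+\tfrac12 n^{-1}\ddot V(A,A)+O(\|A\|^3n^{-3/2})$ gives a perturbation $\mathcal{E}_{A,B}=\TT_{V_0}+n^{-1/2}\mathcal{E}^{(1)}+n^{-1}\mathcal{E}^{(2)}+O(r_n^3n^{-3/2})$. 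We then need perturbation theory for the $p_{V_0}$ peripheral eigenvalues $\gamma^j$ of $\TT_{V_0}$. The rest of the spectrum lies in a disc of radius $<1$ and contributes only exponentially small terms.

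Because $\TT_{V_0}$ has a period, the relevant object is the perturbed spectral projection onto the whole peripheral block. For the leading eigenvalue $\lambda_{A,B}=1+n^{-1/2}\lambda_1+n^{-1}\lambda_2+\dots$, second order perturbation theory gives $\lambda_1={\rm tr}(\rho^{\rm ss}V_0^*B)-\overline{{\rm tr}(\rho^{\rm ss} V_0^*A)}$. The second-order term $\lambda_2$ contains ${\rm tr}(\rho^{\rm ss}A^*B)$, the second-derivative terms, and the reduced-resolvent terms $(\mathrm{Id}-\TT_{V_0})^{-1}$ acting on the mean-subtracted first-order parts.

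Using Proposition \ref{horprop}, the relation \eqref{remarkable}, and the fact that $\exp_{V_0}$ is a geodesic of a $\mathscr G$-invariant metric (Lemma \ref{lem:rm}), I expect the non-identifiable parts to combine into pure phases and into boundary terms of the form ${\rm tr}(\rho_{\rm in}\,\cdot)$ that are $O(n^{-1/2}r_n)$. The identifiable part should give exactly
$$
\lambda^n\to\exp\!\big(-\tfrac12\|A^{\rm id}\|^2-\tfrac12\|B^{\rm id}\|^2+(A^{\rm id},B^{\rm id})_{V_0}\big)\times(\text{phase}).
$$
The phase is then absorbed by redefining the state vector's global phase, which is harmless for pure state models. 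This is the same computation that underlies Proposition \ref{prop:QFI}, so I would reuse it there.

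The other peripheral eigenvalues $\gamma^j$, $j\neq0$, must contribute negligibly. They are multiplied by overlaps with $\rho_{\rm in}$ that, for $\rho_{\rm in}=\rho^{\rm ss}$, vanish at zeroth order because ${\rm tr}(\rho^{\rm ss}Z^j)=0$ for $j\neq 0$. Their perturbed counterparts have moduli at most $1-O(n^{-1})\|\cdot\|^2$, but the coefficient in front is $O(n^{-1/2}r_n)$ after perturbing the eigenprojection. This is the step I expect to be the \textbf{main obstacle}: the peripheral eigenvalues are perturbed differently, and one must show uniformly over $\|A\|,\|B\|\le r_n$ that the off-diagonal contributions stay $O(r_n^{c}n^{-1/2})$.

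With all error terms collected, the goal is a bound $f(n)=O(r_n^3n^{-1/2})=O(n^{3\delta-1/2})$. The condition $r_n^{d_{\rm id}}f(n)\to0$ then becomes $\delta(d_{\rm id}+3)<1/2$, which is precisely the hypothesis $\delta<(2(d_{\rm id}+3))^{-1}$. Lemma \ref{lem:wts2} then yields $\Delta(\widehat{\mathbf Q}^{\rm s+o}_{V_0}(n),\widehat{\mathbf G}_{V_0}(n))\to0$.
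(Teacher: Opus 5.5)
Your overall route is the paper's. You reduce to Lemma~\ref{lem:wts2} and write the overlap as $\langle\varphi|\mathcal E^n_{A,B}(\mathbf 1)\varphi\rangle$. You expand the eigenvalue of $\mathcal E_{A,B}$ near $1$ to second order, and let the non-identifiable contributions collapse into phases of product form $e^{i(\alpha_B-\alpha_A)}$. The paper gets this from the constraint $V(\cdot)^*V(\cdot)\equiv\mathbf 1$, which fixes the Hermitian part of the second-derivative terms; the geodesic property is not needed. The bound $f(n)=O(n^{3\delta-1/2})$ then gives the condition $(d_{\rm id}+3)\delta<1/2$. Your remark that the net in Lemma~\ref{lem:wts2} only has to cover the $\tsident_{V_0}$-components is a correct justification for using $d_{\rm id}$ rather than $(2k-1)d^2$. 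It works because the limit states depend only on $A^{\rm id}$ while $f(n)$ is a supremum over all pairs; the paper uses $d_{\rm id}$ without comment.

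The step you flag as the main obstacle is left open, and the mechanism you propose for it does not apply. The model is $V(n)|\varphi\rangle$ with a \emph{pure} initial vector; this is what makes the states pure and Lemma~\ref{lem:wts2} applicable. So you cannot take $\rho_{\rm in}=\rho^{\rm ss}$, and $\langle\varphi|Z^j\varphi\rangle\neq 0$ in general.

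The missing idea is contractivity, which makes any analysis of the eigenvalues near $\gamma^j$, $j\neq0$, unnecessary. Since $V_A,V_B$ are isometries, $\|\mathcal E_{A,B}(X)\|_\infty\le\|X\|_\infty$, so $\|\mathcal E^n_{A,B}\|\le 1$ for all $n$ and $|\lambda_{A,B}|\le 1$. Let $x_{A,B}$ be the perturbed eigenvector for the eigenvalue $\lambda_{A,B}$ near $1$, normalised by ${\rm tr}(\rho^{\rm ss}x_{A,B})=1$. Perturbation theory for this isolated simple eigenvalue gives $\|\mathbf 1-x_{A,B}\|_\infty=O(r_n n^{-1/2})$ uniformly on the ball, and hence
\[
\bigl|\langle\varphi|\mathcal E^n_{A,B}(\mathbf 1)\varphi\rangle-\lambda_{A,B}^n\bigr|\le\bigl|\langle\varphi|\mathcal E^n_{A,B}(\mathbf 1-x_{A,B})\varphi\rangle\bigr|+|\lambda_{A,B}|^n\bigl|\langle\varphi|(\mathbf 1-x_{A,B})\varphi\rangle\bigr|\le 2\|\mathbf 1-x_{A,B}\|_\infty .
\]
The absence of oscillating terms is explained on the observable side, not the state side. $\mathbf 1$ is itself the unperturbed eigenvector for the eigenvalue $1$: the $\gamma^j$-spectral projection of $\TT_{V_0}$ sends $\mathbf 1$ to ${\rm tr}(J_j)Z^j=0$, whatever the initial state.

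The remaining comparison of $\lambda^n_{A,B}$ with its exponential limit is done by telescoping $a^n-b^n$. It uses only $|\lambda_{A,B}|\le 1$ and the fact that the limiting exponent has nonpositive real part, and it yields the $O(n^{3\delta-1/2})$ rate. Your additional claim that the other peripheral eigenvalues have modulus $1-O(n^{-1})\|\cdot\|^2$ is neither proved nor needed.
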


The proof of Proposition \ref{prop:extsolan} can be found in Appendix \ref{app:local}. As an immediate consequence, one has the following Theorem (the rates follow from a careful inspection of the proof of the previous Proposition).

\begin{theorem} [{\bf QLAN for system and output state}]\label{thm:SOlan}
Given parameter $V_0\in \manifirr$, let us consider the sequence of system and output models $\mathbf{Q}^{\rm s+o}_{V_0}(n)$ defined in \eqref{eq:sosm} 
and the restricted quantum Gaussian shift sequence $\mathbf{G}_{V_0}(n)$ defined in \eqref{eq:gsm.restricted}, with constants $C>0$ and $\delta <(2(d_{\rm id}+3))^{-1}$, where $d_{\rm id}=2d^2(k-1)$.
Then the following limit holds true:
\[
\lim_{n \rightarrow \infty} \Delta(\mathbf{Q}_{V_0}^{\rm s+o}(n),\mathbf{G}_{V_0}(n))=0.
\]
Moreover, if one picks $\delta < (2(11+2d_{\rm id}))^{-1},$ then
\begin{equation} \label{eq:exrate}\lim_{n \rightarrow +\infty}n^{2\delta}\Delta(\mathbf{Q}_{V_0}^{\rm s+o}(n),\mathbf{G}_{V_0}(n))=0.
\end{equation}

\end{theorem}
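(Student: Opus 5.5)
The plan is to obtain Theorem \ref{thm:SOlan} from Proposition \ref{prop:extsolan} by restricting both sequences of models to identifiable directions, and then to extract the rate \eqref{eq:exrate} by following the error terms of Lemma \ref{lem:wts2} explicitly. The first step is to note that $\mathbf{Q}^{\rm s+o}_{V_0}(n)$ is the restriction of $\widehat{\mathbf{Q}}^{\rm s+o}_{V_0}(n)$ to parameters $A\in B^{\rm id}_{r_n}(V_0)$. For such $A$ we have $A^{\rm id}=A$, so restricting $\widehat{\mathbf{G}}_{V_0}(n)$ to the same parameter set gives exactly $\mathbf{G}_{V_0}(n)$. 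The deficiency $\delta(\cdot,\cdot)$ is a supremum over parameters, and it can only decrease when both models are restricted to the same subset. Hence
\[
\Delta(\mathbf{Q}^{\rm s+o}_{V_0}(n),\mathbf{G}_{V_0}(n))\le \Delta(\widehat{\mathbf{Q}}^{\rm s+o}_{V_0}(n),\widehat{\mathbf{G}}_{V_0}(n))\to 0 ,
\]
which proves the first claim under $\delta<(2(d_{\rm id}+3))^{-1}$.

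For the rate, I will apply Lemma \ref{lem:wts2} directly to the restricted pure models. Take $X=\tsident_{V_0}$ with real dimension $d_{\rm id}$, let $A$ be a singleton, and set $r_n=n^\delta$. The Hölder hypothesis holds with $\alpha=2$, since $1-|\langle{\rm Coh}(A)|{\rm Coh}(B)\rangle|^2=1-e^{-\|A-B\|^2_{V_0}}\le \|A-B\|_{V_0}^2$. The substantive input is a uniform bound on
\[
f(n)=\sup_{\|A\|,\|B\|\le r_n}\bigl|\langle\Psi_{V(n^{-1/2}A)}(n)|\Psi_{V(n^{-1/2}B)}(n)\rangle-\langle{\rm Coh}(A)|{\rm Coh}(B)\rangle\bigr|,
\]
which comes from the proof of Proposition \ref{prop:extsolan}. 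There the overlap is written as $\tr(\rho_{\rm in}\,\mathcal{T}^n_{A,B}(\mathbf 1))$, where $\mathcal{T}_{A,B}(X)=V(n^{-1/2}A)^*(X\otimes\mathbf 1)V(n^{-1/2}B)$ is a perturbation of $\mathcal{T}_{V_0}$ of order $n^{-1/2}r_n$. The analysis runs as follows.
\begin{itemize}
\item Perturbation theory of the peripheral eigenvalues $\gamma^k$ gives a leading factor $\exp(-\tfrac12\|A\|^2-\tfrac12\|B\|^2+(A,B)_{V_0})$. The second-order coefficient is identified through $VV^*A=0$ and Proposition \ref{horprop}.
\item The error from the Taylor remainder is $O(n\cdot (n^{-1/2}r_n)^3)=O(n^{-1/2}r_n^3)$.
\item The non-peripheral spectrum contracts exponentially. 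The off-diagonal peripheral blocks average out, using the periodic decomposition of Theorem \ref{th:periodic.structure} and the fact that $\rho_{\rm in}=\rho^{\rm ss}$ has weight $1/p$ on each $\mathcal{H}_a$.
\end{itemize}
Together these give $f(n)=O(n^{-1/2+3\delta})$ up to lower-order corrections, possibly with a further factor $r_n$ from the Hölder continuity of the eigenprojections.

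The last step is to choose the auxiliary sequence $\delta_n=n^{-\eta}$ in Lemma \ref{lem:wts2} and balance the three terms $\delta_n^{2}$, $f(n)^{1/2}$ and $(r_n/\delta_n)^{d_{\rm id}/4}f(n)^{1/4}$ against $n^{-2\delta}$. The first term requires $\eta>\delta$. The third term requires $\tfrac{d_{\rm id}}{4}(\delta+\eta)+\tfrac14(3\delta-\tfrac12)<-2\delta$ (or the analogous inequality with the sharper exponent for $f$). Taking $\eta$ slightly above $\delta$ reduces this to a linear inequality in $\delta$, and I expect it to give exactly $\delta<(2(11+2d_{\rm id}))^{-1}$ once the precise exponent of $f(n)$ is fixed.

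The main obstacle is establishing the uniform-in-$\|A\|\le n^\delta$ bound on $f(n)$ with an explicit power of $n$. This requires controlling the perturbed peripheral eigenvalues and projections uniformly, including the cross terms between different periodic sectors. Those cross terms do not appear in the primitive case of \cite{GK15}.
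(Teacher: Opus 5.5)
Your deduction of the first claim (restrict Proposition~\ref{prop:extsolan} to $B^{\rm id}_{r_n}(V_0)$, where $A^{\rm id}=A$) is the paper's. So is the skeleton of your rate argument: Lemma~\ref{lem:wts2} with $\alpha=2$, $r_n=n^\delta$, $\delta_n=n^{-\eta}$ and $\eta\downarrow\delta$ gives exactly $\delta<(2(11+2d_{\rm id}))^{-1}$, once one knows $f(n)=O(n^{3\delta-1/2})$.

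The gap is that estimate itself. You leave it as the ``main obstacle'' and hedge on its exponent, and the mechanism you sketch does not fit the model. The system and output state is the pure vector $V(n)|\varphi\rangle$ for an arbitrary fixed $|\varphi\rangle$, not an evolution of $\rho^{\rm ss}$. A mixed initial state would not even give a pure model to which Lemma~\ref{lem:wts2} applies. So no averaging of peripheral cross terms with weights $1/p$ is available, and in fact none is needed.

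The missing idea is that $\mathcal{T}^n_{X,Y,n}$ acts on $\mathbf 1$, which is the eigenvector of the isolated, algebraically simple eigenvalue $1$ of $\mathcal{T}_{V_0}$. Only that eigenpair has to be continued: $\mathcal{T}_{X,Y,n}(x_{X,Y,n})=\lambda_{X,Y,n}x_{X,Y,n}$ with $\tr(\rho^{\rm ss}_{V_0}x_{X,Y,n})=1$. The argument then runs in four steps.
\begin{enumerate}
\item For identifiable $X,Y$ one has $V_0^*X=V_0^*Y=0$. Hence $\dot\lambda_{X,Y}=0$ and $\dot x_{X,Y}=0$, so $\|\mathbf 1-x_{X,Y,n}\|=O(n^{2\delta-1})$.
\item The map $\mathcal{T}_{X,Y,n}(B)=V(n^{-1/2}X)^*(B\otimes\mathbf 1)V(n^{-1/2}Y)$ is an operator-norm contraction. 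Therefore $\|\mathcal{T}^n_{X,Y,n}(\mathbf 1-x_{X,Y,n})\|\le\|\mathbf 1-x_{X,Y,n}\|$ and $|\lambda_{X,Y,n}|\le 1$.
\item A telescoping sum gives $|\lambda^n_{X,Y,n}-e^{\ddot\lambda_{X,Y}/2}|\le n|\lambda_{X,Y,n}-e^{\ddot\lambda_{X,Y}/(2n)}|=O(n^{3\delta-1/2})$.
\item Hence $f(n)=O(n^{3\delta-1/2})$ uniformly in $\varphi$, with no extra factor $r_n$.
\end{enumerate}
The other peripheral eigenvalues, the non-peripheral spectrum and the periodicity play no role here. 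They matter only in Theorem~\ref{thm:nLAN}, where one must evolve the operators $|\phi^b_j\rangle\langle\phi^{b'}_{j'}|$, which are not eigenvectors.

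One further detail concerns phases. $\ddot\lambda_{X,Y}$ also contains a term $i\alpha_X-i\alpha_Y$. It comes from the anti-Hermitian part of $\partial^2_XV(0)^*V_0$, which the isometry constraint leaves free, and it can be of order $n^{2\delta}$. So your $f(n)$ must be defined after rephasing each $|\Psi_{V(n^{-1/2}A)}(n)\rangle$ by an $A$-dependent phase, which does not change the states. Without this rephasing $f(n)$ need not tend to zero.
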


Note that since the sequence of statistical models ${\bf G}_{V_0}(n)$ does not depend on the initial state of the system, the same result can be extended to mixed initial system states as well.

\vspace{2mm}

Next, we provide a local approximation of the statistical model of the (reduced) output states and we characterise those sub-models for which LAN holds.
\begin{theorem}[{\bf Limit model for the output state}]\label{thm:nLAN}
Given parameter $V_0\in \manifirr$, let us consider the sequence of output models ${\bf Q}^{\rm out}_{V_0}(n)$ defined in \eqref{eq:outputsm} and the restricted mixed Gaussian shift sequence
${\bf GM}_{V_0}(n)$ defined in \eqref{eq:restricted.mixed.Gaussian}, with constants $C>0$ and $\delta <(2(d_{\rm id}+3))^{-1}$. 
Then the following limit holds true:
\begin{equation}
\label{eq:LeCam.mixed}
\lim_{n \rightarrow \infty} \Delta(\mathbf{Q}_{V_0}^{\rm out}(n),\mathbf{GM}_{V_0}(n))=0.
\end{equation}
\end{theorem}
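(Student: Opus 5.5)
We will use Lemma \ref{lemm:mixed}. This requires writing both the output state and the limit mixture $\rho(A)$ as sums, over a fixed finite index set, of rank-one operators, and then checking that these rank-one families converge strongly.

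\textbf{Limit side.} Take the decomposition \eqref{eq:decomp.gaussian.mixture}, which follows from Proposition \ref{prop.mixture.model}. It gives
\[
\rho(A)=\sum_{m=0}^{p-1}\ket{v(A,m)}\bra{v(A,m)},\qquad \ket{v(A,m)}:=\ket{{\rm Coh}(A_0)}\otimes\ket{\zeta_m(A)}.
\]
Equivalently, $\ket{v(A,m)}=(\mathbf 1\otimes Q_m)W(A)\ket\Omega$, the projection of the coherent state onto the eigenspaces of $\Gamma(U_{V_0}(g))$. Here $p=p_{[V_0]}$.

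\textbf{Finite-$n$ side.} Purify the stationary state as
\[
\ket{\sqrt{\rho^{\rm ss}_V}}=\sum_a\sum_i \sqrt{\pi_i^a}\,\ket{\phi^a_i}\otimes\ket{\phi^a_i}\in\mathcal H\otimes\mathcal H .
\]
Then $\rho^{\rm out}_V(n)$ is the partial trace over system and ancilla of the pure state $\ket{\Psi_V(n)}$ generated from this purification. We cannot trace out the whole system, because a full trace would produce $d^2$ terms, not $p$. Instead we use the periodic structure at $V_0$. Let $Z=Z_{V_0}$. The dynamics maps $\mathcal H_a$ to $\mathcal H_{a\oplus1}$, so only the block of the system in which the final state lies carries information. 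We therefore define
\[
\ket{v_n(A,m)}:=(\text{projection onto the $Z\otimes\bar Z$-type eigenvalue-$\gamma^m$ sector of the system--ancilla pair})\ket{\Psi_{V(n^{-1/2}A)}(n)},
\]
and then discard the remaining system degrees of freedom through an isometric embedding that is fixed and independent of the parameter. This makes the model a sum of $p$ rank-one terms.

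A simpler route is to apply the QLAN channels of Theorem \ref{thm:SOlan} together with the trace channel and $\mathcal S_*$ from Figure \ref{fig:diagram}. The idea is that tracing out the system corresponds, in the limit, to averaging over the stabiliser, because $V_0$ is invariant under $(\gamma,Z)$. The argument would go as follows.

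The output state equals $\mathrm{tr}_{\mathcal H}$ of the system+output state. By Lemma \ref{lem:compa}, the local representative is covariant: $g\cdot V(n^{-1/2}A)=V(n^{-1/2}U_{V_0}(g)A)$, and the two give the same output.

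Hence the output model is unchanged if $A$ is replaced by the uniform mixture over $U_{V_0}(g)^kA$. More precisely, $\rho^{\rm out}_{A,n}$ is obtained by tracing the system out of any of the states $\ket{\Psi_{V(n^{-1/2}U^kA)}(n)}$.

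This yields the first deficiency bound: apply $\mathcal F^{\rm s+o}_n$ after re-preparing the system; this is the hardest step, discussed below. Once it is done, averaging gives the output convergence to $\frac1p\sum_k G(U^kA)=\rho(A)$.

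For the reverse direction, start from $\rho(A)$, apply $\mathcal B^{\rm s+o}_n$ to each component (linearity), and trace out the system. By covariance every component yields the same output, up to the QLAN error, which is uniform over the ball because $U$ is unitary and preserves $B^{\rm id}_{r_n}$.

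\textbf{Main obstacle.} The forward map needs a channel acting on the output alone, and QLAN for system+output requires the system too. Our plan is to show that the system can be approximately re-prepared. Conditionally on the output, the system state is concentrated on the stabiliser-orbit information, so the output determines the joint state up to the $\mathbb Z_p$ ambiguity. To make this precise we would verify the hypotheses of Lemma \ref{lemm:mixed} directly with the $p$-term decomposition above. That amounts to weak convergence of the inner products $\langle v_n(A,m),v_n(B,m')\rangle$ to $\langle v(A,m),v(B,m')\rangle$, uniformly on $B^{\rm id}_{r_n}$.

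These inner products reduce to transfer-operator expressions $\mathrm{tr}(\rho^{\rm ss}\,\mathcal E_{A,B}^n(\cdot))$ with $\mathcal E_{A,B}(X)=V_A^*(X\otimes\mathbf 1)V_B$. The projection onto sector $m$ selects the peripheral eigenvector $Z^m$. A second-order perturbation expansion of the peripheral eigenvalues of $\mathcal E_{A,B}$ gives
\[
\exp\!\Big(-\tfrac12\|A\|^2-\tfrac12\|B\|^2+(U^kA,B)_{V_0}\Big),
\]
averaged over the Fourier phases, and this matches $\langle v(A,m),v(B,m')\rangle$. Controlling this expansion uniformly for $\|A\|\le Cn^\delta$ fast enough that $r_n^{d_{\rm id}}f(n)\to0$ is what forces $\delta<(2(d_{\rm id}+3))^{-1}$. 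This is the step we expect to be most delicate. We would then conclude via Lemma \ref{lem:wts1}/\ref{lem:wts2} with Hölder exponent $\alpha=2$, and finally Lemma \ref{lemm:mixed}.
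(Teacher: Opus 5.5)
Your reverse direction (mixture to output) is correct, and it is cleaner than the paper's ``similar construction''. Every $g\in\mathscr G_{V_0}$ fixes $V_0$ and acts isometrically, so $V(n^{-1/2}U_{V_0}(g)^kA)=g^k\cdot V(n^{-1/2}A)$ has the same stationary output. Hence $\mathrm{tr}_{\mathcal H}\circ\mathcal B^{\rm s+o}_n$ maps $\rho(A)=\frac1p\sum_k G(U_{V_0}(g)^kA)$ to within the QLAN error of $\rho^{\rm out}_{A,n}$. This uses the mixed-initial-state version of Theorem \ref{thm:SOlan}, after replacing $\rho^{\rm ss}_V$ by $\rho^{\rm ss}_{V_0}$ at cost $O(n^{\delta-1/2})$.

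The gap is in the forward direction, which is the real content of the theorem. You reduce it to Lemma \ref{lemm:mixed} with a $p$-element index set. That lemma needs an exact decomposition $\rho^{\rm out}_{A,n}=\sum_{m=0}^{p-1}|v_n(A,m)\rangle\langle v_n(A,m)|$, and none exists, because the output has rank up to $d^2$. Already for primitive $V_0$ ($p=1$, $d\geq 2$) your scheme would need $\rho^{\rm out}_{A,n}$ to be pure. In fact its nonzero spectrum converges to $\{\pi_i\pi_j\}_{i,j}$, the limit of the Gram matrix of its $d^2$ natural components described below, so it stays far from every pure state. The failure is hidden in the sentence ``discard the remaining system degrees of freedom through an isometric embedding''. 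Discarding system and ancilla is a partial trace, which brings the $d^2$ terms back; an isometry discards nothing. Your fallback, approximately re-preparing the system from the output, is essentially equivalent to the statement being proved, and you do not argue it.

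The missing idea is to keep all $d^2$ terms and enlarge the limit model by a parameter-independent classical register.
\begin{enumerate}
\item Take a spectral basis $\{\phi^a_i\}$ of $\rho^{\rm ss}_{V_0}$ adapted to the cyclic decomposition. Write $\rho^{\rm out}_X(n)=\sum_{a,b,i,j}\pi^a_i|\psi^{ab}_{ij,X}(n)\rangle\langle\psi^{ab}_{ij,X}(n)|$, with $\psi^{ab}_{ij,X}(n)=\sum_{\mathbf i}\langle\phi^b_j|K^{(n)}_{\mathbf i,X}\phi^a_i\rangle|\mathbf i\rangle$.
\item Work along subsequences $n=pl+r$, since which sector a given term feeds depends on $r$. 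Your ``averaged over the Fourier phases'' does not track this.
\item Lemmas \ref{lem:inner} and \ref{lemma:tplzk} show that vectors with distinct $(a,b,i,j)$ are asymptotically orthogonal. Each self-overlap converges, through the peripheral-eigenvalue expansion you sketch, to $\pi^b_jp$ times the overlap of $W(X_0)\Omega_0\otimes\zeta_{b\ominus a\oplus r}(X^\perp)$ computed in Lemma \ref{lemma:zeta.inner.prod}.
\item The matching limit vectors are therefore $\sqrt{\pi^b_jp}\,W(X_0)\Omega_0\otimes\zeta_{b\ominus a\oplus r}(X^\perp)\otimes|a,b,i,j\rangle$. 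The orthogonal labels are indispensable. Without them, terms feeding the same sector would be parallel in the limit but asymptotically orthogonal at finite $n$, so the Gram matrices could not converge.
\item Lemmas \ref{lem:wts1} and \ref{lemm:mixed} then give convergence to the augmented model $\widetilde{\mathbf{GM}}(pl+r)$.
\item Conclude with the exact equivalence $\widetilde{\mathbf{GM}}\simeq\mathbf{GM}$. One direction is the partial trace over the labels. The other is the channel that pinches onto the eigenspaces $Q_m$ of $\Gamma(U_{V_0}(g))$ and appends the fixed label state $\omega_m$.
\end{enumerate}
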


The proof of Theorem \ref{thm:nLAN} can be found in Appendix \ref{app:local}. Note that the parametrisation of $\mathbf{GM}_{V_0}$ with $\tsident_{V_0}$ is not injective: indeed, Lemma \ref{lem:idgm} shows that the set of states in ${\bf Q}^{\rm out}_{V_0}$ can be parametrized using the tangent cone at $[V_0]$ and this is the maximal parametrization in order to ensure identifiability. Another important remark is that, although LAN does not hold for the full output state model and the Gaussian shift model needs to be replaced by a mixture model, LAN does hold for a a sub-model described locally by the tangent space to the singular manifold containing $[V_0]$ (see \eqref{eq:singulartangent}).
\begin{corollary}[{\bf QLAN for orbifold tangent space}] \label{coro:plan}
Let us consider any singular submanifold $\mathscr{P}_{p,l}$ and, given a parameter $[V_0]\in \mathscr{P}_{p,l}$, let us consider the sequence of output sub-models 
\[
\widetilde{\bf Q}^{\rm out}_{V_0}(n):=
\{
\rho^{\rm out}_{[V(Xn^{-1/2})]}(n)~:~ 
\quad X \in B_{r_n}(V_0)\cap \TT_{[V_0]}(\mathscr{P}_{p,l})
\}
\]
and the restricted quantum Gaussian shift sub-model
\[
\widetilde{\mathbf{G}}_{V_0}(n)=
\{
W(X)\ket{\Omega_0}\bra{\Omega_0}W(X)^* ~:~ \quad X \in B_{r_n}(V_0)\cap \TT_{[V_0]}(\mathscr{P}_{p,l})
\}
\]
If $\delta <(2(l+3))^{-1}$, then for every connected compact set $K\subseteq \mathscr{P}_{p,l}$, the following limit holds true:
\begin{equation} \label{eq:uniflim}
\lim_{n \rightarrow \infty} \sup_{[V_0] \in K}\Delta(\widetilde{\mathbf{Q}}_{V_0}^{\rm out}(n),\widetilde{\mathbf{G}}_{V_0}(n))=0.
\end{equation}
Moreover, if one picks $\delta <  (2(11+2l))^{-1},$ then
\begin{equation} \label{eq:exrate2}\lim_{n \rightarrow +\infty}\sup_{V_0\in K}n^{2\delta}\Delta(\widetilde{\mathbf{Q}}_{V_0}^{\rm out}(n),\widetilde{\mathbf{G}}_{V_0}(n))=0.
\end{equation}
\end{corollary}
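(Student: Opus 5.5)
Since $\mathcal{V}_0=T_{[V_0]}(\mathscr{P}_{p,l})$ consists exactly of the vectors in $\tsident_{V_0}$ fixed by the stabiliser representation $U_{V_0}(g)$, the plan is to derive the corollary from Theorem \ref{thm:nLAN} restricted to $\mathcal{V}_0$, and then make the constants uniform over compact subsets of the stratum. For $X\in\mathcal{V}_0$ we have $X_0^\perp=0$. Hence in the decomposition \eqref{eq:decomp.gaussian.mixture} every $\ket{\zeta_m}$ with $m\neq 0$ vanishes and $\ket{\zeta_0}=\ket{\Omega_0^\perp}$, so that
\[
\rho(X)=\ket{{\rm Coh}(X)}\bra{{\rm Coh}(X)}\otimes\ket{\Omega_0^\perp}\bra{\Omega_0^\perp}.
\]
Appending or discarding the fixed vacuum factor $\ket{\Omega_0^\perp}$ is a pair of parameter-independent channels. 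Therefore $\widetilde{\mathbf G}_{V_0}(n)$ and the restriction of $\mathbf{GM}_{V_0}(n)$ to $\mathcal{V}_0$ are statistically equivalent. Since restricting both models to a common parameter subset can only decrease deficiencies, Theorem \ref{thm:nLAN} already yields \eqref{eq:uniflim} pointwise in $V_0$, although with the worse threshold $\delta<(2(d_{\rm id}+3))^{-1}$.

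To obtain the dimension-dependent thresholds $(2(l+3))^{-1}$ and $(2(11+2l))^{-1}$, I would rerun the proof of Theorem \ref{thm:SOlan}/Theorem \ref{thm:nLAN} directly on the $l$-dimensional parameter space, rather than inheriting its constants. The route is to apply Lemma \ref{lemm:mixed} together with Lemma \ref{lem:wts2}, whose error bound $O(\delta_n^\alpha+f(n)^{1/2}+(r_n/\delta_n)^{d/4}f(n)^{1/4})$ involves the dimension $d$ of the parameter ball only through the covering factor $r_n^d$. Taking $X=\mathcal{V}_0$ of real dimension $l$, with $r_n=Cn^\delta$, the covering factor becomes $r_n^l$. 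The weak-convergence rate $f(n)$ for the overlaps of the rank-one components of the output state (equivalently, the system-output vectors with the periodic block structure) is the one already established in the proof of Theorem \ref{thm:nLAN}, of order $n^{-1/2+c\delta}$ on balls of radius $n^\delta$. Substituting this rate into the lemma gives the two thresholds in exactly the way that $d_{\rm id}$ entered before.

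For uniformity over a connected compact set $K\subseteq\mathscr{P}_{p,l}$, I would choose representatives $V_0$ depending continuously on $[V_0]$ locally, using a finite cover of $K$ by fundamental charts. By the last part of Proposition \ref{prop:sd}, the period $p$ and the block dimensions $d_a$ are constant along $K$, so $\mathcal{V}_0$ varies continuously as an $l$-dimensional bundle. The constants entering $f(n)$ then need to be bounded uniformly in $V_0\in K$. These constants are the Taylor remainders of the exponential map, the Hölder constant in Lemma \ref{lem:wts2}, and above all the spectral gap of $\TT_{V_0}$ away from the peripheral spectrum, which controls the norm of $({\rm Id}-\TT_{V_0})^{-1}$ on $\mathcal{S}_{V_0}$ and the exponential convergence \eqref{eq:dfs}. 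Inside a single stratum the peripheral spectrum is exactly the $p$ roots of unity, and the remaining spectrum depends continuously on $V_0$, so compactness gives a uniform gap.

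The main obstacle is this uniformity step. One must verify that every estimate in the proof of Theorem \ref{thm:nLAN} depends on $V_0$ only through continuous quantities: the spectral gap, the quantity $\|(\rho^{\rm ss}_{V_0})^{-1}\|$, and second derivatives of $\exp_{V_0}$. This check has to be done while staying on the stratum, because approaching a point of larger period would shrink the gap to zero.
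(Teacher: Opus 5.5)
Your proposal is correct and follows essentially the route the paper intends. You restrict Theorem \ref{thm:nLAN} to $\mathcal{V}_0=T_{[V_0]}(\mathscr{P}_{p,l})$, where $X^\perp=0$ collapses the Gaussian mixture to a coherent state tensored with a fixed vacuum. You then rerun the covering argument with $l$ in place of $d_{\rm id}$, which gives both thresholds from the rate $f(n)=O(n^{3\delta-1/2})$; strictly speaking this uses Lemma \ref{lem:wts1} (the rank-one components are unnormalised) together with Lemma \ref{lemm:mixed}. Finally, you obtain uniformity over $K$ because the period is fixed on the stratum, so the spectral gap and perturbation constants are uniformly controlled on compact sets. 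The paper only asserts this uniformity as following from ``a careful inspection of the proof of Theorem \ref{thm:nLAN}'', so your account of which constants must be controlled is, if anything, more explicit than the original.
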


The uniformity in the limits in Eq. \eqref{eq:uniflim} and \eqref{eq:exrate2} follows from a careful inspection of the proof of Theorem \ref{thm:nLAN}. It is the formulation of LAN which, together with some preliminary estimator (for instance the ones considered in \cite{GGG23}), allows one to build a two-step estimation procedure that is asymptotically optimal (see Section 9 in \cite{YCH19}). We point out that we cannot strengthen in a similar way the result in Theorem \ref{thm:nLAN}: indeed, a step of the proof relies on the convergence of $T_{V_0}^{p_{[V_0]}l}$ to the ergodic projection $\EE_{V_0}$ (see equation \eqref{eq:erg}) and this is not uniform in $V_0$ if one considers parameters with a different period. A further remark is that if one picks two inindistinguishable isometries $V_0$ and $V_1$, the corresponding models $\widetilde{\mathbf{Q}}^{\rm out}_{V_0}(n)$ and $\widetilde{\mathbf{Q}}^{\rm out}_{V_1}(n)$ (resp. $\widetilde{\mathbf{G}}_{V_0}(n)$ and $\widetilde{\mathbf{G}}_{V_1}(n)$) only change by relabelling of the parameters.

We point out that if one considers regular points, i.e. the manifold $\mathscr{P}^{\rm prim}$ of equivalence classes of primitive isomtries, Corollary \ref{coro:plan} ensures that quantum LAN holds. Due to its relevance, below we restate Corollary \ref{coro:plan} in the case of the set of regular points.

\begin{corollary}[{\bf QLAN for primitive dynamics}] 

Let us consider any $[V_0]\in \mathscr{P}^{\rm prim}$, let us consider the sequence of output models 
\[
{\bf Q}^{\rm out}_{V_0}(n):=
\{
\rho^{\rm out}_{[V(Xn^{-1/2})]}(n)~:~ 
\quad X \in B^{\rm id}_{r_n}(V_0)
\}
\]
and the restricted quantum Gaussian shift sub-model
\[
\mathbf{G}_{V_0}(n)=
\{
W(X)\ket{\Omega_0}\bra{\Omega_0}W(X)^* ~:~ \quad X \in B_{r_n}(V_0)
\}
\]
If $\delta <(2(d_{\rm id}+3))^{-1}$, then for every connected compact set $K\subseteq \mathscr{P}^{\rm prim}$, the following limit holds true:
\begin{equation} \label{eq:uniflim}
\lim_{n \rightarrow \infty} \sup_{[V_0] \in K}\Delta(\mathbf{Q}_{V_0}^{\rm out}(n),\mathbf{G}_{V_0}(n))=0.
\end{equation}
Moreover, if one picks $\delta < (2(11+2d_{\rm id}))^{-1},$ then
\begin{equation} \label{eq:exrate2}\lim_{n \rightarrow +\infty}\sup_{V_0\in K}n^{2\delta}\Delta(\mathbf{Q}_{V_0}^{\rm out}(n),\mathbf{G}_{V_0}(n))=0.
\end{equation}
\end{corollary}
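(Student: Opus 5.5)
The plan is to deduce the statement directly from Corollary \ref{coro:plan}, specialised to the stratum of regular points. By Corollary \ref{coro:pb}, $\mathscr{P}^{\rm prim}$ is an open submanifold of $\orbifirr$ of full dimension $d_{\rm id}$. Its points have trivial local group: by Theorem \ref{almostfree}, $\mathscr{G}_{V_0}=\{e\}$ when $p_{[V_0]}=1$. So $\mathscr{P}^{\rm prim}$ is the stratum $\mathscr{P}_{1,d_{\rm id}}$, and we may take $p=1$, $l=d_{\rm id}$ in Corollary \ref{coro:plan}. Although the corollary is phrased for singular submanifolds $\mathscr{P}_{p,l}$, its proof only uses that $[V_0]$ lies in a stratum with constant period and constant periodic dimensions $d_a$. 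Both hold trivially for primitive dynamics, with $p=1$ and $d_0=d$.

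The first step is to identify the tangent space. By Proposition \ref{prop:sd}, with $p_{[V_0]}=1$ the only periodic projection is $P_0=\mathbf{1}_{\cal H}$. The condition in \eqref{eq:singulartangent} then reads $A=(\mathbf{1}\otimes\mathbf{1}_{\cal K})A$, which is automatic. Hence $T_{[V_0]}(\mathscr{P}^{\rm prim})\simeq \tsident_{V_0}$, and $l_{[V_0]}=2(dk-d)d=d_{\rm id}$. Consequently:
\begin{itemize}
\item the sub-model $\widetilde{\bf Q}^{\rm out}_{V_0}(n)$ coincides with ${\bf Q}^{\rm out}_{V_0}(n)$, since $B_{r_n}(V_0)\cap T_{[V_0]}(\mathscr{P}^{\rm prim})=B^{\rm id}_{r_n}(V_0)$;
\item $\mathcal{V}_0=\tsident_{V_0}$ and $\mathcal{V}_0^\perp=\{0\}$, so $|\Omega_0\rangle=|\Omega\rangle$ and $\widetilde{\bf G}_{V_0}(n)={\bf G}_{V_0}(n)$.
\end{itemize}
As a consistency check, the mixture $\rho(A)$ of Definition \ref{def:mixture.gaussian} has a single component, so ${\bf GM}_{V_0}(n)={\bf G}_{V_0}(n)$. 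The pointwise convergence is therefore already Theorem \ref{thm:nLAN} at each fixed $V_0$.

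The second step is the uniformity over a compact connected $K\subseteq\mathscr{P}^{\rm prim}$ and the rate $n^{2\delta}$. Here I would inspect the proof of Theorem \ref{thm:nLAN}, and of Theorem \ref{thm:SOlan} for the rate, and check that every constant depends continuously on $V_0$ within a fixed stratum. The relevant constants are:
\begin{itemize}
\item the spectral gap of $\TT_{V_0}$ away from the peripheral eigenvalue $1$;
\item the norm of $({\rm Id}-\TT_{V_0})^{-1}$ on $\mathcal{S}_{V_0}$;
\item the injectivity radius of $\exp_{V_0}$ and the Taylor remainders in the inner-product expansion;
\item the H\"older constant in Lemma \ref{lem:wts2}.
\end{itemize}
For primitive $V_0$, $\TT_{V_0}^n\to\EE_{V_0}$ exponentially fast with rate bounded by $\sup_{V_0}|\lambda_2(\TT_{V_0})|<1$. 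Since the spectrum depends continuously on $V_0$, this supremum is attained on the compact set of representatives and stays below $1$.

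To obtain such representatives, I would cover $K$ by finitely many fundamental charts. Within each chart I would use smooth local sections $[V]\mapsto V$, which exist because $\manifprim\to\mathscr{P}^{\rm prim}$ is a principal bundle (Corollary \ref{coro:pb}). Taking maxima of the constants over this finite cover then yields uniform bounds. Plugging these into Lemma \ref{lem:wts2} with $r_n=Cn^\delta$ and $f(n)=O(n^{3\delta-1/2})$ uniformly gives both conclusions under the stated conditions on $\delta$.

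The main obstacle is this uniformity: making sure that no hidden constant, in particular the convergence to the ergodic projection, degenerates as $V_0$ ranges over $K$. This is exactly where the restriction to a single stratum, here $p=1$, is essential.
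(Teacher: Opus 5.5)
Your proposal is correct and follows the same route as the paper. The paper obtains this statement simply as the regular-point case ($p=1$, $l=d_{\rm id}$) of Corollary \ref{coro:plan}, and justifies uniformity over $K$ and the $n^{2\delta}$ rate only by a ``careful inspection'' of the proofs of Theorems \ref{thm:SOlan} and \ref{thm:nLAN}; your observation that $\mathcal{V}_0=T^{\rm id}_{V_0}$ (so ${\bf GM}_{V_0}(n)={\bf G}_{V_0}(n)$) and your list of constants to control uniformly (spectral gap, reduced resolvent, injectivity radius and Taylor remainders of $\exp_{V_0}$) make that inspection explicit.
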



\vspace{2mm}
 
To conclude, we note that the results in this section can be extended to the case of general smooth parametrizations $V:\Theta \rightarrow \manifirr$ where $\Theta \subseteq \mathbb{R}^l$ is an open set and $[V(\theta)] \neq [V(\theta^\prime)]$ for $\theta \neq \theta^\prime$. In this case, for every $\theta_0 \in \Theta$ such that $d_{\theta_0}V$ is injective, one has the same statements as in Theorem \ref{thm:SOlan} and  Theorem \ref{thm:nLAN} substituting $d_{\rm id}$ with $l$ and with the following adapted definitions of statistical models:
\begin{align*}
&\widehat{\mathbf{Q}}_{V_0}^{\rm s+o}(n):=\{|\Psi_{V(\theta_0+hn^{-1/2})}(n) \rangle~:~  h \in B_{Cn^{\delta}}(0)\subseteq \RR^{l}\},\\
&\widehat{\mathbf{G}}_{V_0}(n)=\{\ket{{\rm Coh}((d_{\theta_0}V(h))^{\rm id})}\bra{{\rm Coh}((d_{\theta_0}V(h))^{\rm id})} ~:~ h \in B_{Cn^{\delta}}(0)\subseteq \RR^{l}\},\\
&\widehat{\mathbf{Q}}_{V_0}^{\rm out}(n):=\left \{\rho^{\rm out}_{[V(\theta_0+hn^{-1/2})]}(n) ~:~ h \in B_{Cn^\delta}(0)\subseteq \RR^{l}\right \},\\
&\widehat{\mathbf{GM}}_{V_0}(n)=\left \{\sum_{k=0}^{p-1}\frac{1}{p}\alpha_{g_1*}^k\left (\ket{{\rm Coh}((d_{\theta_0}V(h))^{\rm id}) }\bra{{\rm Coh}((d_{\theta_0}V(h)))^{\rm id}}\right )  ~:~ h \in B_{Cn^\delta}(0)\subseteq \RR^{l}\right \}.
\end{align*}
The fact that the sequence of approximating models $\widehat{\mathbf{GM}}_{V_0}(n)$ depends only on the identifiable component of the tangent vector can be seen by the fact that
$$\rho^{\rm out}_{[V(\theta_0+hn^{-1/2})]}(n)=\rho^{\rm out}_{[e^{-i\left (\sum_{j=1}^{l}a_jh_j\right )n^{-1/2}}e^{i \left (\sum_{j=1}^{l}K_jh_j\right ) n^{-1/2}} \otimes \mathbf{1}_{\cal K}V(\theta_0+hn^{-1/2})e^{-i \left (\sum_{j=1}^{l}a_jh_j\right ) n^{-1/2}}]}(n)$$
for any $\{(a_j,K_j)\}_{j=1}^{l} \subseteq \mathfrak{g}_{V(\theta_0)}$ (see Eq. \eqref{eq:LieAlgebra}) and we can easily choose $(a_j,K_j)$ in a way that cancel the non-identifiable component of the directional derivatives at $\theta_0$.
\section{Analysis of the two-dimensional system and environment unit model}
\label{sec:example}
 In this section we provide a detailed analysis of the simplest possible QMC model: a qubit system interacting with qubit environment units, i.e. ${\cal H}={\cal K}=\cc^2$ and $d=k=2$, cf. section \ref{sec:QMC}. A cartoon illustration of the geometry and the structure of the limit models for three one-parameter families analysed below, is presented in Figure \ref{fig:qubit}. 
 
The manifold $\mathscr{V}^{\rm irr}$ of isometries corresponding to irreducible dynamics is 12 dimensional. The orbifold $\orbifirr$ has dimension $2d^2(k-1)=8$ and it is the union of the manifold of primitive parameters $\mathscr{P}^{\rm prim}$ (which has the same dimension) and the submanifold of isometries with period $2$, which we denoted by $\mathscr{P}_{2,4}$ and has dimension $2\sum_{a=0}^{1}(d_{a\oplus 1}k-d_a)d_a=4$ (it follows using Proposition \ref{prop:sd} and observing that periodic projections in this case can only be one dimensional, i.e. $d_0=d_1=1$). 

\vspace{2mm}
 
 Let us start by describing the periodic submanifold in more detail. Let $\{\ket{0},\ket{1}\}$ denote the standard basis in ${\cal H}$ and ${\cal K}$ and let $\{\ket{00}, \ket{01}, \ket{10}, \ket{11}\}$ be the corresponding product basis in ${\cal H}\otimes {\cal K}$. Let us consider any point $[V_0]$ belonging to $\mathscr{P}_{2,4}$; we can always choose a representative $V_0 \in [V_0]$ such that its matrix representation in the standard basis is 
$$
V_0=\begin{pmatrix} 
0 & x\\
0 & w\\
y &0 \\
z & 0\end{pmatrix}
$$
with $x,y,z,w \in \cc$ satisfying $|x|^2+|w|^2=|y|^2+|z|^2=1$ (normalisation condition) and
$$\det\begin{pmatrix} x & w \\
y & z\end{pmatrix} \neq 0 \quad \text{(irreducibility)}.
$$
The condition for irreducibility has been found using the characterisation in equation \eqref{eq:access}. Indeed, every representative in $[V_0]$ has period $2$, and since the system Hilbert space is two-dimensional, the two periodic projections are one dimensional, hence correspond to an orthonormal basis; thanks to the gauge freedom, we can always find a representative $V_0 \in [V_0]$ for which this basis is the canonical basis and the expression for $V_0$ we presented follows immediately from the condition written in equation \eqref{eq:periodicprop}. 

Equivalently, the Kraus operators of $V_0$ corresponding to the standard basis of ${\cal K}$, and the stabiliser unitary are given respectively by  
$$
K_0 = 
\begin{pmatrix} 
0 & x\\
y &0 
\end{pmatrix}, \qquad 
K_1 = 
\begin{pmatrix} 
0 & w\\
z &0 
\end{pmatrix}, \qquad
Z = \begin{pmatrix} 
1 & 0\\
0 &-1 
\end{pmatrix}.
$$
We can further specify $V_0$ by assuming that $x,y \in \RR$: indeed, if this is not the case, we can consider $c=e^{-i\theta_1}$ and $W \in PU(2)$ acting as follows
$$\ket{0} \mapsto e^{i \theta_2/2 }\ket{0}, \quad \ket{1}\mapsto e^{-i \theta_2/2 }\ket{1}$$
for every $\theta_1, \theta_2 \in \RR$. Then,
$$\overline{c} (W \otimes \mathbf{1}_{\cal K}) V_0 W^*=\begin{pmatrix} 
0 & e^{i(\theta_1 +\theta_2)}x\\
0 & e^{i(\theta_1 +\theta_2)}w\\
e^{i(\theta_1 -\theta_2)}y &0 \\
e^{i(\theta_1 -\theta_2)}z & 0\end{pmatrix}$$
belongs to $[V_0]$ as well and one can always choose $\theta_1$ and $\theta_2$ in order to make $x$ and $y$ real. Therefore, we found the following parametrisation for $\mathscr{P}_{2,4}$:
$$V_0=\begin{pmatrix} 
0 & x:=\sqrt{1-|w|^2}\\
0 & w\\
y:=\sqrt{1-|z|^2} &0 \\
z & 0\end{pmatrix}, \quad |w|, |z| \leq 1, \,\sqrt{1-|w|^2}z \neq \sqrt{1-|z|^2}w,$$
from which we easily see that the (real) dimension of the manifold is $4$.

\vspace{2mm}

We recall that in irreducibile periodic dynamics, the unique stationary state is diagonal with respect to periodic projections and gives the same weight to each one of them, therefore we can immediately conclude that 
$\rho^{\rm ss}_{V_0} = \mathbf{1}/2$. The image of $V_0$ is spanned by the vectors 
$$
V_0|0\rangle = 
|1\rangle \otimes |v_1\rangle =  |1\rangle \otimes 
(y|0\rangle + z|1\rangle), \qquad
V_0|1\rangle = 
|0\rangle \otimes |v_0\rangle =  |0\rangle \otimes 
(x|0\rangle + w|1\rangle), 
$$
which implies that the system-output state for an initial system state $\alpha_0|0\rangle+ \alpha_1|1\rangle$ is 
\begin{eqnarray*}
|\psi^{\rm s+o}(n)\rangle &=& 
\alpha_0 |n ~{\rm mod}~2\rangle \otimes
|v_{n ~{\rm mod}~2} \rangle \otimes \cdots\otimes |v_0\rangle\otimes
|v_1\rangle\\
&+&
\alpha_1 |n+1 ~{\rm mod}~2\rangle \otimes
|v_{n+1 ~{\rm mod}~2} \rangle \otimes \cdots\otimes |v_1\rangle\otimes
|v_0\rangle
\end{eqnarray*}
and the stationary output state is the rank two state
\begin{eqnarray*}
\rho^{\rm out}(n) &=& 
\frac{1}{2} |v_0\rangle\langle v_0| 
\otimes  |v_1\rangle\langle v_1| \otimes \cdots \otimes  |v_{n +1 ~{\rm mod} ~2}\rangle\langle v_{n +1 ~{\rm mod} ~2}|\\
&+& \frac{1}{2}
|v_1\rangle\langle v_1| 
\otimes  |v_0\rangle\langle v_0| \otimes \cdots \otimes  |v_{n ~ {\rm mod} ~2}\rangle\langle v_{n ~ {\rm mod} ~2}|. 
\end{eqnarray*}

We now apply the results of section \ref{subsec:idatl} and in particular Proposition \ref{horprop}
in order to characterise the identifiable tangent space 
$\tsident_{V_0}$. We note that the 
 kernel of $V_0V_0^*$ is spanned by the two orthonormal vectors
 $$ 
 (0,0,|z|,-zy/|z|)^T =: |0\rangle\otimes | v^\perp_0\rangle,\quad {\rm and}\quad  (|w|,-wx/|w|,0,0)^T =: |1\rangle\otimes | v^\perp_1\rangle
 $$ 
 where $\alpha/|\alpha|$ can be interpreted as $1$ if $\alpha=0$. 
 Identifiable vectors 
 $A$ (operators in the kernel of left multiplication by $V_0V_0^*$) are in one-to-one correspondence to $2\times 2$ matrix $A$ with complex entries when expressed in coordinates picking $\{\ket{0}, \ket{1}\}$ and $\{|0\rangle\otimes | v^\perp_0\rangle,|1\rangle\otimes | v^\perp_1\rangle\}$ as basis for ${\cal H}$ and ${\rm Range}(V_0)^\perp$, respectively
 \begin{equation}
 \label{eq:tang.vect.2x2}
 a_{ij} = \langle i\otimes v_i^\perp |A|j\rangle, \quad i,j=0,1.
 \end{equation}
 The inner product on $\tsident_{V_0}$ (see equation \eqref{eq:inner.prod.tangent.id} in Lemma \ref{lem:rm}) is the following:
$$
(A, B)_{V_0}= \tr(\rho^{\rm ss}_{V_0} A^*B)=\frac{1}{2}(\overline{a}_{00}b_{00}+\overline{a}_{10}b_{10}+\overline{a}_{01}b_{01}+\overline{a}_{11}b_{11}),$$
where $A=(a_{ij})$ and $B=(b_{ij})$ for $i,j=0,1$. 
From equation \eqref{eq:def.v_k}, 
we find that $\tsident_{V_0}$ decomposes into the orthogonal eigenspaces of the generator of the stabiliser group
$${\cal V}_0=\left \{ \begin{pmatrix} 0 &  a\\b &0
\end{pmatrix}:a,b \in \cc\right \},$$
while
$${\cal V}_1:=\left \{ \begin{pmatrix} c &  0\\0 &d
\end{pmatrix}:c,d \in \cc\right \}.$$
Indeed, using equation \eqref{eq:action.stab.group} it can be checked that $\mathcal{V}_0$ and $\mathcal{V}_1$ are eigenspaces of the generator of the stabiliser group with eigenvalues $1$ and respectively $-1$. Geometrically, 
$\mathcal{V}_0$ corresponds to parameter 
changes that preserve the periodic character, while 
$\mathcal{V}_1$ is the orthogonal complement within 
$\tsident_{V_0}$.

According to Theorem \ref{thm:nLAN}, this means that the limit output model is a tensor  product of two independent quantum Gaussian shift models whose parameters are the coefficients $a$ and respectively $b$  of basis vectors in $\mathcal{V}_0$, and a Gaussian mixture model whose parameters $c$ and $d$ are the coefficients of the basis vectors in 
$\mathcal{V}_1$. More explicitly, given $A=A_0 + A_1\in \tsident_{V_0}$ with $A_0\in\mathcal{V}_0$ and $A_1\in\mathcal{V}_1$, the limit model is 
$$
\rho(A) := |{\rm Coh}(A_0)\rangle\langle {\rm Coh}(A_0)|\otimes \left (\frac{1}{2}|{\rm Coh}(A_1)\rangle\langle {\rm Coh}(A_1)|+ \frac{1}{2}|{\rm Coh}(-A_1)\rangle\langle {\rm Coh}(-A_1)| \right ).  
$$
We recall that some of the properties of mixed Gaussian models with period $p=2$ were discussed in section \ref{sec:QGSM} following equation \eqref{eq:2pexample}. 

\vspace{2mm}

A complete treatment of the \emph{full} estimation problem for a completely unknown dynamical parameter $V$, goes beyond the scope of the paper which focuses on the local asymptotic structure of the quantum output model. We anticipate that the estimation problem can be tackled in a two stage procedure, whereby in the first stage the parameter is localised using a ``non-optimal'' measurement, and then the asymptotic theory (Theorem \ref{thm:nLAN} and Corollary \ref{coro:plan}) is used to design an asymptotically optimal measurement in the second stage. Such methods have been demonstrated in the i.i.d. setup in \cite{YCH19,LAN6}. Nevertheless, in the rest of this section we make the first steps in this direction by analysing in more detail three representative \emph{one-parameter} sub-models $V_\theta$. In each case, we consider the problem of localising the parameter by a preliminary estimation procedure. The three models are:

\begin{enumerate}
    \item a model sitting inside the space of periodic dynamics $\mathscr{P}_{2,4}$: in this case with high probability we can locate the parameter in a preliminary step with precision $n^{-1/2+\delta}$ for some small $\delta$; inside this local region the model is approximated by a quantum Gaussian shift. 

\vspace{2mm}
    
    \item a model which is contained in the primitive submanifold $\mathscr{P}^{\rm prim}$, except from one point which corresponds to a periodic QMC; the tangent vector to the statistical model in that point does \emph{not} belong to the eigenspace ${\cal V}_1$ of the corresponding tangent space, rather it has non-zero components in both ${\cal V}_0$ and ${\cal V}_1$. In this case we can again localise the parameter with rate $n^{-1/2+\delta}$ for some small $\delta$; the local approximation however, becomes a tensor product between a quantum Gaussian shift and a mixture of shifted coherent states. 

    \vspace{2mm}
    
\item  a model passing through a single periodic point, whose tangent vector in that point \emph{does} belong to ${\cal V}_1$. In this case,  the model converges locally to a mixture of two one-mode Gaussian states with opposite amplitudes (see Theorem \ref{thm:nLAN});
for this model, it remains an open question whether we can localise the parameter with rate $n^{-1/2+\epsilon}$ by performing measurements on the output. We present two measurement strategies with qualitatively different 
behaviour and speculate that one of them may be a candidate for the desired parameter localisation. 
\end{enumerate}

\bigskip \textit{First model: inside the periodic submanifold.} In this example we consider a family of isometries which never leaves the periodic submanifold. 
Let $V_\theta$ be the following family of isometries:
$$V_\theta=\begin{pmatrix} 0 & \sqrt{1-4\theta^2}\\
0& 2\theta\\
\theta & 0\\
i\sqrt{1-\theta^2} & 0
\end{pmatrix}$$
for $\theta \in (1/4,1/2)$; the point $1/2$ is removed in order to have a smooth model, while the choice of $1/4$ is arbitrary but insures that the parameter $\theta= 0$ is not included in the model.

The corresponding Kraus operators are given by
$$
K_{\theta,0} = 
\begin{pmatrix} 
0 & \sqrt{1-4\theta^2}\\
\theta &0 
\end{pmatrix}, \qquad 
K_{\theta, 1} = 
\begin{pmatrix} 
0 & 2\theta\\
i \sqrt{1-\theta^2} &0 
\end{pmatrix}.
$$
The corresponding quantum channel $T_{\theta}$ is irreducible and periodic for every value of $\theta$ in the interval $(1/4,1/2)$.

Let us consider the simple estimator  obtained by measuring the observable $|0\rangle\langle 0|$ on each output unit and taking the empirical frequency of the  outcomes
$$
\bar{X}_n =\frac{1}{n}\sum_{i=1}^n X_i,
$$ where $X_i\in\{0,1\}$. In the stationary regime, the mean of $X_i$ is given by 
$$
\mathbb{E}_\theta(X_i) ={\rm Tr}(\rho^{\rm ss}_{V_\theta} K_{\theta,0}^*K_{\theta,0})= \frac{1}{2}-\frac{3}{2}\theta^2,
$$ hence all parameters in $(1/4,1/2)$ are identifiable. We define the moment-like estimator
$$
\hat{\theta}_n:=  \sqrt{\max\left \{\frac{1}{3}-\frac{2}{3}\overline{X}_n,0\right \}}
$$ 
where maximum is taken in order to deal with possible negative values. 
We use $\hat{\theta}_n$ on the first $n^\prime=n^{1-\epsilon}$ output units with $1>2\delta>\epsilon>0$ in order to locate the parameter with high probability in a neighborhood of size $n^{-1/2+\delta}$: indeed, for every $\alpha \in (13/32,1/2)$ we have that
\[\begin{split}&\mathbb{P}_\theta\left ( \left |  \hat{\theta}_n-\theta \right| >n^{-1/2+\delta} \right )=\\
&\mathbb{P}_\theta\left ( \left |  \hat{\theta}_n-\theta \right| >n^{-1/2+\delta},\,\overline{X}_n \leq \alpha \right )+
\mathbb{P}_\theta\left ( \left |  \hat{\theta}_n-\theta \right| >n^{-1/2+\delta},\,\overline{X}_n > \alpha \right ) \leq \\
& \mathbb{P}_\theta\left ( C(\alpha)\left | \overline{X}_n-\frac{1}{2}-\frac{3\theta^2}{2} \right| >n^{-1/2+\delta},\,\overline{X}_n \leq \alpha \right )+\mathbb{P}_\theta\left ( \overline{X}_n > \alpha \right ) \leq\\
&\mathbb{P}_\theta\left ( C(\alpha)\left | \overline{X}_n-\mathbb{E}_\theta[\overline{X}_n] \right| >n^{-1/2+\delta} \right )+\mathbb{P}_\theta\left ( \overline{X}_n-\mathbb{E}_\theta[\overline{X}_n] > \alpha -\mathbb{E}_\theta[\overline{X}_n]\right )
\end{split}\]
for some constant $C(\alpha)>0$, where we used the fact that the square root is Lipschitz on compact intervals separated from $0$ and that, if $\overline{X}_n \leq \alpha$, then
$$\frac{1}{3}-\frac{2}{3}\overline{X}_n \geq \frac{1}{3}-\frac{2}{3}\alpha>0 \text{ and } \hat{\theta}_n=\sqrt{\frac{1}{3}-\frac{2}{3}\overline{X}_n}.$$ Moreover,
$\alpha -\mathbb{E}_\theta[\overline{X}_n] \geq \alpha-\sup_{\theta \in (1/4,1/2)}\mathbb{E}_\theta[\overline{X}_n]=\alpha -13/32$; therefore, for $n$ big enough, one can upper bound both terms using, for instance, the concentration inequality proved in Theorem 4 in \cite{GGG23}, obtaining the following:
$$\mathbb{P}_\theta\left ( \left |  \hat{\theta}_n-\theta \right| >n^{-1/2+\delta} \right ) =O(\exp(-n^{2(\delta-\epsilon)}/2)).$$

We can now apply Corollary \ref{coro:plan} in order to get a local approximation of the statistical model with a quantum Gaussian shift; 
in order to completely specify the limiting local approximation 
we need to find the identifiable component of the tangent vector to our statistical model. As a consistency check we will verify that the identifiable projection of the tangent vector is always in the tangent space to the singular submanifold corresponding to periodic dynamics. The tangent vector at $\theta_0$ is given by
$$ 
A_{\theta_0} =\left.\frac{dV_\theta}{d\theta}\right|_{\theta_0}=\begin{pmatrix} 0 & \frac{-4\theta_0}{\sqrt{1-4\theta_0^2}}\\
0& 2\\
1 & 0\\
-\frac{i\theta_0}{\sqrt{1-\theta_0^2}} & 0
\end{pmatrix}.$$
Its identifiable component is computed by using the explicit expression of $\mathsf P_{V_\theta}$ as given in Section \ref{subsec:idatl}):
$$A_{\theta_0}^{\rm id}= 
\mathsf P_{V_{\theta_0}} (A_{\theta_0})=
\begin{pmatrix} 0 & \frac{-\theta_0(7+4\theta_0)}{\sqrt{1-4\theta_0^2}}\\
0& 2\\
1-2\theta_0^2 & 0\\
-\frac{i\theta_0(3-\theta_0^2)}{\sqrt{1-\theta_0^2}} & 0
\end{pmatrix}.
$$
\sloppy This implies that $A_{\theta_0}^{\rm id}\ket{i}\bra{i}=(\ket{i\oplus 1}\bra{i \oplus 1} \otimes \mathbf{1}_{\cal K})A_{\theta_0}^{\rm id}$, which is exactly the condition expressed in Proposition \ref{prop:sd}, so $A_{\theta_0}^{\rm id}\in \mathcal{V}_0$. Summing up, the statistical model approximating $\{\rho^{\rm out}_{\theta_0+u/\sqrt{n}}(n)\}$ in a local neighbourhood of $\theta_0$ is the one dimensional quantum Gaussian shift on the one-mode CV 
system with Fock space 
$\mathfrak{H}(\mathbb{C} A_{\theta_0}^{\rm id})$
$$
\left\{ \ket{{\rm Coh}(u A_{\theta_0}^{\rm id})}\bra{{\rm Coh}(u A_{\theta_0}^{\rm id})}:|u| \leq n^{\delta}  \right\}.
$$
whose QFI at $\theta_0$ is, cf. Proposition \ref{prop:QFI}
$$
F_{\theta_0}= 4 \|A_{\theta_0}^{\rm id}\|_{\theta_0}^2 = 4 {\rm Tr}(\rho^{\rm ss}_{\theta_0} A_{\theta_0}^{\rm id *}A_{\theta_0}^{\rm id })  = 4\left[
\frac{\theta^2_0(7+4\theta_0)^2}{1-4\theta_0^2}+ (1-2\theta_0^2)^2 +  \frac{\theta^2_0(3-\theta_0^2)^2}{1-\theta_0^2} +4\right].
$$

\vspace{2mm}

\bigskip \textit{Second model: `good' crossing of the periodic submanifold.} Let us now consider the following family of isometries:
$$V_\theta=\begin{pmatrix} \theta & \sqrt{1-3\theta^2}\\
i\theta& -\theta\\
-\theta & i\theta\\
\sqrt{1-3\theta^2} & -\theta,
\end{pmatrix}$$
with Kraus operators given by
$$ 
K_0(\theta) = 
\begin{pmatrix} 
 \theta & \sqrt{1-3\theta^2}\\
-\theta & i\theta
\end{pmatrix}, \qquad 
K_1(\theta) = 
\begin{pmatrix} 
i\theta& -\theta\\
\sqrt{1-3\theta^2} & -\theta
\end{pmatrix}.
$$
The dynamics is periodic and irreducible for $\theta=0$ and we will focus on an interval $(-\overline{\theta}, \overline{\theta})$ for $\overline{\theta}$ such that the parameters are identifiable and the dynamics is primitive for $\theta \in (-\overline{\theta}, \overline{\theta})$, $\theta \neq 0$. Let us verify that these requirements can be satisfied. Since the dynamics is irreducible at $\theta =0$, this property holds in an interval around $0$. Below we exhibit an estimator that can identify parameters around $\theta=0$. 

Finally, we show that the projection of the tangent vector at $\theta=0$ does not belong to ${\cal V}_0$, hence the statistical curve exits the periodic submanifold at least for an interval of parameters around $0$.

Firstly, one can verify by direct computation that the stationary state of $V_\theta$ is given by 
$\rho^{\rm ss}= \mathbf{1}/2$. Let $\overline{X}_n$
be the estimator obtained measuring each output unit in the basis $\ket{\pm}=\frac{1}{\sqrt{2}}(\ket{0}\pm\ket{1})$ and taking the empirical frequency of the 
``$+$'' outcome; its stationary mean is given by
$$
\mathbb{E}_\theta (X_i) =m_\theta=\tr(\rho^{\rm ss}_\theta K_+(\theta)^*K_+(\theta))=
\frac{1}{2}(1-2\theta \sqrt{1-3\theta^2}),  $$
$K_+(\theta):=\frac{1}{\sqrt{2}}(K_0(\theta)+K_1(\theta))$.
Since the first derivative of $m_\theta$ at $0$ is different from $0$, the map $\theta \mapsto m_\theta$ is invertible in a neighbourhood of $\theta=0$; moreover, $m_\theta$ and its inverse are Lipschitz around $\theta=0$, so we can use $\overline{X}_n$ in order to find a confidence interval of length of the order $n^{-1/2+\delta}$ for $\delta$ small enough. Finally, let us find the identifiable component of the tangent vector around $\theta=0$. 
By direct computation and using the explicit expression of $\mathsf P_{V_\theta}$ as given in Section \ref{subsec:idatl} we obtain
$$
A_0= \left.\frac{dV_\theta}{d\theta}\right|_{\theta=0}
=
\begin{pmatrix} 1 & 0\\
i& -1\\
-1 & i\\
0 & -1\end{pmatrix},
\quad 
\text{ and }
\quad 
A_0^{\rm id}=\mathsf P_{V_{0}} (A_{0})=\begin{pmatrix} 0 & 0\\
-1+i& -1\\
-1 & 1+i\\
0 & 0\end{pmatrix}.
$$
To verify that $A_0^{\rm id}$ does not belong to $\mathcal{V}_0$, nor $\mathcal{V}_1$ we look at the action of the stabiliser on $A_0^{\rm id}$, cf. \eqref{eq:action.stab.group}:
\begin{equation}\label{eq:tangentex2}
U_{V_0}(g)A^{\rm id}_0= -\begin{pmatrix} 1 &0&0&0\\
0 & 1&0 &0 \\
0 &0 & -1& 0\\
0 & 0& 0 & -1\end{pmatrix}A_0^{\rm id}\begin{pmatrix}1 &0\\
0 & -1 \end{pmatrix}= \begin{pmatrix} 0 & 0\\
1-i& -1\\
-1 & -(1+i)\\
0 & 0\end{pmatrix} 
\neq \pm A_0^{\rm id}.\end{equation}
From \eqref{eq:tangentex2} we can actually see how the vector splits into the sum of two eigenvectors of the stabiliser 
(which is necessary to determine the limit local approximation):
$$A_0^{\rm id}=
\begin{pmatrix} 0 & 0\\
0& -1\\
-1 & 0\\
0 & 0\end{pmatrix}
+
\begin{pmatrix} 0 & 0\\
-1+i& 0\\
0 & 1+i\\
0 & 0\end{pmatrix}
.
$$
In terms of the explicit identification with $2\times 2$ matrices defined in equation \eqref{eq:tang.vect.2x2} the two components are 
$$
B_0= 
\begin{pmatrix} 
0& -1\\
-1 & 0
\end{pmatrix}
\in { \cal V}_0, \qquad 
B_1= 
\begin{pmatrix} 
-1+i& 0\\
0 & 1+i
\end{pmatrix}
\in { \cal V}_1
$$

This means that in the neighbourhood of 
$\theta =0$ the model $\rho^{\rm out}_{u/\sqrt{n}}(n)$ can be approximated by a product of a Gaussian shift (corresponding to $B_0$) and a mixed Gaussian model (corresponding to $B_1$)
$$
\rho(u):=\ket{{\rm Coh}(uB_0)}\bra{{\rm Coh}(uB_0)} \otimes \frac{1}{2}\left (\ket{{\rm Coh}(uB_1)}\bra{{\rm Coh}(uB_1)} +\ket{{\rm Coh}(-uB_1)}\bra{{\rm Coh}(-uB_1)}\right ). $$
In particular, the parameter is identifiable thanks to the Gaussian shift component of the model.

\bigskip \textit{Third model: `bad' crossing of the periodic submanifold.} Let $V_\theta$ be the family of isometries 
$$V_\theta=\begin{pmatrix} \sqrt{\frac{2}{3}}\sin(\theta) & \sqrt{\frac{1}{3}}\cos(\theta)\\
\sqrt{\frac{1}{3}}\sin(\theta) & -\sqrt{\frac{2}{3}}\cos(\theta)\\
\sqrt{\frac{1}{2}}\cos(\theta) & \sqrt{\frac{1}{2}}\sin(\theta)\\
-\sqrt{\frac{1}{2}}\cos(\theta) & \sqrt{\frac{1}{2}}\sin(\theta)
\end{pmatrix}$$
for $\theta \in 
[-\pi/2,3\pi/2)$. The corresponding Kraus operators are given by
$$ 
K_0(\theta) = 
\begin{pmatrix} 
\sqrt{\frac{2}{3}}\sin(\theta) & \sqrt{\frac{1}{3}}\cos(\theta)\\
\sqrt{\frac{1}{2}}\cos(\theta) & \sqrt{\frac{1}{2}}\sin(\theta)
\end{pmatrix}, \qquad 
K_1(\theta) = 
\begin{pmatrix} 
\sqrt{\frac{1}{3}}\sin(\theta) & -\sqrt{\frac{2}{3}}\cos(\theta)\\
-\sqrt{\frac{1}{2}}\cos(\theta) & \sqrt{\frac{1}{2}}\sin(\theta)
\end{pmatrix}.
$$
Let us first restrict $\theta$ to a set of identifiable values: note that given any $\theta \in 
[-\pi/2,3\pi/2)$ one has
$$
K_0(\pi-\theta) = ZK_0(\theta)Z^*,
\qquad
K_1(\pi -\theta) =ZK_1(\theta)Z^*
$$
Therefore $\theta$ and $\pi-\theta$ have the same output state and we can restrict to $\theta \in [-\pi/2,\pi/2].$ Moreover, since
$$
K_0(-\theta) = -ZK_0(\theta)Z^*,
\qquad
K_1(-\theta) = -ZK_1(\theta)Z^*
$$
we can further restrict to $\theta \in [0,\pi/2].$ Finally, we need to remove $\theta=\pi/2$, because the corresponding isometry is not irreducible; for $\theta \in (0,\pi/2)$ the corresponding isometry is primitive, while for $\theta=0$ it is irreducible with period $2$. Note that for every value of $\theta$ in the considered interval one has that 
$\rho_\theta^{\rm ss}=\mathbf{1}/2$. In order to simplify the analysis, we reduce to consider parameters in an interval $[0, \overline{\theta})$ for $\overline{\theta}$ small enough.

We will consider two different output measurement strategies, and argue that while the first one cannot localise the parameter at the desired rate, the second appears to be more likely to have this property. 

The first measurement strategy is similar to that of the previous examples and consists of measuring the observable 
$\ket{0}\bra{0}$ and taking the average $\bar{X}_n$ of the outcomes. 
Adopting the notation used in section \ref{sec:identifiability} we find that the average value is 
$$m_\theta(\ket{0}\bra{0}):=\tr(\rho_{\theta}^{\rm ss}K_0(\theta)^*K_0(\theta))=\frac{7}{12}-\frac{1}{6}\cos(\theta)^2,
$$
Therefore, all the values of $\theta \in [0,\pi/2)$ are identifiable. 
For $\theta\neq 0$ the empirical average $\bar{X}_n$ satisfies Central Limit Theorem and one can use concentration inequalities \cite{GGG23} to show that $\theta$ can be estimated at rate $n^{-1/2}$. However, approaching $\theta=0$, the performance of $\overline{X}_n$ deteriorates: due to the quadratic dependence on $\theta$, the sensibility of $m_\theta(\ket{0}\bra{0})$ decreases when approaching $0$, as it can be seen from the fact that its first derivative tends to $0$. 
In the same time, as one approaches $\theta=0$, the variance converges to a non-zero constant, which means that the signal to noise ratio vanishes.

\vspace{2mm}

We now consider an alternative output measurement and show that, in contrast to the previous case, the signal to noise ratio (SNR) of the proposed estimator has standard scaling with a constant that isseparated from $0$ uniformly for $\theta$ in a neighbourhood of zero. Therefore, during the argument we will feel free to make adjustments to the interval that guarantee the SNR lower bound.

The measurement consists of repeatedly measuring a positive observable localised on \emph{two}
output units and whose kernel includes the support of the reduced output state at 
$\theta=0$. Let $|\omega\rangle \in \mathbb{C}^2\otimes \mathbb{C}^2$ be a state such that 
\begin{equation}\label{eq:ortho}\langle \omega| v_0\otimes v_1\rangle = 
\langle \omega| v_1\otimes v_0\rangle =0\end{equation}
and let $P_\omega:= |\omega\rangle \langle \omega|$ and let 
$Y_i\in \{0,1\}$ denote the outcomes of measuring $P_\omega$ on non-overlapping blocks of 2 output units. Then the average count
$$
\bar{Y}_n :=   \frac{2}{n}\sum_{j=1}^{n/2} Y_i
$$
has expectation
$$
\mathbb{E}_\theta (\bar{Y}_n)= m_\theta (P_\omega)  = {\rm Tr} (\rho^{\rm ss}_\theta \tilde{K}_\omega(\theta)^*\tilde{K}_\omega(\theta)),
$$
where $\tilde{K}_\omega(\theta)= 
\langle \omega | V^{(2)}_\theta V^{(1)}_\theta $ is a ``two-steps" Kraus operator, with superscript indicating the output unit on which the isometry acts. Notice that equation \eqref{eq:ortho} implies that $\tilde{K}_\omega(0)=0$ and  $m_0(P_\omega)=m_0^\prime(P_\omega)=0$; in addition, for a suitable choice of $\ket{\omega}$, one has that $m^{\prime\prime}_0(P_\omega) \neq 0$. This condition is satisfied for instance if we choose 
$|\omega\rangle = (\sqrt{2}|00\rangle - |11\rangle)/\sqrt{3}$ but the analysis below is independent of the choice of such $|\omega\rangle$.
Finally, since $m^{\prime\prime}_0(P_\omega) \neq 0$, then $m_\theta(P_\omega)$ is injective on $[0,\overline{\theta})$ for $\overline{\theta}$ small enough.

\vspace{2mm}

Let us now analyse ${\rm var}_{\theta}(\overline{Y}_n)$: since $Y_i\in \{0,1\}$, the variance 
\[\begin{split}
{\rm Var}_\theta(\bar{Y}_n) &=\mathbb{E}_\theta\left[(\bar{Y}_n - m_\theta(P_\omega))^2\right] = 
\frac{2}{n}m_\theta(P_\omega)\\
&+  
\frac{8}{n^2}\sum_{1\leq j<k\leq n/2} 
{\rm Tr} \left(
\rho^{\rm ss}_\theta K_\omega(\theta)^* \mathcal{T}_\theta^{2(k-j-1)}(N_\theta)K_\omega(\theta)
\right) 
- m_\theta(P_\omega)^2,
\end{split}\]
where $N_\theta= \tilde{K}_\omega(\theta)^*\tilde{K}_\omega(\theta)$. In order to better examine the dependence of ${\rm var}_{\theta}(\overline{Y}_n)$ on $\theta$ and $n$, it is convenient to rewrite the third term in the previous equation; introducing the notation $\widetilde{\TT}_\theta(\cdot)=\TT^2_\theta(\cdot)-\tr(\rho_\theta^{\rm ss} \cdot)\mathbf{1}$, one has
\[\begin{split}
&\frac{8}{n^2}\sum_{1\leq j<k\leq n/2} 
{\rm Tr} (\rho^{\rm ss}_\theta K_\omega(\theta)^* \mathcal{T}_\theta^{2(k-j-1)}(N_\theta)K_\omega(\theta)) 
- m_\theta(P_\omega)^2\\
&=\frac{2}{n}\left (\frac{n}{2}-1\right )m_\theta(P_\omega)^2-m_\theta(P_\omega)^2\\
&+\frac{8}{n^2}\sum_{1\leq j<k\leq n/2} 
{\rm Tr} (\rho^{\rm ss}_\theta K_\omega(\theta)^* \widetilde{\mathcal{T}}_\theta^{k-j-1}(N_\theta)K_\omega(\theta))\\
&=-\frac{2}{n}m_\theta(P_\omega)^2+\frac{8}{n^2}\sum_{1\leq j<k\leq n/2} 
{\rm Tr} (\rho^{\rm ss}_\theta K_\omega(\theta)^* \widetilde{\mathcal{T}}_\theta^{k-j-1}(N_\theta)K_\omega(\theta)).
\end{split}\]
The following lemma shows that we can use the spectral radius of $\widetilde{\TT}_\theta$ in order to upper bound its norm and provides the exact value of the spectral radius. By $\| \, \|_2$ and $\| \, \|_{2 \rightarrow 2}$ we mean the Hilbert-Schmidt norm and the induced operator norm.
\begin{lemma}
\label{lemma.SNR}
    If $\overline{\theta}$ is small enough, then
    \begin{enumerate}
        \item the spectral radius of $\widetilde{\TT}_\theta$ is given by $(1-2\sin^2(\theta))^2$;
        \item there exists a constant $C>0$ such that
        $$\sup_{\theta \in (0,\overline{\theta})}\|\widetilde{\TT}_\theta^k\|_{2 \rightarrow 2} \leq C(1-2\sin^2(\theta))^2.$$
    \end{enumerate}
\end{lemma}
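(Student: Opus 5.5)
The plan is to compute $\mathcal{T}_\theta$ explicitly as a $4\times 4$ matrix acting on $L^\infty(\mathbb{C}^2)$, written in the Pauli basis $\{\mathbf{1},\sigma_x,\sigma_y,\sigma_z\}$. Since $\rho^{\rm ss}_\theta=\mathbf{1}/2$ for every $\theta$, the channel is unital and trace preserving. Hence $\mathcal{T}_\theta(\mathbf{1})=\mathbf{1}$, and the traceless subspace $\mathcal{S}=\{X:\tr(X)=0\}$ is invariant under $\mathcal{T}_\theta$. The map $\widetilde{\TT}_\theta=\TT_\theta^2-\tr(\rho^{\rm ss}_\theta\,\cdot\,)\mathbf{1}$ annihilates $\mathbf{1}$ and acts as $\TT_\theta^2$ on $\mathcal{S}$. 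So both claims reduce to statements about the $3\times 3$ block $M_\theta:=\TT_\theta|_{\mathcal{S}}$, and $\|\widetilde{\TT}^k_\theta\|_{2\to 2}$ is comparable, with constants independent of $\theta$, to $\|M_\theta^{2k}\|$.

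\textbf{Part 1.} Compute $\TT_\theta(\sigma_j)=K_0(\theta)^*\sigma_jK_0(\theta)+K_1(\theta)^*\sigma_jK_1(\theta)$ for $j=x,y,z$. The symmetries $K_i(\pi-\theta)=ZK_i(\theta)Z^*$ and $K_i(-\theta)=-ZK_i(\theta)Z^*$ suggest that $\sigma_z$ decouples from $\mathrm{span}\{\sigma_x,\sigma_y\}$. I expect a direct computation to give
\[
\TT_\theta(\sigma_z)=-(1-2\sin^2\theta)\,\sigma_z=-\cos(2\theta)\,\sigma_z .
\]
This is consistent with $\theta=0$, where $\sigma_z=Z$ is the eigenvector for the peripheral eigenvalue $-1$, and with the orbifold picture: the eigenvalue $-1$ moves into the disc at second order along a $\mathcal{V}_1$ direction.

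The remaining $2\times 2$ block on $\mathrm{span}\{\sigma_x,\sigma_y\}$ is handled by computing its trace and determinant. At $\theta=0$ its spectral radius is some $r_0<1$, because the peripheral spectrum of the period-2 channel is exactly $\{1,-1\}$ (Theorem \ref{th:periodic.structure}) and both of those eigenvalues are already accounted for. By continuity of eigenvalues in $\theta$, for $\overline\theta$ small the spectral radius of this block stays below some $r<1$. Shrinking $\overline\theta$ further, we may assume $r^2<\cos^2(2\theta)$ on $[0,\overline\theta)$. Then the spectral radius of $\widetilde{\TT}_\theta$ is $\cos^2(2\theta)=(1-2\sin^2\theta)^2$.

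\textbf{Part 2.} Write $\lambda_\theta=-\cos 2\theta$. Let $P_\theta$ be the spectral projection of $M_\theta$ onto the $\sigma_z$-eigenline, and let $R_\theta=M_\theta(\mathrm{Id}-P_\theta)$. Then
\[
M_\theta^{2k}=\lambda_\theta^{2k}P_\theta+R_\theta^{2k}.
\]
The projection $P_\theta$ is a Riesz projection for an eigenvalue isolated uniformly in $\theta$, so $\sup_\theta\|P_\theta\|<\infty$. For the second term, apply the uniform Gelfand-type estimate (equivalently, a Cauchy integral over the circle $|z|=r'$ with $r<r'<\cos 2\overline\theta$, using compactness in $\theta$) to obtain $\|R_\theta^{2k}\|\le C'r'^{2k}$ uniformly in $\theta$. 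Combining the two terms gives $\|\widetilde{\TT}^k_\theta\|_{2\to2}\le C(1-2\sin^2\theta)^{2k}$, which is the intended bound. The statement as printed has exponent $2$; if that is literal, it follows for $k\ge1$ in the weaker form with $C$ adjusted.

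The main obstacle is the explicit computation showing that $\sigma_z$ is an exact eigenvector with eigenvalue $-\cos 2\theta$, together with the bound on the $\{\sigma_x,\sigma_y\}$ block. If the decoupling fails exactly, the fallback is analytic perturbation theory: the eigenvalue near $-1$ is simple and analytic in $\theta$, and computing its second-order expansion would suffice for the asymptotic claims.
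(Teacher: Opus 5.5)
Your argument is correct and is essentially the paper's. Both rest on three facts: $Z=\sigma_z$ is an exact eigenvector of $\TT_\theta$ with eigenvalue $-\cos 2\theta$; the remaining eigenvalues stay strictly inside a smaller disc for $\theta$ near $0$; and a spectral decomposition can be chosen with constants uniform over a compact $\theta$-interval. The paper diagonalises $\TT_\theta-\tr(\cdot)\mathbf{1}/2$ completely and bounds the condition number of the eigenbasis. Your Riesz projection onto the $\sigma_z$-line, together with a Cauchy-integral bound on the complement, does the same job without needing the other eigenvalues to be simple.

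One correction: $\sigma_z$ does not decouple from $\mathrm{span}\{\sigma_x,\sigma_y\}$. A direct computation gives $\TT_\theta(\sigma_x)=(1/\sqrt{3}+1/\sqrt{6})\,\sigma_x+(1/\sqrt{3}-1/\sqrt{6})\sin(2\theta)\,\sigma_z$ and $\TT_\theta(\sigma_y)=-(1/\sqrt{3}+1/\sqrt{6})\cos(2\theta)\,\sigma_y$, so $\mathrm{span}\{\sigma_x,\sigma_y\}$ is not invariant; the symmetries only force the coupling to be odd in $\theta$. This is harmless. Because $\sigma_z$ is an eigenvector, $M_\theta$ is block triangular, so its spectrum is still $\{-\cos 2\theta\}$ together with the eigenvalues $1/\sqrt{3}+1/\sqrt{6}$ and $-(1/\sqrt{3}+1/\sqrt{6})\cos 2\theta$ of the diagonal block, and your Part 2 never uses invariance of that block.
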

Therefore, if $\theta \in (0,\overline{\theta})$ for $\overline{\theta} $ small enough one has
\[\begin{split}
   & \left | \frac{8}{n^2}\sum_{1\leq j<k\leq n/2} 
{\rm Tr} (\rho^{\rm ss}_\theta K_\omega(\theta)^* \widetilde{\mathcal{T}}_\theta^{k-j-1}(N_\theta)K_\omega(\theta))\right |\\
&  \leq \frac{8}{n^2}\sum_{1\leq j<k\leq n/2} 
\|K_\omega(\theta)\rho^{\rm ss}_\theta K_\omega(\theta)^*\|_2 \|\widetilde{\mathcal{T}}_\theta^{k-j-1}\|_{2 \rightarrow 2}\|N_\theta\|_2\\
&\leq \frac{8C}{n^2}\sum_{1\leq j<k\leq n/2} (1-2\sin^2(\theta))^{2(k-j-1)}\theta^4 \\
&=\frac{8C\theta^4}{n^2} \left ( \left (\frac{n}{2}-1\right ) \frac{1}{1-(1-2\sin^2(\theta))^2}-\frac{(1-2\sin^2(\theta))^2-(1-2\sin^2(\theta))^{n}}{(1-(1-2\sin^2(\theta))^2)^2}\right )\\
&\leq \frac{C^\prime \theta^2}{n}
\end{split}\]
for some positive constant $C^\prime$. Therefore,
$${\rm var}_\theta(\overline{Y}_n)\leq \frac{2}{n}m_\theta(P_\omega)(1-m_\theta(P_\omega))+ \frac{C^{\prime} \theta^2}{n}.$$

This implies that for every $\overline{\theta}$ small enough one has the following uniform lower bound for the 
SNR of $\bar{Y}_n$
$$\inf_{\theta \in (0,\overline{\theta})}\frac{\left(\frac{d\mathbb{E}_\theta[\overline{Y}_n]}{d\theta}\right)^2}{{\rm var}_\theta(\overline{Y}_n)} \geq \widetilde{C}n $$
for some positive constant $\widetilde{C}$.
In conclusion, $\theta$ can be estimated at rate $n^{-1}$ with a constant that does not vanish in a neighbourhood of zero. This provides a compelling argument that a two step procedure (localise and apply asymptotic approximation) can be applied similarly to the previous models. 

For completeness, we now identify the tangent vector and the limit model for our third parametric family. By direct computation we find
$$
C_0= \left.\frac{dV_\theta}{d\theta}\right|_{\theta=0}
=
\begin{pmatrix} \sqrt{\frac{2}{3}} & 0\\
\sqrt{\frac{1}{3}}& 0\\
0 & \sqrt{\frac{1}{2}}\\
0 & \sqrt{\frac{1}{2}}\end{pmatrix},
\quad 
\text{ and }
\quad 
C_0^{\rm id}=\mathsf P_{V_{0}} (C_{0})= C_0
$$
and we can check $C_0\in \mathcal{V}_1$ similarly to equation \eqref{eq:tangentex2}. The limit model is the Gaussian mixture 
$$
\frac{1}{2}\left (\ket{{\rm Coh}(uC_1)}\bra{{\rm Coh}(uC_1)} +\ket{{\rm Coh}(-uC_1)}\bra{{\rm Coh}(-uC_1)}\right ).
$$
The optimal measurement of the local parameter for such model and the connection to quantum imaging was discussed in section \ref{sec:QGSM}. 

To conclude, we showed how the general results can be applied to study the class of 2 dimensional system and environment unit QMCs and provided a pathway towards a complete minimax estimation theory for QMCs using a two step procedure, similar to that used in the i.i.d. theory \cite{YCH19,LAN6}.

\section{Conclusions and outlook}

In this paper we developed the identifiability and asymptotic convergence theory for irreducible quantum Markov chains (QMCs). Previous studies dealt with primitive QMCs \cite{Gu11,GK15} or continuous-time Markov processes \cite{CG15,GK17}, but did not cover the case of periodic dynamics. We showed that when one has access to the stationary output of an irreducible QMC, the space of identifiable dynamical parameters is the quotient of the space of isometries corresponding to different dynamics, with a symmetry group of system transformations. Geometrically, the quotient is a stratified space called orbifold which consists of a manifold corresponding to equivalence classes of primitive dynamics, together with lower dimensional strata corresponding to periodic dynamics. Using orbifold theory we gave a detailed description of the quotient in terms of orbifold charts, tangent spaces and tangent cones. This structure offers new geometric insight into the general theory of QMCs, and the interplay between periodicity and identifiability. As the class of QMCs stationary output states coincides with that of purely generated finitely correlated states \cite{Werner1994}, our result applies to the latter, and it would be interesting to see how this analysis can be extended to more general finitely correlated states \cite{FNW92}. We expect that this extension will be intimately related to the identifiability theory of classical hidden Markov chains \cite{Pe69,IAK92}. 

The second main thrust of the paper was towards the asymptotic estimation theory of QMCs. Here we proved the local convergence of the stationary output process to a limit model consisting of a product between a Gaussian shift model and a mixture of quantum Gaussian shift models. The convergence is strong (in the Le Cam sense \cite{LeCam}), holds over growing neighbourhoods of local parameters, and is equipped with explicit rates. The limit objects are determined by local geometric data of the parameter space: the tangent space of identifiable directions, the complex inner product on this space and the unitary action of the stabiliser group. From this data we constructed an algebra of canonical commutation relations  \cite{Petz90} over the tangent space, together with a quantum Gaussian shift of coherent states, and the second quantised action of the stabiliser group. The limit model is the average of the Gaussian shift model under this action, and splits into a tensor product between a pure Gaussian shift corresponding to the tangent space to the orbifold (non-singular part) and a mixed model corresponding to the tangent cone (singular part). Notably, our result shows that quantum local asymptotic normality holds only around primitive dynamics while more general Gaussian mixtures describe the local asymptotic behaviour around periodic QMCs. In the i.i.d. setup, quantum local asymptotic normality \cite{LAN1,GJ07,GJK08,GK09,GG13,YFG13,FY20,FY23} has been used to construct asymptotically optimal estimation strategies for finite dimensional states \cite{LAN1,GG13,YCH19,FY23}
as well as for pure states non-parametric models \cite{BGM18,LN24}. Our convergence results can be used in a similar way to map the QMC estimation problem onto one for mixtures of Gaussian states and derive optimal rates and procedures, in conjunction with recent measurement techniques such as output post-processing \cite{GG23,YPH23,GGG25}. While this goes beyond the scope of the current project, we have made a first step in providing a detailed analysis of the class of QMCs with two dimensional system and environment unit, including the procedure for preliminary estimation in different one-parameter sub-models. In the paper we noted an interesting connection with recent research in quantum imaging \cite{Tsang16,Tsang19} which suggests that in certain Gaussian mixture models the QFI of the local parameter does not vanish when approaching the singularity, unlike the case of classical Gaussian mixtures  \cite{Chen95}. 
Along the way we developed general 
 convergence results and a new geometric picture, which we believe are applicable to other areas of quantum statistical inference.

\vspace{2mm}

{\bf Acknowledgements.} 
M.G. acknowledges the support from the Engineering and Physical Sciences Research Council, grant no. EP/T022140/1. F.G. has been partially supported by the MUR grant Dipartimento di Eccellenza 2023–2027 of Dipartimento di Matematica, Politecnico di Milano and is a member of the INdAM-GNAMPA group. We thank Ian Dryden, Karthik Bharath and Theodore Kypraios for fruitful discussions during the manuscript preparation. 

\begin{appendix}
\section{Proof of the results regarding the mixed quantum Gaussian model} \label{app:equivmixedGauss}

\begin{proof}[Proof of Lemma \ref{lem:idgm}]
Let us consider two states 
$\rho(x), \rho(y)$ in $\mathbf{GM}$ corresponding to $x, y \in X$ respectively. From equation \eqref{eq:alternative} one can check that the states coincide if and only if $x_0=y_0$ and there exist $\theta_0, \dots, \theta_{p-1} \in \mathbb{R}$ such that $\ket{\zeta_m(y_0^\perp)}=e^{\frac{2\pi\theta_{m}i}{p}}\ket{\zeta_m(x_0^\perp)}$ for $m=0,\dots, p-1$; notice that $\theta_m$ are only determined modulo $p$. Using the explicit expression for $\ket{\zeta_m(x_0^\perp)}$ and $\ket{\zeta_m(y_0^\perp)}$, we can rewrite the previous condition as
\[\begin{split}
    &e^{-\|y_0^\perp\|^2/2} \left ( \delta_0(m) \ket{\Omega_0^\perp} + \sum_{l \geq 1}\frac{1}{\sqrt{l!}}\sum_{m_1\oplus\dots\oplus m_l=m} R_{m_1}\ket{y_0^\perp} \otimes \cdots \otimes R_{m_l}\ket{y_0^\perp} \right)=\\
    &e^{\frac{2\pi\theta_{m}i}{p}} e^{-\|x_0^\perp\|^2/2} \left ( \delta_0(m) \ket{\Omega_0^\perp} + \sum_{l \geq 1}\frac{1}{\sqrt{l!}}\sum_{m_1\oplus\dots\oplus m_l=m} R_{m_1}\ket{x_0^\perp} \otimes \cdots \otimes R_{m_l}\ket{x_0^\perp} \right),\end{split}\]
    which, in turn, is equivalent to
    \begin{equation} \label{eq:phase1} e^{-\|y_0^\perp\|^2/2} =e^{\frac{2\pi\theta_{0}i}{p}} e^{-\|x_0^\perp\|^2/2} \end{equation}
    and for every $l\geq 1$, $m=0, \dots, p-1$, $m_1 \oplus \cdots m_l=m$
    \begin{equation} \label{eq:phase2}
    e^{-\|y_0^\perp\|^2/2}R_{m_1}\ket{y_0^\perp} \otimes \cdots \otimes R_{m_l}\ket{y_0^\perp}=e^{-\|x_0^\perp\|^2/2}e^{\frac{2\pi\theta_m i}{2}}R_{m_1}\ket{x_0^\perp} \otimes \cdots \otimes R_{m_l}\ket{x_0^\perp}.
    \end{equation}
    Notice that Eq. \eqref{eq:phase1} implies that $\theta_0\equiv_p0$ and $\|y_0^\perp\|=\|x_0^\perp\|$. Moreover, if we choose $l=1$ in Eq. \eqref{eq:phase2}, we obtain
    $$R_{m}\ket{y_0^\perp}=e^{2\pi\theta_m i}R_{m}\ket{x_0^\perp}, \quad m=1, \dots, p-1.$$ Choosing $l=m=1,\dots, p-1$ and $m_1=\dots=m_m=1$, one gets
    $$m\theta_1\equiv_p \theta_m,\quad p\theta_1\equiv_p 0,$$
    which implies that
    $\theta_1\equiv_pk$ for some $k=0,\dots, p-1$ and $\theta_m\equiv_p mk$. Recalling that $U^k \cdot R_{m}\ket{x_0^\perp}=e^{\frac{2 m k\pi i }{p}}R_{m}\ket{x_0^\perp}$ for every $k,m=0, \dots, p-1$, one obtains the statement.

\end{proof}

\begin{proof}[Proof of Proposition \ref{prop.mixture.model}] From the splitting ${\cal V}_0^\perp=\bigoplus_{m=1}^{p-1}{\cal V}_m$, for every $n \geq 1$, we can write 
$$ 
({\cal V}_0^\perp)^{\otimes_s n}=\bigoplus_{m=0}^{p-1}{\frak H}_{n,m}({\cal V}_0^\perp), \quad {\frak H}_{n,m}({\cal V}_0^\perp)={\rm Symm}\left[
\bigoplus_{\substack{m_1,\dots, m_n =0,\dots, p-1 \\m_1 \oplus \cdots \oplus m_n=m}}
{\cal V}_{m_1} \otimes \cdots \otimes {\cal V}_{m_n}
\right].
$$
For $n=0$, we denote ${\frak H}_{0,0}({\cal V}_0^\perp)=\mathbb{C} \ket{\Omega_0^\perp}$ and ${\cal H}_{0,m}({\cal V}_0^\perp)=\{0\}$ for $m =1,\dots, p-1$.
Let us define $Q_m$ as the orthogonal projection onto
$$
{\frak V}_m:= \bigoplus_{n\geq 0}{\frak H}
_{n,m}({\cal V}_0^\perp)=
\bigoplus_{n\geq 0}\,
\left[
\bigoplus_{\substack{m_1,\dots, m_n =0,\dots, p-1 \\m_1 \oplus \cdots \oplus m_n=m}}{\cal V}_{m_1} \otimes \cdots \otimes {\cal V}_{m_n}.
\right]
$$
Note that $\Gamma(U)=\sum_{m=0}^{p-1} \gamma^{m} \id \otimes Q_m$: indeed, this follows from the fact that
$$\Gamma(U)_{|{\cal V}_{m_1} \otimes \cdots \otimes {\cal V}_{m_n}}=U^{\otimes n}_{|{\cal V}_{m_1} \otimes \cdots \otimes {\cal V}_{m_n}}=\gamma^{m_1 \oplus \cdots \oplus m_n} \id_{|{\cal V}_{m_1} \otimes \cdots \otimes {\cal V}_{m_n}}.$$
Therefore, for every $\ket{x}=\ket{x_0}+\ket{x_0^\perp}$ such that $\ket{x_0}\in {\cal V}_0$ and $\ket{x_0^\perp} \in {\cal V}_0^\perp$, we can write
\[\begin{split}&\frac{1}{p}\sum_{k=0}^{p-1} \alpha_{U*}^k(W(x)|\Omega\rangle\langle \Omega|W(x)^*)\\
&=W(x_0)|\Omega_0\rangle\langle \Omega_0|W(x_0)^*\otimes \sum_{m,m^\prime=0}^{p-1} \underbrace{\left(\frac{1}{p}\sum_{k=0}^{p-1} \gamma^{k(m^\prime-m)}\right )}_{=\delta_{m,m^\prime}}Q_{m^\prime} W(x_0^{\perp})|\Omega_0^\perp\rangle\langle \Omega_0^\perp|W(x_0^{\perp})^*Q_{m}\\
&=W(x_0)|\Omega_0\rangle\langle \Omega_0|W(x_0)^*\otimes \sum_{m=0}^{p-1} Q_{m} W(x_0^{\perp})|\Omega_0^\perp\rangle\langle \Omega_0^\perp|W(x_0^{\perp})^*Q_{m}\\
&=W(x_0)|\Omega_0\rangle\langle \Omega_0|W(x_0)^*\otimes \sum_{m=0}^{p-1}\ket{\zeta_m(x_0^{\perp})}\bra{\zeta_m(x_0^{\perp})},\end{split}\]
where
\[\begin{split}&\ket{\zeta_m(x_0^{\perp})}=Q_m W(x_0^{\perp})|\Omega_0^\perp\rangle\\
    &=e^{-\|x_0^\perp\|^2/2} \left ( \delta_0(m) \ket{\Omega_0^\perp} + \sum_{l \geq 1}\frac{1}{\sqrt{l!}}\sum_{m_1\oplus\dots\oplus m_l=m} R_{m_1}\ket{x_0^\perp} \otimes \cdots \otimes R_{m_l}\ket{x_0^\perp} \right).\end{split}\]
In the last equation we applied formula \eqref{eq:Weyl.operator.action} for the action of Weyl operators.

\end{proof}
\section{Proofs of lemmas on convergence of statistical models}
\label{app:lemmas.convergence.models}

In this section we improve on Lemma 5 in \cite{GK15} showing that under suitable assumptions one can upgrade weak convergence to strong convergence in pure state models even when the parameter space is not compact. Before proceeding, we recall the following well known fact; since we could not find any reference, we report the short proof for the reader's convenience.
\begin{lemma} \label{lem:sqrt}
Let $A$ and $B$ be two positive semidefinite $d\times d$ matrices with complex entries. The following holds true:
$$\|\sqrt{A}-\sqrt{B}\|_\infty \leq \|A-B\|_\infty^{1/2}.$$
\end{lemma}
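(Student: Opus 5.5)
The plan is to prove the operator-monotone square root inequality $\|\sqrt{A}-\sqrt{B}\|_\infty \leq \|A-B\|_\infty^{1/2}$ by using the operator monotonicity of $t\mapsto \sqrt{t}$ together with the elementary scalar inequality $\sqrt{s+t}\leq \sqrt{s}+\sqrt{t}$ for $s,t\ge 0$. First I set $\epsilon:=\|A-B\|_\infty$. Then, in the positive semidefinite order, $A\leq B+\epsilon\mathbf 1$ and $B\leq A+\epsilon \mathbf 1$.

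Next I apply operator monotonicity of the square root (Löwner–Heinz) to get $\sqrt{A}\leq \sqrt{B+\epsilon\mathbf 1}$. Since $B$ and $\epsilon\mathbf 1$ commute, $\sqrt{B+\epsilon\mathbf 1}$ is computed by functional calculus on the eigenvalues $b_i$ of $B$. The scalar inequality $\sqrt{b_i+\epsilon}\leq \sqrt{b_i}+\sqrt{\epsilon}$ then gives $\sqrt{B+\epsilon\mathbf 1}\leq \sqrt{B}+\sqrt{\epsilon}\mathbf 1$. This yields $\sqrt A-\sqrt B\leq \sqrt\epsilon\,\mathbf 1$, and by symmetry $\sqrt B-\sqrt A\leq \sqrt\epsilon\,\mathbf 1$. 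Hence the Hermitian operator $\sqrt A-\sqrt B$ satisfies $-\sqrt\epsilon\,\mathbf 1\leq \sqrt A-\sqrt B\leq\sqrt\epsilon\,\mathbf 1$. Since the operator norm of a Hermitian operator is its largest absolute eigenvalue, this gives exactly the claim.

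The only nontrivial ingredient is operator monotonicity of $t\mapsto t^{1/2}$. I would cite it as standard. If a self-contained argument is wanted, I would give the short proof: suppose $0\le X\le Y$ with $Y>0$ (first perturbing by $\delta\mathbf 1$ and letting $\delta\to0$). Then $\|X^{1/2}Y^{-1/2}\|\le 1$. Moreover $Y^{-1/4}X^{1/2}Y^{-1/4}$ is similar to $X^{1/2}Y^{-1/2}$, so its spectral radius, and hence its norm (it is Hermitian), is at most $1$. This gives $X^{1/2}\le Y^{1/2}$. I expect this monotonicity step to be the only real point; everything else is routine spectral calculus.
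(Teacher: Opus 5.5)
Your argument is correct, but it takes a different route from the paper's. The paper never invokes operator monotonicity. It picks a unit eigenvector $x$ of the Hermitian matrix $\sqrt{A}-\sqrt{B}$ whose eigenvalue $\lambda$ satisfies $|\lambda|=\|\sqrt{A}-\sqrt{B}\|_\infty$, and uses the factorisation $A-B=\sqrt{A}(\sqrt{A}-\sqrt{B})+(\sqrt{A}-\sqrt{B})\sqrt{B}$ to obtain $\langle x,(A-B)x\rangle=\lambda\langle x,(\sqrt{A}+\sqrt{B})x\rangle$. Since $|\lambda|=|\langle x,\sqrt{A}x\rangle-\langle x,\sqrt{B}x\rangle|\le\langle x,(\sqrt{A}+\sqrt{B})x\rangle$, this gives $\lambda^2\le|\langle x,(A-B)x\rangle|\le\|A-B\|_\infty$. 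That proof is a few lines of elementary linear algebra and entirely self-contained. However, it is tied to the square root through the identity $A=(\sqrt{A})^2$.

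Your proof instead rests on the L\"owner--Heinz monotonicity of $t\mapsto t^{1/2}$, whose standard proof you sketch correctly: the similarity of $Y^{-1/4}X^{1/2}Y^{-1/4}$ and $X^{1/2}Y^{-1/2}$, plus norm equals spectral radius for Hermitian matrices. In exchange, your argument is structural. Monotonicity combined with scalar subadditivity $f(b+\epsilon)\le f(b)+f(\epsilon)$ gives $\|f(A)-f(B)\|_\infty\le f(\|A-B\|_\infty)$ for any operator monotone, subadditive $f$ on $[0,\infty)$. In particular it yields $\|A^r-B^r\|_\infty\le\|A-B\|_\infty^r$ for every $r\in(0,1]$, which the paper's eigenvector trick does not give directly.
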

\begin{proof}
Let $x$ be any normalized eigenvector at which $\sqrt{A}-\sqrt{B}$ attains the norm, i.e.
$$(\sqrt{A}-\sqrt{B})x=\lambda x \text{ with } |\lambda|=\|\sqrt{A}-\sqrt{B}\|_\infty.$$
Notice that $A-B=\sqrt{A}(\sqrt{A}-\sqrt{B})+(\sqrt{A}-\sqrt{B})\sqrt{B}$, therefore
$$\langle x, (A-B) x \rangle=\langle x, \sqrt{A}(\sqrt{A}-\sqrt{B})x \rangle+\langle x(\sqrt{A}-\sqrt{B})\sqrt{B}x \rangle=\lambda \langle x(\sqrt{A}+\sqrt{B})x \rangle.$$
Let us consider
$$\|\sqrt{A}-\sqrt{B}\|_\infty^2=(\langle x,(\sqrt{A}-\sqrt{B}) x \rangle)^2\leq |\lambda|\langle x,(\sqrt{A}+\sqrt{B}) x \rangle=|\langle x, (A-B) x \rangle| \leq \|A-B\|_\infty. $$
\end{proof}

\begin{lemma}[{\bf Weak to strong convergence for rank one operators}] \label{lem:wts1}
Let $\{r_n\}_{n \in \mathbb{N}}$ be a sequence of positive numbers such that $r_n \uparrow +\infty$, $C$ a positive constant, $X$ be a normed space of finite dimension $d$, $A$ a finite set. Moreover, let us consider the sequence of collections of rank-one operators given by
\begin{align*}
    &\mathbf{Q}(n):=\{\rho_n(x):=\ket{v_n(x,a)}\bra{v_n(x,a)}\}_{\{(x,a) \in {\cal U}(n)\times A\}}, \\
    &\mathbf{Q}^{\infty}:=\{\rho(x):=\ket{v(x,a)}\bra{v(x,a)}\}_{\{(x,a) \in X\times A\}},
    \end{align*}
where ${\cal U}(n):=B_{C r_n}(0)\subseteq H$ is the ball centered in $0$ with radius $C r_n$ and we assume that $\|v_n(x,a)\|,\|v(x,a)\| \leq 1$ . We will use $\mathbf{Q}^{\infty}(n)$ to denote the model $\mathbf{Q}^{\infty}$ restricted to ${\cal U}(n)\times A$.

Let us assume that the following hypotheses hold true:
\begin{enumerate}
\item \label{it:Lp1} \textbf{(H\"older parametrization)} there exist positive constants $C_1,C_2, \alpha_1,\alpha_2>0$ such that for every $x,y \in X$ and $a \in A$
\begin{align*}&\|v(x,a)\|^2\|v(y,a)\|^2-|\langle v(x,a),v(y,a)\rangle|^2 \leq C_1 \|x-y\|^{\alpha_1},\\
&(\|v(x,a)\|^2-\|v(y,a)\|^2)^2 \leq C_2\|x-y\|^{\alpha_2}\end{align*}
\item \label{it:fc1} \textbf{(Fast enough uniform weak convergence)} let
$$f(n):=\sup_{(x,a),(y,b) \in {\cal U}(n)\times A}|\langle v_n(x,a),v_n(y,b) \rangle -\langle v(x,a),v(y,b) \rangle|;$$
assume that
$$\lim_{n \rightarrow +\infty} r_n^df(n)=0.$$
Notice that, in this case, we can find a sequence of positive numbers $\{\delta_n\}_{n \in \mathbb{N}}$ such that $\delta_n \downarrow 0$ and $r_n^df(n)=o(\delta_n^d).$

Then
$$\Delta(\mathbf{Q}(n),\mathbf{Q}^\infty(n))=O\left (\delta_n^{\min\{\alpha_1,\alpha_2\}}+f(n)^{\frac{1}{2}}+\left(\frac{r_n}{\delta_n}\right )^{\frac{d}{4}}f(n)^{\frac{1}{4}}\right )$$
and, therefore,
$$\lim_{n \rightarrow +\infty}\Delta(\mathbf{Q}(n),\mathbf{Q}^\infty(n))=0.$$
\end{enumerate}

\end{lemma}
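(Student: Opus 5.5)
The plan is to construct both channels explicitly from the Gram matrices of finite nets. The Le Cam distance between two families of (subnormalised) rank-one operators is controlled by how well their Gram matrices agree.

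\emph{Step 1 (discretisation).} Take a $\delta_n$-net $N_n\subseteq{\cal U}(n)$ of cardinality $|N_n|=O((r_n/\delta_n)^d)$, and set $M_n:=|N_n\times A|$. For any $(x,a)$, let $\bar x\in N_n$ satisfy $\|x-\bar x\|\le\delta_n$. The first H\"older hypothesis, together with the identity
\[
\big\||u\rangle\langle u|-|w\rangle\langle w|\big\|_1^2=(\|u\|^2+\|w\|^2)^2-4|\langle u,w\rangle|^2,
\]
rewritten as $(\|u\|^2-\|w\|^2)^2+4(\|u\|^2\|w\|^2-|\langle u,w\rangle|^2)$, gives
\[
\big\||v(x,a)\rangle\langle v(x,a)|-|v(\bar x,a)\rangle\langle v(\bar x,a)|\big\|_1=O\big(\delta_n^{\min\{\alpha_1,\alpha_2\}/2}\big).
\]
The bound claimed in the statement has exponent $\min\{\alpha_1,\alpha_2\}$, so I would either check that the constants are stated for squared quantities or simply absorb the difference. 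For the $v_n$ family, the same identity combined with weak convergence transfers this estimate up to an extra $O(f(n)^{1/2})$: the Gram entries of $v_n$ at $x,\bar x$ differ from those of $v$ by at most $f(n)$. Hence it suffices to build channels acting correctly on the finite net, with errors uniform there. By the triangle inequality, each resulting channel then works on the whole of ${\cal U}(n)\times A$ with the extra $\delta_n^{\cdot}+f(n)^{1/2}$ errors.

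\emph{Step 2 (Gram matrix isometry).} Let $G_n,G$ be the $M_n\times M_n$ Gram matrices of $\{v_n(\bar x,a)\}$ and $\{v(\bar x,a)\}$ on the net. Entrywise $|G_n-G|\le f(n)$, so
\[
\|G_n-G\|_\infty\le M_n f(n).
\]
Writing $v_n(\bar x,a)=\Phi_n e_{(\bar x,a)}$ and $v(\bar x,a)=\Phi e_{(\bar x,a)}$, the polar decompositions are $\Phi_n=U_n\sqrt{G_n}$ and $\Phi=U\sqrt{G}$, with partial isometries $U_n,U$. Define the channel $\mathcal T_*(\rho)=UU_n^*\rho U_nU^*+\tr\big((\id-U_nU_n^*)\rho\big)\sigma_0$, completed by any fixed state $\sigma_0$; this is CPTP. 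It maps $v_n(\bar x,a)$ to $U\sqrt{G_n}e_{(\bar x,a)}$. Lemma~\ref{lem:sqrt} then gives
\[
\|U\sqrt{G_n}e-U\sqrt{G}e\|\le\|G_n-G\|_\infty^{1/2}\le (M_nf(n))^{1/2}.
\]
Since the vectors have norm at most $1$, the trace-norm error between the rank-one operators is at most twice this vector error. The reverse channel is built symmetrically.

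\emph{Step 3 (bookkeeping).} Collect the errors as $\delta_n^{\min\{\alpha_1,\alpha_2\}}+f(n)^{1/2}+(M_nf(n))^{1/2}$. Now $M_nf(n)=O((r_n/\delta_n)^df(n))\to0$ by the choice $r_n^df(n)=o(\delta_n^d)$, which yields the limit. The stated $\tfrac14$-power arises if one instead passes through $\|G_n-G\|_2\le M_nf(n)$ and a square root of a square root (the Powers--St{\o}rmer inequality), which I would adopt if needed to match the constants.

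I expect the main obstacle to be Step 2. First, $\sqrt{G_n}$ must be handled on the whole range, including near-null directions, which forces working with operator-norm square-root bounds rather than inverting $G$. Second, the channel has to be trace preserving even though $U_n$ is only a partial isometry.
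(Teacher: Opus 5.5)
Your construction is the paper's: a $\delta_n$-net of size $O((r_n/\delta_n)^d)$, Gram matrices with $\|G_n-G\|_\infty\le M_nf(n)$, and the maps $v_n(\bar x,a)\mapsto\sqrt{G_n}e_{(\bar x,a)}$ and $\sqrt{G}e_{(\bar x,a)}\mapsto v(\bar x,a)$. Your $U_n^*$ and $U$ play the roles of the paper's $V_n$ and $V_n^{\infty*}$. You then use Lemma~\ref{lem:sqrt}, and contractivity plus the triangle inequality off the net. Your Step~2 is in fact sharper than the paper's. The paper inserts $\|\sqrt{G_n}-\sqrt{G}\|_\infty\le\|G_n-G\|_\infty^{1/2}$ into $\sqrt{(\|u\|^2-\|w\|^2)^2+4(\|u\|^2\|w\|^2-|\langle u,w\rangle|^2)}$ and takes the outer square root; that, not Powers--St{\o}rmer, is where its $((r_n/\delta_n)^df(n))^{1/4}$ comes from. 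Your bound $\bigl\||u\rangle\langle u|-|w\rangle\langle w|\bigr\|_1\le2\|u-w\|$ gives $((r_n/\delta_n)^df(n))^{1/2}$, which implies the stated term once it is below $1$.

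Two points need correcting. First, your $\mathcal T_*$ is not trace preserving. With $K=UU_n^*$ one has $K^*K=U_n(U^*U)U_n^*$, hence $K^*K+(\id-U_nU_n^*)=\id-U_n(\id-U^*U)U_n^*$. This differs from $\id$ whenever $\operatorname{ran}\sqrt{G_n}\not\subseteq\operatorname{ran}\sqrt{G}$, e.g.\ when the net vectors $v_n(\bar x,a)$ are linearly independent but the $v(\bar x,a)$ are not. Complete with $\tr((\id-K^*K)\rho)\,\sigma_0$ instead. On a net point this costs an extra $\|(\id-U^*U)\sqrt{G_n}e\|^2=\|(\id-U^*U)(\sqrt{G_n}-\sqrt{G})e\|^2\le\|G_n-G\|_\infty$ in trace norm, so your estimate survives. (The paper's maps $A_n,B_n$ are likewise only trace non-increasing in general, a point it passes over.)

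Second, $\delta_n^{\min\{\alpha_1,\alpha_2\}/2}$ cannot be ``absorbed'' into $O(\delta_n^{\min\{\alpha_1,\alpha_2\}})$, because $\delta_n\to0$ makes the former the larger of the two. Your argument proves the bound with the halved exponent in the first term. So does the paper's own proof, whose final display has $\widetilde C\delta_n^{\min\{\alpha_1,\alpha_2\}/2}$. The exponent printed in the statement is therefore not delivered by either proof, although the conclusion $\Delta(\mathbf Q(n),\mathbf Q^\infty(n))\to0$ is unaffected.
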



\begin{proof}[Proof of Lemma \ref{lem:wts1}]
Note that it is always possible to find a finite set $I_n=\{x_1,\dots, x_{k(n)}\}\subseteq {\cal U}(n)$ with $k(n) \lesssim (r_n/\delta_n)^d$ such that for every $x \in {\cal U}(n)$, there exists $x_i \in I_n$ with $\|x-x_i\| \leq \delta_n$. This implies that the corresponding operators are close as well: using item \ref{it:Lp1}. one has that for $n$ big enough the following holds true:
\begin{equation}\label{eq:ho}\begin{split}
   \|\rho(x,a)-\rho(x_i,a)\|_1 &=\sqrt{(\|v(x,a)\|^2-\|v(x_i,a)\|^2)^2+}\\
   &\overline{+4(\|v(x,a)\|^2\|v(x_i,a)\|^2-|\langle v(x,a),v(x_i,a)\rangle|^2)} \leq \widetilde{C}\delta_n^{\min\{\alpha_1,\alpha_2\}/2}.
\end{split}\end{equation}
Moreover, note that
\[\begin{split}
    \|\rho_n(x,a)-\rho_n(x_i,a)\|_1 &=\|\rho(x,a)-\rho(x_i,a)\|_1 \quad (I)\\
    &+\|\rho_n(x,a)-\rho_n(x_i,a)\|_1-\|\rho(x,a)-\rho(x_i,a)\|_1 \quad (II).
\end{split}\]
Eq. \eqref{eq:ho} implies that $(I) \leq \widetilde{C}\delta_n^{\min\{\alpha_1,\alpha_2\}/2}$ for $n$ big enough. Using that $x \mapsto \sqrt{x}$ is H\"older $1/2$ and item \eqref{it:fc1}, one has that 
\[
\begin{split}
 &|\|\rho_n(x,a)-\rho_n(x_i,a)\|_1-\|\rho(x,a)-\rho(x_i,a)\|_1| \leq\\
 &\sqrt{|(\|v(x,a)\|^2-\|v(x_i,a)\|^2)^2-(\|v_n(x,a)\|^2-\|v_n(x_i,a)\|^2)^2+}\\
 &\overline{+4(\|v(x,a)\|^2\|v(x_i,a)\|^2-\|v_n(x,a)\|^2\|v_n(x_i,a)\|^2)+}\\
 &\overline{+4(|\langle v_n(x,a),v_n(x_i,a)\rangle|^2)-|\langle v(x,a),v(x_i,a)\rangle|^2)|}=O\left (f(n)^{\frac{1}{2}}\right)
\end{split}
\]
uniformly in $(x,a) \in {\cal U}(n) \times A$. Let us consider the Gram matrices of size $|A|\cdot k(n)$
$$(G_n):=\langle v_n(x_i,a),v_n(x_j,b) \rangle, \quad (G^\infty_n):=\langle v(x_i,a),v(x_j,b) \rangle.$$
Notice that
\begin{equation}
    \|G_n-G^\infty_n\|_\infty \leq |A|k(n)f(n) \lesssim \left ( \frac{r_n}{\delta_n}\right )^{d}f(n) \xrightarrow[n \rightarrow +\infty]{}0.
\end{equation}
Let $\{e_{i,a}\}$ be the canonical basis of $\widetilde{\frak h}_n:=\mathbb{C}^{|A|k(n)}$; we can define the following mappings:
$$v_n(x_i,a) \mapsto \sqrt{G_n} e_{i,a} \in \widetilde{\frak h}_n, \quad v(x_i,a) \mapsto \sqrt{G^\infty_n} e_{i,a} \in \widetilde{\frak h}_n$$ and extend them by linearity to two isometries denoted by $V_n$ and, respectively, $V_n^{\infty}$, on
$$
{\frak h}_n:={\rm span}_{\mathbb{C}}\{v_n(x_i,a):(x_i,a) \in I_n\times A \} \text{ and }  {\frak h}^\infty_n:={\rm span}_{\mathbb{C}}\{v(x_i,a):(x_i,a) \in I_n\times A\}.
$$
Let us consider the following Hilbert spaces:
$$\overline{{\frak h}}_n:=\overline{{\rm span}_{\mathbb{C}}\{v_n(x,a):(x,a) \in {\cal U}(n)\times A\}}, \quad \overline{{\frak h}}^\infty_n:=\overline{{\rm span}_{\mathbb{C}}\{v(x,a):(x,a) \in {\cal U}(n)\times A\}}. $$
At the cost of possibly enlarging the dimension of $\widetilde{\frak h}_n$, we can consider two arbitrary extensions of $V_n$ and $V_n^{\infty}$ to $\overline{{\frak h}}_n$ and $\overline{{\frak h}}^\infty_n$, respectively; with an abuse of notation we will still denote them by $V_n$ and $V_n^{\infty}$. Let us introduce the following quantum channels:
$$A_n(\cdot):=V_n^{\infty*}V_n \cdot V_n^* V_n^{\infty}, \quad B_n(\cdot):=V_n^{*}V^\infty_n \cdot V_n^{\infty*} V_n.$$
Let us consider any $x_i \in I_n$ such that $\|x-x_i\| \leq \delta_n$; one has
\[\begin{split}\|A_n(\rho_n(x_i,a))-\rho(x_i,a)\|_1 &=\|V_n^{\infty*}V_n \rho_n(x_i,a) V_n^* V_n^{\infty}-\rho(x_i,a)\|_1\\
&\leq \|V_n \rho_n(x_i,a) V_n^* -V_n^{\infty}\rho(x_i,a)V_n^{\infty*}\|_1\\
&=\left \|\ket{\sqrt{G_n}e_{i,a}}\bra{\sqrt{G_n}e_{i,a}}-\ket{\sqrt{G^\infty_n}e_{i,a}}\bra{\sqrt{G^\infty_n}e_{i,a}}\right \|_1\\
&=\sqrt{(\|\sqrt{G_n}e_{i,a}\|^2-\|\sqrt{G^\infty_n}e_{i,a}\|^2)^2+}\\
&\overline{+4\left (\|\sqrt{G_n}e_{i,a}\|^2\|\sqrt{G^\infty_n}e_{i,a}\|^2-\left |\langle \sqrt{G_n}e_{i,a},\sqrt{G^\infty_n}e_{i,a}\rangle \right |^{2}\right )}.
\end{split}\]
Note that
\[
(\|\sqrt{G_n}e_{i,a}\|^2-\|\sqrt{G^\infty_n}e_{i,a}\|^2)^2=(\langle e_{i,a}, (G_n-G_n^\infty)e_{i,a} \rangle )^2 \leq \|G_n-G_n^{\infty}\|_\infty^2 
\]
and
\[
\begin{split}
&\|\sqrt{G_n}e_{i,a}\|^2\|\sqrt{G^\infty_n}e_{i,a}\|^2-\left |\langle \sqrt{G_n}e_{i,a},\sqrt{G^\infty_n}e_{i,a}\rangle \right |^{2}\\
&=\|\sqrt{G_n^\infty}e_{i,a}\|^2\langle \sqrt{G_n}e_{i,a},(\sqrt{G_n}-\sqrt{G_n^\infty})e_{i,a}\rangle\\
&+\langle \sqrt{G^\infty_n}e_{i,a},(\sqrt{G^\infty_n}-\sqrt{G_n})e_{i,a}\rangle\langle \sqrt{G_n}e_{i,a},\sqrt{G^\infty_n}e_{i,a}\rangle\\
&\leq 2 \|\sqrt{G^\infty_n}-\sqrt{G_n}\|_{\infty} \leq 2 \|G^\infty_n-G_n\|^{1/2}_{\infty}.
\end{split}
\]
In the last inequality we used Lemma \ref{lem:sqrt}. We obtained that
$$\max_{(x_i,a) \in I_n\times A}\|A_n(\rho_n(x_i,a))-\rho(x_i,a)\|_1 =O \left ( \left(\frac{r_n}{\delta_n}\right )^{\frac{d}{4}}f(n)^{\frac{1}{4}}\right ).$$
Analogously one can show that
$$\max_{(x_i,a) \in I_n\times A}\|\rho_n(x_i,a)-B_n(\rho(x_i,a))\|_1=O\left ( \left(\frac{r_n}{\delta_n}\right )^{\frac{d}{4}}f(n)^{\frac{1}{4}}\right ).$$

We can finish our proof: let us consider $x \in {\cal U}(n)$, $a \in A$ and $x_i \in I_n$ such that $\|x-x_i\| \leq \delta_n$, then for $n$ big enough one has
\[\begin{split}\|A_n(\rho_n(x,a))-\rho(x,a)\|_1&\leq \|A_n(\rho_n(x,a)-\rho_n(x_i,a))\|_1+\|A_n(\rho_n(x_i,a))-\rho(x_i,a)\|_1\\
&+\|\rho_n(x_i,a)-\rho(x,a)\|_1\leq\widetilde{C}\delta^{\min\{\alpha_1,\alpha_2\}/2}_n+O\left ( \left(\frac{r_n}{\delta_n}\right )^{\frac{d}{4}}f(n)^{\frac{1}{4}}\right ).\end{split}\]
We can repeat the same steps in order to get the statement for $\|\rho_n(x,a)-B_n(\rho(x,a))\|_1$ as well.
\end{proof}

If we deal with statistical models, the statement of the previous Lemma becomes that of Lemma \ref{lem:wts2} in the main text.

\vspace{2mm}

\begin{proof}[Proof of Lemma \ref{lemm:mixed}]
Lemma 3.12 in \cite{GJ07} ensures that for every $n$ there exist quantum channels ${\cal T}_n$ and ${\cal S}_n$ such that $\Delta(\mathbf{P}(n),\mathbf{P}^{\infty}(n))$ is the maximum between
$$\sup_{(x,a) \in {\cal U}(n)\times A} \|\TT_n(\ket{v_n(x,a)}\bra{v_n(x,a)})-\ket{v(x,a)}\bra{v(x,a)}\|_1 $$
and
$$\sup_{(x,a) \in {\cal U}(n)\times A} \|\ket{v_n(x,a)}\bra{v_n(x,a)}-{\cal S}_n(\ket{v(x,a)}\bra{v(x,a)})\|_1. $$

Observe that for every $x \in {\cal U}(n)$ one has
\[\begin{split}
   & \|\TT_n(\rho_n(x))-\rho(x)\|_1=\left \| \sum_{a \in A }\TT_n(\ket{v_n(x,a)}\bra{v_n(x,a)})-\ket{v(x,a)}\bra{v(x,a)}\right \| \leq \\
    & \sum_{a \in A }\left \| \TT_n(\ket{v_n(x,a)}\bra{v_n(x,a)})-\ket{v(x,a)}\bra{v(x,a)}\right \| \leq |A| \Delta(\mathbf{P}(n),\mathbf{P}^{\infty}(n)).
\end{split}\]
We used linearity of the quantum channel and triangular inequality. Analogously, one obtains
$$\|\rho_n(x)-{\cal S}_n(\rho(x))\|_1 \leq |A|\Delta(\mathbf{P}(n),\mathbf{P}^{\infty}(n)). $$
Therefore,
$\Delta(\mathbf{Q}(n),\mathbf{Q}^{\infty}(n)) \leq |A| \Delta(\mathbf{P}(n),\mathbf{P}^{\infty}(n))$ and the statement follows immediately.
\end{proof}
\section{Proof of the limit theorems for fluctuation observables} \label{ap:fo}

\begin{proof}[Proof of Lemma \ref{lem:ltflu}]
First, notice that the convergence of $\mathbb{P}(\overline{Q}_n,\rho^{\rm out}_V(n,\rho))$ follows from the Central Limit Theorem for fluctuations operators derived below, by a simple application of Slutsky's theorem. Let us now prove the Central Limit Theorem.

\textbf{1. Reduction to an initial state supported only on one ${\cal H}_a$.} Notice that
\[\begin{split}
\rho^{\rm out}_V(n,\rho) 
&=\sum_{\mathbf{i}, \mathbf{j} \in I_n} \tr( K_{\mathbf{i}}\rho K_{\mathbf{j}}^*)\ket{\mathbf{i}}\bra{\mathbf{j}}=\sum_{a,b=0}^{p-1}
\sum_{\mathbf{i}, \mathbf{j} \in I_n}\tr( K_{\mathbf{i}}P_a\rho P_b K_{\mathbf{j}}^*)\ket{\mathbf{i}}\bra{\mathbf{j}}\\&=\sum_{a,b=0}^{p-1}
\sum_{\mathbf{i}, \mathbf{j} \in I_n}\tr( P_{a \oplus n} K_{\mathbf{i}}P_a \rho P_b K_{\mathbf{j}}^*P_{b\oplus n})\ket{\mathbf{i}}\bra{\mathbf{j}}=\sum_{a=0}^{p-1} \sum_{\mathbf{i}, \mathbf{j} \in I_n}\tr(K_{\mathbf{i}}P_a\rho P_a K_{\mathbf{j}}^*)\ket{\mathbf{i}}\bra{\mathbf{j}}\\
&=\sum_{a=0}^{p-1}\tr(\rho P_a) \rho^{\rm out}_V(n,\rho_a)
,
\end{split}\]
where $\rho_a:=(P_a\rho P_a)/\tr(\rho P_a)$ (we interpret $\rho^{\rm out}_V(n,\rho_a)$ as $0$ if $\tr(\rho P_a)=0$). Therefore, we can reduce to prove the theorem for initial states which are supported in the range of only one $P_a$.

\bigskip \textbf{2. Reduction to a ``coarser'' output chain.} We recall that $Q$ acts on a finite number of output units, let us say $k \in \mathbb{N}^*$; moreover, without loss of generality, we can assume that ${\rm Tr}(\rho^{\rm out}_V(n)Q)=0$ (we can always reduce to this case centering $Q$). Let $M$ be the smallest multiple of $p$ which is bigger or equal than $k$ and let us define $m=M/p$, $\tau_l$ as the operator that sends any observable $X$ acting on the first $k$ output units into $X^{(l+1)}$, which acts on the output units $\{l+1,\dots,l+k\}$, and $\widetilde{\tau}_l=\tau_{lM}$ for every $l \in \nn$.

With a slight abuse of notation, for every $k,n \in \mathbb{N}^*$ such that $k <n$, we will often make use of the identification $L^\infty({\cal K}^{\otimes k}) \simeq L^\infty({\cal K}^{\otimes k}) \otimes \mathbf{1}_{\cal K}^{\otimes (n-k)} \subset L^\infty({\cal K}^{\otimes n})$; for instance, we will use the notation $\tr(\rho_V(n,\rho)Q)$ to denote $\tr(\rho_V(n,\rho)Q\otimes \mathbf{1}_{\cal K}^{\otimes (n-k)})$.

We need to introduce the following sequence of operators:
\begin{align*}
    \widetilde{F}_n(\widetilde{Q})&:=\frac{1}{\lfloor (n-k)/M \rfloor^{1/2}}\sum_{l=0}^{\lfloor (n-k)/M \rfloor-1}\widetilde{\tau}_l(\widetilde{Q}) \in L^\infty({\cal K}^{\otimes n}) , \\ \widetilde{Q}&:=\frac{1}{M^{1/2}}\sum_{l=0}^{M-1} \tau_l(Q) \in L^\infty({\cal K}^{\otimes (M+k)}) \subseteq L^\infty({\cal K}^{\otimes 2M}).
\end{align*}
For every $n$, let us use the notation $f(n)=\lfloor (n-k)/M \rfloor$ and $r(n)=n-k+1-M\lfloor (n-k)/M \rfloor$; notice that $0 \leq r(n) <M$ and, if $n\rightarrow +\infty$, then $f(n) \rightarrow +\infty$ as well. One has the following estimates:
\[\begin{split}
\|F_n(Q)-\widetilde{F}_n(\widetilde{Q})\|_\infty &\leq \left | \frac{1}{(f(n)M+r(n))^{1/2}}-\frac{1}{(f(n)M)^{1/2}}\right |\left \| \sum_{l=0}^{f(n)M-1} \tau_l(Q)\right \|_\infty\\
&+\frac{1}{(n-k+1)^{1/2}} \left \| \sum_{l=f(n) M}^{n-k}\tau_l(Q)\right \| _\infty \\
&\leq \frac{1}{(f(n)M)^{1/2}}\underbrace{\left |\left (1+\frac{r(n)}{f(n)M} \right)^{-1/2}-1 \right |}_{=O((f(n)M)^{-1})}f(n)M\|Q\|_\infty \\
&+ \frac{1}{(n-k+1)^{1/2}} r(n)\|Q\|_\infty =O(n^{-1/2}). \end{split}\]
Therefore, for every $\alpha \in \mathbb{R}$ one has
$$\|e^{i\alpha F_n(Q)}-e^{i\alpha \widetilde{F}_n(\widetilde{Q})}\|_\infty=\left \|i\alpha \int_0^1 e^{i\alpha s \widetilde{F}_n(\widetilde{Q})} (F_n(Q)- \widetilde{F}_n(\widetilde{Q})) e^{i(1-s)\alpha F_n(Q)} ds \right \|_\infty =O(n^{-1/2}) $$
and, consequently,
$$\lim_{n \rightarrow \infty} 
\left|{\rm Tr}(\rho^{\rm out}_V(n,\rho) e^{i\alpha F_n(Q)})-{\rm Tr}(\rho^{\rm out}_V(n,\rho) e^{i\alpha \widetilde{F}_n(\widetilde{Q})})
\right|=0.$$
Therefore we can restrict to study the limit of ${\rm Tr}(\rho^{\rm out}_V(n,\rho) e^{i\alpha \widetilde{F}_n(\widetilde{Q})})$. Note that $\widetilde{F}_n(\widetilde{Q})$ only changes when $n-k$ is a multiple of $M$ and, hence, $p$. Therefore,
$$
\lim_{n \rightarrow\infty} {\rm Tr}(\rho^{\rm out}_V(n,\rho) e^{i\alpha \widetilde{F}_n(\widetilde{Q})})=\lim_{n \rightarrow \infty}{\rm Tr}(\rho^{\rm out}_V(nM+k,\rho)e^{i\alpha \widetilde{F}_{nM+k}(\widetilde{Q})}).
$$
Notice that $\widetilde{F}_{nM+k}(\widetilde{Q})$ is a sequence of fluctuation operators obtained from a two site local operator $\widetilde{Q} \in L^\infty(\widetilde{{\cal K}}^{\otimes 2})$, where each new output unit is obtained grouping $M$ original output units and, therefore, is described by the Hilbert space $\widetilde{{\cal K}}:={\cal K}^{\otimes M}$.

\bigskip \textbf{3. Reduction to stationarity.} Let us pick $j<n$, $j,n \in \nn^*$, then
\[
\begin{split}
    \left \|\widetilde{F}_{nM+k}(\widetilde{Q})-\widetilde{\tau}_j\left (\widetilde{F}_{(n-j)M+k}(\widetilde{Q})\right ) \right \|_\infty&\leq \frac{1}{n^{1/2}}\left \|\sum_{l=0}^{j-1}\widetilde{\tau}_l(\widetilde{Q}) \right \|_\infty\\
    &+\left ( \frac{1}{\sqrt{n-j}}-\frac{1}{\sqrt{n}} \right ) \left \| \sum_{l=j}^{n-1} \widetilde{\tau}_l(\widetilde{Q}) \right \|_\infty \\
    &=O(n^{-1/2}). 
\end{split}
\]
Note that
$${\rm Tr}\left (\rho^{\rm out}_V(nM+k,\rho)e^{i\alpha\widetilde{\tau}_j\left (\widetilde{F}_{(n-j)M}(\widetilde{Q})\right )} \right )=
{\rm Tr}\left ( \rho^{\rm out}_V((n-j)M+k,\TT_*^{Mj}(\rho))
e^{i\alpha\widetilde{F}_{(n-j)M}(\widetilde{Q})} \right ) $$
and, since $M$ is a multiple of $p$ and $\rho$ is supported in the range of $P_a$ for some $a \in \{0,\dots, p-1\}$,
$$\lim_{j\rightarrow \infty} \TT_*^{Mj}(\rho)=p \rho_a^{\rm ss}.$$
Since $\rho \mapsto \rho^{\rm out}_V(n,\rho) $ is a norm one linear mapping, we obtain that
$$\lim_{n \rightarrow \infty}|{\rm Tr}(\rho^{\rm out}_V(nM+k,\rho) e^{i\alpha \widetilde{F}_{nM+k}(\widetilde{Q})})-{\rm Tr}(\rho^{\rm out}_V(nM+k,p\rho^{\rm ss}_a)e^{i\alpha \widetilde{F}_{nM+k}(\widetilde{Q})})|=0.$$
Notice that the collection $(\rho_V^{\rm out}(nM,{p \rho^{\rm ss}_a}))_{n \geq 0}$ determines a translational invariant ${\rm C}^*$-finitely correlated state on the coarser output chain satisfying the assumptions of Proposition 4.2 in \cite{Ma03}, therefore one has that if $\rho$ is supported in the rank of $P_a$ and if $\tr(\rho_V^{\rm out}(M+k,{p \rho^{\rm ss}_a})\widetilde{Q})=0$, then the convergence in law holds
$$
\mathbb{P}(\widetilde{F}_{nM+k}(\widetilde{Q}), \rho^{\rm out}_V(nM+k,p\rho^{\rm ss}_a)) \longrightarrow {\cal N}(0,\sigma^2_{Q,a})$$
with
$$\sigma^2_{Q,a}=\lim_{n \rightarrow \infty}{\rm Tr}(\rho^{\rm out}(nM+k, p\rho_a^{\rm ss})(\widetilde{F}_{nM+k}(\widetilde{Q})^2).$$
If we check that for every $a=0,\dots, p-1$, $\rho^{\rm out}_V(M+k,p\rho^{\rm ss}_a)(\widetilde{Q})=0$ and $\sigma_{Q,a}^2$ does not depend on $a$ and is given by the expression in Eq. \eqref{eq:asvar}, we are done. Notice that
\[\begin{split}\tr(\rho^{\rm out}_V(M+k,p\rho^{\rm ss}_a)\widetilde{Q})&=\frac{1}{\sqrt{M}}\tr\left (\rho^{\rm out}_V(M+k,p\rho^{\rm ss}_a)\left (\sum_{l=0}^{M-1}\tau_l(Q)\right )\right)\\
&=\frac{p}{\sqrt{M}}\sum_{l=0}^{M-1}\tr \left (\rho^{\rm out}_V(k,\TT_*^l(\rho_a^{\rm ss}))Q\right )\\
&=\frac{pm}{\sqrt{M}}\sum_{a=0}^{p-1}\tr \left (\rho^{\rm out}_V(k,\rho_a^{\rm ss})Q\right )\\
&=\sqrt{M}\tr(\rho_V^{\rm out}(k)Q)=0.\end{split}\]
We used the fact that $\TT_*(\rho^{\rm ss}_a)=\rho^{\rm ss}_{a\oplus 1}$, the linearity of $\rho \mapsto \rho^{\rm out}_V(n,\rho)$ and the fact that $\rho^{\rm ss}=\sum_{a =0}^{p-1}\rho^{\rm ss}_a$.
Moreover,
\[\begin{split}\tr \left (\rho^{\rm out}_V(nM+k,p\rho_a^{\rm ss})\widetilde{F}_{nM+k}(\widetilde{Q})^2\right )&=\frac{1}{n}\sum_{l,j=0}^{n-1} \tr \left (\rho^{\rm out}_V(nM+k,p\rho_a^{\rm ss}) \widetilde{\tau}_l(\widetilde{Q})\widetilde{\tau}_j(\widetilde{Q})\right )\\
&=\frac{1}{n}\sum_{l=0}^{n-1} \tr \left (\rho^{\rm out}_V(nM+k,p\rho_a^{\rm ss})\widetilde{\tau}_l(\widetilde{Q}^2)\right )\\
&+\frac{2}{n}\sum_{0 \leq l<j \leq n-1}\tr \left (\rho^{\rm out}_V(nM+k,p\rho_a^{\rm ss})\widetilde{\tau}_l(\widetilde{Q})\circ\widetilde{\tau}_j(\widetilde{Q})\right )\\
&= \tr \left (\rho^{\rm out}_V(M+k,p\rho_a^{\rm ss})\widetilde{Q}^2\right )\\
&+\frac{2}{n}\sum_{l=0}^{n-2} \sum_{j=1}^{n-1-l}\tr \left (\rho^{\rm out}_V((j+1)M+k,p\rho_a^{\rm ss})\widetilde{Q} \circ \widetilde{\tau}_j(\widetilde{Q})\right ) \\
&\xrightarrow[n \rightarrow +\infty]{}\tr \left (\rho^{\rm out}_V(M+kp\rho_a^{\rm ss})\widetilde{Q}^2\right)\\
&+2 \sum_{j=1}^{+\infty}\tr \left (\rho^{\rm out}_V((j+1)M+k,p\rho_a^{\rm ss})\widetilde{Q}\circ\widetilde{\tau}_j(\widetilde{Q})\right ), 
\end{split}\]
where $A \circ B=(AB+BA)/2.$
The limit is finite since $\tr \left (\rho^{\rm out}_V((j+1)M+k,p\rho_a^{\rm ss})\widetilde{Q}\circ \widetilde{\tau}_j(\widetilde{Q})\right )$ decays exponentially with respect to $j$ since $\TT^p_*$ restricted to $L^1({\cal H}_a)$ is primitive. Notice that the previous limit can be rewritten as
\[\begin{split}
    &\tr \left (\rho_V^{\rm out}(M+k,p\rho_a^{\rm ss})\widetilde{Q}^2\right )+2 \sum_{j=1}^{+\infty}\tr\left (\rho_V^{\rm out}((j+1)M+k,p\rho_a^{\rm ss})\widetilde{Q}\circ \widetilde{\tau}_j(\widetilde{Q})\right )\\
    &=\frac{p}{M}\sum_{l,s=0}^{M-1}\tr \left (\rho_V^{\rm out}(M+k,p\rho_a^{\rm ss})\tau_l(Q)\tau_s(Q)\right )\\
    &+\frac{2p}{M}\sum_{l,s=0}^{M-1}\sum_{j=1}^{+\infty}\tr \left (\rho_V^{\rm out}((j+1)M+k,p\rho_a^{\rm ss})\tau_l(Q)\circ \tau_{jM+s}(Q)\right )\\
    &=\tr(\rho^{\rm out}_V(k)Q^2)+\frac{2}{m}\sum_{l=0}^{M-1}\sum_{s=0}^{M-l-1}\tr \left (\rho_V^{\rm out}(s+k,\rho^{\rm ss}_{a\oplus l})Q\circ \tau_{s}(Q)\right)\\
    &+\frac{2}{m}\sum_{l=0}^{M-1}\sum_{s=M-l}^{+\infty}\tr \left (\rho_V^{\rm out}(s+k,\rho^{\rm ss}_{a\oplus l})Q\circ \tau_{s}(Q)\right )\\
    &=\tr(\rho^{\rm out}_V(k)Q^2)+\frac{2}{m}\sum_{s=0}^{+\infty}\sum_{l=0}^{M-1}\tr \left (\rho_V^{\rm out}(s+k,\rho^{\rm ss}_{a\oplus l})Q\circ \tau_{s}(Q)\right )\\
    &=\tr(\rho_V^{\rm out}(k)Q)+2\sum_{s=0}^{+\infty}\tr(\rho_V^{\rm out}(k+s)Q\circ \tau_{s}(Q)).
\end{split}\]
\end{proof}

\section{Proof of the characterisation of the output equivalence} \label{ap:oe}
In the following, we fix two isometries $V_l:\mathcal{H}_l\to \mathcal{H}_l\otimes \mathcal{K}$ with the same space $\mathcal K$ and possibly different spaces $\mathcal{H}_l$, for $l=1,2$. We let $\TT_l:L^\infty(\mathcal H_l)\to L^\infty(\mathcal H_l)$ be the corresponding channels and denote the corresponding Kraus operators by $K_{l,i}$, cf. equation \eqref{eq:dilation.V}. We assume that $\TT_l$ is irreducible with stationary state $\rho^{\rm ss}_{l}$. 
We now introduce a contractive (non-positive) map 
\begin{equation}
\label{eq:T12}
\TT_{12}:L^\infty(\mathcal H_2,\mathcal H_1)\to L^\infty(\mathcal H_1,\mathcal H_2),\quad \TT_{12}(X) = V_1^*(X\otimes \id_{\cal K})V_2=\sum_i K_{1,i}^*XK_{2,i}.
\end{equation}
The following Proposition characterizes the eigenvectors of $\TT_{12}$ corresponding to peripheral eigenvalues. A similar result was proved for \textit{primitive} channels in \cite[Lemma 1]{GK15}; here we show that with a slight modification, the proof works also in the irreducible case.

\begin{proposition}\label{eigenvalueprop} The following statements are equivalent:
\begin{enumerate}
    \item $\TT_{12}$ admits an eigenvalue $c \in \mathbb{C}$ of modulus one;
    \item there exist a unitary operator $W:{\cal H}_2 \rightarrow {\cal H}_1$ and a complex number $c$ with modulus $1$ satisfying the intertwining relations
\begin{equation}\label{Wint}
W\rho^{\rm ss}_{2}=\rho^{\rm ss}_{1}W,\quad WK_{2,i} = c K_{1,i}W, \quad \text{ for all }i=1,\ldots, k.
\end{equation}
In this case, $\TT_{12}(W)=cW$;
\item there exist a unitary operator $W:{\cal H}_2 \rightarrow {\cal H}_1$ and a complex number $c$ with modulus $1$ satisfying the intertwining relations
\begin{equation}\label{Wint2}
(W\otimes \mathbf{1}_{\cal K})V_2=cV_1W.
\end{equation}
In this case, $\TT_{12}(W)=cW$.
\end{enumerate}
\end{proposition}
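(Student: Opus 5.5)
The plan is to prove the cycle $(2)\Leftrightarrow(3)$, $(2)\Rightarrow(1)$ and then $(1)\Rightarrow(2)$; only the last implication needs real work.

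\textbf{The easy implications.} The equivalence of the intertwining relations in (2) and (3) comes from expanding in the basis of $\mathcal K$. Indeed, $(W\otimes\id_{\cal K})V_2=\sum_i WK_{2,i}\otimes|i\rangle$ and $cV_1W=\sum_i cK_{1,i}W\otimes|i\rangle$, so \eqref{Wint2} holds exactly when $WK_{2,i}=cK_{1,i}W$ for all $i$. If (3) holds, the relation for the stationary states follows by a further step. Conjugating gives $K_{2,i}=cW^*K_{1,i}W$, hence $\mathcal T_{2*}(W^*\rho^{\rm ss}_1W)=W^*\mathcal T_{1*}(\rho^{\rm ss}_1)W=W^*\rho^{\rm ss}_1W$. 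Uniqueness of the stationary state for the irreducible channel $\mathcal T_2$ then forces $W\rho^{\rm ss}_2=\rho^{\rm ss}_1W$. In either direction the eigenvalue equation is immediate: $\TT_{12}(W)=V_1^*(W\otimes\id)V_2=cV_1^*V_1W=cW$. This gives (2)$\Rightarrow$(1) with eigenvalue $c$, after reading $W$ as an element of the domain via the adjoint convention of \eqref{eq:T12}.

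\textbf{The main step (1)$\Rightarrow$(2).} I will follow the argument of \cite[Lemma 1]{GK15}, which uses a Kadison--Schwarz/Cauchy--Schwarz argument in place of primitivity. Let $\TT_{12}(X)=cX$ with $|c|=1$ and $X\neq0$. By Cauchy--Schwarz for the isometries,
$$
\TT_{12}(X)^*\TT_{12}(X)\le V_2^*(X^*X\otimes\id)V_2=\TT_2(X^*X),
$$
so $X^*X\le \TT_2(X^*X)$. Pairing with $\rho^{\rm ss}_2$ gives equality in trace, and since $\rho^{\rm ss}_2>0$ the operator inequality must in fact be an equality: $\TT_2(X^*X)=X^*X$. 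Irreducibility implies that the fixed points of $\TT_2$ are the multiples of $\id$, so after rescaling $X^*X=\id$; symmetrically, $XX^*$ is a fixed point of $\TT_1$. In particular $X$ is an isometry between the two spaces. Its dimensions must match, because $\TT_1(XX^*)=XX^*$ forces $XX^*=\lambda\id$ with $\lambda\neq0$, so $X$ is unitary; call it $W$.

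Equality in the Cauchy--Schwarz step means $(W\otimes\id)V_2$ lies in the range of $V_1$ with the matching coefficient, i.e.\ $(W\otimes\id)V_2=V_1V_1^*(W\otimes\id)V_2=cV_1W$. This is (3), and (2) follows as above.

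The point I expect to be delicate is the equality case. One must check that $\|(\id-V_1V_1^*)(W\otimes\id)V_2\psi\|=0$ for all $\psi$, which follows from $V_2^*(W^*W\otimes\id)V_2-\TT_{12}(W)^*\TT_{12}(W)=0$ once $\TT_2(W^*W)=W^*W$. The other point is to use only uniqueness of fixed points (irreducibility) rather than convergence of powers. Unlike the primitive case, there may be several peripheral $c$ (the period), but the argument never needs uniqueness of $c$, so periodicity causes no trouble.
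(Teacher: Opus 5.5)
Your proof is correct. Its skeleton matches the paper's: the Kadison--Schwarz inequality $X^*X\le\TT_2(X^*X)$ obtained from $V_1V_1^*\le\mathbf{1}$, the reduction to an isometric eigenvector, saturation of Cauchy--Schwarz giving $(W\otimes\mathbf{1})V_2=cV_1W$, and uniqueness of the stationary state for the $\rho^{\rm ss}$ relation.

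The real difference is in the middle step. The paper normalises $\|F^*F\|_\infty=1$, iterates the inequality to Ces\`aro means, and passes to the limit using \eqref{average.Schrodinger}. It then compresses with the projection onto the top eigenspace of $F^*F$ to get $\tr(\rho^{\rm ss}_2F^*F)\ge 1$. You instead pair the positive operator $\TT_2(X^*X)-X^*X$ with the faithful invariant state, which gives $\TT_2(X^*X)=X^*X$ directly. You then use that fixed points of an irreducible channel are scalars, which itself follows from \eqref{average.Heisenberg}. This is shorter and avoids limits and spectral projections.

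Your mirrored inequality $XX^*\le\TT_1(XX^*)$ gives $XX^*=\mathbf{1}_{{\cal H}_1}$, hence $\dim{\cal H}_1=\dim{\cal H}_2$ and unitarity of $W$. The paper leaves this implicit; it can be recovered afterwards from $F\rho^{\rm ss}_2F^*=\rho^{\rm ss}_1$ having full rank. Similarly, you supply the uniqueness argument needed for the stationary-state relation in $(3)\Rightarrow(2)$, which the paper calls trivial.

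The saturation step is the same in both proofs. You do it at operator level via $(\mathbf{1}-V_1V_1^*)(W\otimes\mathbf{1})V_2=0$; the paper does it vector by vector on the Kraus sums.
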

\begin{proof}
The implication $2. \Rightarrow 1.$ and the equivalence $2. \Leftrightarrow 3.$ are trivial. We are going to show that $1.$ implies $2.$ as well.
Let us consider $F \in L^\infty({\cal H}_2,{\cal H}_1)$ such that $\TT_{12}(F)=cF$; without any loss of generality we can assume that $\|F^*F\|_{\infty}=1$. Since $V_1$ is an isometry, we have 
$V_1V_1^*\leq \id$. Hence,
\begin{align*}
\TT_{2}(F^*F)&=V_2^*(F^*\otimes \id_{\cal K})(F\otimes \id_{\cal K})V_2\geq V_2^*(F^*\otimes \id_{\cal K})V_1V_1^*(F\otimes \id_{\cal K})V_2
\\
&=\TT_{12}(F)^* \TT_{12}(F) =|c|^2F^*F =F^*F,
\end{align*}
so that $\TT_{2}^n(F^*F)\geq F^*F$ for all 
$n\in \mathbb{N}$ by the positivity of $\TT_{2}$. Therefore, ${\cal S}_n(F^*F)\geq F^*F$ for all $n\in \mathbb{N}$, where ${\cal S}_n := \frac 1n \sum_{k=0}^n \TT_{2}^k$.
Let $P$ be the projection onto the eigenspace of $F^*F$ corresponding to its largest eigenvalue $\|F^*F\|_\infty=1$. Using \eqref{average.Schrodinger} (as $\TT_{2*}$ is irreducible), we have
$$
{\rm tr}(\rho^{\rm ss}_{2}F^*F)=\lim_{n\rightarrow\infty} {\rm tr}(P)^{-1}{\rm tr}(P{\cal S}_n(F^*F))\geq {\rm tr}(P)^{-1}{\rm tr}(PF^*F)=\|F^*F\|_\infty=1.
$$
Now, since ${\rm tr}(\rho^{\rm ss}_{2}F^*F)\leq {\rm tr}(\rho^{\rm ss}_{2}) \|F^*F\|_\infty=\|F^*F\|_\infty=1$, in any case, we obtain
${\rm tr}[\rho^{\rm ss}_{2}F^*F]=1$, i.e. $P=\mathbf{1}$ and $F^*F=\mathbf{1}_{{\cal H}_1}$.
Now fix a unit vector $\varphi\in \mathcal H_2$, then one has
\begin{align}\label{inner}
\sum_i \langle K_{1,i}F\varphi|FK_{2,i}\varphi\rangle&=\langle \varphi|F^*T_{12}(F)\varphi\rangle=c \langle \varphi|F^*F\varphi\rangle = c,\\
\sum_i \|K_{1,i}F\varphi\|^2&=\sum_i \langle F\varphi|K_{1,i}^*K_{1,i}F\varphi\rangle = \langle \varphi|F^*F\varphi\rangle= 1,\nonumber\\
\sum_i \|FK_{2,i}\varphi\|^2&= \sum_i \langle K_{2,i}\varphi|F^*FK_{2,i}\varphi\rangle = \sum_i \langle K_{2,i}\varphi|K_{2,i}\varphi\rangle = 1.\nonumber
\end{align}
Hence,
$$
1=\left| \sum_i \langle K_{1,i}F\varphi|FK_{2,i}\varphi\rangle\right|^2 \leq \sum_i \|K_{1,i}F\varphi\|^2 \sum_i \|FK_{2,i}\varphi\|^2 =1.
$$
This shows that Cauchy-Schwartz inequality is saturated, and hence there is a constant $\lambda\in \mathbb C$ such that
\begin{equation}\label{equiv}
FK_{2,i}\varphi=\lambda K_{1,i}F\varphi \text{ for all }\varphi\in \mathcal H_2, \, i=1,\ldots, k.
\end{equation}
Putting this back in \eqref{inner}, we see that $\lambda=c$. Next we show that $\rho^{\rm ss}_1=F\rho^{\rm ss}_2F^*$. To that end we define $\rho^{\rm ss}:= F\rho^{\rm ss}_{2}F^*$; then $\rho^{\rm ss}$ is positive, and ${\rm tr}(\rho^{\rm ss}) = {\rm tr}(F^*F \rho^{\rm ss}_{2})=1$, so $\rho^{\rm ss}$ is a state. We have $K_{1,i}F\rho^{\rm ss}_{2}(K_{1,i}F)^*= FK_{2,i}\rho^{\rm ss}_{2}(FK_{2,i})^*$ for each $i$ by \eqref{equiv}, and hence
$$\TT_{1*}(\rho^{\rm ss}) =\sum_{i=1}^k K_{1,i}F\rho^{\rm ss}_{2}(K_{1,i}F)^*= F\TT_{2*}(\rho^{\rm ss}_{2})F^*= F\rho^{\rm ss}_{2}F^* =\rho^{\rm ss},$$
so by irreducibility, $\rho^{\rm ss}$ is the unique stationary state of $\TT_{1*}$, that is,
\begin{equation}\label{equiv2}
F\rho^{\rm ss}_{2}F^*=\rho^{\rm ss}_{1}.
\end{equation}
\end{proof}

The following lemma provides information on the structure of the output state.
\begin{lemma}\label{outputstructure}
Consider an irreducible QMC defined by an isometry $V$, and let $\rho^{\rm ss}$ be the stationary state with spectral decomposition 
$
\rho^{\rm ss} = \sum_{m=1}^d \pi_m |\phi_m\rangle \langle \phi_m|
$.
Then
\begin{itemize}
\item[(a)] The stationary output state has the form
$$
\rho^{\rm out}_V(n): =\sum_{m,p=1}^{d} \pi_{p} |\psi_{mp}(n) \rangle\langle \psi_{mp}(n)|,
$$
where $\psi_{mp}(n) =\sum_{{\bf i}\in I^{(n)}} \langle \phi_{m}|{\bf K}^{(n)}_{\bf i}\phi_{p}\rangle |{\bf i}\rangle$ for each $m,p = 1,\ldots, d$.
\item[(b)] The rank of $\rho^{\rm out}_V(n)$ does not exceed $d^2$ for any $n\in \mathbb N$. In particular, ${\rm tr}((\rho^{\rm out}_V(n))^2 )\geq d^{-2}$.
\end{itemize}
\end{lemma}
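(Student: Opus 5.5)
Both parts follow from the definition of the stationary output state, by expanding the trace over $\mathcal H$ in the eigenbasis of $\rho^{\rm ss}$. For part (a), I will start from the formula \eqref{eq:output.state} with $\rho_{\rm in}=\rho^{\rm ss}$:
$$
\rho^{\rm out}_V(n)=\sum_{{\bf i},{\bf j}\in I_n}{\rm Tr}\bigl(K^{(n)}_{\bf i}\rho^{\rm ss}K^{(n)*}_{\bf j}\bigr)\,|{\bf i}\rangle\langle{\bf j}|.
$$
Inserting the spectral decomposition $\rho^{\rm ss}=\sum_p\pi_p|\phi_p\rangle\langle\phi_p|$ turns the coefficient into $\sum_p\pi_p\langle\phi_p|K^{(n)*}_{\bf j}K^{(n)}_{\bf i}\phi_p\rangle$. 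Since $\{\phi_m\}_{m=1}^d$ is an orthonormal basis of $\mathcal H$ ($\rho^{\rm ss}$ being faithful, all $d$ eigenvectors appear with $\pi_m>0$), I resolve the identity $\mathbf 1=\sum_m|\phi_m\rangle\langle\phi_m|$ between $K^{(n)*}_{\bf j}$ and $K^{(n)}_{\bf i}$. This gives
$$
\sum_{p,m}\pi_p\,\langle\phi_m|K^{(n)}_{\bf i}\phi_p\rangle\,\overline{\langle\phi_m|K^{(n)}_{\bf j}\phi_p\rangle}.
$$
Summing against $|{\bf i}\rangle\langle{\bf j}|$ and regrouping, this is exactly $\sum_{m,p}\pi_p|\psi_{mp}(n)\rangle\langle\psi_{mp}(n)|$ with the stated vectors $\psi_{mp}(n)$. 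The only care needed is to keep the ordering convention $K^{(n)}_{\bf i}=K_{i_n}\cdots K_{i_1}$ consistent; it does not affect the argument.

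For part (b), the representation in (a) writes $\rho^{\rm out}_V(n)$ as a sum of $d^2$ rank-at-most-one positive operators. Hence its range lies in ${\rm span}\{\psi_{mp}(n)\}$ and its rank is at most $d^2$. For the purity bound, I will let $\lambda_1,\dots,\lambda_r$ with $r\le d^2$ be the nonzero eigenvalues, which sum to $1$ since the output is a state. Cauchy--Schwarz then gives
$$
1=\Bigl(\sum_{j=1}^r\lambda_j\Bigr)^2\le r\sum_j\lambda_j^2,
$$
so ${\rm tr}\bigl((\rho^{\rm out}_V(n))^2\bigr)\ge 1/r\ge d^{-2}$.

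I do not expect a real obstacle. The only point to check is that the trace of the output state is $1$, which holds because $V(n)$ is an isometry and $\rho^{\rm ss}$ is a state. Irreducibility is only used to guarantee that $\rho^{\rm ss}$ exists and is unique, so that the stationary output is well defined.
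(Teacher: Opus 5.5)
Your proposal is correct and follows essentially the same route as the paper. The paper also expands the partial trace over $\mathcal H$ in the eigenbasis $\{\phi_m\}$ of $\rho^{\rm ss}$ to obtain the $d^2$ vectors $\psi_{mp}(n)$. It then bounds the purity by $1/r_n$ via Cauchy--Schwarz, written there as $({\rm tr}(\rho^{\rm out}_V(n)\,Q_n/\sqrt{r_n}))^2$ with $Q_n$ the support projection, which is the same inequality as yours on the eigenvalues.
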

\begin{proof} We have
$$
V^{(n)}\rho^{\rm ss}V^{(n)*}= \sum_{p=1}^{d} \pi_p \sum_{{\bf i},{\bf i}'\in I^{(n)}} |K^{(n)}_{\bf i}\phi_p\rangle\langle K^{(n)}_{{\bf i}'}\phi_p| \otimes |{\bf i}\rangle\langle {\bf i}'|.
$$
Moreover, we have $${\rm tr}(|K^{(n)}_{\bf i}\phi_p\rangle\langle K^{(n)}_{{\bf i}'}\phi_p|) =\sum_{m=1}^{d} \langle \phi_{m} |K^{(n)}_{\bf i}\phi_p\rangle \overline{\langle \phi_{m} |K^{(n)}_{{\bf i}'}\phi_{p}\rangle}.$$ Part (a) follows from this. To prove (b) let $Q_n$ and $r_n$ be the support projection and the rank of $\rho_{U}^{out}(n)$, respectively. From (a) we see that $r_n\leq d^2$ (where the latter is just the number of vectors $\psi_{m'm}(n)$), so ${\rm tr}((\rho_{V}^{out}(n))^2) \geq {\rm tr}(\rho_{U}^{out}(n) (Q_n/\sqrt{r_n}))^2 = 1/r_n \geq 1/d^{2}$.
\end{proof}

The following lemma connects  the notion of output equivalence to the spectrum of the map $\TT_{12}$ defined in Eq. \eqref{eq:T12}.

\begin{lemma}\label{overlaplemma}
\begin{itemize}
\item[(a)] Let $V_1$ and $V_2$ be two isometries, 
and let $\rho^{\rm ss}_l = \sum_i \pi_{l,i} |\phi_{l,i}\rangle\langle\phi_{l,i}| $ be the spectral decompositions of their stationary states, for $l=1,2$.
The overlap of the corresponding output states is given by
$$
{\rm tr}(\rho_{V_1}^{\rm out}(n)\rho_{V_2}^{\rm out} (n))= \sum_{m,p=1}^{d}\sum_{m',p'=1}^{d} \pi_{1,p}\pi_{2,p'}|\langle \phi_{1,p}|\TT_{12}^n(|\phi_{1,m}\rangle\langle \phi_{2,m'}|)|\phi_{2,p'}\rangle|^2.
$$
\item[(b)] If $V_1$ and $V_2$ are equivalent, then the map $\TT_{12}$ has an eigenvalue of modulus one.
\end{itemize}
\end{lemma}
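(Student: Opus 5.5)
For part (a) the plan is to expand both stationary output states with Lemma \ref{outputstructure}(a) and compute the Hilbert--Schmidt overlap term by term. By that lemma,
$$
\rho^{\rm out}_{V_l}(n)=\sum_{m,p}\pi_{l,p}|\psi^{(l)}_{mp}(n)\rangle\langle\psi^{(l)}_{mp}(n)|,
$$
so that
$$
{\rm tr}\bigl(\rho^{\rm out}_{V_1}(n)\rho^{\rm out}_{V_2}(n)\bigr)=\sum_{m,p,m',p'}\pi_{1,p}\pi_{2,p'}\bigl|\langle\psi^{(1)}_{mp}(n)|\psi^{(2)}_{m'p'}(n)\rangle\bigr|^2 .
$$
The key identity to verify is
$$
\langle\psi^{(1)}_{mp}(n)|\psi^{(2)}_{m'p'}(n)\rangle=\sum_{\bf i}\overline{\langle\phi_{1,m}|K^{(n)}_{1,{\bf i}}\phi_{1,p}\rangle}\,\langle\phi_{2,m'}|K^{(n)}_{2,{\bf i}}\phi_{2,p'}\rangle=\sum_{\bf i}\langle\phi_{1,p}|K^{(n)*}_{1,{\bf i}}|\phi_{1,m}\rangle\langle\phi_{2,m'}|K^{(n)}_{2,{\bf i}}|\phi_{2,p'}\rangle .
$$
Here one uses that $\sum_{\bf i}K^{(n)*}_{1,{\bf i}}XK^{(n)}_{2,{\bf i}}=\TT_{12}^n(X)$, which follows by induction from $K^{(n)}_{\bf i}=K_{i_n}\cdots K_{i_1}$ and the definition \eqref{eq:T12}. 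Taking $X=|\phi_{1,m}\rangle\langle\phi_{2,m'}|$, the overlap becomes $\langle\phi_{1,p}|\TT_{12}^n(X)|\phi_{2,p'}\rangle$, which gives the stated formula. One must take care to place the adjoint on the $V_1$ side and to use the ordering convention consistently; apart from that this step is bookkeeping.

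For part (b), suppose $V_1$ and $V_2$ are equivalent, so that $\rho^{\rm out}_{V_1}(n)=\rho^{\rm out}_{V_2}(n)$ for all $n$. Then the left-hand side of (a) equals ${\rm tr}\bigl((\rho^{\rm out}_{V_1}(n))^2\bigr)\ge d^{-2}$ by Lemma \ref{outputstructure}(b), uniformly in $n$. On the other hand, the right-hand side is a finite sum, with at most $d^4$ terms and weights $\pi\le1$, of squared moduli of matrix elements of $\TT_{12}^n$ applied to fixed operators. Consequently $\|\TT_{12}^n\|\ge c>0$ for all $n$, for some constant $c$ independent of $n$.

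It therefore suffices to argue that if every eigenvalue of $\TT_{12}$ had modulus strictly less than one, then its spectral radius $r<1$ and hence $\|\TT_{12}^n\|\to0$; this uses Gelfand's formula in finite dimensions. That would contradict the uniform lower bound. Since $\TT_{12}$ is contractive, its spectral radius is at most one, so the spectral radius must equal one, giving an eigenvalue of modulus one. This contradiction is the main point of (b). The only subtlety I expect is justifying that $\TT_{12}$ is a contraction (for the operator norm, using that $V_1,V_2$ are isometries), which rules out eigenvalues of modulus larger than one. This is immediate from $\|V_1^*(X\otimes\id)V_2\|\le\|X\|$.
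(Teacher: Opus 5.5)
Your proposal is correct and follows the paper's proof essentially step for step. Part (a) uses Lemma \ref{outputstructure}(a) together with the identity $\sum_{\bf i}K^{(n)*}_{1,{\bf i}}XK^{(n)}_{2,{\bf i}}=\TT_{12}^n(X)$. Part (b) is the same contradiction between the purity bound of Lemma \ref{outputstructure}(b) and the decay $\TT_{12}^n\to 0$ when all eigenvalues lie strictly inside the unit disk; the paper invokes the Jordan decomposition where you use Gelfand's formula, and you additionally spell out the uniform lower bound on $\|\TT_{12}^n\|$ and the contractivity of $\TT_{12}$.
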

\begin{proof} Define $\psi_{l,mp}(n) =\sum_{{\bf i}\in I^{(n)}} \langle \phi_{l,m}|{\bf K}^{(n)}_{l,{\bf i}}\phi_{l,p}\rangle |{\bf i}\rangle$ for each $m,p = 1,\ldots, d$, and $l=1,2$. Using part (a) of the previous Lemma, we obtain
\begin{align*}
{\rm tr}(\rho_{V_1}^{\rm out}(n)\rho_{V_2}^{\rm out} (n))&= \sum_{m,p=1}^{d}\sum_{m',p'=1}^{d} \pi_{1,p}\pi_{2,p'} |\langle\psi_{1,mp}(n)|\psi_{2,m'p'}(n)\rangle|^2,
\end{align*}
which has the form claimed in (a), because
\begin{align*}
\langle \psi_{1,mp}(n)|\psi_{2,m'p'}(n)\rangle &=
\sum_{{\bf i}\in I^{(n)}} \overline{\langle \phi_{1,m}|{\bf K}_{1,\bf i}^{(n)}\phi_{1,p}\rangle}\langle \phi_{2,m'}|{\bf K}_{2,\bf i}^{(n)} \phi_{2,p'}\rangle\\
&=\langle \phi_{1,p}|\TT_{12}^n(|\phi_{1,m}\rangle\langle \phi_{2m'}|)|\phi_{2,p'}\rangle.
\end{align*}
In order to prove (b), recall from the previous section that the spectral radius of $\TT_{12}$ does not exceed 1. Hence, if $\TT_{12}$ does not have any eigenvalue of modulus one, then all its eigenvalues have modulus strictly less than 1, which (by the standard Jordan decomposition) implies $\lim_{n\rightarrow\infty} \TT_{12}^n=0$, and hence $\lim_{n\rightarrow \infty} {\rm tr}(\rho_{V_1}^{\rm out}(n)\rho_{V_2}^{\rm out} (n)) =0$ by (a). But then the output states cannot be equal, because ${\rm tr}((\rho_{V_l}^{\rm out}(n))^2)$ must be bounded away from zero by part (b) of Lemma \ref{outputstructure}.
\end{proof}

We are now ready to prove the characterisation of the output equivalence.

\begin{proof}[Proof of Theorem \ref{thm:identification}] Since (iii) is just a reformulation of (ii), it suffices to show that (i) is equivalent to (ii). From the explicit form of the stationary output state, one can easily verify that (ii) implies (i). In order to prove the other implication, first note that (i) implies that the map $T_{12}$ defined in equation \eqref{eq:T12} has eigenvalue of modulus one by part (b) of Lemma \ref{overlaplemma}, which implies (ii) by Proposition \ref{eigenvalueprop}. 
\end{proof}

\section{Proofs of the results in Section \ref{sec:global}} \label{sec:proofsgeom}

\begin{proof}[Proof of Theorem \ref{almostfree}] The nontrivial part consists of showing that $\mathcal P^{\rm irr}=\manifirr / \mathscr{G}$ and proving that the action is almost free. After proving this, one can transfer the orbifold structure of $\manifirr/\mathscr{G}$ (Proposition 1.5.1 in \cite{Ca22}) to $\orbifirr$.

Theorem \ref{thm:identification} shows that $V$ and $V^\prime$ are output equivalent if and only if there exists $c \in U(1)$, and $W \in PU(d)$ such that
\begin{equation}\label{chieq}
V' = \overline{c}(W\otimes \id_{\cal K})VW^*,
\end{equation}
which holds exactly if $V' = g \cdot V$ for some $g=(c,W)$. This shows that the orbit of any given $V$ under the action of $\mathscr{G}$ coincides exactly with the equivalence class $[V]\in \mathcal P$. This means that $\mathcal P=\manifirr / \mathscr{G}$.

In order to show the rest of the claim we need to prove that the group action $\mu$ defined above is smooth, proper and almost free (see e.g. Proposition 1.5.1 \cite{Ca22}). To that end, we first note that $g\mapsto g \cdot V$ is clearly smooth with respect to $W$ as an element of $U(d)$, and hence also the lift to $PU(d)$ is smooth. Since $\mathscr{G}$ is compact, the action is proper. Let us prove that the action is almost free. Suppose that $g \cdot V=g'V$ for some $V$ and $g=(c,W)$, $g'=(c',W')$. This implies, first of all, that $c \overline{c}^\prime VW^{\prime*}W= (W^{\prime *}W\otimes \id_{\cal K})V$, and consequently $$\TT_V(W^{\prime *}W) = V^*(W^{\prime *}W\otimes \id_{\cal K})V = c\overline{c}^\prime V^*VW^{\prime *}W=c\overline{c}^{\prime}W^{\prime *}W,$$
that is, $W^{\prime *}W$ is an eigenvector of $\TT_V$ with eigenvalue $c\overline{c}^\prime$. Since this has modulus one, irreducibility implies that $c\overline{c}'=\gamma_V^k$ and $W^{\prime *}W=Z_V^k$ for some $k=0,\dots, p_V-1$. Hence $c=c^\prime\gamma^k$ and $W=W^\prime Z^k$ as elements of $PU(d)$.
Hence $g=g^\prime (\gamma_V^k,Z_V^k)$ and
$$
\mathscr{G}_V=\{(\gamma_V^k,Z_V^k): \, k=0,\dots, p_V-1\}.
$$
\end{proof}

\begin{proof}[Proof of Lemma \ref{lem:rm}]
If we consider a Riemannian metric $\nu$ on $\manifirr$ such that for every $V \in \manifirr$
$$\tsident_V\perp\tsnonid_V,$$
then we can consider the new metric
$$
\nu^\prime_V(X,Y):=\int_\mathscr{G} \nu_{g \cdot V}(d_V\mu^g(X),d_V\mu^g(Y))d\xi(g), \quad X,Y \in T_V(\manifirr),$$
where $\xi$ is the unique normalized right invariant Haar measure on $\mathscr{G}$; it is trivial to verify that $\mu^g$ is an isometry for $\nu^\prime$ for every $g \in \mathscr{G}$ and Lemma \ref{lem:compa} ensures that $\tsident_V\perp\tsnonid_V$ under $\nu^\prime$ as well.

In order to conclude, we need to show that there exists a metric $\nu$ with the property above. A possibility is given by
$$
\nu_V(X,Y)=\langle \omega_V(X),\omega_V(Y) \rangle + 2\Re(\tr(\rho_V^{\rm ss} P_V(X)^* P_V(Y))),$$
where $\langle \cdot, \cdot \rangle$ is any inner product on $\{(\theta,K): \, \theta \in \mathbb{R}, \, K \in L^\infty({\cal H}), \, K^*=K\}$.

Finally, we only need to check that averaging over $\mathscr{G}$ does not change the value of $\nu_V(A,A)$ for $A \in \tsident_V$. Notice that for every $A \in \tsident_V$, $g \in \mathscr{G}$ one has
\[\begin{split}
    \nu_{g \cdot V}(d_V\mu^g (A),d_V\mu^g (A))&=\nu_{g \cdot V}(d_V\mu^g (P_V(A)),d_V\mu^g (P_V(A)))\\
    &=\nu_{g \cdot V}(P_{g \cdot V}d_V\mu^g (A),P_{g \cdot V}d_V\mu^g (A))\\
    &=\tr(\rho_{g \cdot V}^{\rm ss}d_V\mu^g (A)^*d_V\mu^g (A))=\tr(\rho_{V}^{\rm ss} A^*A)=\nu_V(A,A).
    \end{split}\]
    We used that for $g=(c, W)$, $\rho^{\rm ss}_{g \cdot V}=W\rho^{\rm ss}_VW^*$ and Lemma \ref{lem:compa}.
\end{proof}

\begin{proof}[Proof of Proposition \ref{prop:sd}]
In order to prove the first two statements, we only need to determine the vector subspace of $\tsident_V$ that is fixed by $d_V\mu^g$ for every $g \in \mathscr{G}_{V}$.  Let us consider $A \in \tsident_V$, then $d_V\mu^g(A)=A$ for $g=(\gamma_V^{-k},Z_{V}^{*k})$ if and only if
\begin{equation} \label{eq:condition}
\gamma_V^k (Z_V^{*k} \otimes \mathbf{1}_{\cal K})A Z_{V}^k=A.
\end{equation}

Let us consider the spectral resolution of $Z_{V}$:
$$Z_{V}^k=\sum_{a =0}^{p_{[V]}-1}\gamma^{ak} P_a(V)$$
and notice that condition in Eq. \eqref{eq:condition} implies that
\begin{equation} \label{eq:condition2}
A=\sum_{a=0}^{p_{[V]}-1}(P_{a \oplus 1}(V)\otimes \mathbf{1}_{\cal K}) A P_a(V).\end{equation}
Indeed, plugging in the spectral resolution of $Z_V^k$ in Eq. \eqref{eq:condition} one obtains that the following equation
$$\sum_{a,b=0}^{p_V-1} \gamma_V^{k(a-b+1)}(P_b(V) \otimes \mathbf{1}_{\cal K})AP_a(V)=\sum_{a,b=0}^{p_V-1} (P_b(V) \otimes \mathbf{1}_{\cal K})AP_a(V)$$
needs to hold for every $k=0,\dots, p_V-1$. Using that $\gamma_V=e^{i2\pi/p_V}$ and that 
$\{P_a(V) : a=0,\dots , p-1\}$ is a resolution of the identity, one gets that $P_b(V) \otimes \mathbf{1}_{\cal K}AP_a(V)$ can only be different from zero when $a-b+1 \equiv_p 0$, i.e. $b \equiv_pa+1$.

Moreover, note that equation \eqref{eq:periodicprop} is equivalent to
\[
VP_a(V) V^*=(P_{a\oplus 1}(V) \otimes \mathbf{1}_{\cal K})VV^*,
\]
which shows that $(P_{a\oplus 1}(V) \otimes \mathbf{1}_{\cal K})$ and $VV^*$ are commuting projections and that the dimension of the range of $(P_{a\oplus 1}(V) \otimes \mathbf{1}_{\cal K})VV^*$ is equal to $d_a([V])$, therefore the dimension of the range of $(P_{a\oplus 1}(V) \otimes \mathbf{1}_{\cal K})(\mathbf{1}_{{\cal H}\otimes {\cal K}}-VV^*)$ is equal to $d_{a\oplus 1}([V])k-d_a([V])$. Since the range of $A \in \tsident_V$ is included in ${\rm Range}(V)^\perp$ (cf. Proposition \ref{horprop}), the real dimension of those tangent vectors which satisfy condition in Eq. \eqref{eq:condition2} is equal to
$$2 \left ( \sum_{a=0}^{p_{[V]}-1} (d_{a\oplus 1}([V])k-d_a([V]))d_a([V])\right ). $$

Let us prove the last statement: $p_{[V]}=p_{[W]}$ follows from the fact that elements laying in the same submanifold of the canonical stratification have the same local group. Moreover, if $[V]$ and $[W]$ are two points sitting on the same connected component of the submanifold $\mathscr{P}_{p, l}$, then they must be connected by a smooth path $\alpha:[0,1] \rightarrow \mathscr{P}_{p, l}$. Since an atlas for $\mathscr{P}_{p, l}$ can be obtained restricting the collection of fundamental charts we built in Subsection \ref{subsec:idatl}, one can easily see that the path $\alpha$ can be lifted to a path $\widetilde{\alpha}$ in $\manifirr$. Let us consider the smooth family of quantum channels $(T_{\widetilde{\alpha}(t)})_{t \in [0,1]}$: since peripheral eigenvalues are algebraically simple, perturbation theory ensures that the family of matrices $Z_{\widetilde{\alpha}(t)}$ is smooth as well. Using the fact that
$$P_a(\widetilde{\alpha}(t))=\sum_{m=0}^{p(\widetilde{\alpha}(t))-1}\overline{\gamma}_{\widetilde{\alpha}(t)}^{am}Z_{\widetilde{\alpha}(t)}^{m},$$
we can deduce that the family of projections $P_a(\widetilde{\alpha}(t))$ is smooth as well, therefore $d_a(\widetilde{\alpha}(t))$ must be constant for all $t \in [0,1]$.
\end{proof}

\section{Proofs of local approximations} \label{app:local}

\begin{proof}[Proof of Proposition \ref{prop:QFI}]
Thanks to the fact that QFI is a nonnegative bilinear form, in order to show that the limit of the scaled QFI only depends on the identifiable component, it is enough to show that for every $A \in \tsnonid_{V_0}$,
\begin{equation} \label{eq:zero}
    \lim_{n \rightarrow +\infty} \frac{F_{V_0,n}(A,A)}{n}=0.
\end{equation}

Let us write the explicit expression of $F_{V_0,n}(A,A)$ for any $A \in T_{V_0}(\manifirr)$: we recall that
$$\ket{\partial_A\Psi^{\rm s+o}_{V_0}(n)}:=i\sum_{i=1}^{n}V_0^{(n)} \cdots V_0^{(i+1)} A V_0^{(i-1)} \cdots V_0^{(1)} \ket{\varphi}$$
and that, for pure statistical models, one has
$$F_{V_0,n}(A,A)=4\left(\|\partial_A\Psi^{\rm s+o}_{V_0}(n)\|^2- \left|\bra{\partial_A\Psi^{\rm s+o}_{V_0}(n)}\ket{\Psi^{\rm s+o}_{V_0}(n)}\right|^2
\right).
$$
It is easy to verify that
$$\langle\Psi^{\rm s+o}_{V_0}(n)|\partial_A\Psi^{\rm s+o}_{V_0}(n)\rangle=i\sum_{i=0}^{n-1}\langle \varphi|\TT_{V_0}^{i}(V_0^*A)\varphi \rangle$$
and
\[\begin{split}
\|\partial_A\Psi^{\rm s+o}_{V_0}(n)\|^2&=\sum_{i=0}^{n-1}\langle \varphi |\TT_{V_0}^{i}(A^*A)\varphi \rangle \quad (I)\\
&+\sum_{1 \leq i<j\leq n}\langle \varphi |\TT_{V_0}^{i-1}(A^*\TT_{V_0}^{j-i-1}(V_0^*A)\otimes \mathbf{1}_{{\cal K}}V_0)\varphi \rangle \quad (II)\\
&+\sum_{1 \leq i<j\leq n}\langle \varphi |\TT^{i-1}(V_0^*\TT_{V_0}^{j-i-1}(A^*V_0)\otimes \mathbf{1}_{{\cal K}}A)\varphi \rangle. \quad (III)\end{split}\]

Note that non identifiable tangent vectors are of the form $\theta V_0 -(K \otimes \mathbf{1}_{\cal K})V_0+V_0 K$ for some $\theta \in \RR$ and for $K \in L^\infty({\cal H})$ such that $K =K^*$ and $\tr(\rho^{\rm ss}_{V_0} K)=0$.

\vspace{2mm}

{\bf Case 1.} Let us first consider a vector of the form $A=\theta V_0$; in this case, using that $V_0^*V_0=\mathbf{1}_{\cal H}$ and that $\TT_{V_0}$ is unital, one immediately sees that $F_{V_0,n}(A,A)=0$ and we are done.

\vspace{2mm}

{\bf Case 2.} Let us now consider $A=-(K \otimes \mathbf{1}_{\cal K})V_0+V_0 K.$
In this case $V_0^*A=({\rm Id}-\TT_{V_0})(K)$ and
$$A^*A=\TT_{V_0}(K^2)-\TT_{V_0}(K)K-K\TT_{V_0}(K)+K^2.$$
Therefore,
$$\langle\Psi^{\rm s+o}_{V_0}(n)|\partial_A\Psi^{\rm s+o}_{V_0}(n)\rangle=i\sum_{i=0}^{n-1}\langle \varphi|\TT_{V_0}^{i}({\rm Id}-\TT_{V_0})(K)\varphi \rangle=i\langle \varphi|({\rm Id}-\TT_{V_0}^n)(K)\varphi \rangle$$
and
$$\lim_{n \rightarrow +\infty}\frac{|\langle\Psi^{\rm s+o}_{V_0}(n)|\partial_A\Psi^{\rm s+o}_{V_0}(n)\rangle|^2}{n}=0.$$
On the other hand,
\[\begin{split}
(I)&=n \tr(\rho_{V_0}^{\rm ss}(\TT_{V_0}(K^2)-\TT_{V_0}(K)K-K\TT_{V_0}(K)+K^2)) +O(1)\\
&=n\tr(\rho_{V_0}^{\rm ss}(({\rm Id}-\TT_{V_0})(K)K+K({\rm Id}-\TT_{V_0})(K)))+O(1).
\end{split}\]
Using the spectral decomposition of $\TT_{V_0}$ as
$$\TT_{V_0}=\sum_{a=0}^{p_{V_0}}\gamma_{V_0}^{a}\ket{Z_{V_0}^{a}}_{HS} \bra{J^*_{V_0,a}}_{HS}+ {\cal R}$$
with $Z^a_{V_0}, J_{V_0,a}$ given by \eqref{eq:pereigen} and $\|{\cal R}^n\|\leq C \zeta^n$ where $C>0$ and $\zeta \in (0,1)$, one can write
\[\begin{split}
(II)&=\sum_{a,b=0}^{p_{V_0}-1}\langle \varphi |Z_{V_0}^{a}\varphi \rangle \langle J^*_{V_0,a}|A^*Z_{V_0}^b \otimes \mathbf{1}_{\cal K}V_0 \rangle_{HS}\langle J^*_{V_0,b}|V_0^*A \rangle_{HS}\cdot \left (\sum_{j=2}^{n}\sum_{i=1}^{j-1} \gamma_{V_0}^{a(i-1)+b(j-i-1)} \right )\\
&+\sum_{j=2}^{n}\sum_{i=1}^{j-1}\tr(\rho_{V_0}^{\rm ss}(A^*{\cal R}^{j-i-1}(V_0^*A)\otimes \mathbf{1}_{\cal K}V_0)\\
&+\sum_{j=2}^{n}\sum_{i=1}^{j-1}\langle \varphi |{\cal R}^{i-1}(A^*V_0)\varphi\tr(\rho_{V_0}^{\rm ss}V_0^*A) \varphi\rangle+O(1).
\end{split}\]
where in the middle term we separated the two contributions from $\mathcal{T}$ and used the contractivity property of $\mathcal{R}$. 
Note that $\tr(\rho_{V_0}^{\rm ss}V_0^*A)=0$ and the last term in the previous equation disappears. Let us rewrite the second term as well as
\[\begin{split}\sum_{i=1}^{n-1}\sum_{k=0}^{n-i-1}\tr(\rho_{V_0}^{\rm ss}(A^*{\cal R}^k(V_0^*A)\otimes \mathbf{1}_{\cal K}V_0)=n\tr(\rho_{V_0}^{\rm ss}(A^*({\rm Id}-{\cal R})^{-1}(V_0^*A)\otimes \mathbf{1}_{\cal K}V_0)+O(1).
\end{split}\]
Let us now focus on the first term: notice that if $a=b$, then using that $Z_{V_0}^a \otimes \mathbf{1}_{\cal K}V_0=\gamma_{V_0}^a  V_0 Z_{V_0}^a$ and that $J_{V_0,a}Z_{V_0}^a=\rho_{V_0}^{\rm ss}$, one has
$$\langle J^*_{V_0,a}|A^*Z_{V_0}^a \otimes \mathbf{1}_{\cal K}V_0 \rangle_{HS}=\gamma_{V_0}^a\langle J^*_{V_0,a}|A^*V_0Z_{V_0}^a\rangle_{HS} =\gamma_{V_0}^a\tr(\rho_{V_0}^{\rm ss}A^*V_0)=0.$$
Likewise, if $b=0$, then
$$\langle J^*_{V_0,b}|V_0^*A \rangle_{HS}=\tr(\rho_{V_0}^{\rm ss}V_0^*A)=0.$$
On the other hand, if $a \neq b$ and $b \neq 0$, then
$$\sum_{j=2}^{n}\sum_{i=1}^{j-1} \gamma_{V_0}^{a(i-1)+b(j-i-1)}=\begin{cases} (1-\gamma_{V_0}^b)^{-1}n +O(1) \text{ if} a=0, \, b \neq 0\\
O(1) \text{ in all the other cases.}
\end{cases}$$
Therefore the first term in $(II)$ is 
$$
n\sum_{b=1}^{p_{V_0}-1}(1-\gamma_{V_0}^b)^{-1}\langle{J^*_{V_0,b}}| V_0^*A \rangle_{HS}\tr\left ( \rho_{V_0}^{\rm ss}A^*Z_{V_0}^{b}  \otimes \mathbf{1}_{\cal K} V_0\right )+O(1).$$
Putting everything together and using $V_0^*A=({\rm Id}-\TT_{V_0})(K)$, we obtain that
\[\begin{split}(II)&=n \tr(\rho_{V_0}^{\rm ss}A^*({\rm Id}-\TT_{V_0})^{-1}(V_0^*A)\otimes \mathbf{1}_{\cal K}V_0) + O(1)\\
&=n \tr(\rho_{V_0}^{\rm ss}A^* (K \otimes \mathbf{1}_{\cal K})V_0)+O(1)\\
&=\tr(\rho_{V_0}^{\rm ss}(K(\TT_{V_0}-{\rm Id})(K))+O(1).\end{split}\]
Observing that $(III)$ is just the complex conjugate of $(II)$ we are done.

Let us focus on the limit of the scaled QFI along identifiable directions: let us consider $A \in \tsident_{V_0}$; remembering that (see Proposition \ref{horprop}) one has $V_0^*A=0$, then
$$\frac{1}{n}F_{V_0,n}(A,A)=\frac{4}{n}\sum_{i=0}^{n-1}\langle \varphi |\TT_{V_0}^{i}(A^*A)\varphi \rangle\xrightarrow[n \rightarrow +\infty]{}4\tr(\rho_{V_0}^{\rm ss}A^*A)=F_{V_0}(A,A)$$
and the statement follows from polarisation identities.
\end{proof}

\begin{proof}[Proof of Proposition \ref{prop:extsolan}]
Both statistical models are pure, therefore we will show that for $X,Y \in B_{r_n}(V_0)\subset T_{V_0}(\manifirr)$ one has
$$\lim_{n \rightarrow \infty}|\langle \Psi_{V(Xn^{-1/2})}(n)| \Psi_{V(Yn^{-1/2})}(n)\rangle- \langle W(X^{\rm id})\Omega| W(Y^{\rm id}) \Omega \rangle|=0$$
uniformly in $X$ and $Y$ and with a suitable rate in order to apply Lemma \ref{lem:wts2}. We recall that
$$\langle W(X^{\rm id})\Omega| W(Y^{\rm id}) \Omega \rangle=e^{ -\frac{1}{2}\beta_{V_0}(X^{\rm id}-Y^{\rm id},X^{\rm id}-Y^{\rm id}) + i \sigma_{V_0}(X^{\rm id},Y^{\rm id})},$$
therefore
\[\begin{split}
    1-|\langle W(X^{\rm id})\Omega| W(Y^{\rm id}) \Omega \rangle|^2&=1-e^{-\beta_{V_0}(X^{\rm id}-Y^{\rm id},X^{\rm id}-Y^{\rm id})} \leq F_{V_0}(X^{\rm id}-Y^{\rm id},X^{\rm id}-Y^{\rm id}) \\
    &\leq \nu_{V_0}(X-Y,X-Y)
\end{split}\]
and condition \ref{it:Lp2} in Lemma \ref{lem:wts2} holds,  with $\alpha=2, C=1$.
Using the explicit expression of the system and output state (Eq. \eqref{eq:sostate}) we obtain \[\begin{split}
&\langle \Psi_{V(Xn^{-1/2})}(n)| \Psi_{V(Yn^{-1/2})}(n)\rangle=\langle \varphi| \TT_{X,Y,n}^n(\mathbf{1}_{\cal H}) \varphi \rangle, \\
&\TT_{X,Y,n}(A):=V(Xn^{-1/2})^* (A \otimes \mathbf{1}_{\cal K}) V(Yn^{-1/2}).\end{split}\]
Note that
\[\begin{split}&V(Xn^{-1/2})=V_0 + \frac{1}{\sqrt{n}}iX+ \frac{1}{2n}\partial^2_X V(0)+O(n^{3(\delta-1/2)}), \\ &V(Yn^{-1/2})=V_0 + \frac{1}{\sqrt{n}}iY+ \frac{1}{2n}\partial^2_Y V(0)+O(n^{3(\delta-1/2)}).\end{split}\]
where the Taylor expansions are taken seeing $V$ as a smooth mappings from $B_\epsilon(V)$ to $L^\infty({\cal H},{\cal H} \otimes {\cal K})\simeq \mathbb{R}^{2d^2k}$; the first order approximation is given by $iX,iY$ because the differential of $V$ in $0$ is the identity mapping (the factor $i$ in front comes from the fact that we removed it in the definition of the abstract tangent space in equation \eqref{eq:tangentmanirr}. We remark that the reminder is $O(n^{3(\delta-1/2)})$ uniformly in $X$ and $Y$ in $B_{Cn^\delta}(0)$. Consequently, one has
$$\TT_{X,Y,n}=\TT_{V_0}+ \frac{1}{\sqrt{n}}\TT_{X,Y,1}+\frac{1}{2n}\TT_{X,Y,2}+O(n^{3(\delta-1/2)}),$$
where
\begin{align*}
    \TT_{X,Y,1}(A)&=-iX^*A\otimes \mathbf{1}_{\cal K}V_0+iV_0^*A\otimes \mathbf{1}_{\cal K} Y,\\
    \TT_{X,Y,2}(A)&=\partial^2_X V(0)A\otimes \mathbf{1}_{\cal K} V_0+2X^*A\otimes \mathbf{1}_{\cal K}Y + V_0^* A\otimes \mathbf{1}_{\cal K} \partial^2_YV(0).
\end{align*}
We now follow similar steps as in Theorem 2 in \cite{Gu11}, but reproduce the calculations in detail in order to carefully track the order of the remainder. Thanks to the irreducibility of $\TT_{V_0}$, perturbation theory ensures that for $n$ big enough $\TT_{X,Y,n}$ admits an algebraically simple eigenvalue $\lambda_{X,Y,n}$ with the largest absolute value; moreover, if we call $x_{X,Y,n}$ the corresponding eigenvector (normalized in a way that $\tr(\rho_{V_0}^{\rm ss}x_{X,Y,n})\equiv 1$), one has that
\[\begin{split}\lambda_{X,Y,n}=1+\frac{1}{\sqrt{n}}\dot{\lambda}_{X,Y}+\frac{1}{2n}\ddot{\lambda}_{X,Y}+O(n^{3(\delta-1/2)}), \\
x_{X,Y,n}=\mathbf{1}_{\cal H}+\frac{1}{\sqrt{n}}\dot{x}_{X,Y}+\frac{1}{2n}\ddot{x}_{X,Y}+O(n^{3(\delta-1/2)}).
\end{split}\]
Differentiating in $0$ the eigenvalue equation $\TT_{X,Y,n}(x_{X,Y,n})=\lambda_{X,Y,n}x_{X,Y,n}$ one obtains
\begin{align}
    &\TT_{X,Y,1}(\mathbf{1}_{\cal H})+\TT_{V_0}(\dot{x}_{X,Y})=\dot{x}_{X,Y}+\dot{\lambda}_{X,Y}\mathbf{1}_{\cal H}, \label{eq:fd}\\
    &\TT_{X,Y,2}(\mathbf{1}_{\cal H})+2\TT_{X,Y,1}(\dot{x}_{X,Y})+\TT_{V_0}(\ddot{x}_{X,Y})=\ddot{x}_{X,Y}+2\dot{\lambda}_{X,Y}\dot{x}_{X,Y}+\ddot{\lambda}_{X,Y}\mathbf{1}_{\cal H}. \label{eq:sd}
\end{align}
Note that
$$\TT_{X,Y,1}(\mathbf{1}_{\cal H})=-iX^*V_0+iV_0^*Y=-iX^{{\rm noid}*}V_0+iV_0^*Y^{\rm noid}$$
because of Proposition \ref{horprop}. Moreover, we recall that $X^{\rm noid}$ is of the form $\theta_X V_0-(K_X \otimes \mathbf{1}_{\cal K})V_0+V_0K$ and the same holds for $Y$; without loss of generality, we can we can assume that $\theta_X=\theta_Y=0$: indeed, if this is not the case, one can consider $e^{-i\theta_Xn^{-1/2}}V(Xn^{-1/2})$ and $e^{-i\theta_Y n^{-1/2}}V(Yn^{-1/2})$ instead of $V(Xn^{-1/2})$ and $V(Yn^{-1/2})$ (notice that the states considered do not change). Therefore,
$$\TT_{X,Y,1}(\mathbf{1}_{\cal H})=i({\rm Id}-\TT)(K_Y-K_X).$$
On the other hand, by differentiating 
$\tr(\rho_{V_0}^{\rm ss}x_{X,Y,n})\equiv 1$, we obtain $\tr(\rho_{V_0}^{\rm ss}\dot{x}_{X,Y})=0$ and plugging into  \eqref{eq:fd} we get 
$\dot{\lambda}_{X,Y}=0$ and
$$\dot{x}_{X,Y}=i(K_Y-K_X).$$

Equation \eqref{eq:sd} becomes
$$\ddot{\lambda}_{X,Y}\mathbf{1}_{\cal H}=\TT_{X,Y,2}(\mathbf{1}_{\cal H})+2\TT_{X,Y,1}(\dot{x}_{X,Y})+(\TT_{V_0}-{\rm Id})(\ddot{x}_{X,Y})$$
and if we evaluate the state corresponding to $\rho^{\rm ss}_{V_0}$ at both sides we obtain that
$$\ddot{\lambda}_{X,Y}=\tr(\rho_{V_0}^{\rm ss}\TT_{X,Y,2}(\mathbf{1}))+2\tr(\rho^{\rm ss}_{V_0}\TT_{X,Y,1}(\dot{x}_{X,Y})).$$
Notice that $\tr(\rho_{V_0}^{\rm ss}\TT_{X,Y,2}(\mathbf{1}))=\tr(\rho_{V_0}^{\rm ss}(\partial^2_X V(0)^* V_0+2X^*Y + V_0^*  \partial^2_YV(0)));$ using that $V(\cdot)^*V(\cdot) \equiv \mathbf{1}_{\cal H}$, one has that
$$\partial^2_X V(0)^* V_0+V_0^*\partial^2_X V(0) =-2X^*X.$$
Therefore,
$$\partial^2_X V(0)^* V_0=\frac{1}{2}(\partial^2_X V(0)^* V_0+V_0^*\partial^2_X V(0))+i \left ( \frac{1}{2i} \partial^2_X V(0)^* V_0-\frac{1}{2i}V_0^*\partial^2_X V(0)\right )$$
and we can write
$$\tr(\rho_{V_0}^{\rm ss}\TT_{X,Y,2}(\mathbf{1}))=-\tr(\rho_{V_0}^{\rm ss} (X^*X-2X^*Y+Y^*Y)) + i \alpha_X -i\alpha_Y,$$
where $\alpha_X,\alpha_Y$ are irrelevant phases that can be absorbed in the states. Moreover,
\[\begin{split}
   &-\tr(\rho_{V_0}^{\rm ss} (X^*X-2X^*Y+Y^*Y))\\
   &=-\tr(\rho_{V_0}^{\rm ss} (X^{{\rm id}*}X^{\rm id}-2X^{{\rm id}*}Y^{\rm id}+Y^{{\rm id}*}Y^{\rm id}))\\
   &+\tr(\rho_{V_0}^{\rm ss} X^{{\rm id}*}K_X \otimes \mathbf{1}_{\cal K} V_0)+\tr(\rho_{V_0}^{\rm ss} V_0^*K_X \otimes \mathbf{1}_{\cal K} X^{\rm id})\\
   &- \tr(\rho_{V_0}^{\rm ss}({\rm Id}-\TT_{V_0})(K_X)K_X+K_X({\rm Id}-\TT_{V_0})(K_X))\\
   &+\tr(\rho_{V_0}^{\rm ss} Y^{{\rm id}*}K_Y \otimes \mathbf{1}_{\cal K} V_0)+\tr(\rho_{V_0}^{\rm ss} V_0^*K_Y \otimes \mathbf{1}_{\cal K} Y^{\rm id})\\
   &- \tr(\rho_{V_0}^{\rm ss}({\rm Id}-\TT_{V_0})(K_Y)K_Y+K_Y({\rm Id}-\TT_{V_0})(K_Y))\\
   &-2\tr(\rho_{V_0}^{\rm ss} X^{{\rm id}*}K_Y \otimes \mathbf{1}_{\cal K} V_0)\\
   &-2\tr(\rho_{V_0}^{\rm ss} V_0^*K_X \otimes \mathbf{1}_{\cal K} Y^{\rm id})\\
   &+2\tr(\rho_{V_0}^{\rm ss}({\rm Id}-\TT_{V_0})(K_X)K_Y+K_X({\rm Id}-\TT_{V_0})(K_Y)).
\end{split}\]
On the other hand,
\[
\begin{split}
   &\tr(\rho^{\rm ss}_{V_0}\TT_{X,Y,1}(\dot{x}_{X,Y}))\\
   &=\tr(\rho^{\rm ss}_{V_0}X^*(K_Y-K_X)\otimes \mathbf{1}_{\cal K} V_0+V_0^*(K_Y-K_X)\otimes \mathbf{1}_{\cal K}Y)\\
   &=\tr(\rho^{\rm ss}_{V_0}X^{{\rm id}*}(K_Y-K_X) \otimes \mathbf{1}_{\cal K} V_0)\\
   &-\tr(\rho^{\rm ss}_{V_0}K_X({\rm Id}-\TT_{V_0})(K_Y)+K_X({\rm Id}-\TT_{V_0})(K_X))\\
   &+\tr(\rho^{\rm ss}_{V_0}V_0^* (K_X-K_Y)\otimes \mathbf{1}_{\cal K} Y^{\rm id})\\
   &-\tr(\rho^{\rm ss}_{V_0}({\rm Id}-\TT_{V_0})(K_X)K_Y)+\tr(\rho^{\rm ss}_{V_0}({\rm Id}-\TT_{V_0})(K_Y)K_Y).
\end{split}
\]
Summing up, one gets
\[
\begin{split}
&-\tr(\rho_{V_0}^{\rm ss} (X^*X-2X^*Y+Y^*Y)+2\tr(\rho^{\rm ss}_{V_0}\TT_{X,Y,1}(\dot{x}_{X,Y}))\\
&=-\tr(\rho_{V_0}^{\rm ss} (X^{{\rm id}*}X^{\rm id}-2X^{{\rm id}*}Y^{\rm id}+Y^{{\rm id}*}Y^{\rm id}))\\
&+\tr(\rho_{V_0}^{\rm ss} (V_0^* K_X \otimes \mathbf{1}_{\cal K} X^{\rm id}-X^{{\rm id}*}K_X \otimes \mathbf{1}_{\cal K} V_0))\\
&+\tr(\rho_{V_0}^{\rm ss} (K_X({\rm Id}-\TT_{V_0})(K_X)-({\rm Id}-\TT_{V_0})(K_X)K_X)\\
&+\tr(\rho_{V_0}^{\rm ss} (Y^{{\rm id}*}K_Y \otimes \mathbf{1}_{\cal K} V_0-V_0^* K_Y \otimes \mathbf{1}_{\cal K} Y^{\rm id}))\\
&+\tr(\rho_{V_0}^{\rm ss} (({\rm Id}-\TT_{V_0})(K_Y)K_Y-K_Y({\rm Id}-\TT_{V_0})(K_Y)).
\end{split}
\]
Notice that
\[\begin{split}&\tr(\rho_{V_0}^{\rm ss} (V_0^* K_X \otimes \mathbf{1}_{\cal K} X^{\rm id}-X^{{\rm id}*}K_X \otimes \mathbf{1}_{\cal K} V_0))\\
&+\tr(\rho_{V_0}^{\rm ss} (K_X({\rm Id}-\TT_{V_0})(K_X)-({\rm Id}-\TT_{V_0})(K_X)K_X)\end{split}\]
and
\[\begin{split}
&\tr(\rho_{V_0}^{\rm ss} (Y^{{\rm id}*}K_Y \otimes \mathbf{1}_{\cal K} V_0-V_0^* K_Y \otimes \mathbf{1}_{\cal K} Y^{\rm id}))\\
&+\tr(\rho_{V_0}^{\rm ss} (({\rm Id}-\TT_{V_0})(K_Y)K_Y-K_Y({\rm Id}-\TT_{V_0})(K_Y)).
\end{split}\]
are purely imaginary and depend on $X$ and $Y$ only, respectively. Therefore, they can be absorbed in $i\alpha_X$ and $i\alpha_Y$ and we can finally write
$$\ddot{\lambda}_{X,Y}=-\tr(\rho_{V_0}^{\rm ss} (X^{{\rm id}*}X^{\rm id}-2X^{{\rm id}*}Y^{\rm id}+Y^{{\rm id}*}Y^{\rm id})) + i \alpha_X -i\alpha_Y.$$
From the definition and the properties of $\beta_{V_0}$ and $\sigma_{V_0}$ one has
$$\tr(\rho_{V_0}^{\rm ss} (X^{{\rm id}*}X^{\rm id}-2X^{{\rm id}*}Y^{\rm id}+Y^{{\rm id}*}Y^{\rm id}))=\beta_{V_0}(X^{\rm id}-Y^{\rm id},X^{\rm id}-Y^{\rm id}) + 2i \sigma_{V_0}(X^{\rm id},Y^{\rm id}).$$

We are finally ready to prove the limit and establish the rate of decay of the error: for $n$ big enough one has
\[\begin{split}|\langle \varphi | \TT_{X,Y,n}^n(\mathbf{1}_{\cal H}) \varphi \rangle-e^{\ddot{\lambda}_{X,Y}/2}| &\leq |\langle \varphi | \TT_{X,Y,n}^n(\mathbf{1}_{\cal H}-x_{X,Y,n}) \varphi \rangle| \quad (I)\\
&+|\lambda_{X,Y,n}|^n |1-\langle \varphi| x_{X,Y,n} \varphi \rangle|\quad (II)\\
&+|\lambda_{X,Y,n}^n-e^{\ddot{\lambda}_{X,Y}/2}| \quad (III).\end{split}\]
Using that $\TT_{X,Y,n}$ is a contraction (therefore $|\lambda_{X,Y,n}|\leq 1$ as well), one gets that $(I)$ and $(II)$ are $O(n^{2(\delta-1/2)})$. Regarding $(III)$, one has that
$$|\lambda_{X,Y,n}-e^{\ddot{\lambda}_{X,Y}}|=\left |\sum_{k=1}^{n}\lambda_{X,Y,n}^{n-k}(\lambda_{X,Y,n}-e^{\ddot{\lambda}_{X,Y}/2n})e^{\ddot{\lambda}_{X,Y}(k-1)/n} \right | =O(n^{3\delta-1/2}),$$
where we used that $|\lambda_{X,Y,n}|,|e^{\ddot{\lambda}_{X,Y}}| \leq 1$ and that
$$\lambda_{X,Y,n}-e^{\ddot{\lambda}_{X,Y}/2n}=O(n^{3(\delta-1/2)}).$$
Putting everything together, we got that
$$\sup_{X,Y \in B_{r_n}^{\rm ext}(V_0)}|\langle \Psi_{V(Xn^{-1/2})}|\Psi_{V(Yn^{-1/2})}\rangle-e^{ -\frac{1}{2}\beta_{V_0}(X^{\rm id}-Y^{\rm id},X^{\rm id}-Y^{\rm id}) + i \sigma_{V_0}(X^{\rm id},Y^{\rm id})}|=O(n^{3\delta-1/2}).$$
Therefore we can apply Lemma \ref{lem:wts2} where $r_n=n^{\delta}$, $\alpha=2$ and $f(n)=O(n^{3\delta-1/2})$. Let us choose $\delta_n$ of the form $n^{-\beta}$ for some $\beta>0.$ The Le Cam distance between the two statistical models scales as
$$n^{-2\beta}+n^{3\delta/2-1/4}+n^{(3+d_{\rm id})\delta/4+d_{\rm id}\beta/4-1/8},$$
therefore it converges to $0$ if $\delta <(2(3+d_{\rm id}))^{-1}$ and $\beta <1/2{d_{\rm id}}-(3+d_{\rm id})\delta/d_{\rm id}.$ If we want the distance above to decay faster than $n^{-2\delta}$ we need to impose that
$$\delta<\min\{1/14,(2(11+d_{\rm id}))^{-1}\}, \quad \delta <\beta<1/2d_{\rm id}-(11+d_{\rm id})\delta/d_{\rm id}.$$
Notice that $[\delta <\beta<1/2d_{\rm id}-(11+d_{\rm id})\delta/d_{\rm id}]$ is bigger than a single point if $\delta<(2(11+d_{\rm id}))^{-1}.$
\end{proof}

\begin{proof}[Proof of Theorem \ref{thm:nLAN}]
    For the sake of conciseness and since it does not generate any confusion, we drop the label $V_0$ from all the objects corresponding to such parameter, putting the emphasis on the local parameters. Let us consider a spectral resolution of $\rho^{\rm ss} = \rho^{\rm ss}_{V_0} $:
    \[\rho^{\rm ss}=\sum_{a=0}^{p-1} \sum_{i \in I_a} \pi_i^a \ket{\phi_i^a}\bra{\phi_i^a}.\]
We recall that $\{\ket{\phi_i^a}\}_{i \in I_a}$ is an orthonormal basis for the range of $P_a$. We recall that the stationary output can be expressed as
\begin{equation} \label{eq:sscm}
\rho^{\rm out}_{X}(n):= \rho^{\rm out}_{[V(Xn^{-1/2})]}(n)=\sum_{\substack{a,b=0,\dots,p-1\\i \in I_a, j \in I_b}}\pi_{i}^a |\psi_{ij,X}^{ab}(n) \rangle\langle \psi_{ij,X}^{ab}(n)|,
\end{equation}
where $|\psi^{ab}_{ij,X} (n)\rangle = \sum_{\mathbf{i} \in I^{(n)}} \langle \phi_{j}^b |\mathbf{K}_{\mathbf{i},X}^{(n)} | \phi_{i}^a\rangle \otimes \ket{\mathbf{i}}$.

\vspace{2mm}

In the primitive case ($p=1$), the output QLAN result  is based on the fact that $|\psi_{ij,X}^{00} (n) \rangle $ are quasi-orthogonal to each other for local parameters, so one only needs to prove QLAN for each component.
Consider an initial state in $\mathcal{H}_a$. Then after $n$ evolution steps for $X=0$, the system state belongs to the subspace $\mathcal{H}_{a\oplus n}$ (where $\oplus$ denotes addition modulo $p$). Therefore
$$
|\psi^{ab}_{ij} (n)\rangle = 0 ,	\qquad {\rm for ~} b\neq n\oplus a.
$$
This suggests that in order to have well defined limits, \textit{one needs to consider subsequences of the type $n = p\cdot l +r$ where $r$ is fixed and $l\in \mathbb{N}$ is allowed to grow}.

\vspace{2mm}

We now let $X,Y \in B_{r_n}^{\rm id}(0)$ be arbitrary fixed local parameters and we study the limit for $l \rightarrow \infty$ of the inner products
\[\langle \psi^{ab}_{ij,X}(n) |\psi^{a^\prime b^{\prime}}_{i^\prime j^\prime ,Y}(n)  \rangle
=
\left. \left. \tr\left( 
\left|\phi_{ i^\prime}^{a^\prime}\right\rangle
\right\langle \phi_{i}^a \right| 
\TT^{n}_{X,Y,n}
\left (\ket{\phi_{j}^b} \bra{\phi_{j^\prime }^{b^\prime}}
\right )
\right )  .
\]
where $\TT^{n}_{X,Y,n}$ is the deformed channel which has been defined in the proof of Theorem \ref{thm:SOlan}.
\begin{lemma} \label{lem:inner}
Let us consider the following eigenvectors of the stationary state $\ket{\phi_{i}^a}$, $\ket{\phi_{i^\prime}^{a^\prime}}$, $\ket{\phi_{j}^b}$, $\ket{\phi_{ j^\prime}^{b^\prime}}$ and the local parameters $X$, $Y$. One has
\[\sup_{X,Y \in B_{C(pl+r)^\delta}(0)} |\langle \psi^{ab}_{ij,X}(pl+r) |\psi^{a^\prime b^{\prime}}_{i^\prime j^\prime,Y}(pl+r)  \rangle|=O(l^{\delta-1/2}\log(l))
\]
unless $a=a^\prime$, $b=b^\prime$, $i=i^\prime$ and $j=j^\prime$, in which case
 \[\sup_{{X,Y \in B_{C(pl+r)^\delta}(0)}} |\langle \psi^{ab}_{ij,X}(pl+r) |\psi^{ab}_{ij,Y}(pl+r)  \rangle-\pi_{j}^b p\tr(\rho_{a}^{\rm ss} \TT^{pl}_{X,Y,n}(P_{b \ominus r}))|=O(l^{\delta-1/2}\log(l)).
\]   
\end{lemma}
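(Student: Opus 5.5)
The plan is to split the $n=pl+r$ steps into a long block of $pl$ steps and a short block of $r$ steps, and to control the long block with the perturbed eigenstructure of the deformed map $\TT_{X,Y,n}$ near $\TT_{V_0}$. Along identifiable directions the first-order term vanishes, $V_0^*X=V_0^*Y=0$ (Proposition \ref{horprop}), so $\TT_{X,Y,n}=\TT_{V_0}+n^{-1/2}\TT_{X,Y,1}+O(n^{2\delta-1})$. Here $\TT_{X,Y,1}(A)=-iX^*(A\otimes\mathbf 1)V_0+iV_0^*(A\otimes \mathbf 1)Y$, and its norm is $O(n^{\delta})$ uniformly on the ball.

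\textbf{Step 1 (spectral splitting).} Write $\TT_{V_0}=\sum_{m}\gamma^m\ket{Z^m}\bra{J_m^*}+\mathcal R$ with $\|\mathcal R^k\|\le C\zeta^k$. Then $\TT_{V_0}^p$ equals the ergodic projection $\EE$ (see \eqref{eq:erg}) plus $\mathcal R^p$. Set $k_l:=c\log l$ with $c$ chosen so that $\zeta^{k_l}=O(l^{-1/2})$. I would factor the $pl$ steps as blocks of $p k_l$ steps. Telescoping gives
\[
\TT_{X,Y,n}^{pk_l}=\TT_{V_0}^{pk_l}+O\big(pk_l\,n^{\delta-1/2}\big)=\EE+O(l^{\delta-1/2}\log l).
\]
The full $pl$-step map is contractive on the relevant sector, though not globally a contraction. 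To handle this I would use the perturbed peripheral projection $\EE_{X,Y,n}$ from analytic perturbation theory: the $p$ perturbed peripheral eigenvalues are algebraically simple, so $\EE_{X,Y,n}=\EE+O(n^{\delta-1/2})$, and $\TT_{X,Y,n}^{pl}=\EE_{X,Y,n}\TT_{X,Y,n}^{pl}\EE_{X,Y,n}+O(\zeta'^{\,pl})$.

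\textbf{Step 2 (inner product formula).} Insert this into
\[
\langle\psi^{ab}_{ij,X}|\psi^{a'b'}_{i'j',Y}\rangle=\tr\big(\ket{\phi^{a'}_{i'}}\bra{\phi^a_i}\,\TT^{r}_{X,Y,n}\TT^{pl}_{X,Y,n}(\ket{\phi^b_j}\bra{\phi^{b'}_{j'}})\big).
\]
This separates the computation into two pieces.

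Right end. Write $\TT^{pl}_{X,Y,n}=\TT^{pl-pk_l}_{X,Y,n}\TT^{pk_l}_{X,Y,n}$. By Step 1, $\TT^{pk_l}_{X,Y,n}(\ket{\phi^b_j}\bra{\phi^{b'}_{j'}})$ equals $\EE(\ket{\phi^b_j}\bra{\phi^{b'}_{j'}})$ up to $O(l^{\delta-1/2}\log l)$. Since $\EE(\cdot)=\sum_c p\,\tr(\rho^{\rm ss}_c\,\cdot)P_c$, this gives
\[
\EE(\ket{\phi^b_j}\bra{\phi^{b'}_{j'}})=\delta_{bb'}\delta_{jj'}\,p\,\pi^b_jP_b .
\]
So all terms except $j=j'$, $b=b'$ are small.

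Left end. Use the Schrödinger-picture analogue: the dual evolution of $\ket{\phi^{a'}_{i'}}\bra{\phi^a_i}$ over $pk_l$ steps is close to $\EE_*$, which kills it unless $a=a'$, $i=i'$. In that case it becomes $p\,\rho^{\rm ss}_a$ paired against $\tr(\ket{\phi^a_i}\bra{\phi^a_i})=1$. The $r$ extra steps map $P_b$ through $\TT^r_{V_0}(P_b)=P_{b\ominus r}$ (Theorem \ref{th:periodic.structure}(iii)), again up to $O(n^{\delta-1/2})$. Putting the two ends together yields the leading term
\[
\pi^b_j\,p\,\tr\big(\rho^{\rm ss}_a\,\TT^{pl}_{X,Y,n}(P_{b\ominus r})\big),
\]
with the long $pl$-step block preserved in the middle.

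\textbf{Main obstacle.} The hardest step is making both ends of Step 2 rigorous simultaneously while keeping the long middle block, since I am using mixing on both the input and output sides. I would first try to bound $\sup_k\|\TT^k_{X,Y,n}\|$ uniformly; otherwise the $O(n^{\delta-1/2}\log l)$ errors at the ends cannot be propagated through the $pl$ middle steps. For this bound I would use the perturbed peripheral projection, on whose range the eigenvalues have modulus at most $1+O(n^{2\delta-1})$. On that range, $|\lambda|^{pl}$ stays bounded, because the first-order shift of the eigenvalues vanishes, as in the proof of Proposition \ref{prop:extsolan}. On the complementary range the map is uniformly contractive.
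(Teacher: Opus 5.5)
Your overall scheme is the one the paper uses. It has mixing blocks of length $pk$ with $k\sim\log l$ at both ends, the telescoping estimate $\|\TT^{pk}_{X,Y,n}-\TT^{pk}\|=O(k\,l^{\delta-1/2})$, and exponential convergence $\TT^{pk}\to\EE$ to kill the terms with $(b,j)\neq(b',j')$. It then uses $\TT^r(P_b)=P_{b\ominus r}$ for the residual steps and duality for the left end. The gap is in the step you single out as the main obstacle, and its premise is false. Since $V=\exp_{V_0}$ takes values in the manifold of isometries, $\TT_{X,Y,n}(A)=V(Xn^{-1/2})^*(A\otimes\mathbf{1}_{\cal K})V(Yn^{-1/2})$ satisfies $\|\TT_{X,Y,n}(A)\|_\infty\le\|A\|_\infty$ for all $X,Y,n$. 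So $\TT_{X,Y,n}$ is a global contraction, and its predual is a trace-norm contraction. Every power, in particular the long middle block, therefore has norm at most one. This is exactly what the paper invokes to push the end errors through the middle, and it is also what justifies your telescoping bounds.

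The substitute you propose does not close the gap. From $|\lambda|\le 1+O(n^{2\delta-1})$ on the perturbed peripheral eigenvalues you only get $|\lambda|^{pl}\le\exp(O(n^{2\delta}))$, since $pl\approx n$. This diverges for every $\delta>0$, and the vanishing of the first-order shift does not change it. To make that route work you would additionally need the second-order coefficient of each peripheral eigenvalue to have non-positive real part, and the remaining terms to be $o(n^{-1})$ uniformly on the ball. That means redoing the computations of Proposition \ref{prop:extsolan} and Lemma \ref{lemma:tplzk}, none of which is needed once you use contractivity.

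A smaller bookkeeping point concerns your claim that the $pl$-step block is preserved. After spending $pk$ steps at each end, your middle block has only $p(l-2k)$ steps, so this needs an argument. The paper compares $\TT^{pk+r}_{X,Y,n}(A)$ with $\TT^{pk}_{X,Y,n}(A_\infty)$ rather than with $A_\infty$, where $A=|\phi^b_j\rangle\langle\phi^{b'}_{j'}|$ and $A_\infty=\delta_{bb'}\delta_{jj'}\pi^b_j\,p\,P_{b\ominus r}$. Since $\TT^{p}(P_{b\ominus r})=P_{b\ominus r}$, both are within $O(k\,l^{\delta-1/2}+\zeta^{pk})$ of $A_\infty$. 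Contractivity of $\TT^{p(l-k)}_{X,Y,n}$ then gives $\TT^{pl+r}_{X,Y,n}(A)=\TT^{pl}_{X,Y,n}(A_\infty)+O(k\,l^{\delta-1/2}+\zeta^{pk})$. The same device on the left uses $\TT_*^p(\rho^{\rm ss}_a)=\rho^{\rm ss}_a$. With contractivity in place of your spectral argument, your proof becomes the paper's.
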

\begin{proof}

Let $A = \ket{\phi_{j}^b} \bra{\phi_{ j^\prime}^{b^\prime}}$, $A_\infty:=\delta_{b,b^\prime} \delta_{j,j^\prime} \pi_{j}^b p P_{b\ominus r} $,
we have
\begin{eqnarray*}
&&
\| \TT^{pl+r}_{X,Y,n} (A) -\TT^{pl}_{X,Y,n}(A_\infty) \| \\
&&\leq
\| \TT^{p(l-k)}_{X,Y,n}\TT^{pk + r}_{X,Y,n} (A)- \TT^{p(l-k)}_{X,Y,n}\TT^{pk+r} (A) \| + \| \TT^{p(l-k)}_{X,Y,n} \TT^{pk+r} (A)- \TT^{p(l-k)}_{X,Y,n}\TT^{pk}_{X,Y,n}(A_\infty) \|\\
&&\leq 
  \| \TT^{pk+r}_{X,Y,pl+r} (A)- \TT^{pk+r} (A) \|+ \|\TT^r\TT^{pk}(A) -\TT^{pk}_{X,Y,n}(A_\infty) \| ,
\\
&&\leq 
  \| \TT^{pk+r}_{X,Y,pl+r} (A)- \TT^{pk+r} (A) \|
  + \|\TT^{pk+r}(A) -A_\infty \|
  +\|\TT^{pk}_{X,Y,pl+r} (A_\infty) -A_\infty\|
\end{eqnarray*}
where we used that $\TT^p_{u,v,n}$ is a contraction. Since $\TT$ and $\TT_{X,Y,pl+r}$ are contractions, we have
$$
\| \TT^{pk+r}_{X,Y,pl+r} (A)- \TT^{pk+r} (A) \| \leq (pk+r)\|\TT - \TT_{X,Y,pl+r}\|  = O(kl^{\delta-{1/2}})
$$
which can be controlled if $k$ increases sufficiently slowly. Next, we recall that 
$$
\mathcal{E}(A)=
p\sum_{a=0}^{p-1} \tr(\rho^{\rm ss}_a A) P_a=
\delta_{b,b^\prime} \delta_{j,j^\prime} p \pi^b_j P_{b}
$$ 
so that $\mathcal{T}^r (\mathcal{E}(A))) = A_\infty$, and from \eqref{eq:erg} it follows that
$$
\|\TT^r\TT^{pk}(A) -A_\infty\| =\|\TT^r\TT^{pk}(A) -\TT^r( \mathcal{E}(A))\| \leq \|\TT^{pk} - \mathcal{E}\| \rightarrow 0
$$
exponentially fast in $k$. Moreover 
$$
\|
\TT^{pk}_{X,Y,pl+r} (A_\infty) -A_\infty\|=
\|
\TT^{pk}_{X,Y,pl+r} (A_\infty) -\TT^{pk}(A_\infty)\|\leq pk \|\TT - \TT_{X,Y,pl+r}\|  = O(kl^{-{1/2}+\delta}). 
$$

This proves that the limit of the inner product is zero if either $b\neq b^\prime$ or $j\neq j^\prime$ and that we can substitute $P_{b \ominus r}$ to $\TT^{r}_{X,Y,n}(A)$ when computing the limit. Moving the map $\TT_{X,Y,n}$ on $\ket{\phi^{a^\prime}_{ i^\prime}}\bra{\phi_{i}^a}$ by duality and repeating the same computations, we can conclude by choosing $k$ increasing as $\log l$.

\end{proof}


Recall that the spectral projections  of $Z$ can be expressed as
\[
P_a=\sum_{k=0}^{p-1} \overline{\gamma}^{ak} Z^k.
\]
In order to compute the limit of $\TT^{pl}_{X,Y,n}(P_{b\ominus r})$ when $l \rightarrow \infty$, we can therefore study the limit of the powers of the same map applied to $Z^k$ for $k=0,\dots, p-1$.

First, we need to introduce some some notation. Given any $p$-dimensional vectors $h$, $g$ we denote in the following way the Fourier transform, the reflection around $0$ and the convolution:
$$
\hat{h}_k=\sum_{m=0}^{p-1} \gamma^{mk}h_m, \quad \widetilde{h}_k=h_{-k}, \quad (h*g)_k=\sum_{m=0}^{p-1}h_m g_{k-m}.
$$
We use $h^{*n}$ to denote the convolution of $h$ with itself $n$ times ($(h^{*0})_m=\delta_{0,m}$) and we recall that $\widehat{h^{*n}}=\hat{h}^n$, $\widehat{h^n}=\widehat{h}^{*n}$ and $\hat{\hat{h}}=\widetilde{h}$.

We also recall that $\tsident$ decomposes as the direct sum 
$\bigoplus_{m=0}^{p-1}\mathcal{V}_m$ where $\mathcal{V}_m$ are the orthogonal eigenspaces of the unitary operator $d\mu^{g_1}_{V_0} $, cf. equation \eqref{eq:def.v_k}. 
For $A\in T^{\rm id}$ we denote its projection onto 
$\mathcal{V}_m$ by $A_m$. Let us introduce the sesquilinear mapping $\eta:\tsident\times \tsident \rightarrow \mathbb{C}^{p}$ defined as:
\[
\eta(A,B)_0:=0, \quad \eta(A,B)_m=\tr(\rho^{\rm ss}A_m^*B_m)=\beta(A_m,B_m)+i\sigma(A_m,B_m).
\] 
\begin{lemma}
\label{lemma:tplzk}
Let $k=1,\dots, p-1$. Then the following holds true:
\[
\sup_{X,Y \in B_{C(pl+r)^\delta}(0)}
\left\| \TT^{pl}_{X,Y,pl}(Z^k)-e^{ \lambda_k(X,Y)}Z^k
\right\|_\infty=O(l^{3\delta-1/2})
\]
where

\[\begin{split}
\lambda_k(X,Y)&= -\frac{1}{2} \beta_{V_0}(X_0-Y_0,X_0-Y_0)+i \sigma_{V_0}(X_0,Y_0) +i\alpha_X-i\alpha_Y\\
&-\frac{1}{2} (\beta_{V_0}(X^\perp,X^\perp)+\beta_{V_0}(Y^\perp,Y^\perp))+ \widehat{\eta(X,Y)}_k,
\end{split}
\]
where $\alpha_X$ and $\alpha_Y$ are two (irrelevant) phases depending on $X$ and $Y$ only, respectively.
\end{lemma}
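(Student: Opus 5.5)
The plan is to treat $Z^k$ as an approximate peripheral eigenvector of the deformed map. We then run the same second-order perturbation argument used in the proof of Proposition \ref{prop:extsolan}, but at the peripheral eigenvalue $1$ of $\TT^p_{V_0}$ rather than at the eigenvalue $1$ of $\TT_{V_0}$ on the identity. We work with $\TT_{X,Y,n}^p$ and $n=pl$.

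\textbf{Step 1 (one-step action on $Z^k$).} By \eqref{eq:periodicprop}, $(Z^k\otimes\mathbf{1}_{\cal K})V_0=\gamma^kV_0Z^k$, so $\TT_{V_0}(Z^k)=\gamma^kZ^k$. Hence $Z^k$ is a simple eigenvector of $\TT_{V_0}$ with a peripheral eigenvalue (Theorem \ref{th:periodic.structure}). By analytic perturbation theory, $\TT_{X,Y,n}$ has a simple eigenvalue $\lambda^{(k)}_{X,Y,n}=\gamma^k(1+n^{-1/2}\dot\lambda+\frac{1}{2n}\ddot\lambda+O(n^{3(\delta-1/2)}))$ with eigenvector $x^{(k)}=Z^k+n^{-1/2}\dot x+\dots$. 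We normalise by pairing with the left eigenvector $J_k$ from \eqref{eq:pereigen}, so that $\tr(J_k^*x^{(k)})$ is constant.

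We differentiate the eigenvalue equation exactly as in \eqref{eq:fd}--\eqref{eq:sd} and pair with $J_k$. The first-order term $\TT_{X,Y,1}(Z^k)=-iX^*(Z^k\otimes\mathbf{1})V_0+iV_0^*(Z^k\otimes\mathbf{1})Y$ vanishes for identifiable $X,Y$, because $V_0^*X=0$ implies $X^*(Z^k\otimes\mathbf{1})V_0=\gamma^kX^*V_0Z^k=0$. So $\dot\lambda=0$, and we may take $\dot x=0$ after the gauge reduction used there, since only identifiable directions appear.

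\textbf{Step 2 (second-order term).} The second-order coefficient equals $\gamma^{-k}\tr(J_k^*\TT_{X,Y,2}(Z^k))$. The terms involving $\partial^2V$ give, as before, $-\frac12\tr(\rho^{\rm ss}(X^*X+Y^*Y))$ plus irrelevant phases $i\alpha_X-i\alpha_Y$, using $J_kZ^k=\rho^{\rm ss}$ and the relation $\partial_X^2V(0)^*V_0+V_0^*\partial_X^2V(0)=-2X^*X$. The cross term is $2\gamma^{-k}\tr(\rho^{\rm ss}Z^{-k}X^*(Z^k\otimes\mathbf{1})Y)$.

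We expand $X=\sum_mX_m$ and $Y=\sum_mY_m$ in the eigenspaces $\mathcal V_m$ of \eqref{eq:def.v_k}. Using \eqref{eq:action.stab.group}, $\gamma^{-k}(Z^{-k}\otimes\mathbf 1)$-conjugation acts on $\mathcal V_m$ by a phase $\gamma^{\pm mk}$. Orthogonality of the $\mathcal V_m$ then collapses the cross term to $\sum_m\gamma^{mk}\tr(\rho^{\rm ss}X_m^*Y_m)$, with the sign convention fixed by matching the definition of the Fourier transform. Regrouping, the $m=0$ piece combines with $-\frac12(\|X_0\|^2+\|Y_0\|^2)$ into $-\frac12\beta(X_0-Y_0,X_0-Y_0)+i\sigma(X_0,Y_0)$. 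The $m\ge1$ pieces give $-\frac12(\beta(X^\perp,X^\perp)+\beta(Y^\perp,Y^\perp))+\widehat{\eta(X,Y)}_k$. This is the claimed $\lambda_k$, with $\eta_0=0$ accounting for the separate treatment of $\mathcal V_0$.

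\textbf{Step 3 (powers and error).} Raising to the power $pl$, the phase contributes $\gamma^{kpl}=1$, and $(1+\ddot\lambda/2pl+O(l^{3\delta-3/2}))^{pl}$ converges to $e^{\lambda_k}$. The error is controlled by the telescoping estimate (III) of the proof of Proposition \ref{prop:extsolan}, which gives $O(l^{3\delta-1/2})$. Replacing $Z^k$ by $x^{(k)}$ costs $O(l^{2(\delta-1/2)})$, using contractivity as in terms (I) and (II). All bounds are uniform over $B_{C(pl+r)^\delta}$ because the Taylor remainders are.

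\textbf{Main obstacle.} The delicate step is Step 2: tracking the phases $\gamma^{\pm mk}$ in the cross term with the correct convention, so that the result is exactly $\widehat{\eta}_k$. A secondary point is justifying that the $\dot x$ correction contributes nothing. For that I would first verify directly that $\TT_{X,Y,1}(Z^k)=0$ for identifiable directions, so no resolvent term arises at second order.
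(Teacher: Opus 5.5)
Your proposal is correct and is essentially the paper's proof. You perturb $\TT_{X,Y,n}$ at the simple peripheral eigenvalue $\gamma^k$, with right eigenvector $Z^k$ and left functional $\tr(J_k\,\cdot)$, and show that $\TT_{X,Y,1}(Z^k)=0$ for identifiable $X,Y$. You then evaluate $\tr(J_k\TT_{X,Y,2}(Z^k))$ using $(Z^k\otimes\mathbf{1}_{\cal K})V_0=\gamma^kV_0Z^k$ and $J_kZ^k=\rho^{\rm ss}$, split the cross term along the $\mathcal V_m$ (equivalent to the paper's expansion of $J_k$ and $Z^k$ in the $P_a$), and finish with $\gamma^{kpl}=1$ and the telescoping bound. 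The only fix is your opening sentence: since $1$ is a $p$-fold degenerate eigenvalue of $\TT^p_{V_0}$, the perturbation must be done as your Step 1 actually does it, for $\TT_{X,Y,n}$ at $\gamma^k$, not for $\TT^p_{X,Y,n}$ at $1$.
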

\begin{proof}
In this proof we use the notation $\langle A|B
\rangle_{\rm HS}= {\rm Tr}(A^*B)$.
Since the peripheral eigevalues are isolated and algebraically simple, we can repeat the perturbation argument in Theorem \ref{thm:SOlan} to show that for every $k=0,\dots, p-1$
$$
\sup_{X,Y \in B_{Cn^\delta}(0)}\|\TT^n_{X,Y,n} (Z^k) - \gamma^{nk}  e^{ \gamma^{-k} \widetilde{\lambda}_k(X,Y)}  Z^k\|=O(n^{3\delta-1/2}),
$$
where $\widetilde{\lambda}_k(X,Y)$ has corresponding coefficient $  \langle J_k |\TT_{X,Y,2}(Z^k)\rangle_{\rm HS} /2 $. 
We refer to the proof of Theorem \ref{thm:SOlan} for the definition of $\TT_{X,Y,1}$ and $\TT_{X,Y,2}$. Since the proof is the same, we will only report below the computations which are different.
By perturbation theory one has the following expansions for the perturbations of $\gamma^k$ and $Z^k$:
\begin{align*}
 &\gamma^{-k}\gamma^{(k)}_{X,Y,n}=1+\frac{1}{\sqrt{n}}\gamma^{-k}\dot{\gamma}^{(k)}_{X,Y}+\frac{1}{2n}\gamma^{-k}\Ddot{\gamma}^{(k)}_{X,Y}+O(n^{3(\delta-1/2)}),\\
 &Z^{(k)}_{X,Y,n}=Z^k+\frac{1}{\sqrt{n}}\dot{Z}^{(k)}_{X,Y} + \frac{1}{2n}\Ddot{Z}^{(k)}_{X,Y}+O(n^{3(\delta-1/2)}),
\end{align*}
where we assume without loss of generality that 
$\langle J_k |{Z}^{(k)}_{X,Y} \rangle_{\rm HS}\equiv1$. By differentiting the latter we get 
$\langle J_k |\dot{Z}^{(k)}_{X,Y} \rangle_{\rm HS} =0$. 
On the other hand, differentiating the eigenvector equation, we obtain
\begin{align*}
\TT_{X,Y,1}(Z^k)+\TT(\dot{Z}^{(k)}_{X,Y})&=\dot{\gamma}^{(k)}_{X,Y}Z^k+\gamma^k \dot{Z}^{(k)}_{X,Y},\\
\TT_{X,Y,2}(Z^k)+2\TT_{X,Y,1}(\dot{Z}^{(k)}_{X,Y})+\TT(\Ddot{Z}^{(k)}_{X,Y})&=\Ddot{\gamma}^{(k)}_{X,Y}Z^k+2\dot{\gamma}^{(k)}_{X,Y}\dot{Z}^{(k)}_{X,Y}+\gamma^k \Ddot{Z}^{(k)}_{X,Y}.
\end{align*}
Since $Z^k\otimes \mathbf{1}_{\cal K} V=\gamma^k V Z^k$ we obtain 
$$\TT_{X,Y,1}(Z^k)=-iX^*Z^k\otimes \mathbf{1}_{\cal K}V_0+iV_0^*Z^k\otimes \mathbf{1}_{\cal K}Y=\gamma^k(-iX^*V_0Z^k+iZ^kV_0^*Y)=0,
$$
where we have used that $X,Y\in T^{\rm id}$ implies $V_0^*X = V_0^*Y =0$ (cf. Proposition \ref{horprop}). Tracing the first equation against $J_k$ and using $\langle J_k |\dot{Z}^{(k)}_{X,Y} \rangle_{\rm HS}  =\langle J_k |\mathcal{T}(\dot{Z}^{(k)}_{X,Y}) \rangle_{\rm HS}=0$ one obtains $\dot{\gamma}^{(k)}_{X,Y}=0$.
As a consequence
\[
\dot{Z}^{(k)}_{X,Y}=(\gamma^k{\rm Id}-\TT)^{-1}\TT_{X,Y,1}(Z^k)=0
\]
and
\[
2\widetilde{\lambda}_k(X,Y):=\Ddot{\gamma}^{(k)}_{X,Y}=\langle J_k, \TT_{X,Y,2}(Z^k)\rangle_{\rm HS}.
\]
Using again that $Z^k\otimes \mathbf{1}_{\cal K} V=\gamma^k V Z^k$ and the fact that $J_k Z^k=\rho^{\rm ss}$, one can write
\[\begin{split}
\widetilde{\lambda}_k(X,Y)&=\frac{\gamma^k}{2}  \tr ( \rho^{\rm ss} (\partial^2_X V^*(0)V_0+V_0^*\partial^2_YV(0)))+ \langle J_k, X^* Z^k\otimes \mathbf{1}_{\cal K} Y \rangle.
\end{split}
\]
The statement follows taking $\lambda_k(X,Y)=\overline{\gamma}^{k}\widetilde{\lambda}_k(X,Y)$; the other expression of $\lambda_k$ follows from plugging in the  expression of $J_k$ and $Z^k$ in terms of $P_a$'s.
\end{proof}

Putting together the results of Lemma \ref{lem:inner} and Lemma \ref{lemma:tplzk} we obtain that
\begin{equation} \label{eq:inner2}
\sup_{X,Y \in B_{C(pl+r)^\delta}(0)} 
\left|\langle\psi_{ij,X}^{ab}(pl+r)|\psi_{ij,Y}^{ab} (pl+r)\rangle- \pi_{j}^b p \sum_{k=0}^{p-1} \gamma^{(a-b+r)k} e^{\lambda_k(X,Y)}
\right|=O(l^{3\delta-1/2}).
\end{equation}

Recall that in section \ref{sub:limitmodels} we introduced the mixed Gaussian model ${\bf GM}_{V_0}$ whose states can be expressed as mixtures according to equation \eqref{eq:decomp.gaussian.mixture}.
$$
\rho(X) =
W(X_0)\ket{\Omega_0}\bra{\Omega_0}W(X_0)^* \otimes \sum_{m=0}^{p-1} \ket{\zeta_m(X^\perp)}\bra{\zeta_m({X^\perp})}
$$
The inner products of the right-side components are computed in the following lemma.
\begin{lemma}
\label{lemma:zeta.inner.prod}
Let us consider $X,Y \in \tsident$ and $m,m^\prime =0,\dots,p-1$. Then
\begin{equation}
\label{lemma:zeta.inner.products}
\langle \zeta_m(X^\perp)|\zeta_{m^\prime}(Y^\perp) \rangle=\delta_{m,m^\prime}\cdot e^{-(\beta(X^\perp, X^\perp)+\beta(Y^\perp, Y^\perp))/2} \sum_{k=0}^{p-1}\gamma^{-mk}e^{\widehat{\eta(X,Y)}_k}.
\end{equation}
As a consequence
$$
\langle W(X_0) \Omega_0 \otimes \zeta_m(X^\perp)|W(Y_0) \Omega_0 \otimes \zeta_{m^\prime}(Y^\perp)\rangle = 
\sum_{k=0}^{p-1}\gamma^{-mk} e^{\lambda_k(X,Y)}.
$$
\end{lemma}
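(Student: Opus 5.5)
The natural route is to expand $\ket{\zeta_m(X^\perp)}$ in the $\mathcal V_m$-graded Fock basis, or more cleanly, to use the projection formula $\ket{\zeta_m(X^\perp)}=Q_mW(X^\perp)\ket{\Omega_0^\perp}$ from Proposition \ref{prop.mixture.model}. Orthogonality for $m\neq m'$ is immediate, since the $Q_m$ are mutually orthogonal spectral projections of $\Gamma(U)$, where $U=U_{V_0}(g)$ acts on $\mathcal V_0^\perp$. For $m=m'$ we write
$$
Q_m=\frac1p\sum_{k=0}^{p-1}\gamma^{-mk}\Gamma(U)^k,
$$
which is the Fourier inversion of $\Gamma(U)=\sum_m\gamma^mQ_m$. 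This gives
$$
\langle \zeta_m(X^\perp)|\zeta_m(Y^\perp)\rangle=\langle \Omega|W(X^\perp)^*Q_mW(Y^\perp)\Omega\rangle=\frac1p\sum_k\gamma^{-mk}\langle {\rm Coh}(X^\perp)|{\rm Coh}(U^kY^\perp)\rangle,
$$
using $\Gamma(U)^kW(Y^\perp)\ket{\Omega}=W(U^kY^\perp)\ket{\Omega}$.

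Next we evaluate the coherent overlaps with exponential vectors, $\langle e^{x}|e^{y}\rangle=e^{\langle x|y\rangle}$. This yields
$$
\langle {\rm Coh}(X^\perp)|{\rm Coh}(U^kY^\perp)\rangle=e^{-(\|X^\perp\|^2+\|Y^\perp\|^2)/2}\,e^{(X^\perp,U^kY^\perp)_{V_0}}.
$$
Since $U^k$ acts on $\mathcal V_j$ by a root of unity and the $\mathcal V_j$ are orthogonal, we get $(X^\perp,U^kY^\perp)=\sum_{j\ge1}\gamma^{jk}\eta(X,Y)_j=\widehat{\eta(X,Y)}_k$, recalling $\eta_0=0$. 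The eigenvalue convention must be checked against \eqref{eq:def.v_k} and \eqref{eq:action.stab.group}: whether $U$ multiplies $\mathcal V_j$ by $\gamma^j$ or $\gamma^{-j}$ decides whether the Fourier transform appears with $\gamma^{jk}$ or its reflection. I would fix the convention so that it matches the definition of $\lambda_k$ in Lemma \ref{lemma:tplzk}, reindexing $k\to -k$ if necessary. The remaining normalization is a factor $1/p$ relative to the displayed formula. I would absorb it by checking the normalization convention of $\zeta_m$ in \eqref{eq:decomp.gaussian.mixture}, where $\rho$ carries the $1/p$; if the statement's normalization differs, the constant is simply tracked there.

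For the second assertion we factorize the Fock space as ${\frak H}(\mathcal V_0)\otimes{\frak H}(\mathcal V_0^\perp)$. The inner product then splits as the product of $\langle W(X_0)\Omega_0|W(Y_0)\Omega_0\rangle$ and the first assertion. The first factor equals $\exp(-\tfrac12\beta(X_0-Y_0,X_0-Y_0)+i\sigma(X_0,Y_0))$ by the CCR relations \eqref{CCR2}, the same computation as in the proof of Proposition \ref{prop:extsolan}. Multiplying, each summand becomes exactly $e^{\lambda_k(X,Y)}$ with $\lambda_k$ as in Lemma \ref{lemma:tplzk}, up to the phases $\alpha_X,\alpha_Y$. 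These phases can be absorbed into the representatives of the states, as was done there.

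The main obstacle is bookkeeping rather than analysis. The sign convention for the Fourier transform and eigenvalues, the $1/p$ normalization, and the phase conventions $\alpha_X,\alpha_Y$ must be aligned consistently with \eqref{eq:inner2}, so that the subsequent matching of Gram matrices works.
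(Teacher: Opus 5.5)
Your proof is correct, and it takes a different route from the paper's. The paper expands $\ket{\zeta_m(X^\perp)}$ sector by sector in $\mathfrak H(\mathcal V_0^\perp)$ and writes the overlap as $\delta_0(m)+\sum_{l\geq 1}\frac{1}{l!}(\eta^{*l})_m$, a sum of convolution powers of the sequence $\eta(X,Y)$. It then converts the convolution powers into ordinary powers with the discrete Fourier transform and resums the exponential series.

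You instead do the Fourier inversion once, at the operator level, via $Q_m=\frac1p\sum_k\gamma^{-mk}\Gamma(U)^k$. This reduces everything to the coherent overlaps $\langle{\rm Coh}(X^\perp)|{\rm Coh}(U^kY^\perp)\rangle$. It avoids the combinatorics of tuples with $m_1\oplus\cdots\oplus m_l=m$ and keeps every constant visible. The paper's version stays closer to the explicit series \eqref{eq:zed}.

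The convention check you flag resolves without reindexing. By \eqref{eq:def.v_k} and \eqref{eq:action.stab.group}, $\mathcal V_j$ is exactly the $\gamma^j$-eigenspace of $U_{V_0}(g)$. So if $Q_m$ is the $\gamma^m$-spectral projection of $\Gamma(U_{V_0}(g))$, as in Proposition \ref{prop.mixture.model}, you get $(X^\perp,U^kY^\perp)_{V_0}=\widehat{\eta(X,Y)}_k$ as displayed. The labels $A_{-m_i}$ used in \eqref{eq:zed} would merely swap $\zeta_m$ and $\zeta_{-m}$.

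Where you should not hedge is the normalization: the factor $\frac1p$ is genuine and cannot be absorbed into $\zeta_m$.
\begin{itemize}
\item The vectors $\ket{\zeta_m(X^\perp)}=Q_mW(X^\perp)\ket{\Omega_0^\perp}$ are forced by \eqref{eq:decomp.gaussian.mixture} together with ${\rm Tr}\,\rho(A)=1$. This requires $\sum_m\|\zeta_m(X^\perp)\|^2=1$.
\item The displayed formula instead gives $p$. Already $\|\zeta_0(0)\|^2=p$ by that formula, although $\zeta_0(0)=\ket{\Omega_0^\perp}$.
\item The paper loses the factor in its Fourier identities. With $\hat h_k=\sum_m\gamma^{mk}h_m$ one actually has $\hat{\hat h}=p\,\tilde h$, $(h^{*l})_m=\frac1p\sum_k\gamma^{-mk}\hat h_k^{\,l}$ and $\delta_0(m)=\frac1p\sum_k\gamma^{-mk}$.
\end{itemize}
So the displayed identity is off by a factor $p$. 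What you prove is the correct statement: a prefactor $\delta_{m,m'}\frac1p$ in both lines, with the second line holding up to the $X$- and $Y$-dependent phases $\alpha_X,\alpha_Y$ contained in $\lambda_k$.

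Nothing downstream breaks. Equation \eqref{eq:inner2} carries a compensating extra factor $p$: its correct prefactor is $\pi^b_j$. You can see this at $X=Y=0$, where $\|\psi^{ab}_{ij}(pl+r)\|^2\to p\,\pi^b_j\,\delta_{a,b\ominus r}$. Hence the factors of $p$ still cancel when the two Gram matrices are compared in the proof of Theorem \ref{thm:nLAN}.
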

\begin{proof}
    The fact that $\ket{\zeta_m(X^\perp)} \perp \ket{\zeta_{m^\prime}(Y^\perp)}$ if $m \neq m^\prime$ is an easy consequence of the fact that they arise from the projection of $W(\widetilde{X})\ket{\Omega_0^\perp}$ into orthogonal subspaces (see Eq. \eqref{eq:zed}). On the other hand,
    \[\begin{split}
\langle \zeta_m(X^\perp)| \zeta_m(Y^\perp) \rangle&=  e^{-(\beta(X^\perp, X^\perp)+\beta(Y^\perp, Y^\perp))/2}\left ( \delta_0(m)+\right .\\
&\left .+\sum_{l \geq 1}\frac{1}{l!}\sum_{m_1 \oplus \dots \oplus m_l=-m} \widetilde{\eta(X,Y)}_{m_1} \cdots \widetilde{\eta(X,Y)}_{m_l}\right )\\
&=e^{-(\beta(X^\perp, X^\perp)+\beta(Y^\perp, Y^\perp))/2}\left ( \delta_0(m)+ \sum_{l \geq 1} \frac{1}{l!}{\widetilde{\eta(X,Y)}}^{*l}_{-m}\right )\\
&=e^{-(\beta(X^\perp, X^\perp)+\beta(Y^\perp, Y^\perp))/2}\left ( \delta_0(m)+ \sum_{l \geq 1} \frac{1}{l!} \sum_{k=0}^{p-1}\gamma^{-km}(\widehat{\eta(X,Y)})^l_k\right )=\\
&e^{-(\beta(X^\perp, X^\perp)+\beta(Y^\perp, Y^\perp))/2} \sum_{k=0}^{p-1}\gamma^{-mk}e^{\widehat{\eta(X,Y)}_k}.  \end{split}\]
    
\end{proof}

In the last part of the proof we use equations \eqref{eq:inner2} and \eqref{lemma:zeta.inner.products} to prove the convergence of the Le Cam distance between the stationary output model and the mixed Gaussian model. For this we consider the following additional families of 
(un-normalised) vectors  
$$\widetilde{\mathbf{Q}}(pl+r):=\left \{ \ket{\psi_{ij,X}^{ab}(pl+r)} ~:~ a,b=0,\dots, p-1, \, i\in I_a,\,j \in I_b, \, X \in B^{\rm id}_{C(pl+r)^{\delta}} \right \},
$$
and
\begin{align*}
\widetilde{\bf G}(pl+r):=&\left \{ |\varphi^{ab}_{ij,X}\rangle:= \sqrt{\pi_j^b p}\, W(X_0)\ket{\Omega_0} \otimes \ket{\zeta_{b\ominus a \oplus r}(X^\perp)}\otimes  \ket{a,b,i,j} ~:~ a,b=0,\dots, p-1,\right .\\
&\left .i\in I_a,\,j \in I_b, \, X \in B^{\rm id}_{C(pl+r)^{\delta}}\right \}
\subset 
{\cal F}(\tsident_{V_0}) \otimes \mathbb{C}^{s},
\end{align*}
where $s=2(p-1)+\sum_{a=0}^{p-1}|I_a|$ (see section \ref{sub:limitmodels} for the definition of the involved objects).

\vspace{2mm}
Lemma \ref{lem:inner}, equation  \eqref{eq:inner2} and Lemma \ref{lemma:zeta.inner.prod} imply that for any fixed $r=0,\dots, p-1$, the inner products of the vectors in 
$\widetilde{\bf Q}(pl+r)$ converge uniformly to the inner products of the vectors in 
$\widetilde{\bf G}(pl+r)$ with a rate of the order $l^{3\delta-1/2}$. 

\vspace{2mm}

Using that
$$\ket{W(X_0)\Omega_0} \otimes \ket{\zeta_{b\ominus a \oplus r}(X^\perp)}=\mathbf{1} \otimes Q_{-b\ominus a \oplus r} W(X)\ket{\Omega},
$$
one has that the distance in norm $1$ between the states (as density matrices)
$$\ket{W(X_0)\Omega_0} \otimes \ket{\zeta_{b\ominus a \oplus r}(X^\perp)}\otimes \ket{a,b,i,j}
$$
and
$$\ket{W(Y_0)\Omega_0} \otimes \ket{\zeta_{a\ominus b \ominus r}(Y^\perp)}\otimes \ket{a,b,i,j}
$$
is less or equal that
$$\|\ket{W(X)\Omega}    \bra{W(X)\Omega}-\ket{W(Y)\Omega}    \bra{W(Y)\Omega}\|_1 \leq \beta_{V_0}(X-Y,X-Y).$$
Therefore one can apply Lemma \ref{lem:wts1}, one has that
\begin{equation} \label{eq:convam}
    \lim_{l\rightarrow \infty}\Delta(\widetilde{\mathbf{Q}}(pl+r),\widetilde{\mathbf{G}}(pl+r))=0.
\end{equation}

We arrived at the last step of the proof.
Consider the stationary output model (with $n=pl+r$)
$$
\mathbf{Q}^{\rm out}_{V_0}(pl+r):=\{\rho_{[V(X(pl+r)^{-1/2})]}^{\rm out}(pl+r) ~:~ X \in B^{\rm id}_{C(pl+r)^\delta}(V_0)\}
$$
and the modified Gaussian mixture model
\begin{equation} \label{eq:model1}
    \widetilde{\mathbf{GM}}(pl+r)
    :=\left \{ \omega_X:=\sum_{m=0}^{p-1} W(X_0)\ket{\Omega_0} \bra{\Omega_0} W(X_0)^* \otimes \ket{\zeta_m(X^\perp)} \bra{\zeta_m(X^\perp)}\otimes \omega_m \right \},
\end{equation}
where
$$\omega_m:= \sum_{a,i,j} \pi_{i}^a\pi_{j}^{a \ominus m\ominus r} p \ket{a,a \ominus m\ominus r,i,j} \bra{a,a \ominus m\ominus r,i,j}.$$
is a state for every $X\in B^{\rm id}_{C(pl+r)^\delta}(V_0)$:
$$\tr(\omega_m)=p\sum_{a=0}^{p-1} \left (\sum_{i}\pi_i^{a} \right )\left (\sum_{j}\pi_j^{ a\ominus m \ominus r} \right )=p \sum_{a=0}^{p-1} \frac{1}{p^2}=1.
$$

Let us now show that equation  \eqref{eq:LeCam.mixed} in the theorem statement is a corollary of \eqref{eq:convam}.

Notice that
\begin{align*}\rho^{\rm out}_X(n)&=\sum_{a,b,i,j} \ket{\sqrt{\pi}_i^a \psi^{ab}_{ij,X}(n)}\bra{\sqrt{\pi}_i^a \psi^{ab}_{ij,X}(n)}, \\
\omega_X&=\sum_{a,b,i,j}\ket{\varphi^{ab}_{ij,X}}\bra{\varphi^{ab}_{ij,X}},\end{align*}
therefore thanks to equation \eqref{eq:convam} and Lemma \ref{lemm:mixed} we get that
$$\lim_{l \rightarrow +\infty}\Delta(\mathbf{Q}^{\rm out}_{V_0}(pl+r),\widetilde{\mathbf{GM}}(pl+r))=0.$$
Equation \eqref{eq:LeCam.mixed} follows from the fact that $\mathbf{GM}(pl+r))$ and $\widetilde{\mathbf{GM}}(pl+r))$ are equivalent: indeed, let us consider the two quantum channels given by
\begin{align*}{\cal S}_*:L^1({\cal F}(T^{\rm id}))\otimes L^1(\cc^s) &\rightarrow L^1({\cal F}(T^{\rm id}))\\
\rho &\mapsto \tr_{\cc^s}(\rho)
\end{align*}
and
\begin{align*}{\cal R}_*:L^1({\cal F}(T^{\rm id}))&\rightarrow L^1({\cal F}(T^{\rm id}))\otimes L^1(\cc^s)  \\
\rho &\mapsto \sum_{m=0}^{p-1} (\mathbf{1}\otimes Q_m )\rho (\mathbf{1}\otimes Q_m ) \otimes \omega_m.
\end{align*}
It is easy to see that for every $X \in T^{\rm id}$ one has
$${\cal S}_*(\omega_X)=\rho(X) \text{ and } {\cal R}_*(\rho(X))=\omega_X.$$

The opposite direction can be shown by performing a similar construction.


\end{proof}

\section{Proofs of Lemma \ref{lemma.SNR} } 
\label{app:proof.lemma.SNR}

\begin{proof}
    One can explicitly check that $\TT_0(\cdot)-\tr(\cdot)\mathbf{1}/2$ is diagonalisable with the following eigenvalues and eigenvectors:
    \begin{align*}&(\lambda_1(0)=0, R_1(0)=\mathbf{1}/2), \, (\lambda_2(0)=-1, R_1(0)=Z), \\
    &\left (\lambda_3(0)=1/\sqrt{3}+1/\sqrt{6}, R_3(0)=\begin{pmatrix}0 & 1 \\ 1 & 0 \end{pmatrix}\right ), \\
    &\left (\lambda_4(0)=-(1/\sqrt{3}+1/\sqrt{6}), R_4(0)=\begin{pmatrix}0 & 1 \\ -1 & 0 \end{pmatrix}\right ).\end{align*}
Perturbation theory ensures that we can define smooth functions $\lambda_j(\theta), L_j(\theta), R_j(\theta)$ such that
\begin{enumerate}
    \item $\TT_\theta(R_j(\theta))=\lambda_j(\theta)R_j(\theta),$
    \item $\TT_\theta^\dagger(L_j)=\overline{\lambda_j(\theta)}L_j(\theta)$, where $\TT_\theta^\dagger$ is the adjoint of $\TT_\theta$ with respect to the Hilbert-Schmidt inner product,
    \item $\langle L_j(\theta)| R_j(\theta)\rangle_{HS}\equiv 1$.
\end{enumerate}
We recall that for every $\theta$,
$$\TT_{0*}(\mathbf{1}/2)=\TT_{0}(\mathbf{1}/2)=\mathbf{1}/2,$$
moreover one can easily check that $\TT_\theta(Z)=(-1+2 \sin^2(\theta))Z$ as well. Therefore, since $|\lambda_1(0)|,|\lambda_3(0)|, |\lambda_4(0)| < |\lambda_2(0)|=1,$ the spectral radius of $\TT_\theta-\tr(\cdot) \mathbf{1}/2$ is given by $|\lambda_2(\theta)|=1-2\sin^2(\theta)$ and, consequently, the spectral radius of $\widetilde{\TT}_\theta=\TT^2_\theta-\tr(\cdot) \mathbf{1}/4$ is $(1-2\sin^2(\theta))^2$. Moreover, one can easily see that for every $k\geq 0$
$$(\TT_\theta-\tr(\cdot)\mathbf{1}/2)^k={\cal V}_\theta^{-1}{\cal D}^k_\theta{\cal V}_\theta, \quad  \text{ where }{\cal D}_\theta:=\begin{pmatrix} 0 & 0 & 0 & 0 \\
0 & \lambda_2(\theta) & 0 & 0 \\ 
0 & 0 & \lambda_3(\theta) & 0 \\
0 & 0 &0 &  \lambda_4(\theta)  \\\end{pmatrix},$$
and ${\cal V}_\theta$ is the transformation that allows to pass from the basis composed by the right eigenvectors $R_1(\theta), \dots, R_4(\theta)$ to the canonical basis, hence whose rows are composed by the conjugate of the coordinates of $L_j(\theta)$s in the canonical basis. We can easily see that
$$\|\widetilde{\TT}^k_\theta\|_{2 \rightarrow 2}=\|{\cal V}_\theta^{-1}{\cal D}^{2k}_\theta{\cal V}_\theta\|_{2 \rightarrow 2} \leq \|{\cal V}_\theta^{-1}\|_{2 \rightarrow 2}\|{\cal V}_\theta\|_{2 \rightarrow 2}\|{\cal D}^{2k}_\theta\|_{2 \rightarrow 2}\leq C|\lambda_2(\theta)|^{2k},$$
where
$$C:=\max_{\theta \in [0,\overline{\theta}]}\|{\cal V}_\theta^{-1}\|_{2 \rightarrow 2}\|{\cal V}_\theta\|_{2 \rightarrow 2}>0.$$
The maximum in the previous equation is well defined and strictly positive due to the continuity of the norm and the compactness of the interval.
\end{proof}

\end{appendix}

\bibliographystyle{imsart-number.bst} 
\bibliography{blibio.bib,biblioII.bib}  



\end{document}